\numberwithin{equation}{section}
\def\Ext{\mbox{\rm Ext}\,} \def\Hom{\mbox{\rm Hom}} \def\dim{\mbox{\rm dim}\,} 
\def\lr#1{\langle #1\rangle}    \def\mod{\mbox{\rm \textbf{mod}}\,}\def\Mod{\mbox{\rm mod}\,}\def\top{\mbox{\rm top}\,}
\def\Ker{\mbox{\rm Ker}\,}   \def\im{\mbox{\rm Im}\,} \def\Coker{\mbox{\rm Coker}\,}
\def\End{\mbox{\rm End}\,}\def\t{\mbox{\rm t}\,}\def\h{\mbox{\rm h}\,}
 \def\gl.{\mbox{\rm gl.}\,}
\def\diag{\mbox{\rm diag}\,}\def\ooz{\Omega}\def\dz{\delta}\def\vez{\varepsilon}\def\rad{\mbox{\rm rad}\,}\def\ad{\mbox{\rm ad}\,}\def\x{\mbox{\rm \textbf{x}}\,}\def\y{\mbox{\rm \textbf{y}}\,}
\def\Aut{\mbox{\rm Aut}\,}\def\Dim{\mbox{\underline{\dim}}\,}\def\A{\mathcal{A}\,} \def\H{\mathcal{H}\,}
\def\Der{\mbox{\rm Der}\,}
\theoremstyle{plain} 
\newtheorem{theorem}{\bf Theorem}[section]
\newtheorem*{theorem*}{\bf Theorem}
\newtheorem{lemma}[theorem]{\bf Lemma}
\newtheorem{corollary}[theorem]{\bf Corollary}
\newtheorem{proposition}[theorem]{\bf Proposition}
\newtheorem{claim}[theorem]{\bf Claim}
\theoremstyle{definition} 
\newtheorem{definition}[theorem]{\bf Definition}
\newtheorem{remark}[theorem]{\bf Remark}
\newtheorem{example}[theorem]{\bf Example}
\newcommand{\bt}{\begin{theorem}}
\newcommand{\et}{\end{theorem}}
\newcommand{\bl}{\begin{lemma}}
\newcommand{\el}{\end{lemma}}
\newcommand{\bd}{\begin{definition}}
\newcommand{\ed}{\end{definition}}
\newcommand{\bc}{\begin{corollary}}
\newcommand{\ec}{\end{corollary}}
\newcommand{\bp}{\begin{proof}}
\newcommand{\ep}{\end{proof}}
\newcommand{\bx}{\begin{example}}
\newcommand{\ex}{\end{example}}
\newcommand{\br}{\begin{remark}}
\newcommand{\er}{\end{remark}}
\newcommand{\be}{\begin{equation}}
\newcommand{\ee}{\end{equation}}
\newcommand{\ba}{\begin{align}}
\newcommand{\ea}{\end{align}}
\newcommand{\bn}{\begin{enumerate}}
\newcommand{\en}{\end{enumerate}}
\newcommand{\bcs}{\begin{cases}}
\newcommand{\ecs}{\end{cases}}
\renewcommand{\section}{\@startsection{section}{1}{0mm}
  {-\baselineskip}{0.5\baselineskip}{\bf\leftline}}
\begin{document}

\title[Lie algebras arising from 1-cyclic perfect complexes]{Lie algebras arising from 1-cyclic perfect complexes} 

\author[Shiquan Ruan]{{Shiquan Ruan}} 
\author[Jie Sheng]{{Jie Sheng$^{*}$}}
\author[Haicheng Zhang]{{Haicheng Zhang}}

\subjclass[2010]{ 
16G20, 17B20, 17B30.
}
%
\keywords{ 
1-cyclic complex; Hall polynomial; simple Lie algebra; nilpotent Lie algebra; minimal Horseshoe lemma
}
\thanks{$^{*}$ Corresponding author.}
\thanks{S. Ruan was supported by NSF of China (No. 11471269) and J. Sheng was supported by NSF of China (No.11301533).}
\address{
Yau Mathematical Sciences Center, Tsinghua University,
 Beijing 100084, P. R. China.\endgraf
}
\email{shiquanruan@stu.xmu.edu.cn}

\address{
Department of Applied Mathematics, College of Science, China Agricultural University,
 Beijing 100083, P. R. China.\endgraf
}
\email[Corresponding author]{shengjie@amss.ac.cn}

\address{
Yau Mathematical Sciences Center, Tsinghua University,
 Beijing 100084, P. R. China.\endgraf
}
\email{zhanghai14@mails.tsinghua.edu.cn}


\maketitle

\begin{abstract}
Let $A$ be the path algebra of a Dynkin quiver $Q$ over a finite field, and $\mathscr{P}$ be the category of projective $A$-modules. Denote by $C^1(\mathscr{P})$ the category of 1-cyclic complexes over $\mathscr{P}$, and $\tilde{\mathfrak{n}}^+$ the vector space spanned by the isomorphism classes of indecomposable and non-acyclic objects in $C^1(\mathscr{P})$. In this paper, we prove the existence of Hall polynomials in $C^1(\mathscr{P})$, and then establish a relationship between the Hall numbers for indecomposable objects therein and those for $A$-modules. Using Hall polynomials evaluated at $1$, we define a Lie bracket in $\tilde{\mathfrak{n}}^+$ by the commutators of degenerate Hall multiplication.
The resulting Hall Lie algebras provide a broad class of nilpotent Lie algebras. For example, if $Q$ is bipartite, $\tilde{\mathfrak{n}}^+$ is isomorphic to the nilpotent part of the corresponding semisimple Lie algebra; if $Q$ is the linearly oriented quiver of type $\mathbb{A}_{n}$,  $\tilde{\mathfrak{n}}^+$ is isomorphic to the free 2-step nilpotent Lie algebra with $n$-generators. Furthermore, we give a description of the root systems of different $\tilde{\mathfrak{n}}^+$. We also characterize the Lie algebras $\tilde{\mathfrak{n}}^+$ by generators and relations. When $Q$ is of type $\mathbb{A}$, the relations are exactly the defining relations.  As a byproduct, we construct an orthogonal exceptional pair satisfying the minimal Horseshoe lemma for each sincere non-projective indecomposable $A$-module.
\end{abstract}

\section{Introduction}
\subsection{Background}
In his remarkable paper \cite{R90a}, Ringel introduced the Hall algebra $\mathcal {H}(A)$ of a finite dimensional hereditary algebra $A$ over a finite field $\mathbb{F}_q$.  The Hall algebra is an associative algebra, with a basis indexed by the iso-classes (isomorphism classes) of finite dimensional $A$-modules, and  with the multiplication whose structure constants are given by the so-called Hall numbers $$g^{L}_{XY}=|\{M\subseteq L| M\simeq Y , L/M\simeq X\}|.$$ When $A$ is representation-finite, Ringel \cite{R91a} showed that the Hall numbers
are actually integral coefficient polynomials in $q$, thus called Hall polynomials. Using the Hall polynomials evaluated at $q=1$, Ringel \cite{R90,R91b} defined the degenerate Hall algebra $\mathcal {H}_1(A)$ of $A$, and proved that it is isomorphic to the positive part of the universal enveloping algebra associated to $A$. This also gives a realization of the nilpotent part of the
corresponding semisimple Lie algebra. Inspired by Ringel's idea, Peng and Xiao \cite{PX97,PX2000} provided a realization of the whole Kac--Moody Lie algebra via the root category of a hereditary algebra.

Moreover, the Hall algebra approach also provides a nice framework for the realization of quantum enveloping algebras, see \cite{Sch} for details. Actually, Ringel \cite{R90a,R92a,R95} and Green \cite{Gr95,R96} showed that the Hall algebra is a good model for the positive and Borel part of a quantum enveloping algebra. So a natural question is how one can give a Hall algebra realization of the whole quantum enveloping algebra. There have been many attempts to answer such a question (cf. \cite{X1,Xiao,Toen}).


Recently, Bridgeland \cite{Br} considered the Hall algebra of the category $C^2(\mathscr{P})$, where $C^2(\mathscr{P})$ is the category of 2-cyclic complexes of projective modules over a finite dimensional hereditary algebra $A$, and obtained a new algebra called Bridgeland's Hall algebra by taking some localization and reduction. He proved that the quantum enveloping algebra associated to  $A$ can be embedded into Bridgeland's Hall algebra. This provides a realization of the full quantum enveloping algebra. Based on Bridgeland's work, for each positive integer $m\geq2$, Chen and Deng \cite{ChenD} considered the category $C^m(\mathscr{P})$ of $m$-cyclic complexes of projective modules over a representation-finite hereditary algebra $A$. They proved the existence of Hall polynomials in $C^m(\mathscr{P})$, thus defined the generic Bridgeland's Hall algebra, and showed that it contains a subalgebra isomorphic to the integral form of the corresponding quantum enveloping algebra. Note that the methods of proving the existence of Hall polynomials in \cite{ChenD} do not work well for the case of $m=1$.

On the other hand, as the representations of a quiver $Q$ over a field $k$ have been studied thoroughly, it seems worthwhile to consider the representations of a quiver $Q$ over an arbitrary finite-dimensional $k$-algebras $D$. When $D=k[T]/(T^2)$ is the algebra of
dual numbers, Ringel and Zhang \cite{RZ} investigated the AR (Auslander--Reiten) structure of the subcategory of Gorenstein-projective modules over the
path algebra $DQ$, which is precisely the category $C^1(\mathscr{P})$ of 1-cyclic complexes of projective modules over $kQ$.
As pointed out in  \cite{RZ}, the Kac theorem also yields a correspondence between the iso-classes of indecomposable objects in the stable category of $C^1(\mathscr{P})$ and the positive roots of the Kac--Moody algebra associated to $Q$. This is also our motivation to find a new realization of (the nilpotent part of) a Kac--Moody Lie algebra using the Hall algebra of $C^1(\mathscr{P})$. The new realization may be seen as a new categorification
of the positive root system of a Kac--Moody Lie algebra.

\subsection{Main results}
From now on, for the sake of simplicity, a complex of projective modules will be also called \emph{a perfect complex}.
The present paper mainly deals with Hall Lie algebras arising from the category $C^1(\mathscr{P})$ of 1-cyclic perfect complexes over the path algebra $A=\mathbb{F}_q Q$ of a Dynkin quiver $Q$. Let $\mathfrak{P}^1(\Gamma)$ denote the set of iso-classes of objects in $C^1(\mathscr{P})$.
We remind that the existence of Hall polynomials means that Hall numbers are always integral coefficient polynomials in $q$, which are independent of the base field $\mathbb{F}_{q}$.
\begin{theorem*}[A]
For any $\lambda, \mu, \nu\in\mathfrak{P}^1(\Gamma)$, the Hall polynomial $\psi_{\mu \nu}^{\lambda}$  exists.
\end{theorem*}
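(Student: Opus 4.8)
The plan is to reduce the existence of Hall polynomials in $C^1(\mathscr{P})$ to the already-known existence of Hall polynomials for $A$-modules (Ringel's theorem, since $Q$ is Dynkin hence representation-finite). The key structural input is that every object of $C^1(\mathscr{P})$ decomposes (uniquely up to iso) into an acyclic part and a "reduced" part, and that indecomposable non-acyclic 1-cyclic complexes are controlled by the indecomposable $A$-modules. Concretely, a 1-cyclic perfect complex is a pair $(P, d)$ with $P$ projective and $d\colon P\to P$ satisfying $d^2=0$; its homology $H=\Ker d/\im d$ is an $A$-module, and conversely a minimal projective presentation of an $A$-module $M$ produces a reduced 1-cyclic complex with homology $M$. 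Acyclic indecomposables are the "stalk-type" complexes $\mathrm{id}\colon P\to P$ for $P$ indecomposable projective. So $\mathfrak{P}^1(\Gamma)$ is parametrized combinatorially by a pair: a function recording multiplicities of acyclic summands, together with an $A$-module (the total homology). This parametrization is field-independent, which is the first thing I would set up carefully.

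Next I would analyze the submodule structure. Given $\lambda,\mu,\nu\in\mathfrak{P}^1(\Gamma)$, I want to count subcomplexes $K\subseteq L$ (where $L$ is of type $\lambda$) with $K$ of type $\nu$ and $L/K$ of type $\mu$. The strategy is a dévissage splitting off the acyclic parts. First, since acyclic objects are both projective and injective in $C^1(\mathscr{P})$ (this is essentially the Ringel–Zhang description of the Gorenstein-projective category over dual numbers, and it's the analogue of what makes Bridgeland's reduction work), a short exact sequence $0\to K\to L\to M\to 0$ with an acyclic term in a prescribed position forces partial splittings; the number of ways an acyclic complex embeds into, or surjects from, a given complex is governed by $\Hom$ and $\Ext^1$ dimensions, which are field-independent exponentials of $q$ (Riedtmann–Peng type formulas: the relevant count is $q^{\dim\Hom}$ or $\frac{q^{\dim\Ext^1}\,|\ldots|}{\ldots}$, always a monomial or ratio of monomials in $q$ once the relevant dimensions are fixed by the types). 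So one reduces to the case where $L$, $K$ and $M$ are all reduced, i.e. have no acyclic summands.

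Then for the reduced case, the plan is to pass to homology. A subcomplex of a reduced complex need not be reduced, but its image and cokernel decompose into reduced-plus-acyclic; taking $H_*$ is exact on $C^1(\mathscr{P})$ up to controlled acyclic corrections, and on reduced complexes the homology functor is close to an equivalence with $\mod A$ in the sense that the fibres of "take homology" over a fixed module are single iso-classes. Thus a subcomplex $K\subseteq L$ of reduced complexes gives a submodule $H_*(K)\subseteq H_*(L)$ with quotient $H_*(M)$ plus an acyclic discrepancy, and conversely each submodule of $H_*(L)$ lifts to subcomplexes, the lifts being counted again by $q$-power factors coming from $\Hom/\Ext^1$ between the pieces. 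Assembling, $\psi_{\mu\nu}^{\lambda}$ becomes a finite sum, indexed by the finitely many combinatorial "types" of the intermediate acyclic discrepancies, of products of: a classical Hall number $g^{M}_{XY}$ for $A$-modules (a polynomial in $q$ by Ringel), times explicit $q$-monomials or ratios $\frac{\prod(q^{a_i}-1)}{\prod(q^{b_j}-1)}$. Since $g^M_{XY}$ divided/multiplied by such factors stays a polynomial (this divisibility is exactly what Ringel established in proving Hall polynomials exist for representation-finite algebras, and it survives these operations), the total is a polynomial in $q$ independent of $\mathbb{F}_q$.

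The main obstacle, I expect, is precisely the point where a subcomplex of a reduced complex fails to be reduced: one must show the "acyclic discrepancy" is bounded (only finitely many types occur for fixed $\lambda$, which follows from dimension bounds) and that the counting of lifts factors cleanly as a product of independent $\Hom$/$\Ext^1$-type terms — i.e. that the relevant filtration splits appropriately. This is where the injective-and-projective property of acyclic complexes in $C^1(\mathscr{P})$ does the real work, and where I would need the structural results of Ringel–Zhang on $C^1(\mathscr{P})$ (AR-structure, the decomposition into reduced and acyclic, exactness properties of homology). A secondary nuisance is bookkeeping: unlike the $m\geq 2$ case handled by Chen–Deng, here $d^2=0$ with $d\colon P\to P$ a single endomorphism means the homology is $\Ker d/\im d$ with $\im d\subseteq\Ker d$ automatically, and the "reduced" normal form uses minimal projective presentations — one must check the resulting parametrization of $\mathfrak{P}^1(\Gamma)$ and the ensuing counts are genuinely insensitive to $q$, which is the content that makes $m=1$ work even though Chen–Deng's method does not apply.
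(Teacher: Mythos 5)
Your dévissage of the acyclic summands is fine (the $K_P$ are indeed projective--injective in $C^1(\mathscr{P})$, and an acyclic indecomposable in the sub or quotient position forces splitting), but the core step of your plan --- ``a subcomplex $K\subseteq L$ of reduced complexes gives a submodule $H_*(K)\subseteq H_*(L)$ with quotient $H_*(M)$ plus an acyclic discrepancy'' --- is false, and this is exactly where the $m=1$ case differs from module categories. A short exact sequence in $C^1(\mathscr{P})$ induces a $6$-periodic long exact sequence on homology, not a short one, because $\Ext^1_{C^1(\mathscr{P})}(C_M,C_N)\cong \Hom_A(M,N)\oplus\Ext_A^1(M,N)$ (Proposition 2.5(3) of the paper): the $\Hom_A(M,N)$-component produces genuinely non-split conflations whose connecting map in homology is the given morphism $M\to N$. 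For instance (Proposition 3.12(2)), for a monomorphism $f\colon M\to N$ with $L=\Coker f$ one gets a conflation $0\to C_N\to C_L\to C_M\to 0$ with all three terms reduced; here $N$ is not a submodule of $L$, and the dimension vectors subtract rather than add, so no bookkeeping of ``acyclic corrections'' can recast this count as a classical submodule count $g^{H(L)}_{H(M)\,H(K)}$. These $\Hom$-induced extensions can in the end be expressed through module Hall numbers, but only via the counts $|{}_X\Hom_A(M,N)_Y|=\sum_{[L']}a_{L'}F^M_{L'X}F^N_{YL'}$ (formula (3.8) in the paper), which is a different and substantially more delicate reduction than the one you describe; the paper carries this out only for special pairs $(M,N)$ in Section 3.2, as a supplement to, not as the proof of, Theorem A.

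The paper's actual proof of Theorem A avoids your homology passage altogether. For $\mu$ or $\nu$ indecomposable and both without acyclic summands, it uses representation-directedness to split $M_q(\nu)=M_q(\nu_1)\oplus M_q(\nu_2)$ with $\Hom_A(M_q(\mu),M_q(\nu_1))=0$ and $\Ext^1_A(M_q(\mu),M_q(\nu_2))=0$, identifies $\Ext^1_{C^1(\mathscr{P})}(C_q(\mu),C_q(\nu))$ with an $\Ext^1$-group in the category $C^0(\mathscr{P})$ of bounded perfect complexes via the Galois covering $\mathcal{F}\colon C^0(\mathscr{P})\to C^1(\mathscr{P})$, and then sums the Chen--Deng polynomials $\varepsilon^{\tilde\lambda}_{\tilde\mu\tilde\nu}$ over the fibre $\gamma(\tilde\lambda)=\lambda$; the general case of decomposable $\mu,\nu$ is then handled by induction on $(l_\mu,d_\mu)$ using associativity of the Hall algebra, as in Chen--Deng. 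So even if you repaired the counting of $\Hom$-induced extensions, you would still need an argument (such as this induction) to pass from indecomposable to arbitrary $\mu,\nu$; as written, your proposal neither proves the key correspondence it relies on nor correctly describes what the Hall numbers of $C^1(\mathscr{P})$ count.
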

Using the Hall polynomials evaluated at $1$, we obtain a Hall Lie algebra $\tilde{\mathfrak{n}}$ spanned by the iso-classes of indecomposable objects in $C^1(\mathscr{P})$. Naturally there are two Lie subalgebras of $\tilde{\mathfrak{n}}$, i.e. $\tilde{\mathfrak{n}}^{+}$ (resp. $\hat{\mathfrak{n}}$), which is spanned by the iso-classes of indecomposable and non-acyclic (resp. acyclic) objects in $C^1(\mathscr{P})$.

The structure of $\tilde{\mathfrak{n}}^{+}$ depends not only on the underlying graph but also on the orientation of the quiver $Q$. So
for each Dynkin quiver $Q$ with $n$ vertices, one can attach an $n\times n$ \emph{path matrix} $E=(a_{ij})$ as follows: If there is a path
between $i$ and $j$ in $Q$, say from $i$ to $j$, then $a_{ij}=1$ and $a_{ji}=-1$. Otherwise, $a_{ij}=a_{ji}=0$.

For each indecomposable $A$-module $M$, we fix a minimal projective resolution of $M$
$$0 \longrightarrow \Omega_M \stackrel{\delta_M}{\longrightarrow} P_M \stackrel{\epsilon_M}{\longrightarrow} M \longrightarrow 0$$
and a 1-cyclic perfect complex $$C_{M}=(P_{M}\oplus \Omega_{M}, \begin{pmatrix}0& \delta_M\\0&0\end{pmatrix}).$$
If $M$ is projective, say $M=P_{i}$, then $C_{P_{i}}=(P_{i},0)$. By abuse of language, we also denote by $C_{M}$ the iso-class of the corresponding 1-cyclic perfect complex, then all the $C_{M}$'s constitute the natural basis of the Hall Lie algebra $\tilde{\mathfrak{n}}^{+}$.
\begin{theorem*}[B]
Let $Q$ be a Dynkin quiver and  $E=(a_{ij})$ its path matrix.
The associated Lie algebra $\tilde{\mathfrak{n}}^{+}$ is generated by $\{C_{P_i}~|~1\leq i\leq n\}$, and these generators satisfy the following relations:

(a) If $|a_{ij}|=1$, $(\ad C_{P_i})^2(C_{P_j})=(\ad C_{P_j})^2(C_{P_i})=0$;

(b) If $a_{ij}a_{jk}=1$, $[C_{P_{i}},[C_{P_{j}},C_{P_{k}}]]=[C_{P_{k}},[C_{P_{i}},C_{P_{j}}]]=0$;

(c) If $a_{ij}=0$, $[C_{P_i},C_{P_j}]=0$.
\end{theorem*}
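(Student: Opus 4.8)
The plan is to settle the defining relations by a direct computation and then deduce generation by an induction built on the minimal Horseshoe lemma. Throughout I use that $\tilde{\mathfrak n}^{+}$ is graded by $K_0(\mathscr P)=\mathbb Z^n$, where $C_M$ has degree $\Dim P_M+\Dim\Omega_M$ (the dimension vector of the underlying projective, which is additive along conflations in $C^1(\mathscr P)$), and that for a Dynkin quiver this degree map is injective on the non-acyclic indecomposables, so each nonzero homogeneous component of $\tilde{\mathfrak n}^{+}$ is spanned by a single $C_M$. The first step is to compute $[C_{P_i},C_{P_j}]$. A conflation of two stalk complexes is, up to transposition, of the form $(P_i\oplus P_j,\bigl(\begin{smallmatrix}0&\delta\\0&0\end{smallmatrix}\bigr))$ with $\delta$ a map between $P_i$ and $P_j$; since $Q$ is a tree, $\dim\Hom_A(P_i,P_j)+\dim\Hom_A(P_j,P_i)\le 1$, with the sum equal to $1$ exactly when $|a_{ij}|=1$, in which case $\delta$ is the injective radical map realising the unique path between $i$ and $j$ and the complex is $C_{M_{ij}}$, where $M_{ij}=\Coker\delta$ is the indecomposable whose minimal projective resolution has terms $P_i$ and $P_j$ (indecomposable as it is a proper quotient of an indecomposable projective, hence has simple top). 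Evaluating the two relevant Hall polynomials at $1$ --- one of them counts the unique admissible subcomplex $\cong C_{P_i}$ of $C_{M_{ij}}$ and is identically $1$, the other vanishes since the competing candidate subcomplex would have non-projective quotient --- gives $[C_{P_i},C_{P_j}]=0$ when $a_{ij}=0$, which is relation~(c), and $[C_{P_i},C_{P_j}]=\pm C_{M_{ij}}$ when $|a_{ij}|=1$.

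For (a) and (b) it remains to check that the second brackets vanish. By the previous step, $(\ad C_{P_i})^2(C_{P_j})=\pm[C_{P_i},C_{M_{ij}}]$ and $(\ad C_{P_j})^2(C_{P_i})=\pm[C_{P_j},C_{M_{ij}}]$, while $[C_{P_i},[C_{P_j},C_{P_k}]]=\pm[C_{P_i},C_{M_{jk}}]$ and $[C_{P_k},[C_{P_i},C_{P_j}]]=\pm[C_{P_k},C_{M_{ij}}]$ (here $i,j,k$ are pairwise distinct and lie on one directed path). In each case I claim no indecomposable object of $C^1(\mathscr P)$ is a non-split conflation of the two complexes involved, i.e.\ every such conflation decomposes. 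This follows by writing out the differential of a putative extension as a matrix and using only that a nonzero map between indecomposable projectives over a hereditary algebra is injective, and that the composite of nonzero maps corresponding to composable paths in a tree is again nonzero. Concretely, in case (a) one finds $\Ext^1_{C^1(\mathscr P)}(C_{M_{ij}},C_{P_i})=0$ while every non-split extension in $\Ext^1_{C^1(\mathscr P)}(C_{P_i},C_{M_{ij}})$ has a contractible direct summand $\cong(P_i\oplus P_i,\bigl(\begin{smallmatrix}0&1\\0&0\end{smallmatrix}\bigr))$; in case (b) both $\Ext^1$ groups between $C_{P_i}$ and $C_{M_{jk}}$ vanish, because the off-diagonal entry of the differential would have to lie in a $\Hom$ space that is zero (the wrong direction along the path) or else factors through the map defining $C_{M_{jk}}$ and can be removed by a base change. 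Since a decomposable object contributes nothing to an antisymmetrised Hall bracket of indecomposables, all these second brackets are $0$, which gives (a) and (b).

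For generation I argue by induction on $\dim_{\mathbb F_q}M$ that $C_M\in\langle C_{P_1},\dots,C_{P_n}\rangle$. Projective $M$ give the generators, so let $M$ be indecomposable non-projective. If $M$ is not sincere I pass to the path algebra of its support (a connected, hence convex, subquiver) and invoke the inductive hypothesis there; the minimal projective resolutions and the relevant exceptional pair are unaffected by this restriction. For sincere $M$ I use the minimal Horseshoe lemma: there is an orthogonal exceptional pair $(X,Y)$ and a non-split short exact sequence $0\to Y\to M\to X\to 0$ whose minimal projective resolutions assemble so that $P_M=P_Y\oplus P_X$ and $\Omega_M=\Omega_Y\oplus\Omega_X$; the horseshoe then produces a conflation $0\to C_Y\to C_M\to C_X\to 0$ in $C^1(\mathscr P)$. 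Hence $\Dim C_X+\Dim C_Y=\Dim C_M$ with $C_X,C_Y$ of strictly smaller total dimension, so by injectivity of the grading $[C_X,C_Y]$ is a scalar multiple of $C_M$, and this scalar is nonzero because orthogonality forces $\Ext^1_{C^1(\mathscr P)}(C_Y,C_X)=0$ while the class of $C_M$ is a nonzero element of $\Ext^1_{C^1(\mathscr P)}(C_X,C_Y)$, so exactly one of the two competing Hall polynomials is positive at $1$. By induction $C_X,C_Y\in\langle C_{P_1},\dots,C_{P_n}\rangle$, hence so is $C_M$.

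I expect generation to be the main obstacle, at two points. First, establishing the minimal Horseshoe lemma for every sincere non-projective indecomposable $M$ --- producing the orthogonal exceptional pair $(X,Y)$ for which the sequence $0\to Y\to M\to X\to 0$ is compatible with the \emph{minimal} projective resolutions (the ordinary Horseshoe lemma only respects some resolution) --- requires a genuine construction, and is the source of the byproduct statement in the abstract. Second, one must rule out cancellation, i.e.\ show that the coefficient of $C_M$ in $[C_X,C_Y]$ really is nonzero; this is where the orthogonality of the pair, together with the Dynkin hypothesis through the injectivity of $M\mapsto\Dim P_M+\Dim\Omega_M$, is indispensable. By contrast, once the formula for $[C_{P_i},C_{P_j}]$ is in hand the relations reduce to a finite and uniform bookkeeping of matrix differentials.
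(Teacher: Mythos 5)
Your verification of relations (a)--(c) runs along the same lines as the paper's: compute $[C_{P_i},C_{P_j}]$ from $\Ext_{C^1(\mathscr{P})}^1(C_{P_j},C_{P_i})\cong\Hom_A(P_j,P_i)$, and kill the iterated brackets by showing that the relevant $\Ext^1$-groups either vanish or only produce decomposable middle terms; as a sketch this is fine and matches the paper. The genuine gap is in generation. First, the existence, for every sincere non-projective indecomposable $M$, of an orthogonal exceptional pair $(X,Y)$ whose sequence $0\to Y\to M\to X\to 0$ is compatible with the \emph{minimal} projective resolutions is not something you may quote: it is exactly Theorem \ref{sincere} of the paper, proved there by a long case-by-case analysis over types $\mathbb{A}$, $\mathbb{D}$, $\mathbb{E}$ (using Ringel's $s(M)-1$ orthogonal exceptional pairs and the AR-quiver), and it is the heart of the proof of generation; acknowledging it as ``the main obstacle'' and deferring it leaves Theorem B unproved.

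Second, your reduction of the non-sincere case fails as stated. Restriction to the support quiver $Q'$ \emph{does} change the minimal projective resolution: the projective cover of $M$ over $kQ'$ is a sum of $kQ'$-projectives $P'_i$, which are in general proper quotients of the $P_i$, so the inductive hypothesis over $Q'$ concerns $\tilde{\mathfrak{n}}^{+}(Q')$ and its generators $C_{P'_i}$, not the elements $C_{P_i}\in\tilde{\mathfrak{n}}^{+}(Q)$ you need. What does transport is the short exact sequence $0\to M'_1\to M\to M'_2\to 0$ itself (after re-checking the top condition over $kQ$, which holds because simples outside $Q'$ do not occur in the tops), but only when $M$ remains non-projective over $kQ'$. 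Your induction therefore never reaches the modules that are projective over their support yet non-projective over $kQ$; the simplest instance is $S_i$ for a non-sink vertex $i$, where no sequence $0\to Y\to M\to X\to 0$ with $\top M=\top X\oplus\top Y$ can exist since $\top M$ is simple. The paper needs a second mechanism precisely for these: a sequence $0\to P\to N\to M\to 0$ with $P$ projective and $\top N\cong\top M$ (Corollary \ref{proj first}), applied through iterated pushouts along the minimal resolution $0\to\oplus_i P_{t_i}\to P_j\to M\to 0$, yielding $C_M=[C_{P_{t_1}},[C_{P_{t_2}},[C_{P_{t_3}},C_{P_j}]]]$ as in Proposition \ref{derived}. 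Without an argument of this kind (your ``method (II)'' is mentioned but never deployed in the induction), the proof of generation does not close.
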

Actually the Hall Lie algebras $\tilde{\mathfrak{n}}^{+}$ provide variant nilpotent Lie algebras.
Let us pause to give a short review of nilpotent Lie algebras.

According to Levi's theorem, in characteristic zero,
the natural problem of determining all finite-dimensional Lie algebras can be broken into two parts:
the classification of semisimple Lie algebras and the classification of solvable Lie algebras.
The first one was settled by Killing and Cartan around 1890, while the second
one was reduced to the classification of nilpotent Lie algebras by Malcev in 1945. It is well known that
nilpotent Lie algebra is very important both in algebra and differential geometry.
However, compared to semisimple Lie algebras, the systematic study of nilpotent Lie algebras is very difficult.
From the examples of Malcev in \cite{M}, we know several interesting results: (1) there are uncountably many non-isomorphic
nilpotent Lie algebras over $\mathbb{C}$; (2) a nilpotent Lie algebra over $\mathbb{R}$ need not have
a rational form.

Since the classification problem of nilpotent Lie algebras is hopeless, it is natural to
restrict to certain special cases. Except for those serving as nilpotent radicals of semisimple Lie algebras,
the simplest nontrivial nilpotent Lie algebras are 2-step nilpotent Lie algebras. A Lie algebra $\mathfrak{N}$
is called \emph{2-step nilpotent} if $\mathfrak{N}^{3}=0$ while $\mathfrak{N}^{2}\neq 0$, where $\mathfrak{N}^{t}$
denotes the $t$-th term of the descending central series of $\mathfrak{N}$. Each 2-step nilpotent Lie algebra is
a homomorphism image of a \emph{free 2-step nilpotent} Lie algebra, which will be recalled in Section \ref{free2}.
\begin{theorem*}[C]
(1) If $Q$ is bipartite, i.e. each vertex is a sink or a source, then $\tilde{\mathfrak{n}}^{+}$ is isomorphic to the nilpotent part $\mathfrak{n}^{+}$ of the semisimple Lie algebra $\mathfrak{g}$ associated to $Q$.

(2) If $Q$ is the linearly oriented quiver of type $\mathbb{A}_{n}$, then $\tilde{\mathfrak{n}}^{+}$
is isomorphic to the free 2-step nilpotent Lie algebra with $n$-generators.
\end{theorem*}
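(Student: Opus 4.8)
My plan is to derive both statements from Theorem~B by determining, for each of the two special orientations, which of the relations (a)--(c) actually occur, recognising the resulting presentation, and then bridging the gap between ``surjection'' and ``isomorphism'' with a dimension count. Throughout I will use that, by construction, $\tilde{\mathfrak{n}}^{+}$ has a basis indexed by the indecomposable non-acyclic objects of $C^{1}(\mathscr{P})$, i.e. by the $C_{M}$ with $M$ an indecomposable $A$-module, so that $\dim\tilde{\mathfrak{n}}^{+}$ equals the number of indecomposable $A$-modules, which for a Dynkin quiver is $|\Phi^{+}|$ by Gabriel's theorem.

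For part (1), assume $Q$ is bipartite. Then every path in $Q$ has length at most one, so in the path matrix $E=(a_{ij})$ one has $|a_{ij}|=1$ exactly when $i$ and $j$ are neighbours, $a_{ij}=0$ otherwise, and there is no triple with $a_{ij}a_{jk}=1$ (that would require a path of length two through $j$, impossible when $j$ is a sink or a source). Hence only relations (a) and (c) survive, and they say precisely that $(\ad C_{P_{i}})^{2}(C_{P_{j}})=0$ for neighbouring $i,j$ and $[C_{P_{i}},C_{P_{j}}]=0$ for non-neighbouring $i,j$: these are the Serre relations of the nilpotent part $\mathfrak{n}^{+}$ of the simply-laced semisimple Lie algebra $\mathfrak{g}$ attached to the underlying graph. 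So $e_{i}\mapsto C_{P_{i}}$ defines a surjective Lie algebra homomorphism $\mathfrak{n}^{+}\twoheadrightarrow\tilde{\mathfrak{n}}^{+}$, and since $\dim\mathfrak{n}^{+}=|\Phi^{+}|=\dim\tilde{\mathfrak{n}}^{+}$ it is an isomorphism.

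For part (2), take $Q$ to be the linear orientation $1\to 2\to\cdots\to n$ of $\mathbb{A}_{n}$, so $a_{ij}=1$ for $i<j$ and $a_{ij}=-1$ for $i>j$. Thus $|a_{ij}|=1$ for all $i\neq j$, no $a_{ij}$ vanishes, and $a_{ij}a_{jk}=1$ precisely for $i<j<k$ or $i>j>k$. Theorem~B therefore gives $[C_{P_{i}},[C_{P_{i}},C_{P_{j}}]]=[C_{P_{j}},[C_{P_{j}},C_{P_{i}}]]=0$ for all $i\neq j$, and $[C_{P_{i}},[C_{P_{j}},C_{P_{k}}]]=[C_{P_{k}},[C_{P_{i}},C_{P_{j}}]]=0$ for $i<j<k$. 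I would then check that these relations already force every iterated bracket of length $\geq 3$ in the $C_{P_{i}}$ to vanish: for a bracket in three distinct indices one reduces, via the Jacobi identity applied to the triple $i<j<k$, the third basic bracket $[C_{P_{j}},[C_{P_{i}},C_{P_{k}}]]$ to a combination of the two killed by (b); for a bracket with a repeated index the two families of (a) suffice; and since the degree-two part of the free Lie algebra on the $C_{P_{i}}$ is spanned by brackets of two generators, this kills the whole degree-three component, whence (the free Lie algebra being generated in degree one) every higher component vanishes too. Consequently $\tilde{\mathfrak{n}}^{+}$ is $2$-step nilpotent, $e_{i}\mapsto C_{P_{i}}$ yields a surjection from the free $2$-step nilpotent Lie algebra $\mathfrak{f}$ on $n$ generators, and since $\dim\mathfrak{f}=n+\binom{n}{2}=\binom{n+1}{2}$ equals the number of positive roots of $\mathfrak{sl}_{n+1}$, hence the number of indecomposable $\mathbb{F}_{q}\mathbb{A}_{n}$-modules, hence $\dim\tilde{\mathfrak{n}}^{+}$, this surjection is an isomorphism.

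The bulk of the content is upstream --- Theorem~B's presentation and the identification of the basis of $\tilde{\mathfrak{n}}^{+}$ with the set of indecomposable $A$-modules (equivalently, with $\Phi^{+}$) --- and the argument above just reads off the surviving relations and matches them to a known presentation. The step most likely to need care is the Jacobi-identity bookkeeping in part (2), verifying that relations (a) together with (b) for the linear orientation are already enough to force $2$-step nilpotency; alternatively, once the paper's completeness-of-relations result for type $\mathbb{A}$ is available, part (2) follows by comparing the two presentations directly, but the self-contained route via the dimension count seems cleanest.
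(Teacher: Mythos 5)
Your proof is correct and follows essentially the same route as the paper: in both cases the paper deduces from Theorem~B a surjection onto $\tilde{\mathfrak{n}}^{+}$ (from $\mathfrak{n}^{+}$ via the Serre presentation in the bipartite case, and from the free $2$-step nilpotent Lie algebra $\mathfrak{N}(n)$ for $\mathbb{A}_{n}^{\rightarrow}$) and concludes by comparing dimensions, i.e.\ by matching the basis $\{C_{M}\}$ with $\Phi^{+}$. Your only addition is to spell out the Jacobi-identity bookkeeping showing that relations (a) and (b) force $[[C_{P_i},C_{P_j}],C_{P_k}]=0$ for all $i,j,k$, a step the paper asserts without detail.
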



Generally, different orientations of a given graph always result in different iso-classes of nilpotent Lie algebras. They all admit integral forms and share the same dimension. Since $\tilde{\mathfrak{n}}^{+}$ is a nilpotent Lie algebra of maximal rank, we describe its root system in
a unified manner following \cite{Sa}. For $C_{M}$, let $\alpha_{M}$ be the image of $P_{M}\oplus \Omega_{M}$ in the Grothendieck
group of the category of projective $A$-modules.


One can associate a new quiver $Q^{p}$ to the quiver $Q$, which has the same vertices as $Q$, while an arrow $i\rightarrow j$ in $Q^{p}$ indicates a path from $i$ to $j$ in $Q$. Then $Q^{p}$ is an acyclic quiver and let $C_{E}=(c_{ij})$ be the generalized Cartan matrix associated to $Q^{p}$. Let $\mathfrak{LC}(Q^{p})$ be the classical Hall composition Lie algebra generated by all iso-classes $u_{S^{p}_{i}}$ associated to simple $kQ^{p}$-modules $S^{p}_{i}$. Assume that $\tilde{\mathfrak{n}}^{+}$ is of nilpotency $l$ and let $\mathfrak{m}'=\mathfrak{LC}(Q^{p})/C^{l+1}\mathfrak{LC}(Q^{p})$, where $C^{t}\mathfrak{N}:=\mathfrak{N}^{t}$ also denotes the $t$-th term of the descending central series of $\mathfrak{N}$.

\begin{theorem*}[D]
(1) There exists an epimorphism of Lie algebras $\varphi: \mathfrak{m}'\rightarrow\tilde{\mathfrak{n}}^{+}$, $\overline{u_{S^{p}_{i}}}\mapsto C_{P_{i}}$ such that $\Ker \varphi$ is a homogeneous ideal of $\mathfrak{m}'$, $C^{l}\mathfrak{m}'\nsubseteq \Ker \varphi$ and
$(\ad \overline{u_{S^{p}_{i}}})^{-c_{ij}}\overline{u_{S^{p}_{j}}}\not\in \Ker \varphi$.

(2) The nilpotent Lie algebra  $\tilde{\mathfrak{n}}^{+}$ has a root space decomposition relative to its maximal torus $T$:
$$\tilde{\mathfrak{n}}^{+}=\oplus_{\alpha\in R(T)}\tilde{\mathfrak{n}}^{+}_{\alpha},$$
where $R(T)=\{\alpha_{M}|M\in \mathrm{Ind\ } kQ\}$ and $\dim \tilde{\mathfrak{n}}^{+}_{\alpha}=1$, $\forall\alpha \in R(T)$.
\end{theorem*}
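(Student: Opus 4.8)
The plan is to deduce both statements from the structure theory of nilpotent Lie algebras of maximal rank developed in \cite{Sa}, together with Theorems B and C(1). The first step is to identify $\tilde{\mathfrak{n}}^{+}$ as a \emph{maximal-rank} nilpotent Lie algebra. By Theorem B, $\tilde{\mathfrak{n}}^{+}$ is generated by the $n$ elements $C_{P_i}$, and the relations (a)--(c) are all homogeneous with respect to the $\mathbb{Z}^n$-grading in which $C_{P_i}$ has degree $\varepsilon_i$ (the $i$-th standard basis vector); indeed, setting $\deg C_M = \alpha_M \in \mathbb{Z}^n = K_0(\mathscr{P})$ one checks from the description of $C_M = (P_M\oplus\Omega_M, \cdots)$ that the degenerate Hall bracket is graded, since a nonzero structure constant $\psi^{\lambda}_{\mu\nu}(1)$ forces $\alpha_\lambda = \alpha_\mu + \alpha_\nu$. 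Hence $\tilde{\mathfrak{n}}^{+}$ carries a faithful torus action by an $n$-dimensional torus $T$, and since it is generated by $n$ elements it has maximal rank in the sense of \cite{Sa}. The relevant result from \cite{Sa} is that such a Lie algebra is a quotient of the free nilpotent-of-class-$l$ Lie algebra on generators $x_1,\dots,x_n$ subject only to the "Serre-type" relations $(\ad x_i)^{1-c_{ij}}x_j = 0$ read off from a generalized Cartan matrix $(c_{ij})$; and that the root system of $T$ on such a quotient is a subset of the positive roots of the Kac--Moody algebra attached to $(c_{ij})$.

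The second step is to pin down which Cartan matrix $(c_{ij})$ governs $\tilde{\mathfrak{n}}^{+}$. Reading relation (c) of Theorem B, $[C_{P_i},C_{P_j}]=0$ exactly when $a_{ij}=0$, i.e. when there is no path between $i$ and $j$ in $Q$; and relation (a) gives $(\ad C_{P_i})^2 C_{P_j}=0$ whenever $|a_{ij}|=1$, i.e. whenever there \emph{is} a path between $i$ and $j$. This is precisely the adjacency data of the quiver $Q^p$, whose generalized Cartan matrix $C_E=(c_{ij})$ therefore satisfies $c_{ij}=0 \iff a_{ij}=0$ and $c_{ij}=-1$ otherwise (for $i\ne j$). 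Thus the free object mapping onto $\tilde{\mathfrak{n}}^{+}$ in the sense of \cite{Sa} is exactly $\mathfrak{m}' = \mathfrak{LC}(Q^p)/C^{l+1}\mathfrak{LC}(Q^p)$, because $\mathfrak{LC}(Q^p)$ is the classical Hall (composition) Lie algebra of the hereditary path algebra $kQ^p$, which is the nilpotent part $\mathfrak{n}^+(Q^p)$ of the Kac--Moody algebra of $(c_{ij})$ by Ringel's theorem; quotienting by $C^{l+1}$ matches the nilpotency class $l$ of $\tilde{\mathfrak{n}}^{+}$. This yields the epimorphism $\varphi\colon \mathfrak{m}'\to\tilde{\mathfrak{n}}^{+}$, $\overline{u_{S^p_i}}\mapsto C_{P_i}$, and $\Ker\varphi$ is homogeneous because both sides are $\mathbb{Z}^n$-graded and $\varphi$ respects the grading. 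The assertions $C^l\mathfrak{m}'\not\subseteq\Ker\varphi$ and $(\ad\overline{u_{S^p_i}})^{-c_{ij}}\overline{u_{S^p_j}}\notin\Ker\varphi$ are the statement that $\tilde{\mathfrak{n}}^{+}$ genuinely has nilpotency class $l$ and that the degree-$(\varepsilon_i+\varepsilon_j)$ (more precisely, the mixed) root spaces survive — these are established by exhibiting the relevant indecomposable complexes $C_M$ as nonzero iterated brackets, using the bracket computations that underlie Theorem B.

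For part (2), I would invoke the general principle (again from \cite{Sa}, or from the theory of root gradings) that a maximal-rank nilpotent Lie algebra decomposes as $\tilde{\mathfrak{n}}^{+}=\bigoplus_{\alpha\in R(T)}\tilde{\mathfrak{n}}^{+}_\alpha$ into weight spaces for its maximal torus $T$, with $R(T)$ the set of weights occurring. Since $\deg C_M = \alpha_M$ and the $C_M$ form a basis indexed by $\mathrm{Ind}\,kQ$, the occurring weights are exactly $\{\alpha_M \mid M\in\mathrm{Ind}\,kQ\}$; and because the basis element $C_M$ is the \emph{unique} (up to scalar) basis vector of its degree — which follows from the Kac-theorem bijection mentioned in \cite{RZ} between indecomposable non-acyclic objects of $C^1(\mathscr{P})$ and positive roots, combined with injectivity of $M\mapsto\alpha_M$ on indecomposables once the path matrix data is taken into account — every root space is one-dimensional, $\dim\tilde{\mathfrak{n}}^{+}_\alpha=1$. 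The main obstacle I anticipate is the last point: verifying that $M\mapsto\alpha_M$ is injective on indecomposable $kQ$-modules, equivalently that no two distinct indecomposable complexes $C_M$, $C_N$ share the same class $\alpha_M=\alpha_N$ in $K_0(\mathscr{P})$. For Dynkin $Q$ the dimension vector $\underline{\dim}\,M$ already determines $M$, but $\alpha_M$ records $[\![P_M]\!]-[\![\Omega_M]\!]$ (with an appropriate sign convention for the two copies of $\mathscr{P}$ inside $C^1(\mathscr{P})$), so one must check this lift remains injective — this is where the explicit role of the path matrix $E$ and the minimal projective resolutions enters, and it is the step requiring the most care rather than a routine citation.
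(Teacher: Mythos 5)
Your overall route is the same as the paper's: identify the generalized Cartan matrix $C_{E}$ of $Q^{p}$ from the relations of Theorem B, use the Ringel--Green theorem to identify $\mathfrak{LC}(Q^{p})$ with $L_{+}(C_{E})$, and then invoke Santharoubane's theory of nilpotent Lie algebras of maximal rank to realize $\tilde{\mathfrak{n}}^{+}$ as $\mathfrak{m}'/\mathfrak{a}$ with $\mathfrak{a}$ a homogeneous ideal avoiding $C^{l}\mathfrak{m}'$ and the elements $(\ad \overline{u_{S^{p}_{i}}})^{-c_{ij}}\overline{u_{S^{p}_{j}}}$. That part of your argument matches the paper in substance.

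However, there is one genuine gap, and it is exactly the point you flag at the end but do not resolve: the injectivity of $M\mapsto\alpha_{M}$ on indecomposable $kQ$-modules, which is equivalent to the assertion $\dim\tilde{\mathfrak{n}}^{+}_{\alpha}=1$ for all $\alpha\in R(T)$ and is also what makes $R(T)=\{\alpha_{M}\mid M\in\mathrm{Ind}\,kQ\}$ a faithful index set. Note first that your description of $\alpha_{M}$ as $[P_{M}]-[\Omega_{M}]$ (up to a sign convention) is not the paper's convention: $\alpha_{M}$ is the class of $P_{M}\oplus\Omega_{M}$ in the split Grothendieck group of $\mathscr{P}$, i.e. $\alpha_{M}=\Dim P_{M}+\Dim\Omega_{M}=\Dim M+2\,\Dim\Omega_{M}$. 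With the difference convention injectivity would just be Gabriel's theorem, but then the grading would collapse to the usual root grading by $\Dim M$, which is not what Theorem D asserts (compare the root system $\{\alpha_{i}+\alpha_{j}\}$ for $\mathbb{A}_{n}^{\rightarrow}$). With the correct (sum) convention, injectivity is not automatic and the appeal to the Kac-type bijection of \cite{RZ} does not supply it. The paper closes this gap with a parity lemma: for non-isomorphic indecomposables $M\not\cong N$, the vector $\Dim M+\Dim N$ has at least one odd coordinate (proved by moving the root of $M$ to a simple root by a Weyl group element and inspecting dimension vectors of indecomposables). Granting this, $\alpha_{M}=\alpha_{N}$ would give $2\alpha_{M}=\Dim M+\Dim N+2(\Dim\Omega_{M}+\Dim\Omega_{N})$, forcing $2\mid\Dim M+\Dim N$, a contradiction. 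Without this lemma (or an equivalent argument), the one-dimensionality of the root spaces in part (2) — and hence the precise form of $R(T)$ — remains unproven in your proposal.
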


Moreover, the defining relations of $\tilde{\mathfrak{n}}^{+}$ are determined for type $\mathbb{A}_{n}$.
\begin{theorem*}[E]
Let $Q$ be a quiver of type $\mathbb{A}_{n}$ and $E=(a_{ij})$ its path matrix, then $\tilde{\mathfrak{n}}^{+}$ is isomorphic to the Lie algebra
defined by generators $\{e_{i}~|~1\leq i\leq n\}$ and relations:

(a) If $|a_{ij}|=1$, $(\ad e_{i})^2(e_{j})=(\ad e_{j})^2(e_{i})=0$;

(b) If $a_{ij}a_{jk}=1$, $[e_{i},[e_{j},e_{k}]]=[e_{k},[e_{i},e_{j}]]=0$;

(c) If $a_{ij}=0$, $[e_{i},e_{j}]=0$;

\end{theorem*}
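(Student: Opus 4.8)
The plan is to realize $\tilde{\mathfrak{n}}^{+}$ as the Lie algebra $\mathfrak{L}$ presented by generators $e_{1},\dots,e_{n}$ and the relations (a), (b), (c). Grading $e_{i}$ by $\alpha_{P_{i}}$ makes $\mathfrak{L}$ graded by the Grothendieck group $K_{0}(\mathscr{P})\cong\mathbb{Z}^{n}$, since each of (a)--(c) is homogeneous for this grading. By Theorem B the elements $C_{P_{i}}$ satisfy (a)--(c) and generate $\tilde{\mathfrak{n}}^{+}$, so there is a surjective graded Lie algebra homomorphism $\pi\colon\mathfrak{L}\to\tilde{\mathfrak{n}}^{+}$ with $\pi(e_{i})=C_{P_{i}}$, and the task is to prove $\pi$ injective. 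Since the $C_{M}$ form a basis of $\tilde{\mathfrak{n}}^{+}$ and, for $Q$ of type $\mathbb{A}_{n}$, there are exactly $\binom{n+1}{2}$ indecomposable $kQ$-modules $M_{[i,j]}$ indexed by intervals $[i,j]$ with $1\le i\le j\le n$, we have $\dim\tilde{\mathfrak{n}}^{+}=\binom{n+1}{2}$; hence it suffices to prove $\dim\mathfrak{L}\le\binom{n+1}{2}$.

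The strategy for this bound is to exhibit a spanning set of $\mathfrak{L}$ of size $\binom{n+1}{2}$. For each interval $[i,j]$ I would fix a Lie monomial $\xi_{[i,j]}\in\mathfrak{L}$ (with $\xi_{[i,i]}=e_{i}$), formed by bracketing $e_{i},\dots,e_{j}$ in the order dictated by the orientation of $Q$ on the subquiver supported on $[i,j]$; using Theorem B (together with the relation between Hall numbers for indecomposables and for $A$-modules) one checks $\pi(\xi_{[i,j]})=\pm C_{M_{[i,j]}}$, so these $\binom{n+1}{2}$ elements are linearly independent in $\mathfrak{L}$. It then remains to show they span. Since $\mathfrak{L}$ is generated by the $e_{k}$, it is spanned by iterated brackets of the $e_{k}$, and I would prove by induction on the bracket length that every such bracket reduces, modulo the relations, to a linear combination of the $\xi_{[i,j]}$: relation (c) together with the Jacobi identity discards or rewrites brackets whose $K_{0}(\mathscr{P})$-degree cannot occur (i.e. is not of the form $\alpha_{M}$); relation (a) removes repeated occurrences of a generator; and relation (b) collapses the remaining ambiguity in the bracketing of a fixed interval to the single representative $\xi_{[i,j]}$. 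It is convenient to run the induction on $n$ through the split inclusion $\mathfrak{L}_{n-1}\hookrightarrow\mathfrak{L}_{n}$ obtained by deleting the last vertex, reducing the inductive step to showing that, modulo $\mathfrak{L}_{n-1}$, every bracket involving $e_{n}$ lies in the span of $\xi_{[1,n]},\dots,\xi_{[n,n]}$.

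The main obstacle I anticipate is precisely this combinatorial reduction: one must treat all $2^{n-1}$ orientations of $A_{n}$ uniformly, pin down which graded components $\mathfrak{L}_{\gamma}$ can be nonzero (they should be exactly the $\alpha_{M}$'s, with all of them one-dimensional), and check that the rewriting via (a), (b), (c) is confluent, so that each interval contributes a single basis vector rather than several. A useful device for keeping the bookkeeping in check is the observation that relations (a) and (c) are exactly the Serre relations of the path quiver $Q^{p}$ of Theorem D --- since $|a_{ij}|=1$ corresponds to $-c_{ij}=1$ and $a_{ij}=0$ to $-c_{ij}=0$ --- so $\mathfrak{L}$ is a graded quotient of $\mathfrak{LC}(Q^{p})$ and the reduction may be carried out inside $\mathfrak{LC}(Q^{p})$, with relation (b) furnishing exactly the further cut-down to finite dimension; the two extreme cases, where $Q$ is bipartite (then (b) is vacuous and $\mathfrak{L}\cong\mathfrak{n}^{+}(\mathfrak{g})$ already by the Serre relations) and where $Q$ is linearly oriented (then (a) and (b) force all triple brackets to vanish, so $\mathfrak{L}$ is free $2$-step nilpotent), serve as consistency checks and as the base cases of the argument. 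Once the $\binom{n+1}{2}$-element spanning set is established, $\dim\mathfrak{L}\le\binom{n+1}{2}=\dim\tilde{\mathfrak{n}}^{+}$ forces $\pi$ to be an isomorphism; in particular the $\xi_{[i,j]}$ form a basis, each spanning the root space $\tilde{\mathfrak{n}}^{+}_{\alpha_{M_{[i,j]}}}$, which proves Theorem E.
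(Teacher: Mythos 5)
Your overall architecture is the same as the paper's (the surjection $\psi\colon\tilde{\mathfrak{N}}\twoheadrightarrow\tilde{\mathfrak{n}}^{+}$ from Theorem B plus a spanning set of size $\tfrac{n(n+1)}{2}$ to force equality of dimensions), but your concrete candidate spanning set is wrong, and this is not a cosmetic slip. You index it by support intervals $[i,j]$ and define $\xi_{[i,j]}$ by bracketing the generators $e_{i},\dots,e_{j}$, one per vertex of the interval, claiming $\pi(\xi_{[i,j]})=\pm C_{M_{[i,j]}}$. This confuses the grading of $\tilde{\mathfrak{n}}^{+}$ by dimension vectors of modules with its actual grading by $\alpha_{M}=\Dim P_{M}+\Dim\Omega_{M}$: which generators $C_{P_{k}}$ occur in an expression for $C_{M}$ is governed by the minimal projective resolution of $M$, not by its composition factors. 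Already for $Q\colon 1\to 2\to 3$ one has $C_{M_{[1,2]}}=C_{I_{2}}=[C_{P_{3}},C_{P_{1}}]$ (generators $1$ and $3$, not $1$ and $2$), while your $\xi_{[1,2]}=[e_{1},e_{2}]$ maps to $-C_{S_{1}}$; worse, $\xi_{[1,3]}$ is a triple bracket, which is zero in $\mathfrak{L}$ by relations (a)--(b) (as your own consistency check for the linearly oriented quiver records), yet $C_{M_{[1,3]}}=C_{P_{1}}\neq 0$. So the asserted linear independence fails, these elements cannot span, and the planned rewriting ``every bracket reduces to a combination of the $\xi_{[i,j]}$'' has no chance of succeeding with this target set.

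Beyond that, the combinatorial reduction itself --- the actual content of the theorem --- is only sketched, and it is exactly where the paper invests its work: Lemmas 7.15--7.18 show that any left normed word $[e_{j_{1}},\dots,e_{j_{r}}]$ equals, up to sign, the word with its indices rearranged in increasing order (or is zero), that a repeated or ``interior'' index kills the word, and hence that $\tilde{\mathfrak{N}}$ is spanned by standard left normed words $[e_{i_{1}},\dots,e_{i_{m}}]$ with $i_{1}<\dots<i_{m}$ whose associated quiver $Q_{\omega}$ (arrows recording paths between consecutive $i_{j}$'s) is connected and bipartite; a counting argument (Lemma 7.20) then shows there are exactly $\tfrac{n(n+1)}{2}$ such words for every orientation, with no need to identify their images or prove their independence, since surjectivity of $\psi$ does that automatically. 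To repair your proof you should drop the interval-indexed monomials and the independence claim, and establish a reduction of this kind (or an equivalent confluent rewriting) uniformly in the orientation; your observation that (a) and (c) are the Serre relations of the path quiver $Q^{p}$ is correct and consistent with Theorem D, but by itself it only exhibits $\mathfrak{L}$ as a quotient of $\mathfrak{LC}(Q^{p})$ and does not produce the needed upper bound on $\dim\mathfrak{L}$.
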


The paper is organized as follows. Firstly we give a review of known results on 1-cyclic perfect complexes in Section 2. Especially, we recall the AR structure of the category of 1-cyclic perfect complexes over $A=\mathbb{F}_q Q$. In Section 3, by using induction and covering theory, we prove that
the Hall polynomials exist in $C^1(\mathscr{P})$. Also some specific Hall numbers for 1-cyclic perfect complexes are expressed by Hall numbers for $A$-modules. Then several Lie algebras are associated to the category $C^1(\mathscr{P})$ in Section 4, of which $\tilde{\mathfrak{n}}^+$ is our main concern. We exhibit the generators and verify some relations for $\tilde{\mathfrak{n}}^+$. The proof of generators is given in Section 5. To fulfil the aim, we construct an orthogonal exceptional pair satisfying the minimal Horseshoe lemma for any sincere non-projective indecomposable $A$-module. In our view this is interesting in its own right. In Section 6, it is shown that $\tilde{\mathfrak{n}}^+$ of the quiver $\mathbb{A}_{n}^{\rightarrow}$ gives a new model for free 2-step nilpotent Lie algebras. Section 7 is devoted to the root system of $\tilde{\mathfrak{n}}^+$ for any Dynkin quiver and the defining relations of $\tilde{\mathfrak{n}}^+$ for type $\mathbb{A}_{n}$. We also relate $\tilde{\mathfrak{n}}^+$ to Ringel's classical Hall Lie algebras and complete Lie algebras.

\subsection{Notations}
Now we fix some notations used throughout the paper. Let $k=F_q$ be a field with $q$ elements. Given a finite dimensional $k$-algebra $A$, we denote by $\mod A$ the category of finite dimensional (left) $A$-modules. Denote by $D^b(A)$ the bounded derived category of $\mod{A}$. For an $A$-module $M$, we denote by $\Dim M$ the dimension vector of $M$. Let $Q=(Q_0,Q_1)$ be a finite acyclic quiver with a vertex set $Q_0=\{1,\cdots,n\}$ and an arrow set $Q_1=\{\rho~|~\t\rho\xrightarrow{\rho}\h\rho\}$, where $\t\rho$ and $\h\rho$ denote the tail and head of the arrow $\rho$, respectively. The underlying graph of $Q$ is denoted by $\Gamma$.  Let $C$ be the Cartan matrix of $\Gamma$ and $\mathfrak{g}$  be  the associated  Kac--Moody Lie algebra. Denote by $\Phi$ the root system of $\mathfrak{g}$ and $\Phi^{+}$ the set of positive roots. Let $\mathfrak{g}=\mathfrak{n}^{-}\oplus \mathfrak{h}\oplus \mathfrak{n}^{+}$ be the triangular decomposition of $\mathfrak{g}$.
For each vertex $i$ of $Q$, we denote by $S_i$ the corresponding simple $kQ$-module. Denote by $P_i$ and $I_i$ the projective cover and injective envelope of $S_i$ respectively. The \emph{Euler form} $\lr{\cdot,\cdot}:
\mathbb{Z}Q_0\times\mathbb{Z}Q_0\rightarrow\mathbb{Z}$  is defined by $$\lr{\x,\y}=\sum\limits_{i\in Q_0}x_iy_i-\sum\limits_{\rho\in Q_1}x_{\t\rho}y_{\h\rho},~\text{for}~ \x=(x_i), \y=(y_i)\in\mathbb{Z}Q_0.$$ Its symmetrization $$(\x,\y):=\lr{\x,\y}+\lr{\y,\x}$$ is called the \emph{symmetric Euler form} of $Q$. For any $kQ$-modules $M$ and $N$, $$\lr{\Dim M,\Dim N}=\dim\Hom_{kQ}(M,N)-\dim\Ext_{kQ}^1(M,N).$$
For a finite set $S$, we denote by $|S|$ its cardinality. For any object $X$ in an additive category, we denote by $\End(X)$ and $\Aut(X)$ the endomorphism ring and automorphism group of $X$ respectively. We also write $a_X$ for $|\Aut(X)|$. For a complex $M^\cdot=(\cdots\rightarrow M^{i}\xrightarrow{d^{i}} M^{i+1}\rightarrow\cdots)$ over an abelian category $\mathcal{A}$, its homology is denoted by $H_\ast(M^\cdot)$.

For any nilpotent Lie algebra $\mathfrak{N}$, let $\mathfrak{N}^{2}=[\mathfrak{N},\mathfrak{N}]$ be its derived algebra. We use both $\mathfrak{N}^{t}$ and $C^{t}\mathfrak{N}$ to denote the $t$-th term of the descending central series of $\mathfrak{N}$.

\section{Category $C^1(\mathscr{P})$ of $1$-cyclic perfect complexes}
In this section we recall from \cite{RZ,ChenD} the notion of $1$-cyclic complexes. If $A$ is hereditary, we also recall the AR structure of the category of $1$-cyclic perfect complexes.

Given an additive category $\mathcal{A}$,  let $C^b(\mathcal{A})$ be the category of bounded complexes over $\mathcal{A}$ and $K^b(\mathcal{A})$ the corresponding homotopy category. Denote by $[1]$ the shift functor of $C^b(\mathcal{A})$. A \emph{1-cyclic complex} over $\mathcal{A}$ is a pair $M^\cdot=(M,d)$ where $M\in \mathcal{A}$ and $d$ is an endomorphism of $M$ satisfying $d^2=0$. A morphism  $f:(M,d)\rightarrow (M',d')$ between two $1$-cyclic complexes is given by an $\mathcal{A}$-morphism $f:M\rightarrow M'$ such that $d'f=fd$. Two morphisms $f, g:(M,d)\rightarrow (M',d')$ are called \emph{homotopic} if there is an $\mathcal{A}$-morphism $s:M\rightarrow M'$ such that $f-g=sd+d's$. We denote by $C^1(\mathcal{A})$ the category of $1$-cyclic complexes over $\mathcal{A}$, and  ${K}^1(\mathcal{A})$ the corresponding homotopy category.
It is obvious that the shift functor $[1]$ is an automorphism of $\mathcal{C}^1(\mathcal{A})$, which also induces an automorphism of  ${K}^1(\mathcal{A})$.

Consider a finite dimensional algebra $A$ and the category $\mathscr{P}=\mathscr{P}_A$ of projective $A$-modules. For simplicity, we write $C^0(\mathscr{P})$ and $K^0(\mathscr{P})$ for $C^b(\mathscr{P}_A)$ and $K^b(\mathscr{P}_A)$ respectively. Note that $C^0(\mathscr{P})$ (resp. $C^1(\mathscr{P})$) is Frobenius, whose stable category is $K^0(\mathscr{P})$ (resp. $K^1(\mathscr{P})$).
There exists a functor $$\mathcal {F}:C^0(\mathscr{P})\rightarrow C^1(\mathscr{P})$$ taking $X^\cdot=(X^i,d^i)_{i\in\mathbb{Z}}$ to $(X,d)$, where $X=\bigoplus\limits_{i\in\mathbb{Z}}X^i, d=\diag\{d^i~|~i\in\mathbb{Z}\}$. It is easy to know that $\mathcal {F}$ is an exact functor. The following lemma is proved by Gorsky \cite[Lem. 9.4]{Go} for $C^2(\mathscr{P})$. The same proof holds for $m\geq 1$ and we only need it for $C^1(\mathscr{P})$.
\bl\rm{(\cite[Prop. 2.2]{Zhao})}\label{Ext} If $X^\cdot, Y^\cdot\in C^1(\mathscr{P})$, then for any $i\geq 1$ $$\Ext_{C^1(\mathscr{P})}^i(X^\cdot,Y^\cdot)\cong
\Hom_{K^1(\mathscr{P})}(X^\cdot, Y^\cdot).$$
\el
By Lemma \ref{Ext}, we know that $C^1(\mathscr{P})$ has infinite global dimension.
From now on, we assume that $A$ is hereditary. Thus $C^1(\mathscr{P})$
is closed under subobjects and extensions. However, it is not abelian in general. Given a morphism
$f: \Omega\rightarrow P$ in $\mathscr{P}$, we define objects
$$\tilde{C}_f=\cdots\rightarrow 0\rightarrow\Omega\xrightarrow{f} P\rightarrow 0\rightarrow \cdots\in C^0(\mathscr{P}),\quad\quad
C_f=\scriptsize{\left(P\oplus \Omega,\begin{pmatrix}0&f\\0&0\end{pmatrix}\right)}\in C^1(\mathscr{P}),$$
where $P$ and $\Omega$ in $\tilde{C}_f$ are of degree $0$ and $-1$ respectively.
So for each projective $A$-module $P$, we have objects $\tilde{K}_P:=\tilde{C}_{Id_P}\in C^0(\mathscr{P})$ and $K_P:=C_{Id_P}\in C^1(\mathscr{P})$. For each $A$-module $M$, we fix a minimal projective resolution of $M$
\begin{equation}\label{mini proj res}
0 \longrightarrow \Omega_M \stackrel{\delta_M}{\longrightarrow} P_M \stackrel{\epsilon_M}{\longrightarrow} M \longrightarrow 0.
\end{equation}
We set $\tilde{C}_M:=\tilde{C}_{\delta_M}$ and $C_M:=C_{\delta_M}$. Since the minimal projective resolution is unique up to isomorphism, $\tilde{C}_M$ and $C_M$ are well-defined up to isomorphism. By \cite{Br,RZ}, we know that the functor $\mathcal {F}$ defined above is a Galois $G$-covering in the sense of \cite[Def. 2.8]{Cover}, where $G$ is the infinite cyclic group generated by $[1]$.
Using the covering functor $\mathcal{F}$, we have the following lemma.
\bl\rm{(\cite[Th. 1]{RZ})}\label{indec. obj.s}

(1)  $\{C_M, K_P|\ M\ \text{is indecomposable}, P\ \text{is indecomposable projective}\}$ is a complete set of indecomposable objects in $C^1(\mathscr{P})$. Moreover, $K_P$'s are all indecomposable projective-injective objects in $C^1(\mathscr{P})$.

(2) The homotopy category $K^1(\mathscr{P})$ is equivalent to the orbit category $D^b(A)/[1]$.
\el

Now we recall the AR structure of $C^1(\mathscr{P})$ (cf. \cite{RZ,ChenD}). Let $A=kQ$ be a path algebra of a finite acyclic quiver $Q$. Let $M$ be an indecomposable non-projective $A$-module, then we have an almost split sequence in $\mod A$
\be
\eta_M: 0 \longrightarrow \tau M \stackrel{\varphi}{\longrightarrow} E \stackrel{\psi}{\longrightarrow} M \longrightarrow 0,
\ee where $\tau$ is the AR-translation.  By the Horseshoe Lemma (cf. \cite{Weibel}),  the minimal projective resolutions of $M$ and $\tau M$ as in $(\ref{mini proj res})$ yield the following commutative diagram with exact rows and columns:
\begin{equation} \label{horse lemma diagram}
\xymatrix{
&0\ar[d] & 0\ar[d] & 0\ar[d]\\
0\ar[r] & \ooz_{\tau M}\ar[d]^-{\dz_{\tau M}}\ar[r]^{\iota\quad} & \ooz_{\tau M}\oplus
\ooz_M \ar[d]_{\widetilde\dz}\ar[r]^{\quad p} & \ooz_M\ar[d]^-{\dz_M}\ar[r] & 0\\
0\ar[r] & P_{\tau M}\ar[d]^-{\vez_{\tau M}}\ar[r]^{\iota\quad} & P_{\tau M}\oplus P_M\ar[d]_{\widetilde\vez}\ar[r]^{\quad p} & P_M\ar[d]^-{\vez_M}\ar[r] & 0\\
0\ar[r] & {\tau M} \ar[d]\ar[r]^{\varphi} & E\ar[d]\ar[r]^{\psi} & M\ar[d]\ar[r] & 0\\
 & 0 & 0&0\\
 }
\end{equation}
where $\iota$ and $p$ denote the canonical inclusion and projection respectively. Thus we have an exact sequence in $C^1(\mathscr{P})$ \be
\widetilde{\eta}_M: 0 \longrightarrow C_{\tau M} \stackrel{\widetilde{\varphi}}{\longrightarrow} \widetilde{E} \stackrel{\widetilde{\psi}}{\longrightarrow} C_M \longrightarrow 0,
\ee where $\widetilde{E}=C_{\widetilde{\dz}}$. By \cite[Prop. 2.6]{ChenD}, it is an almost split sequence in $C^1(\mathscr{P})$. Moreover, if $\tau M$ is not simple, then $\widetilde{E}\cong C_E$. Otherwise, $\widetilde{E}\cong C_E\oplus K_{P_{\tau M}}$.

Let $i$ be a vertex of $Q$. If $i$ is a source,  we have an almost split sequence of the form
 (cf. \cite[5.2. (a)]{RZ}) \be
0 \longrightarrow C_{I_i} \stackrel{}{\longrightarrow} C_{\rad P_i}\oplus K_{P_i} \stackrel{}{\longrightarrow} C_{P_i} \longrightarrow 0.
\ee

Otherwise, we have an almost split sequence of the form (cf. \cite[5.2. (b)]{RZ})\be
0 \longrightarrow C_{I_i} \stackrel{}{\longrightarrow} C_{\rad P_i}\oplus C_{{I_i}/{S_i}} \stackrel{}{\longrightarrow} C_{P_i} \longrightarrow 0.
\ee Note that each almost split sequence with $K_{P_i}$ a direct summand of the middle term starts with $C_{S_i}$ and ends in $C_{\tau^{-1}S_i}$ .

\begin{example}\label{lizi}
Let $Q$ be the quiver of type $\mathbb{A}_3$
$$1\longrightarrow 2\longrightarrow 3.$$
The AR-quiver of $C^1(\mathscr{P})$ is as follows:
\be\label{AR-quiver}\xymatrix@!=0.8pc{&&&&\ar@{.}[rd]\\
&&& C_{P_1}\ar@{.}[ru]\ar[rd]\ar@{--}[rr] & & C_{P_3}\ar[rd]\\
&& C_{P_2}\ar[ru]\ar[rd]\ar@{--}[rr]&&C_{I_2}\ar[ru]\ar[rd]\ar@{--}[rr]& & C_{P_2}\ar[rd]\\
& C_{P_3}\ar[ru]\ar[rd]\ar@{--}[rr]&&C_{S_2}\ar[ru]\ar[rd]\ar@{--}[rr]&&C_{S_1}\ar[ru]\ar[rd]\ar@{--}[rr]& & C_{P_1}\ar@{.}[rd]\\
\ar@{.}[ru]& & K_{P_3}\ar[ru]& &K_{P_2}\ar[ru]& & K_{P_1}\ar[ru]& &}\ee where the horizontal dashed lines denote different $\tau$-orbits, and the dotted line on the left and that on the right are identified oppositely. Thus the AR-quiver of $C^1(\mathscr{P})$ is like a M\"{o}bius strip. Each $K_{P_i}$ is projective-injective in $C^1(\mathscr{P})$, and occupies its own orbit alone.
\end{example}

\begin{proposition}\label{formula}
Given $A$-modules $M$ and $N$, take their minimal projective resolutions as in $(\ref{mini proj res})$. Let $P$ and $\Omega$ be two projective $A$-modules.
Then

$(1)$~$$\Hom_{C^1(\mathscr{P})}(C_{M},C_{N})\cong\Hom_A(M,N)\oplus\Hom_A(P_{M},\Omega_N)\oplus\Hom_A(\Omega_M,P_N).$$

$(2)$~\begin{equation*}\begin{split}&\Hom_{C^1(\mathscr{P})}(K_{P},C_{M})\cong\Hom_A(P,\Omega_M \oplus P_M),\\
&\Hom_{C^1(\mathscr{P})}(C_{M},K_{P})\cong\Hom_A(\Omega_M\oplus P_M,P),\\
&\Hom_{C^1(\mathscr{P})}(K_{P},K_{\Omega})\cong\Hom_A(P,\Omega)\oplus\Hom_A(P,\Omega).\end{split}\end{equation*}

$(3)$~$$\Ext_{C^1(\mathscr{P})}^1(C_M,C_N)\cong \Hom_A(M,N)\oplus\Ext_A^1(M,N).$$

$(4)$~For any $i\in\mathbb{Z}$,
\begin{equation}\label{ExtZ}\begin{split}\Ext_{C^0(\mathscr{P})}^1(\tilde{C}_M,\tilde{C}_N[i])\cong\begin{cases} \Hom_A(M,N),\;\;&\text{if~$i=-1$};\\
                    \Ext_A^1(M,N),\;\;&\text{if~$i=0$};\\
                     0, &\text{otherwise.}\end{cases}\qquad\end{split}\end{equation}

$(5)$~$a_{K_P}=a_P\cdot|\End_A(P)|$ and $a_{C_M}=a_M\cdot|\Hom_A(P_M,\Omega_M)|\cdot|\Hom_A(\Omega_M,P_M)|.$
\end{proposition}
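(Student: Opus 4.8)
The plan is to compute each $\Hom$-space and $\Ext$-space directly from the definition of morphisms in $C^1(\mathscr{P})$, using the hereditary hypothesis and the structure of the complexes $C_M$, $K_P$ as $1$-cyclic complexes with prescribed differentials.

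\textbf{Setting up the main computation.} For part (1), I would unwind the definition: a morphism $C_M\to C_N$ is an $A$-module map $P_M\oplus\Omega_M\to P_N\oplus\Omega_N$ commuting with the differentials $\left(\begin{smallmatrix}0&\delta_M\\0&0\end{smallmatrix}\right)$ and $\left(\begin{smallmatrix}0&\delta_N\\0&0\end{smallmatrix}\right)$. Writing such a map as a $2\times 2$ matrix $\left(\begin{smallmatrix}a&b\\c&e\end{smallmatrix}\right)$ with $a\colon P_M\to P_N$, $b\colon\Omega_M\to P_N$, $c\colon P_M\to\Omega_N$, $e\colon\Omega_M\to\Omega_N$, the commutation relation $d'f=fd$ forces $c=0$ and $\delta_N e=a\delta_M$. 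So a morphism is a triple $(a,b,e)$ with $\delta_N e=a\delta_M$ and $b$ arbitrary; the $\Hom_A(P_M,\Omega_N)$ summand is exactly the free choice of $b$. The remaining data $(a,e)$ with $a\delta_M=\delta_N e$ is precisely a chain map of the two-term projective resolutions $\tilde C_M\to\tilde C_N$, which lifts $\epsilon_M,\epsilon_N$; by the comparison theorem for projective resolutions this set of chain maps maps onto $\Hom_A(M,N)$ with kernel the null-homotopic ones, i.e.\ those $(a,e)$ with $a=\delta_N s$, $e=s\delta_M$ for some $s\colon P_M\to\Omega_N$. But $s$ is determined by $a$ since $\delta_N$ is injective, and since the resolution is minimal, $\Hom_A(P_M,\Omega_N)\xrightarrow{\delta_N\circ(-)}\Hom_A(P_M,P_N)$ is injective — here is where minimality enters. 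Hence the set of chain maps $(a,e)$ is an extension of $\Hom_A(M,N)$ by $\Hom_A(P_M,\Omega_N)$ (via $s\mapsto(\delta_N s, s\delta_M)$), which splits as vector spaces; adding the free $b$ gives $\Hom_A(M,N)\oplus\Hom_A(P_M,\Omega_N)\oplus\Hom_A(P_M,\Omega_N)$? I must be careful: the two $\Hom_A(P_M,\Omega_N)$'s collapse — the null-homotopies overlap with the free $b$-parameter only trivially, so in fact one recovers $\Hom_A(M,N)\oplus\Hom_A(P_M,\Omega_N)\oplus\Hom_A(\Omega_M,P_N)$ once one tracks that the correct free summand among $(a,b,e)$ is $b$, and that the $(a,e)$-part together with the homotopy ambiguity gives $\Hom_A(M,N)\oplus\Hom_A(\Omega_M,P_N)$. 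Wait — the summand $\Hom_A(\Omega_M,P_N)$: reconsider. Actually $b\colon\Omega_M\to P_N$ is $\Hom_A(\Omega_M,P_N)$, and the homotopy parameter $s\colon P_M\to\Omega_N$ sits in $\Hom_A(P_M,\Omega_N)$; both survive, giving exactly the three stated summands. I would present this cleanly by exhibiting the explicit splitting.

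\textbf{The remaining parts.} Part (2) is the same kind of matrix bookkeeping but easier, since $K_P=(P\oplus P,\left(\begin{smallmatrix}0&\mathrm{Id}\\0&0\end{smallmatrix}\right))$ has an invertible ``off-diagonal'' differential, so null-homotopies act transitively on one coordinate; I expect the $K$'s behave as in the $m=2$ case of \cite{Br,ChenD} and the proof is a direct translation. For part (3), I would use Lemma~\ref{Ext}: $\Ext^1_{C^1(\mathscr{P})}(C_M,C_N)\cong\Hom_{K^1(\mathscr{P})}(C_M,C_N[1])$, then use Lemma~\ref{indec. obj.s}(2) identifying $K^1(\mathscr{P})$ with $D^b(A)/[1]$, under which $C_M$ corresponds to (the image of) $M$ and $[1]$ to the shift in the orbit category; this gives $\bigoplus_{i\in\mathbb{Z}}\Hom_{D^b(A)}(M,N[1+i])$, which since $A$ is hereditary collapses to $\Hom_A(M,N[1])\oplus\Hom_A(M,N)=\Ext^1_A(M,N)\oplus\Hom_A(M,N)$. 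Part (4) is the analogous $\Ext$ computation in $C^0(\mathscr{P})=C^b(\mathscr{P})$, whose stable category is $K^b(\mathscr{P})\cong D^b(A)$: here $\Ext^1_{C^0(\mathscr{P})}(\tilde C_M,\tilde C_N[i])\cong\Hom_{D^b(A)}(M,N[i+1])$, which is $\Hom_A(M,N)$ for $i=-1$, $\Ext^1_A(M,N)$ for $i=0$, and $0$ otherwise by hereditariness. Part (5) follows by counting: from part (1) with $M=N$, $|\End_{C^1(\mathscr{P})}(C_M)|=|\End_A(M)|\cdot|\Hom_A(P_M,\Omega_M)|\cdot|\Hom_A(\Omega_M,P_M)|$, and $\Aut(C_M)$ is the preimage of $\Aut(M)$ under the (surjective, by the argument above) map $\End(C_M)\to\End_A(M)$ with fibres of constant size $|\Hom_A(P_M,\Omega_M)|\cdot|\Hom_A(\Omega_M,P_M)|$, giving $a_{C_M}=a_M\cdot|\Hom_A(P_M,\Omega_M)|\cdot|\Hom_A(\Omega_M,P_M)|$; similarly $a_{K_P}=a_P\cdot|\End_A(P)|$ from part (2).

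\textbf{Main obstacle.} The delicate point is part (1): correctly identifying which of the three parameter spaces $\Hom_A(P_M,\Omega_N)$ (for the homotopy $s$), $\Hom_A(\Omega_M,P_N)$ (for $b$), and $\Hom_A(P_M,P_N)$ (for $a$) survive to the quotient, and verifying that the map $\End(C_M)\to\End_A(M)$ is surjective with the homotopy ambiguity injecting cleanly — this rests squarely on the \emph{minimality} of the chosen projective resolution, which makes $\delta_N\circ(-)\colon\Hom_A(P_M,\Omega_N)\hookrightarrow\Hom_A(P_M,P_N)$ injective (equivalently $\delta_N(\Omega_N)\subseteq\mathrm{rad}\,P_N$). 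Once that injectivity is in hand, the direct-sum decomposition is forced and the rest is routine. I would isolate this minimality input as a small observation before the main computation so that parts (1) and (5) both cite it.
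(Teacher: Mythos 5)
Your treatment of (1)--(4) is essentially the paper's own proof: the same matrix unwinding (forcing $c=0$ and $a\delta_M=\delta_N d$, with the surjection onto $\Hom_A(M,N)$ whose kernel is parametrized by $s\colon P_M\to\Omega_N$, plus the free parameter $b\in\Hom_A(\Omega_M,P_N)$), and the same passage through $\Ext^1\cong\Hom$ in the stable (homotopy) category together with $K^1(\mathscr{P})\simeq D^b(A)/[1]$ and $K^0(\mathscr{P})\simeq D^b(A)$ for (3) and (4). Those parts are fine.

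The genuine problem is your diagnosis of where minimality enters, and it leaves a gap in (5). The injectivity of $\delta_N\circ(-)\colon\Hom_A(P_M,\Omega_N)\to\Hom_A(P_M,P_N)$ has nothing to do with minimality: it holds because $\delta_N$ is a monomorphism and $\Hom$ is left exact (and it is not equivalent to $\delta_N(\Omega_N)\subseteq\rad P_N$); indeed part (1) holds verbatim for any two-term projective resolution. Where minimality is actually indispensable is the assertion in (5), which you state but do not prove, that $\Aut(C_M)$ is the \emph{full} preimage of $\Aut(M)$ under $\End_{C^1(\mathscr{P})}(C_M)\to\End_A(M)$. The nontrivial inclusion says: if a pair $(a,d)$ with $a\delta_M=\delta_M d$ induces an automorphism $h$ of $M$, then $a$ and $d$ are themselves automorphisms. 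This is false for non-minimal resolutions (pad the resolution by an identity on a projective $Q$ and lift $\mathrm{id}_M$ by $(\mathrm{id}_{P_M}\oplus 0,\ \mathrm{id}_{\Omega_M}\oplus 0)$), so it cannot follow from bookkeeping alone, and your stated minimality input (the injectivity above) does not yield it. The paper closes exactly this step by using that $\epsilon_M$ is an essential epimorphism (projective cover): from $\epsilon_M a=h\epsilon_M$ with $h$ surjective one gets $a$ surjective, hence $a\in\Aut(P_M)$, and then $d\in\Aut(\Omega_M)$ by restriction (5-lemma/snake lemma). Once you insert that argument, your counting — surjectivity onto $\End_A(M)$ with fibres cosets of the kernel of size $|\Hom_A(P_M,\Omega_M)|\cdot|\Hom_A(\Omega_M,P_M)|$ — recovers the paper's formula, and the $K_P$ case goes through as you say.
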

\bp
$(1)$ Take an arbitrary $f=\left({\begin{smallmatrix}a&b\\c&d\end{smallmatrix}}\right)\in\Hom_{C^1(\mathscr{P})}(C_{M},C_{N})$. Thus, $${\begin{pmatrix}a&b\\c&d\end{pmatrix}}
{\begin{pmatrix}0&\delta_M\\0&0\end{pmatrix}}={\begin{pmatrix}0&\delta_N\\0&0\end{pmatrix}}
{\begin{pmatrix}a&b\\c&d\end{pmatrix}}.$$
Hence $c=0$ and $a\delta_M=\delta_Nd$. Set $S=\{(a,d)~|~a:P_M\to P_N,~d:\Omega_M\to \Omega_N,~a\delta_M=\delta_Nd\}$.
Define a map $$\varphi: S\longrightarrow \Hom_A(M,N),~(a,d)\mapsto h_1,$$ where $h_1$ is uniquely determined by the commutative diagram of short exact sequences:
$$\xymatrix{0\ar[r]&\Omega_M\ar[r]^-{\delta_M}\ar[d]_-d&P_M\ar[r]^-{\epsilon_M}\ar[d]^-a&M\ar[r]\ar@{.>}[d]^-{h_1}&0\\
0\ar[r]&\Omega_N\ar[r]^-{\delta_N}&P_N\ar[r]^-{\epsilon_N}&N\ar[r]&0.}$$
It is obvious that $\varphi$ is surjective.
\begin{equation*}\begin{split}\Ker \varphi&=\{(a,d)~|~a:P_M\to P_N,~d:\Omega_M\to \Omega_N,~a\delta_M=\delta_Nd,~\epsilon_Na=0\}\\
&=\{(a,d)~|~a:P_M\to P_N,~d:\Omega_M\to \Omega_N,~a\delta_M=\delta_Nd,~a=\delta_Nh_2~\textrm{for~some}~ h_2:P_M\to \Omega_N\}\\
&=\{(a,d)~|~a:P_M\to P_N,~d:\Omega_M\to \Omega_N,~a=\delta_Nh_2,~d=h_2\delta_M~\textrm{for~some}~ h_2:P_M\to \Omega_N\}.
\end{split}\end{equation*}
Hence $\Ker \varphi\cong\Hom_A(P_M,\Omega_N)$. So $\Hom_{C^1(\mathscr{P})}(C_{M},C_{N})\cong\Hom_A(M,N)\oplus\Hom_A(P_{M},\Omega_N)\oplus\Hom_A(\Omega_M,P_N).$

Similarly, one can prove $(2)$.

$(3)$\begin{equation*}
\begin{split}
\Ext_{C^1(\mathscr{P})}^1(C_M,C_N)&\cong\Hom_{K^1(\mathscr{P})}(C_M,C_N[1])\\
&\cong\Hom_{D^b(A)/[1]}(M,N[1])\\
&\cong\bigoplus\limits_{t\in\mathbb{Z}}\Hom_{D^b(A)}(M,N[t+1])\\
&\cong\Hom_A(M,N)\oplus\Ext_A^1(M,N),
\end{split}
\end{equation*}since $A$ is hereditary.

$(4)$\begin{equation*}
\begin{split}
\Ext_{C^0(\mathscr{P})}^1(\tilde{C}_M,\tilde{C}_N[i])&\cong\Hom_{K^0(\mathscr{P})}(\tilde{C}_M,\tilde{C}_N[i+1])\\
&\cong\Hom_{D^b(A)}(M,N[i+1])\\
&\cong\Ext_A^{i+1}(M,N).
\end{split}
\end{equation*}Since $A$ is hereditary and $\Ext_A^{i}(M,N)=0$ for any $i<0$, (\ref{ExtZ}) is proved.

$(5)$ It is easy that any $f\in\End(K_P)$ has the form $f=\left({\begin{smallmatrix}a&b\\0&a\end{smallmatrix}}\right)$ for some $a,~b\in\End(P)$. So $a_{K_P}=a_P\cdot|\End(P)|$. We also have
$$\Aut(C_M)=\{\left({\begin{smallmatrix}a&b\\0&d\end{smallmatrix}}\right)~|~a\delta_M=\delta_Md,~a\in\Aut(P_M),~b\in\Hom_A(\Omega_M,P_M),~d\in\Aut(\Omega_M)\}.$$
So $a_{C_M}=|G|\cdot|\Hom_A(\Omega_M,P_M)|$, where $G=\{(a,d)~|~a\delta_M=\delta_Md,~a\in\Aut(P_M),~d\in\Aut(\Omega_M)\}$.
Obviously $G$ is a subgroup of $\Aut(P_M)\times\Aut(\Omega_M)$.

Consider the map $\psi: G\longrightarrow\Aut(M),~(a,d)\mapsto h$,
where $h$ is uniquely determined by the commutative diagram of short exact sequences:
$$\xymatrix{0\ar[r]&\Omega_M\ar[r]^-{\delta_M}\ar[d]_-d&P_M\ar[r]^-{\epsilon_M}\ar[d]^-a&M\ar[r]\ar@{.>}[d]^-{h}&0\\
0\ar[r]&\Omega_M\ar[r]^-{\delta_M}&P_M\ar[r]^-{\epsilon_M}&M\ar[r]&0.}$$ By 5-Lemma in \cite[Ex. 1.3.3]{Weibel}, $h$ is an automorphism.
It is easy to see that $\psi$ is a homomorphism of groups.

For any $h\in\Aut(M)$, there exists $a\in\End(P_M)$ such that $\epsilon_Ma=h\epsilon_M$. Then $a$ is surjective since $\epsilon_M$ is an essential epimorphism. Thus, $a$ is an automorphism and induces an automorphism $d$ making the above diagram commutative. So $\psi$ is surjective.

Note $\Ker \psi=\{(a,d)~|~a\delta_M=\delta_Md,~a\in\Aut(P_M),~d\in\Aut(\Omega_M),~\epsilon_Ma=\epsilon_M\}.$
For any $(a,d)\in\Ker \psi$, $\epsilon_M(a-1)=0$, so there is a unique $h'\in\Hom_A(P_M,\Omega_M)$ such that $a-1=\delta_Mh'$. It is straightforward $\Ker \psi \cong \Hom_A(P_M,\Omega_M)$.
Hence  $a_{C_M}=a_M\cdot|\Hom_A(P_M,\Omega_M)|\cdot|\Hom_A(\Omega_M,P_M)|$.
\ep

Recall Example $\ref{lizi}$. Spread the AR-quiver $(\ref{AR-quiver})$ of $C^1(\mathscr{P})$ and consider its universal covering:
$$\xymatrix@!=0.8pc{\ar@{.}[d]&{1}~K_{P_1}\ar[rd]&&&&1~K_{P_3}\ar[rd]&&1~K_{P_2}\ar[rd]&\ar@{.}[d]\\
{1}~C_{S_1}\ar@{.}[dd]\ar[ur]\ar[rd]\ar@{--}[rr]&&{0}~C_{P_1}\ar[rd]\ar@{--}[rr]&&1~C_{P_3}\ar[rd]\ar[ru]\ar@{--}[rr]
&&1~C_{S_2}\ar[rd]\ar[ru]\ar@{--}[rr]&&1~C_{S_1}\ar@{.}[dd]\\
&{0}~C_{P_2}\ar[ur]\ar[rd]\ar@{--}[rr]&&1~C_{I_2}\ar[ur]\ar[rd]\ar@{--}[rr]&&1~C_{P_2}\ar[ur]\ar[rd]\ar@{--}[rr]
&&1~C_{I_2}\ar[ur]\ar[rd]&\\
{0}~C_{P_3}\ar@{.}[d]\ar[ru]\ar[rd]\ar@{--}[rr]&&1~C_{S_2}\ar[ur]\ar[rd]\ar@{--}[rr]
&&1~C_{S_1}\ar[ur]\ar[rd]\ar@{--}[rr]&&1~C_{P_1}\ar[ru]\ar@{--}[rr]&&0~C_{P_3}\ar@{.}[d]\\
&1~K_{P_3}\ar[ur]&&1~K_{P_2}\ar[ur]&&1~K_{P_1}\ar[ur]&&&}$$
where $0$ or $1$ indicates the dimension of $\Hom(K_{P_3},X)$ for each indecomposable object $X$ in the quiver.
By this hammock, we know that
\small{\begin{equation*}\begin{split}&\dim\Hom_{C^1(\mathscr{P})}(K_{P_3},C_{S_1})=\dim\Hom_{C^1(\mathscr{P})}(K_{P_3},C_{S_2})=\dim\Hom_{C^1(\mathscr{P})}(K_{P_3},C_{I_2})
=1+1=2,\\
&\dim\Hom_{C^1(\mathscr{P})}(K_{P_3},C_{P_1})=\dim\Hom_{C^1(\mathscr{P})}(K_{P_3},C_{P_2})=\dim\Hom_{C^1(\mathscr{P})}(K_{P_3},C_{P_3})=1,\\
&\dim\Hom_{C^1(\mathscr{P})}(K_{P_3},K_{P_1})=\dim\Hom_{C^1(\mathscr{P})}(K_{P_3},K_{P_2})=\dim\Hom_{C^1(\mathscr{P})}(K_{P_3},K_{P_3})=1+1=2.
\end{split}\end{equation*}}
These dimensions coincide with the ones obtained in Proposition $\ref{formula}$.

\section{Hall polynomials for $1$-cyclic perfect complexes}

In what follows, we assume that $Q$ is a Dynkin quiver, that is, $Q$ is of $\mathbb{ADE}$ type. Let $\Gamma$ be the underlying graph of $Q$. For each prime power $q$ ($\neq 1$ by convention), we  denote by  $A=A(q)$ the path algebra of $Q$ over the finite field $k=\mathbb{F}_q$.

First of all, we recall the notions of Ringel--Hall algebras for exact categories from \cite{Hubery}.
Let $\mathcal{A}$ be a finitary and skeletally small exact $k$-category and $W_{XY}^Z$ the set of all conflations $Y\xrightarrow{\varphi} Z\xrightarrow{\psi} X$. The group $G:=\Aut{X}\times\Aut{Y}$ acts on $W_{XY}^Z$ via
$$\xymatrix{Y\ar[r]^{\varphi}\ar[d]_{g}&Z\ar[r]^{\psi}\ar@{=}[d]&X\ar[d]^{f}\\
Y\ar[r]^{\overline{\varphi}}&Z\ar[r]^{\overline{\psi}}&X.}$$
We denote the orbit space by $V_{XY}^Z$. This action is free, and we define $$F_{X Y}^{Z}:=|V_{X Y}^{Z}|=\frac{|W_{X Y}^{Z}|}{a_Xa_Y}.$$

\begin{definition}\label{def of Hall}
Let $\A$ be a finitary and skeletally small $k$-exact category.
The \emph{Ringel--Hall algebra} $\H(\A)$ of $\A$ is the vector space over $\mathbb{C}$ with basis the iso-classes $[X]$ of objects in $\A$, and with the multiplication given by
\[[X]\cdot[Y] = \sum\limits_{[Z]} F_{X,Y}^{Z} [Z].\]
\end{definition}
Note that the sum is finite and $[0]$ is the unit for the multiplication.
\begin{theorem}{\rm(\cite[Th. 3]{Hubery})}
The Ringel--Hall algebra $\H(\A)$ of a finitary and skeletally small $k$-exact category $\A$ is an associative and unital algebra.
\end{theorem}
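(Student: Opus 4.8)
The statement to be proved is that the Ringel--Hall algebra $\H(\A)$ of a finitary, skeletally small exact $k$-category is associative and unital. The plan is to verify these two assertions directly from Definition \ref{def of Hall}, exactly as in the module case but phrased in terms of conflations and the orbit counts $F_{XY}^Z$. The unit claim is immediate: since $0\xrightarrow{} Z\xrightarrow{} X$ is, up to the $\Aut X\times\Aut 0$-action, the unique conflation with kernel term $0$, we get $F_{X,0}^Z=\delta_{[X],[Z]}$ and likewise $F_{0,Y}^Z=\delta_{[Y],[Z]}$; hence $[0]\cdot[X]=[X]=[X]\cdot[0]$ for every object $X$. So the real content is associativity.

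For associativity, the plan is to show that for fixed iso-classes $[X],[Y],[W],[Z]$ one has
\[
\sum_{[T]} F_{X,Y}^{T}\,F_{T,W}^{Z}\;=\;\sum_{[S]} F_{Y,W}^{S}\,F_{X,S}^{Z},
\]
since both sides are the coefficient of $[Z]$ in $([X]\cdot[Y])\cdot[W]$ and in $[X]\cdot([Y]\cdot[W])$ respectively. First I would reinterpret each side as counting a set of ``filtration-type'' configurations inside $Z$: the left-hand side counts (orbits of) pairs of composable conflations realizing $Z$ as an extension of $W$ by some $T$, together with a realization of $T$ as an extension of $Y$ by $X$; the right-hand side counts the analogous data with the roles grouped as $(Y,W)$ first. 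The standard device is to pass from orbit counts back to the counts of actual conflation diagrams: since the relevant automorphism groups act freely (as recalled just before Definition \ref{def of Hall}), each $F$ is the cardinality of a set of diagrams divided by the appropriate $a_{(-)}$, and one sets up an explicit bijection between the two sets of ``three-term'' diagrams, one organizing the object $Z$ as $X\subseteq (\text{something})\subseteq Z$ and the other choosing an intermediate subobject corresponding to $S$. In an exact category one does not literally have subobjects, so the bijection must be made at the level of composable pairs of inflations/deflations; the key categorical inputs are the axioms of an exact category — composability of deflations, the fact that a pullback/pushout of a conflation along a deflation/inflation is again a conflation, and the $3\times 3$ (nine) lemma — which together let one complete a pair of conflations to the required $3\times 3$ diagram in a way that is unique up to isomorphism.

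The main obstacle, and the step that deserves the most care, is precisely this combinatorial-categorical bijection: producing, from a conflation $Y\to S\to Z'$-style datum, the ``transposed'' decomposition, and checking it is well defined on orbits and inverse to the obvious map in the other direction. In the classical module setting this is the familiar ``associativity of Hall multiplication via the $3\times3$ lemma'' argument; here one must be disciplined about only using the exact-category axioms (no elements, no honest subobject lattice), so the cleanest route is to invoke Hubery's framework directly — indeed the statement is quoted as \cite[Th.~3]{Hubery}, so the proof can legitimately be reduced to citing that reference after recording that the $G$-action used in our $F_{XY}^Z$ matches Hubery's normalization. If a self-contained argument is wanted instead, I would: (i) fix notation for the two iterated-multiplication coefficients; (ii) lift to diagram counts using freeness of the automorphism actions; (iii) build the $3\times3$ commutative diagram from a composable pair of conflations using the nine lemma, extracting the transposed conflation; (iv) verify this assignment is a bijection compatible with the automorphism-group actions, hence descends to the needed equality of orbit counts; and (v) conclude. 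Finiteness of all sums (needed for the products to be defined in the first place) follows from $\A$ being finitary, so no additional convergence issue arises.
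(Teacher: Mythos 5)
The paper gives no proof of this statement at all: it is quoted directly from \cite[Th.~3]{Hubery}, so your remark that one may legitimately reduce to citing that reference (after matching the normalization of the $\Aut X\times\Aut Y$-action) is exactly what the paper does. Your sketched direct argument — the unit computation $F_{X,0}^Z=\delta_{[X],[Z]}=F_{0,X}^Z$ and associativity via freeness of the automorphism actions, lifting orbit counts to counts of composable conflation diagrams, and rebracketing them with the exact-category axioms and the $3\times 3$ lemma — is the standard proof carried out in the cited reference, so the proposal is correct and takes essentially the same route.
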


By Riedtmann--Peng formula \cite{Riedt,Peng}, for any objects $X,Y,Z$ in $\A$, we have
$$F_{X Y}^{Z}=\frac{|\Ext_{\A}^1(X,Y)_{Z}|}{|\Hom_{\A}(X,Y)|}
\frac{a_{Z}}{a_{X}a_{Y}},$$ where $\Ext_{\A}^1(X,Y)_{Z}$ denotes the subset of $\Ext_{\A}^1(X,Y)$ consisting of equivalence classes of conflations with middle terms isomorphic to $Z$.  The following proposition is an adaptation of \cite[Th. 4]{Hubery}.

\begin{proposition}\label{F-F}
Let $\A$ be a finitary and skeletally small $k$-exact category and $X,Y,Z\in\A.$ If $X, Y$ are indecomposable and $Z$ is decomposable, then $$F_{XY}^Z-F_{YX}^Z\equiv 0~\Mod~(q-1).$$
\end{proposition}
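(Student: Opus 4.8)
The plan is to use the Riedtmann--Peng formula to express both $F_{XY}^Z$ and $F_{YX}^Z$ in terms of the orders of $\Hom$, $\Ext^1$ and automorphism groups, and then to analyze the resulting expression modulo $q-1$. Concretely, by Riedtmann--Peng we have
\[
F_{XY}^Z=\frac{|\Ext_{\A}^1(X,Y)_Z|}{|\Hom_{\A}(X,Y)|}\cdot\frac{a_Z}{a_Xa_Y},\qquad
F_{YX}^Z=\frac{|\Ext_{\A}^1(Y,X)_Z|}{|\Hom_{\A}(Y,X)|}\cdot\frac{a_Z}{a_Ya_X}.
\]
So the difference $F_{XY}^Z-F_{YX}^Z$ has common factor $a_Z/(a_Xa_Y)$, and it suffices to understand
\[
\frac{|\Ext_{\A}^1(X,Y)_Z|}{|\Hom_{\A}(X,Y)|}-\frac{|\Ext_{\A}^1(Y,X)_Z|}{|\Hom_{\A}(Y,X)|}
\]
modulo $q-1$ — more precisely, to show that after clearing denominators the numerator is divisible by $q-1$, while the denominators are units modulo $q-1$ (a finite-dimensional $k$-vector space of dimension $d$ has order $q^d\equiv 1\pmod{q-1}$, so $|\Hom|$, $|\Ext^1|$, $a_X$, $a_Y$, $a_Z$ are all $\equiv 1\pmod{q-1}$; the only subtle quantity is $|\Ext^1(X,Y)_Z|$, which counts a \emph{subset} picked out by an isomorphism condition on middle terms and need not be a power of $q$).

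The key idea, following Hubery's argument for Hall algebras of abelian categories, is to count conflations with middle term \emph{isomorphic to a fixed decomposable $Z$} by a torus action. Since $Z$ is decomposable, write $Z=Z_1\oplus\cdots\oplus Z_r$ with $r\ge 2$ into indecomposables (the Krull--Schmidt property holds in $\A=C^1(\mathscr{P})$, or more generally we may assume it, since $\A$ is Krull--Schmidt here). Then $\Aut(Z)$ contains the torus $T=(k^\times)^r$ of scalar automorphisms on the summands, hence $\mathbb{F}_q^\times$ embedded diagonally acts \emph{trivially} on $Z$ but the quotient torus $T/\mathbb{F}_q^\times$ of rank $r-1\ge 1$ acts. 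I want to let this torus act on the set of conflations $0\to Y\to Z\to X\to 0$ (equivalently on $\Ext^1(X,Y)_Z$, or on $W_{XY}^Z$) and count orbits. The crucial point will be that because $X$ and $Y$ are \emph{indecomposable}, the action of this rank-$(r-1)$ torus on the relevant variety is \emph{free} away from a locus that contributes in a controlled way — so $|\Ext^1(X,Y)_Z|$ is congruent modulo $q-1$ to the number of fixed points, and the fixed-point count can be matched up between the $XY$ and $YX$ sides via an explicit symmetry (transpose/dual of conflations, or the bijection coming from the fact that a fixed point corresponds to a conflation that splits compatibly with the decomposition of $Z$, forcing $X$ and $Y$ to be among the $Z_i$'s and the two counts to agree).

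In more detail, the steps I would carry out are: (1) reduce via Riedtmann--Peng to an integrality/congruence statement about $|\Ext^1(X,Y)_Z|-|\Ext^1(Y,X)_Z|$ after noting all vector-space cardinalities are $\equiv 1\pmod{q-1}$; (2) realize $\Ext^1(X,Y)_Z$ as an $\mathbb{F}_q$-points count of a constructible set acted on by the torus $\mathbb{G}_m^{r}/\mathbb{G}_m=\mathbb{G}_m^{r-1}$; (3) show this action is free on the complement of the "split along the decomposition" locus — here indecomposability of $X,Y$ is used to rule out nontrivial stabilizers, since a nontrivial scalar on a proper set of summands of $Z$ that preserves a conflation would force a splitting of $X$ or $Y$; (4) conclude $|\Ext^1(X,Y)_Z|\equiv (\text{fixed-point contribution})\pmod{q-1}$ because every nontrivial free $\mathbb{G}_m^{r-1}(\mathbb{F}_q)$-orbit has size divisible by $q-1$; and (5) exhibit a cardinality-preserving bijection between the fixed-point loci for $(X,Y)$ and for $(Y,X)$ — intuitively dualizing the conflation, or just observing that on the fixed locus the conflation respects the direct-sum decomposition so the count becomes symmetric in $X$ and $Y$ — which yields $F_{XY}^Z\equiv F_{YX}^Z\pmod{q-1}$.

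The main obstacle I anticipate is step (3)–(4): making precise that the torus action on $W_{XY}^Z$ (or on $\Ext^1(X,Y)_Z$) has only orbits of size divisible by $q-1$ except for a controlled fixed locus, and that this fixed locus is genuinely symmetric under swapping $X$ and $Y$. One has to be careful that $\A=C^1(\mathscr{P})$ is only exact (not abelian), so "conflation" and "middle term" must be handled via the exact structure, and the torus $\Aut(Z)\supseteq T$ acts on conflations in the way displayed before Definition~\ref{def of Hall}; checking freeness of the induced $T/\mathbb{F}_q^\times$-action on the non-split part, and that the stabilizer computation really only uses indecomposability of $X$ and $Y$ (not of $Z$), is the delicate heart of the argument. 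Everything else is bookkeeping with the Riedtmann--Peng formula and the elementary congruence $q^d\equiv 1\pmod{q-1}$.
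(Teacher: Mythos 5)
Your step (1) already contains a genuine error, and it is not a cosmetic one. The quantities $a_X,a_Y,a_Z$ are orders of automorphism groups, not of $k$-vector spaces: each of $\Aut(X),\Aut(Y),\Aut(Z)$ contains the scalars $k^{\times}$, so $a_X\equiv a_Y\equiv a_Z\equiv 0\pmod{q-1}$, not $1$. Consequently the Riedtmann--Peng prefactor $a_Z/\bigl(a_Xa_Y\,|\Hom_{\A}(X,Y)|\bigr)$ is in general neither an integer nor a unit modulo $q-1$ (for the $\mathbb{A}_2$ quiver with $X=S_1$, $Y=S_2$, $Z=P_1$ it equals $1/(q-1)$), so the proposed reduction ``clear denominators, the denominators are units, hence it suffices to compare $|\Ext^1_{\A}(X,Y)_Z|$ and $|\Ext^1_{\A}(Y,X)_Z|$ modulo $q-1$'' does not go through. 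Note also that the paper itself gives no argument here: the proposition is quoted as an adaptation of Hubery's Theorem 4 (in the line of Riedtmann and Peng--Xiao), so your proof has to stand on its own, and the published proofs are organized precisely around the point your reduction discards.

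There is moreover a structural problem with what steps (2)--(5) are aiming at: the congruence $|\Ext^1_{\A}(X,Y)_Z|\equiv|\Ext^1_{\A}(Y,X)_Z|\pmod{q-1}$ holds for \emph{every} $Z$, decomposable or not, since $k^{\times}$ acts freely by scaling on the nonzero part of the $k$-vector space $\Ext^1_{\A}(X,Y)$ and scaling preserves the isomorphism class of the middle term, whence both counts are $\equiv\delta_{[Z],[X\oplus Y]}\pmod{q-1}$. Yet the proposition is false for indecomposable $Z$ (for $\mathbb{A}_2$ one has $F_{S_1S_2}^{P_1}-F_{S_2S_1}^{P_1}=1$). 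So no argument whose only output is the $\Ext$-congruence can prove the statement: decomposability of $Z$ must be played off against the automorphism factors (roughly, $(q-1)^{s}$ divides $a_Z$ when $Z$ has $s\geq 2$ indecomposable summands, against the two factors of $q-1$ in $a_Xa_Y$ forced by indecomposability of $X$ and $Y$, with additional care for the residue division rings $\End/\rad$ in a general finitary category), and this interplay never appears in your plan. Finally, the parts you do outline are exactly the ones you leave unproved: freeness of the $(k^{\times})^{r-1}$-action away from a ``split'' locus and the $X\leftrightarrow Y$ symmetry of that locus are delicate, since an element of $V_{XY}^Z$ fixed by the torus is only fixed up to automorphisms of $X$ and $Y$, so the fixed locus is in general larger than the compatibly split conflations. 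As it stands, the sound part of the proposal proves something too weak, and the decisive steps are missing.
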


\subsection{}
Now we return to the exact category $C^1(\mathscr{P})$. For any objects $X^\cdot,Y^\cdot$ in $C^1(\mathscr{P})$, $\Hom_{C^1(\mathscr{P})}(X^\cdot,Y^\cdot)$ is a finite dimensional $k$-vector space. Thus $\Ext_{C^1(\mathscr{P})}^1(X^\cdot,Y^\cdot)$ is also finite-dimensional by Lemma $\ref{Ext}$. Let $\mathcal {H}(C^1(\mathscr{P}))$ be the Ringel--Hall algebra of the exact category $C^1(\mathscr{P})$.



By a well-known theorem of Gabriel \cite{Gabriel1,Gabriel2}, the correspondence $M\mapsto\Dim M$ induces a bijection between the set of isoclasses of indecomposable $A$-modules and the set of positive roots $\Phi^{+}$ of the simple Lie algebra $\mathfrak{g}$ associated with $\Gamma$. For each $\alpha\in\Phi^{+}$, let $M_q(\alpha)$ denote the corresponding indecomposable $A$-module. Denote the set of simple roots  by $\{\alpha_i~|~1\leq i\leq n\}$, so $M_q(\alpha_i)\cong S_i, i=1,\cdots,n$. For each $1\leq i\leq n$, let $\beta_i$ be the root in $\Phi^{+}$ such that $M_q(\beta_i)\cong P_i$.

By Lemma \ref{indec. obj.s}, the set \begin{equation}\label{allindec}\{C_{M_q(\alpha)}, K_{M_q(\beta_i)}~|~\alpha\in\Phi^{+}, 1\leq i\leq n\}\end{equation}
is a complete set of indecomposable objects in $C^1(\mathscr{P})$. Set $I=\{1,\cdots,n\}$, $\mathcal {I}^1(\Gamma)=\Phi^{+}\cup I,~\mathcal {I}^0(\Gamma)=(\Phi^{+}\cup I)\times\mathbb{Z}$, and define \begin{equation*}\begin{split}\mathfrak{P}^1(\Gamma)=\{\lambda:\mathcal {I}^1(\Gamma)\rightarrow\mathbb{N}\},~
\mathfrak{P}^0(\Gamma)=\{\lambda:\mathcal {I}^0(\Gamma)\rightarrow\mathbb{N}~|~\mbox{supp} \lambda~ \mbox{is finite}\},\end{split}\end{equation*} where supp$\lambda$ denotes the set of all $x\in\mathcal {I}^0(\Gamma)$ satisfying $\lambda(x)\neq 0$. By the Krull--Schmidt theorem, the correspondence sending $\lambda\in\mathfrak{P}^1(\Gamma)$ to $$C(\lambda)=C_q(\lambda)=(\bigoplus\limits_{\alpha\in\Phi^{+}}\lambda(\alpha){C_{M_q(\alpha)}})\bigoplus(\bigoplus\limits_{1\leq i\leq n}\lambda(i){K_{M_q(\beta_i)}})$$ induces a bijection from $\mathfrak{P}^1(\Gamma)$ to the set of isoclasses of objects in $C^1(\mathscr{P})$, and the correspondence sending $\lambda\in\mathfrak{P}^0(\Gamma)$ to $$\tilde{C}(\lambda)=\tilde{C}_q(\lambda)=(\bigoplus\limits_{(\alpha,t)\in\Phi^{+}\times\mathbb{Z}}\lambda(\alpha,t){\tilde{C}_{M_q(\alpha)}[t]})\bigoplus
(\bigoplus\limits_{(i,t)\in I\times\mathbb{Z}}\lambda(i,t){\tilde{K}_{M_q(\beta_i)}[t]})$$ induces a bijection from $\mathfrak{P}^0(\Gamma)$ to the set of isoclasses of objects in $C^0(\mathscr{P})$.
For any $\lambda_1, \lambda_2\in\mathfrak{P}^1(\Gamma)$ (resp. $\mathfrak{P}^0(\Gamma)$), we define the addition $\lambda_1\oplus\lambda_2$ by setting $$(\lambda_1\oplus\lambda_2)(x)=\lambda_1(x)+\lambda_2(x) ~\mbox {for all}~ x \in\mathfrak{P}^1(\Gamma)~ (\mbox{resp.}~\mathfrak{P}^0(\Gamma)).$$
An element $\lambda\in\mathfrak{P}^1(\Gamma)$ (resp. $\lambda\in\mathfrak{P}^0(\Gamma)$) is said to be \emph{indecomposable} if $C_q(\lambda)$ (resp. $\tilde{C}_q(\lambda)$) is indecomposable and \emph{decomposable} otherwise. We say that $\lambda$ is \emph{acyclic} if $C_q(\lambda)$ (resp. $\tilde{C}_q(\lambda)$) is an acyclic complex. Here a complex is called \emph{acyclic} if its homology groups vanish.

By Lemma \ref{indec. obj.s}(1), the covering functor $\mathcal {F}:C^0(\mathscr{P})\rightarrow C^1(\mathscr{P})$ is dense. So the map $\gamma: \mathfrak{P}^0(\Gamma)\rightarrow\mathfrak{P}^1(\Gamma), \lambda\mapsto\gamma(\lambda)$ defined by $$\gamma(\lambda)(x)=\sum_{t\in\mathbb{Z}}\lambda(x,t),~ \forall x\in\mathfrak{P}^1(\Gamma),$$ is surjective. By definition, $\mathcal {F}(\tilde{C}_q(\lambda))\cong C_q(\gamma(\lambda))$.

\begin{remark}
There is a bijection from the set of functions $\lambda: \Phi^+\rightarrow\mathbb{N}$ to the set of isoclasses of $A$-modules by sending $\lambda\mapsto[M_q(\lambda)]$, where
$$M_q(\lambda)=\bigoplus\limits_{\alpha\in\Phi^+}\lambda(\alpha)M_q(\alpha)\in\mod A.$$
\end{remark}

\begin{definition}
Let $\lambda, \mu, \nu\in\mathfrak{P}^1(\Gamma)$. If there exists a polynomial $\psi_{\mu \nu}^{\lambda}(x)\in\mathbb{Z}[x]$ such that for each prime power $q$,
$$\psi_{\mu \nu}^{\lambda}(q)=F_{C_q(\mu) C_q(\nu)}^{C_q(\lambda)},$$
then we say that the \emph{Hall polynomial} $\psi_{\mu \nu}^{\lambda}$ exists for $\lambda, \mu, \nu$.
\end{definition}

For any $\mu, \nu\in\mathfrak{P}^1(\Gamma)$, it is well-known (cf. \cite[Sect. 2]{R90} and \cite[Lem. 3.5]{ChenD}) that the dimensions $\dim \Hom_{C^1(\mathscr{P})}(C_q(\mu),C_q(\nu))$ and $\dim \Hom_{K^1(\mathscr{P})}(C_q(\mu),C_q(\nu))$ only depend on $\mu$ and $\nu$, but not on $q$. Moreover, there exists a monic polynomial $\mathfrak{a}_\mu(x)\in\mathbb{Z}[x]$ such that for each prime power $q$, $\mathfrak{a}_\mu(q)=a_{C_q(\mu)}$. By Riedtmann--Peng formula, it follows that for any $\lambda, \mu, \nu\in\mathfrak{P}^1(\Gamma)$, the Hall polynomial $\psi_{\mu \nu}^{\lambda}$ exists if and only if there exists a polynomial $\varepsilon_{\mu \nu}^{\lambda}(x)\in\mathbb{Z}[x]$ such that for each prime power $q$,
$\varepsilon_{\mu \nu}^{\lambda}(q)=|\Ext_{C^1(\mathscr{P})}^1(C_q(\mu), C_q(\nu))_{C_q(\lambda)}|.$

\begin{theorem}\label{Hallpoly}
For any $\lambda, \mu, \nu\in\mathfrak{P}^1(\Gamma)$, the Hall polynomial $\psi_{\mu \nu}^{\lambda}$  exists.
\end{theorem}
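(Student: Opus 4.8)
The plan is to reduce the existence of Hall polynomials in $C^1(\mathscr{P})$ to the existence of Hall polynomials for $A$-modules (which is Ringel's theorem, since $A$ is representation-finite) by exploiting the Galois covering functor $\mathcal{F}\colon C^0(\mathscr{P})\to C^1(\mathscr{P})$ together with the exactness of $\mathcal{F}$ and the dictionary in Proposition \ref{formula}. By the Riedtmann--Peng formula and the remarks following the definition of Hall polynomial, it suffices to produce a polynomial $\varepsilon_{\mu\nu}^{\lambda}(x)\in\mathbb{Z}[x]$ with $\varepsilon_{\mu\nu}^{\lambda}(q)=|\Ext^1_{C^1(\mathscr{P})}(C_q(\mu),C_q(\nu))_{C_q(\lambda)}|$ for all prime powers $q$; equivalently, a polynomial counting the short exact sequences $C_q(\nu)\to C_q(\lambda)\to C_q(\mu)$ in $C^1(\mathscr{P})$. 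The first step is therefore to set up the covering-theoretic counting argument: since $\mathcal{F}$ is a Galois $G$-covering with $G=\langle[1]\rangle$, and since $\mathcal{F}$ is dense and exact, every conflation in $C^1(\mathscr{P})$ with end terms $C_q(\mu), C_q(\nu)$ is, fibrewise, the image of conflations in $C^0(\mathscr{P})$ between objects $\tilde{C}_q(\tilde\mu),\tilde{C}_q(\tilde\nu)$ that lift $\mu,\nu$ along $\gamma$. The key identity to establish is a "push-down" formula expressing $F^{C_q(\lambda)}_{C_q(\mu)C_q(\nu)}$ as a finite $\mathbb{Z}$-linear combination (with combinatorial, $q$-independent coefficients coming from the orbit structure of $G$ acting on the lifts) of the numbers $F^{\tilde{C}_q(\tilde\lambda)}_{\tilde{C}_q(\tilde\mu)\tilde{C}_q(\tilde\nu)}$ in $C^0(\mathscr{P})$, summed over the finitely many relevant lifts $\tilde\mu,\tilde\nu,\tilde\lambda$ (finiteness here follows because $\Hom$ and $\Ext^1$ between $\tilde{C}$'s vanish for all but finitely many shifts, by Proposition \ref{formula}(4)).

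The second step is to prove the existence of Hall polynomials in $C^0(\mathscr{P})$, i.e. for bounded complexes of projectives over the hereditary representation-finite algebra $A$. Here I would use the same strategy as in Chen--Deng \cite{ChenD}: an object of $C^0(\mathscr{P})$ decomposes, up to the radical square vanishing of the differential combined with hereditariness, into contractible pieces $\tilde{K}_{P}[t]$ and the "essential" pieces $\tilde{C}_M[t]$, and a conflation in $C^0(\mathscr{P})$ can be analyzed degree-wise. The filtration-counting for such conflations can be organized using the long exact sequences and the fact (Proposition \ref{formula}(4)) that $\Ext^1_{C^0(\mathscr{P})}(\tilde{C}_M,\tilde{C}_N[i])$ is $\Hom_A(M,N)$ for $i=-1$, $\Ext^1_A(M,N)$ for $i=0$, and $0$ otherwise; all of these Hom- and Ext-spaces have $q$-independent dimensions and, more to the point, the numbers of subobjects/quotients are governed by the classical Hall polynomials of $A$-modules together with linear-algebra counts over $\mathbb{F}_q$ (numbers of subspaces, of solutions of linear systems), each of which is manifestly polynomial in $q$. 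So one gets $|\Ext^1_{C^0(\mathscr{P})}(\tilde{C}_q(\tilde\mu),\tilde{C}_q(\tilde\nu))_{\tilde{C}_q(\tilde\lambda)}|$ as a polynomial in $q$, hence $F^{\tilde{C}_q(\tilde\lambda)}_{\tilde{C}_q(\tilde\mu)\tilde{C}_q(\tilde\nu)}\in\mathbb{Q}(q)$; combined with the monic polynomiality of the automorphism-group orders $\mathfrak{a}(x)$ this gives honest polynomials.

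The third step is to combine the two: substituting the $C^0$-polynomials into the push-down formula from step one yields a rational function in $q$ that agrees with $F^{C_q(\lambda)}_{C_q(\mu)C_q(\nu)}$ at every prime power, and since that quantity is a non-negative integer for infinitely many $q$, a standard argument (clearing denominators against the known polynomial $\mathfrak{a}(x)$, or using that a rational function integer-valued on infinitely many integers of a fixed residue pattern is a polynomial) upgrades it to a genuine element of $\mathbb{Z}[x]$; by the Riedtmann--Peng criterion recalled before the statement, this is exactly the existence of $\psi_{\mu\nu}^{\lambda}$.

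I expect the main obstacle to be the push-down formula in step one: one must control precisely how the $G=\mathbb{Z}$-action permutes the (infinitely many a priori, finitely many effectively) lifts of a conflation, show that the fibres of $\mathcal{F}$ on conflation sets are exactly $G$-orbits of a uniform size, and track the interplay between $\Aut$ in $C^1$ and $\Aut$ in $C^0$ (cf. the automorphism-group computations in Proposition \ref{formula}(5)) so that the orbit-counting constants are genuinely independent of $q$. This is the step where the case $m=1$ differs from $m\geq 2$ — the shift $[1]$ has infinite order, so the covering group is infinite and one has to argue that only finitely many shifted lifts contribute, which is where Proposition \ref{formula}(4) (vanishing of $\Ext^1_{C^0(\mathscr{P})}(\tilde{C}_M,\tilde{C}_N[i])$ for $i\neq 0,-1$) does the essential work. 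Once that bookkeeping is in place, steps two and three are routine.
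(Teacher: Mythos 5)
Your overall direction --- reduce to $C^0(\mathscr{P})$ through the covering functor $\mathcal{F}$ and invoke the Chen--Deng polynomials there --- is in the same spirit as the paper's, but your argument hinges on a general ``push-down formula'' for arbitrary $\mu,\nu,\lambda$ which you yourself call the ``key identity to establish'' and never establish; this is a genuine gap, and it is exactly the point the paper's proof is engineered to avoid. For decomposable end terms, a class in $\Ext^1_{C^1(\mathscr{P})}(C_q(\mu),C_q(\nu))\cong\bigoplus_{i,j}\bigl(\Hom_A(M_i,N_j)\oplus\Ext^1_A(M_i,N_j)\bigr)$ has one component for each pair of indecomposable summands, and to realize it as the image of a single conflation in $C^0(\mathscr{P})$ you must choose shifts for all the lifts simultaneously so that every nonzero Hom-type component sees shift difference $-1$ and every nonzero Ext-type component sees shift difference $0$; it is not at all obvious that such a consistent choice always exists, nor that the fibres of $\mathcal{F}$ on conflation sets with fixed middle term are $G$-orbits of uniform, $q$-independent size (the lifts of the middle term vary with the class, and $G$ is infinite). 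None of this is routine bookkeeping, and you give no argument for it.

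The paper sidesteps the problem by two devices you are missing. First, it uses the covering comparison only when $\mu$ (or dually $\nu$) is indecomposable: representation-directedness then lets one split $M_q(\nu)=M_q(\nu_1)\oplus M_q(\nu_2)$ with $\Hom_A(M_q(\mu),M_q(\nu_1))=0$ and $\Ext^1_A(M_q(\mu),M_q(\nu_2))=0$, and the single lift $\tilde{C}_{M_q(\nu_1)}\oplus\tilde{C}_{M_q(\nu_2)}[-1]$ captures the \emph{entire} group $\Ext^1_{C^1(\mathscr{P})}(C_q(\mu),C_q(\nu))$ at once, so the fixed-middle-term count becomes the finite sum $\sum_{\gamma(\tilde\lambda)=\lambda}\varepsilon^{\tilde\lambda}_{\tilde\mu\tilde\nu}$ with no orbit counting at all. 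Second, the general case (both $\mu$ and $\nu$ decomposable) is not handled by covering theory: it follows by the standard associativity induction on $(l_\mu,d_\mu)$ in lexicographic order, as in Chen--Deng's Theorem 3.11, peeling off indecomposable summands inside the Hall algebra. Your step two (polynomials in $C^0(\mathscr{P})$) is fine but amounts to citing Chen--Deng, and your step three glosses over why the resulting polynomial has integer coefficients; both issues disappear once step one is replaced by the two devices above, but as written the central step of your plan is missing.
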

\bp
Firstly, we claim that the Hall polynomial $\psi_{\mu \nu}^{\lambda}$ exists for $\lambda, \mu, \nu\in\mathfrak{P}^1(\Gamma)$ with $\mu$ or $\nu$ indecomposable.

We only need to prove that there exists a polynomial $\varepsilon_{\mu \nu}^{\lambda}(x)\in\mathbb{Z}[x]$ such that for each prime power $q$,
$$\varepsilon_{\mu \nu}^{\lambda}(q)=|\Ext_{C^1(\mathscr{P})}^1(C_q(\mu), C_q(\nu))_{C_q(\lambda)}|.$$
We just prove the case in which $\mu$ is indecomposable, the case of $\nu$  being indecomposable can be treated in a dual way.

If $\mu$ or $\nu$ is acyclic, $\Ext_{C^1(\mathscr{P})}^1(C_q(\mu), C_q(\nu))=0$.

So we concentrate on the case that $C_q(\mu)$ and  $C_q(\nu)$ have no acyclic summands. By Proposition \ref{formula}$(4)$,
$$\Ext_{C^1(\mathscr{P})}^1(C_q(\mu), C_q(\nu))\cong\Hom_A(M_q(\mu), M_q(\nu))\oplus\Ext_A^1(M_q(\mu), M_q(\nu)).$$
Since $A$ is representation-directed, we can write $M_q(\nu)=M_q(\nu_1)\oplus M_q(\nu_2)$ for some $\nu_1, \nu_2: \Phi^+\rightarrow \mathbb{N}$ such that $$\Hom_A(M_q(\mu), M_q(\nu_1))=0~~\textrm{and}~~\Ext_A^1(M_q(\mu), M_q(\nu_2))=0.$$

Hence,
\begin{equation*}
\begin{split}
\Ext_{C^1(\mathscr{P})}^1(C_q(\mu), C_q(\nu))&\cong\Hom_A(M_q(\mu), M_q(\nu_2))\oplus\Ext_A^1(M_q(\mu), M_q(\nu_1))\\
&\cong\Ext_{C^0(\mathscr{P})}^1(\tilde{C}_{M_q(\mu)}, \tilde{C}_{M_q(\nu_2)}[-1])\oplus\Ext_{C^0(\mathscr{P})}^1(\tilde{C}_{M_q(\mu)}, \tilde{C}_{M_q(\nu_1)})\\
&\cong\Ext_{C^0(\mathscr{P})}^1(\tilde{C}_{M_q(\mu)}, \tilde{C}_{M_q(\nu_1)}\oplus\tilde{C}_{M_q(\nu_2)}[-1]).
\end{split}
\end{equation*}
Obviously, $C_q(\nu)=C_{M_q(\nu_1)}\oplus C_{M_q(\nu_2)}\cong C_{M_q(\nu_1)}\oplus C_{M_q(\nu_2)}[-1]$ and $\mathcal {F}(\tilde{C}_{M_q(\nu_1)}\oplus\tilde{C}_{M_q(\nu_2)}[-1])=C_q(\nu)$.
Choose $\tilde{\mu}, \tilde{\nu}\in\mathfrak{P}^0(\Gamma)$ such that $\tilde{C}_q(\tilde{\mu})=\tilde{C}_{M_q(\mu)}, \tilde{C}_q(\tilde{\nu})=\tilde{C}_{M_q(\nu_1)}\oplus\tilde{C}_{M_q(\nu_2)}[-1]$,
then we have $$\Ext_{C^1(\mathscr{P})}^1(C_q(\mu), C_q(\nu))\cong\Ext_{C^0(\mathscr{P})}^1(\tilde{C}_q(\tilde{\mu}), \tilde{C}_q(\tilde{\nu})).$$

 Thus we have a bijection $$\xymatrix{\bigcup\limits_{\tilde{\lambda}\in\mathfrak{P}^0(\Gamma),\gamma(\tilde{\lambda})=\lambda}\Ext_{C^0(\mathscr{P})}^1(\tilde{C}_q(\tilde{\mu}), \tilde{C}_q(\tilde{\nu}))_{\tilde{C}_q(\tilde{\lambda})}\ar[r]^-{\simeq}&\Ext_{C^1(\mathscr{P})}^1(C_q(\mu), C_q(\nu))_{C_q(\lambda)}.}$$

By \cite[Cor. 2.8 and 3.7]{ChenD},~$C^0(\mathscr{P})$ is directed and  there exists a polynomial
$\varepsilon_{\tilde{\mu} \tilde{\nu}}^{\tilde{\lambda}}(x)\in\mathbb{Z}[x]$ such that for each prime power $q$,
$$\varepsilon_{\tilde{\mu} \tilde{\nu}}^{\tilde{\lambda}}(q)=|\Ext_{C^0(\mathscr{P})}^1(\tilde{C}_q(\tilde{\mu}), \tilde{C}_q(\tilde{\nu}))_{\tilde{C}_q(\tilde{\lambda})}|.$$
Set $$\varepsilon_{\mu \nu}^{\lambda}(x)=\sum\limits_{\tilde{\lambda}\in\mathfrak{P}^0(\Gamma),\gamma(\tilde{\lambda})=\lambda}
\varepsilon_{\tilde{\mu} \tilde{\nu}}^{\tilde{\lambda}}(x).$$
Therefore, $$\varepsilon_{\mu \nu}^{\lambda}(q)=|\Ext_{C^1(\mathscr{P})}^1(C_q(\mu), C_q(\nu))_{C_q(\lambda)}|.$$
Thus the claim is proved.

Now for the general case, the proof is similar to that of \cite[Th. 3.11]{ChenD}.
For each $\mu\in \mathfrak{P}^1(\Gamma)$, set
$d_{\mu}=\sum_{x\in \mathcal {I}^1(\Gamma)}\mu(x)$ and $l_{\mu}=\dim \Ext^{1}_{C^{1}(\mathscr{P})}(C_{q}(\mu),C_{q}(\mu))$.
We only need to proceed by induction on $(l_{\mu},d_{\mu})$ with  the lexicographical order which starts from $(l_{\mu},d_{\mu})=(1,1)$.
So we are done.
\ep

\subsection{}
Our next aim in this section is to express the Hall numbers for objects in $C^{1}(\mathscr{P})$ by the Hall numbers for $A$-modules.

Recall that, for any $M, N\in\mod A$, $\Ext_{C^1(\mathscr{P})}^1(C_M,C_N)\cong \Hom_A(M,N)\oplus\Ext_A^1(M,N).$
Inspired by the ideas in \cite[Sect. 4]{ChenD}, we consider the Hall numbers $F_{C_MC_N}^{L^\cdot}$ associated to $M, N\in\mod A$ such that exactly one of $\Ext_A^1(M,N)$ and $\Hom_A(M,N)$ vanishes. So the two cases are treated as follows.

\textbf{Case I}\ Let $M, N\in\mod A$ such that $\Ext_{C^1(\mathscr{P})}^1(C_M,C_N)\cong\Ext_A^1(M,N)$, that is, each extension of $C_M$ by $C_N$ is induced by an extension of $M$ by $N$. More precisely, we consider the  Horseshoe Lemma diagram
\begin{equation}\label{Horse}\xymatrix{
&0\ar[d] & 0\ar[d] & 0\ar[d]\\
0\ar[r] & \ooz_N\ar[d]^-{\dz_N}\ar[r]^{\iota\quad} & \ooz_N\oplus
\ooz_M \ar[d]_{\widetilde\dz}\ar[r]^{\quad p} & \ooz_M\ar[d]^-{\dz_M}\ar[r] & 0\\
0\ar[r] & P_N\ar[d]^-{\vez_N}\ar[r]^{\iota\quad} & P_N\oplus P_M\ar[d]_{\widetilde\vez}\ar[r]^{\quad p} & P_M\ar[d]^-{\vez_M}\ar[r] & 0\\
0\ar[r] & N \ar[d]\ar[r]^{f} & L\ar[d]\ar[r]^{g} & M\ar[d]\ar[r] & 0.\\
 & 0 & 0&0\\
 }\end{equation}
By \cite[Lem. 4.1]{Br}, there exists a projective $A$-module $X_L$ such that $P_M\oplus P_N\cong P_L\oplus X_L$ and $\Omega_M\oplus \Omega_N\cong \Omega_L\oplus X_L$. Clearly, for fixed $M, N$, $X_L$ is uniquely determined by $L$ up to isomorphism. Thus $C_{\tilde{\delta}}\cong C_L\oplus K_{X_L}$.

\begin{proposition}
Let $L, M, N\in\mod A$ such that  $\Hom_A(M,N)=0$. Then $$F^{C_L\oplus K_{X_L}}_{C_MC_N}=q^{s} a_{X_L}F^L_{MN},$$ where $s=\lr{\Dim P_N,\Dim \Omega_M}+\lr{\Dim \Omega_N,\Dim P_M}-\lr{\Dim X_L,\Dim X_L}.$
\end{proposition}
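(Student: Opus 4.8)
The plan is to compute the Hall number $F^{C_L\oplus K_{X_L}}_{C_MC_N}$ via the Riedtmann--Peng formula
$$F^{C_L\oplus K_{X_L}}_{C_MC_N}=\frac{|\Ext^1_{C^1(\mathscr{P})}(C_M,C_N)_{C_L\oplus K_{X_L}}|}{|\Hom_{C^1(\mathscr{P})}(C_M,C_N)|}\cdot\frac{a_{C_L\oplus K_{X_L}}}{a_{C_M}a_{C_N}},$$
and match each factor against the analogous ingredients for $F^L_{MN}$ in $\mod A$. Since $\Hom_A(M,N)=0$, Proposition \ref{formula}(3) gives $\Ext^1_{C^1(\mathscr{P})}(C_M,C_N)\cong\Ext^1_A(M,N)$, and moreover the Horseshoe diagram (\ref{Horse}) shows that every conflation $C_N\to \widetilde E\to C_M$ arises, up to the stabilization by a projective-injective summand, from a short exact sequence $N\to L\to M$. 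The first step is therefore to establish a bijection between $\Ext^1_{C^1(\mathscr{P})}(C_M,C_N)_{C_L\oplus K_{X_L}}$ and $\Ext^1_A(M,N)_L$: given a class in the latter with middle term $L$, applying the Horseshoe Lemma and the functor $\mathcal{F}$ produces a $1$-cyclic conflation with middle term $C_{\widetilde\delta}\cong C_L\oplus K_{X_L}$, and conversely one recovers the module extension by taking homology / applying $\epsilon$. Since $X_L$ is determined by $L$ up to isomorphism (as noted after (\ref{Horse})), this bijection is well-defined on the relevant subsets, so $|\Ext^1_{C^1(\mathscr{P})}(C_M,C_N)_{C_L\oplus K_{X_L}}|=|\Ext^1_A(M,N)_L|$.

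Next I would handle the remaining factors. For the $\Hom$ denominator, Proposition \ref{formula}(1) gives $\Hom_{C^1(\mathscr{P})}(C_M,C_N)\cong\Hom_A(M,N)\oplus\Hom_A(P_M,\Omega_N)\oplus\Hom_A(\Omega_M,P_N)$, and the first summand vanishes by hypothesis, so $|\Hom_{C^1(\mathscr{P})}(C_M,C_N)|=|\Hom_A(P_M,\Omega_N)|\cdot|\Hom_A(\Omega_M,P_N)|$. For the automorphism factors, Proposition \ref{formula}(5) gives $a_{C_M}=a_M\cdot|\Hom_A(P_M,\Omega_M)|\cdot|\Hom_A(\Omega_M,P_M)|$, similarly for $a_{C_N}$, and $a_{K_{X_L}}=a_{X_L}\cdot|\End_A(X_L)|$; also $a_{C_L\oplus K_{X_L}}=a_{C_L}\cdot a_{K_{X_L}}$ since $\Hom_{C^1(\mathscr{P})}(C_L,K_{X_L})$ and $\Hom_{C^1(\mathscr{P})}(K_{X_L},C_L)$ contribute nothing to the order of the automorphism group of a direct sum beyond a unipotent factor --- actually one should be slightly careful here: $a_{C_L\oplus K_{X_L}}=a_{C_L}a_{K_{X_L}}\cdot|\Hom(C_L,K_{X_L})|\cdot|\Hom(K_{X_L},C_L)|$, and these Hom spaces must be folded into the bookkeeping. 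Substituting everything and using $F^L_{MN}=\dfrac{|\Ext^1_A(M,N)_L|}{|\Hom_A(M,N)|}\cdot\dfrac{a_L}{a_Ma_N}=|\Ext^1_A(M,N)_L|\cdot\dfrac{a_L}{a_Ma_N}$ (again since $\Hom_A(M,N)=0$), the module Hall number cancels the $\Ext$ and module-automorphism contributions, leaving a pure power of $q$ times $a_{X_L}$.

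The main obstacle --- and the crux of the computation --- is identifying the exponent $s$ and checking it equals $\lr{\Dim P_N,\Dim\Omega_M}+\lr{\Dim\Omega_N,\Dim P_M}-\lr{\Dim X_L,\Dim X_L}$. This requires carefully assembling the $\Hom$-dimension contributions. Using $P_M\oplus P_N\cong P_L\oplus X_L$ and $\Omega_M\oplus\Omega_N\cong\Omega_L\oplus X_L$ (from \cite[Lem. 4.1]{Br}), one expands the various $|\Hom_A(-,-)|$ factors appearing in the numerator/denominator as powers of $q$ via $\dim\Hom_A(P,P')=\lr{\Dim P,\Dim P'}$ for projectives, then collects exponents. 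For instance, the combination $\Hom_A(P_M,\Omega_N)$, $\Hom_A(\Omega_M,P_N)$ from the denominator, the projective-resolution Hom-spaces inside $a_{C_M},a_{C_N}$ and $a_{K_{X_L}}=a_{X_L}|\End X_L|$, together with $|\Hom(C_L,K_{X_L})||\Hom(K_{X_L},C_L)|$ expressed via Proposition \ref{formula}(2) and the relation $\dim\Hom_A(\Omega_L\oplus P_L, X_L)+\dim\Hom_A(X_L,\Omega_L\oplus P_L)$, should reorganize --- after using biadditivity of $\lr{-,-}$ on the two split relations and cancelling the $\lr{\Dim P_L,\Dim\Omega_L}$-type terms against what $a_{C_L}$ versus $a_L$ contributes --- precisely into $s$ as stated. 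I expect this to be a somewhat lengthy but entirely mechanical Euler-form accounting; the only genuinely delicate point is keeping track of the four ``cross'' Hom-spaces between $C_L$ and $K_{X_L}$ and between $P_M,P_N,\Omega_M,\Omega_N$, which is where the $-\lr{\Dim X_L,\Dim X_L}$ correction term originates (from $|\End_A(X_L)|=q^{\lr{\Dim X_L,\Dim X_L}}$ since $X_L$ is projective, hence $\Ext^1$ vanishes).
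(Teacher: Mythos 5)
Your proposal is correct and follows essentially the same route as the paper: Riedtmann--Peng in $C^1(\mathscr{P})$ together with $|\Ext^1_{C^1(\mathscr{P})}(C_M,C_N)_{C_L\oplus K_{X_L}}|=|\Ext^1_A(M,N)_L|$, the Hom/Aut formulas of Proposition \ref{formula} (including the cross terms $|\Hom(C_L,K_{X_L})|\,|\Hom(K_{X_L},C_L)|$, which you correctly fold in), and Euler-form bookkeeping via $P_M\oplus P_N\cong P_L\oplus X_L$, $\Omega_M\oplus\Omega_N\cong\Omega_L\oplus X_L$. The exponent collection you leave as "mechanical" is exactly what the paper does, and it does simplify to the stated $s$; only your parenthetical attribution of the $-\lr{\Dim X_L,\Dim X_L}$ term solely to $|\End_A(X_L)|$ is loose (it arises as the deficit of that single $\End$ factor against the full symmetrized pairing $(\Dim X_L,\Dim X_L)$), but this does not affect the argument.
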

\begin{proof}
We assume that $L$ is an extension of $M$ by $N$.
Since $C_L$ and $K_{X_L}$ have no direct summands in common, $$|\Aut(C_L\oplus K_{X_L})|
=a_{C_L}\cdot a_{K_{X_L}}\cdot|\Hom_{C^1(\mathscr{P})}(C_L,K_{X_L})|\cdot|\Hom_{C^1(\mathscr{P})}(K_{X_L},C_L)|.$$
By arguments as above, $|\Ext^1_{C^1(\mathscr{P})}(C_M,C_N)_{C_L\oplus K_{X_L}}|=|\Ext^1_A(M,N)_L|$.
By Riedtmann--Peng formula,
$$F^{C_L\oplus K_{X_L}}_{C_MC_N}=\frac{|\Ext^1_{C^1(\mathscr{P})}(C_M,C_N)_{C_L\oplus K_{X_L}}|\cdot|\Aut(C_L\oplus K_{X_L})|}
{|\Hom_{C^1(\mathscr{P})}(C_M,C_N)|\cdot |\Aut(C_M)|\cdot |\Aut(C_N)|}.$$
Using Proposition \ref{formula}, we deduce that $F^{C_L\oplus K_{X_L}}_{C_MC_N}=q^s a_{X_L}F^L_{MN},$
where \begin{equation*}\begin{aligned}
s=&(\Dim P_L,\Dim\Omega_L)+(\Dim X_L,\Dim\Omega_L)+(\Dim P_L,\Dim X_L)+\lr{\Dim X_L,\Dim X_L}\\
&-(\Dim P_M,\Dim \Omega_M)-(\Dim P_N,\Dim \Omega_N)-\lr{\Dim P_M,\Dim \Omega_N}-\lr{\Dim\Omega_M,\Dim P_N}.\\
\end{aligned}
\end{equation*}
Since $P_M\oplus P_N\cong P_L\oplus X_L$ and $\Omega_M\oplus \Omega_N\cong \Omega_L\oplus X_L$, we have $$s=\lr{\Dim P_N,\Dim \Omega_M}+\lr{\Dim \Omega_N,\Dim P_M}-\lr{\Dim X_L,\Dim X_L}.$$
\end{proof}

\textbf{Case II}\ Let $M, N\in\mod A$ such that $\Ext_{C^1(\mathscr{P})}^1(C_M,C_N)\cong\Hom_A(M,N)$, that is, each extension of $C_M$ by $C_N$ is induced by a homomorphism from $M$ to $N$. Take an arbitrary extension of $C_M$ by $C_N$
\begin{equation}\label{ss0}0\longrightarrow C_N\longrightarrow L^\cdot\longrightarrow C_M\longrightarrow 0.\end{equation}
It induces a long exact sequence:
$$\xymatrix{\ar[r]&H_0(C_N)\ar[r]&H_0(L^\cdot)\ar[r]&H_0(C_M)\ar[r]&H_0(C_N)\ar[r]&H_0(L^\cdot)\ar[r]&H_0(C_M)\ar[r]&}.$$
Obviously $H_0(C_X)=X$ for any $X\in\mod A$.
Write $L^\cdot=C_L\oplus K_{Y_L}$ for some $L\in\mod A$ and $Y_L\in\mathscr{P}$, then  we obtain a long exact sequence of $A$-modules
$$\xymatrix{\ar[r]&N\ar[r]&L\ar[r]&M\ar[r]^\delta&N\ar[r]&L\ar[r]&M\ar[r]&}$$ where $\delta$ is uniquely determined by the isomorphism \begin{equation}\label{iso}\Ext_{C^1(\mathscr{P})}^1(C_M,C_N)\cong\Hom_A(M,N).\end{equation}
So we have the short exact sequence $0\longrightarrow C\longrightarrow L\longrightarrow K\longrightarrow0,$ where $C=\Coker \delta,~K=\Ker \delta$.
It follows from the lemma below that $L\cong K\oplus C$. We also note that $Y_L$ is uniquely determined by \begin{equation}\label{determine}P_L\oplus \Omega_L\oplus {Y_L}\oplus {Y_L}\cong P_M\oplus P_N\oplus\Omega_M\oplus\Omega_N.\end{equation}
\begin{lemma}
Let $M, N\in\mod A$ such that $\Ext^1_A(M,N)=0$.
Then for any $\delta\in\Hom_A(M,N)$, $\Ext_A^1(K,C)=0$, where $C=\Coker\delta, K=\Ker\delta$.
\end{lemma}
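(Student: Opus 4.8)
The plan is to deduce the vanishing of $\Ext_A^1(K,C)$ from the single hypothesis $\Ext_A^1(M,N)=0$, using only that $A$ is hereditary, so that $\Ext_A^i(-,-)=0$ for all $i\geq 2$. The first step is to factor $\delta$ through its image $I=\im\delta$, which produces two genuine short exact sequences of $A$-modules
\[0\longrightarrow K\longrightarrow M\longrightarrow I\longrightarrow 0\qquad\text{and}\qquad 0\longrightarrow I\longrightarrow N\longrightarrow C\longrightarrow 0,\]
where $K=\Ker\delta$ and $C=\Coker\delta$ by definition.

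Next I would apply $\Hom_A(-,N)$ to the first short exact sequence. The associated long exact sequence in cohomology contains the segment $\Ext_A^1(M,N)\to\Ext_A^1(K,N)\to\Ext_A^2(I,N)$. Since $A$ is hereditary, $\Ext_A^2(I,N)=0$, and $\Ext_A^1(M,N)=0$ by assumption; hence $\Ext_A^1(K,N)=0$.

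Finally I would apply $\Hom_A(K,-)$ to the second short exact sequence. Its long exact sequence contains $\Ext_A^1(K,N)\to\Ext_A^1(K,C)\to\Ext_A^2(K,I)$. Again $\Ext_A^2(K,I)=0$ because $A$ is hereditary, and $\Ext_A^1(K,N)=0$ by the previous paragraph; therefore $\Ext_A^1(K,C)=0$, which is the assertion. (One then gets, as used in the discussion of Case II, that the extension $0\to C\to L\to K\to 0$ splits, so $L\cong K\oplus C$.)

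Each step is a single invocation of the long exact sequence for $\Ext$ combined with the hereditary property, so I do not expect any genuine obstacle. The only point requiring a word of care is that the factorization of $\delta$ yields honest short exact sequences, i.e.\ that $M\twoheadrightarrow I$ is epic and $I\hookrightarrow N$ is monic, so that both long exact sequences are legitimate; this is immediate from $I=\im\delta$.
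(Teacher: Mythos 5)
Your proof is correct and follows essentially the same route as the paper: factor $\delta$ through its image to get the two short exact sequences, apply $\Hom_A(-,N)$ to the first to get $\Ext_A^1(K,N)=0$, then apply $\Hom_A(K,-)$ to the second to conclude $\Ext_A^1(K,C)=0$. Your write-up merely makes explicit the vanishing of the $\Ext^2$ terms coming from heredity, which the paper leaves implicit.
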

\begin{proof}
From the long exact sequence $0\rightarrow K\rightarrow M\xrightarrow{\delta}N\rightarrow C\rightarrow 0$, we get two short exact sequences
$0\rightarrow K\rightarrow M\rightarrow G\rightarrow 0$ and $0\rightarrow G\rightarrow N\rightarrow C\rightarrow 0$, where $G= \im \delta$.
Applying the Hom functor $\Hom_{A}(-,N)$ to the first one, we have $\Ext_{A}^{1}(K,N)=0$. Then applying $\Hom_{A}(K,-)$ to the second one, we get
$\Ext_{A}^{1}(K,C)=0$.
\end{proof}

Therefore, for each $\delta\in\Hom_A(M,N)$, we have an extension $L^\cdot=C_L\oplus K_{Y_L}$ of $C_M$ by $C_N$,  where $L=\Ker\delta\oplus\Coker\delta$, and $Y_L$ is uniquely determined by $L$ as in (\ref{determine}).

Let $X, Y, M, N\in\mod A$, we denote by ${}_X\Hom_A(M,N)_Y$ the set $\{f: M\rightarrow N~|~\Ker f\cong X, \Coker f\cong Y \},$ and by $W_{MN}^{XY}$ the set $$\{(f,g,h)~|~\xymatrix{0\ar[r]&X\ar[r]^g&M\ar[r]^f&N\ar[r]^h&Y\ar[r]&0} \text{is exact in } \mod A\}.$$ By \cite[Sect. 8]{Van}, we know that $$|W_{MN}^{XY}|=a_Xa_Y\sum_{[L']}a_{L'}F_{L'X}^MF_{YL'}^N.$$
Hence, it is easy to know that \begin{equation}\label{lHoml}|{}_X\Hom_A(M,N)_Y|=\sum_{[L']}a_{L'}F_{L'X}^MF_{YL'}^N.\end{equation}

For any $M, N\in\mod A$, $|\Ext_{C^1(\mathscr{P})}^1(C_M,C_N)_{C_{M\oplus N}}|=1=|{}_{M}\Hom_A(M,N)_{N}|$. So we focus on  the nontrivial extension of $C_M$ by $C_N$, which is induced by a nonzero $\delta\in\Hom_A(M,N)$.

\begin{lemma}\label{hom-ext}
Let $M, N$ be indecomposable $A$-modules such that $\Ext_A^1(M,N)=0$, and suppose that $L\cong L_1\oplus L_2$, where $L_1$ and $L_2$ are the kernel and cokernel of a nonzero morphism $\delta\in\Hom_A(M,N)$, respectively. Then $$|\Ext_{C^1(\mathscr{P})}^1(C_M,C_N)_{C_L\oplus K_{Y_L}}|=|{}_{L_1}\Hom_A(M,N)_{L_2}|.$$
\end{lemma}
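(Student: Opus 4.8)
The plan is to set up a bijection between the set $\Ext_{C^1(\mathscr{P})}^1(C_M,C_N)_{C_L\oplus K_{Y_L}}$ and the set ${}_{L_1}\Hom_A(M,N)_{L_2}$, by tracking which homomorphisms $\delta\in\Hom_A(M,N)$ give rise to an extension whose middle term is the prescribed object $C_L\oplus K_{Y_L}$. We already have, from the preceding discussion, the isomorphism $\Ext_{C^1(\mathscr{P})}^1(C_M,C_N)\cong\Hom_A(M,N)$ of (\ref{iso}), and for each nonzero $\delta$ the corresponding middle term is $C_{L(\delta)}\oplus K_{Y_{L(\delta)}}$ with $L(\delta)=\Ker\delta\oplus\Coker\delta$ and $Y_{L(\delta)}$ determined by $L(\delta)$ via (\ref{determine}). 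The trivial extension is excluded since it corresponds to $\delta=0$ and its middle term is $C_{M\oplus N}$, not of the form $C_L\oplus K_{Y_L}$ with $L\cong L_1\oplus L_2$ unless $\delta$ is forced to be zero (and by hypothesis $L_1,L_2$ come from a \emph{nonzero} $\delta$).

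First I would argue that, under the isomorphism (\ref{iso}), a class in $\Ext_{C^1(\mathscr{P})}^1(C_M,C_N)_{C_L\oplus K_{Y_L}}$ corresponds exactly to a $\delta\in\Hom_A(M,N)$ with $\Ker\delta\oplus\Coker\delta\cong L\cong L_1\oplus L_2$. Because $A$ is a representation-directed (indeed representation-finite hereditary) algebra, the Krull--Schmidt decomposition of $L$ is unique, so $\Ker\delta\oplus\Coker\delta\cong L_1\oplus L_2$ together with the fact that $\Ker\delta$ is a submodule of $M$ and $\Coker\delta$ a quotient of $N$ should pin down $\Ker\delta\cong L_1$ and $\Coker\delta\cong L_2$ (up to swapping, which one rules out using the indecomposability of $M$ and $N$ and a dimension/support comparison, or more directly because $L_1=\Ker\delta$ already has a fixed iso-type from the setup of the lemma). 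Hence the fibre of the middle-term map over $C_L\oplus K_{Y_L}$ is precisely ${}_{L_1}\Hom_A(M,N)_{L_2}$ as a subset of $\Hom_A(M,N)$.

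Next I would check that the isomorphism (\ref{iso}) is compatible with the automorphism-group actions so that it descends to a bijection on \emph{equivalence classes} of conflations with fixed middle term, not merely on the underlying extension elements; this is where one must be a little careful, since $\Ext_{C^1(\mathscr{P})}^1(C_M,C_N)_{C_L\oplus K_{Y_L}}$ is defined as a set of equivalence classes. I expect this to follow from naturality of the identifications in Proposition \ref{formula}(3)--(4) and Lemma \ref{Ext}, combined with the fact that the map $\delta\mapsto C_{L(\delta)}\oplus K_{Y_{L(\delta)}}$ is constant on $\Aut(M)\times\Aut(N)$-orbits and that conversely two $\delta$'s in the same orbit give equivalent conflations. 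Putting these together yields $|\Ext_{C^1(\mathscr{P})}^1(C_M,C_N)_{C_L\oplus K_{Y_L}}|=|{}_{L_1}\Hom_A(M,N)_{L_2}|$.

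The main obstacle is the orbit/equivalence-class bookkeeping in the last step: I want to be sure that the bijection $\Ext^1\cong\Hom$ is sufficiently functorial that restricting to a fixed middle-term iso-type on one side matches restricting the kernel and cokernel iso-types on the other, and that no collapsing of equivalence classes occurs on either side. The cleanest route is probably to realize both sides explicitly: parametrize conflations $0\to C_N\to L^\cdot\to C_M\to0$ with $L^\cdot\cong C_L\oplus K_{Y_L}$ via the long exact homology sequence (as already done above to produce $\delta$), show this assignment is well-defined on equivalence classes and injective, and show every $\delta\in{}_{L_1}\Hom_A(M,N)_{L_2}$ arises; surjectivity uses the construction right before the lemma producing an extension $L^\cdot=C_L\oplus K_{Y_L}$ from each such $\delta$, and injectivity uses that $\delta$ is \emph{uniquely} determined by the conflation via (\ref{iso}). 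Everything else is routine once the directedness of $\mathrm{mod}\,A$ is invoked to make Krull--Schmidt types rigid.
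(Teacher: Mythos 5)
Your proposal is correct and follows essentially the same route as the paper: under the isomorphism (\ref{iso}) each extension class corresponds to a unique $\delta\in\Hom_A(M,N)$ whose middle term is $C_{\Ker\delta\oplus\Coker\delta}\oplus K_{Y}$, and representation-directedness (Krull--Schmidt together with the impossibility of a cycle of nonzero non-isomorphisms $X\to M\to N\to X$) forces $\Ker\delta\cong L_1$ and $\Coker\delta\cong L_2$, so restricting (\ref{iso}) gives exactly the paper's injective-and-surjective map $\varphi$ onto ${}_{L_1}\Hom_A(M,N)_{L_2}$; the paper merely isolates the degenerate case $M\cong N$ (where $L=0$ and the middle term is $K_{P_M\oplus\Omega_M}$), which your uniform argument also covers. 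Your worry about $\Aut$-orbit bookkeeping is a non-issue, since $\Ext_{C^1(\mathscr{P})}^1(C_M,C_N)_{C_L\oplus K_{Y_L}}$ is by definition a subset of the Ext group itself (the $\Aut\times\Aut$-action only enters the Hall number $F^{Z}_{XY}$), though do note that your alternative ``dimension/support comparison'' would not by itself pin down the kernel/cokernel types --- the directedness argument is what is actually needed, just as in the paper.
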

\begin{proof}
If $M\cong N$, then each nonzero $\delta\in\Hom_A(M,N)$ is an isomorphism.
Thus $L=L_1=L_2=0$ and $Y_L\cong P_M\oplus\Omega_M$. It follows that
$$|\Ext_{C^1(\mathscr{P})}^1(C_M,C_N)_{K_{P_M\oplus\Omega_M}}|=|\Hom_A(M,N)|-1=|{}_{0}\Hom_A(M,N)_{0}|.$$

If $M\ncong N$, then
for each nontrivial extension
$$\xi: 0\longrightarrow C_N\longrightarrow C_L\oplus K_{Y_L}\longrightarrow C_M\longrightarrow 0,$$ by the isomorphism (\ref{iso}), there exists a unique nonzero $\delta'\in\Hom_A(M,N)$ such that $L\cong\Ker\delta'\oplus\Coker\delta'$. Since $A$ is representation-directed, we obtain that $\Ker\delta'\cong L_1$ and $\Coker\delta'\cong L_2$. Thus $\delta'\in{}_{L_1}\Hom_A(M,N)_{L_2}$.
Now we have an injective map $\varphi:\Ext_{C^1(\mathscr{P})}^1(C_M,C_N)_{C_L\oplus K_{Y_L}}\longrightarrow{}_{L_1}\Hom_A(M,N)_{L_2}$  induced by the isomorphism (\ref{iso}). Obviously, $\varphi$ is surjective.  So we complete the proof.
\end{proof}

 Using Lemma \ref{hom-ext} and the formula (\ref{lHoml}), we have the following proposition.
\begin{proposition}
Let $M, N$  be indecomposable $A$-modules such that $\Ext_A^1(M,N)=0$, and suppose that $L\cong L_1\oplus L_2$, where $L_1$ and $L_2$ are the kernel and cokernel of a morphism $\delta\in\Hom_A(M,N)$, respectively. Then
$$F_{C_MC_N}^{C_L\oplus K_{Y_L}}=\begin{cases}
q^{s_0}\frac{a_{M\oplus N}}{a_Ma_N},\quad\quad&\text{if}~L\cong M\oplus N;\\

q^{s_1}\frac{a_La_{Y_L}}{a_Ma_N}\sum\limits_{[L']}a_{L'}F_{L'L_1}^MF_{L_2L'}^N,\quad\quad&\text{otherwise},
\end{cases}$$
where $$s_0=\lr{\Dim P_N,\Dim \Omega_M}+\lr{\Dim \Omega_N,\Dim P_M}-\lr{\Dim M,\Dim N},$$ and
\begin{equation*}\begin{split}s_1=&(\Dim P_L,\Dim \Omega_L)+(\Dim Y_L,\Dim \Omega_L)+(\Dim P_L,\Dim Y_L)+\lr{\Dim Y_L,\Dim Y_L}\\&-(\Dim P_M,\Dim \Omega_M)-(\Dim P_N,\Dim \Omega_N)-\lr{\Dim P_M,\Dim P_N}-\lr{\Dim \Omega_M,\Dim \Omega_N}.\end{split}\end{equation*}
\end{proposition}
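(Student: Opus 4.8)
The plan is to compute $F_{C_MC_N}^{C_L\oplus K_{Y_L}}$ directly from the Riedtmann--Peng formula
$$F_{C_MC_N}^{C_L\oplus K_{Y_L}}=\frac{|\Ext_{C^1(\mathscr{P})}^1(C_M,C_N)_{C_L\oplus K_{Y_L}}|\cdot a_{C_L\oplus K_{Y_L}}}{|\Hom_{C^1(\mathscr{P})}(C_M,C_N)|\cdot a_{C_M}\cdot a_{C_N}},$$
splitting according to whether $L\cong M\oplus N$ (the split case) or not. First I would dispose of the numerator factor $|\Ext_{C^1(\mathscr{P})}^1(C_M,C_N)_{C_L\oplus K_{Y_L}}|$: in the split case this equals $|{}_M\Hom_A(M,N)_N|=1$, while in the nonsplit case Lemma \ref{hom-ext} identifies it with $|{}_{L_1}\Hom_A(M,N)_{L_2}|$, which by formula (\ref{lHoml}) equals $\sum_{[L']}a_{L'}F_{L'L_1}^MF_{L_2L'}^N$. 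The factor $|\Hom_{C^1(\mathscr{P})}(C_M,C_N)|$ is computed from Proposition \ref{formula}(1) as $|\Hom_A(M,N)|\cdot|\Hom_A(P_M,\Omega_N)|\cdot|\Hom_A(\Omega_M,P_N)|$; here one uses $\Ext_A^1(M,N)=0$ together with the Euler form identity $\lr{\Dim M,\Dim N}=\dim\Hom_A(M,N)$ to turn $|\Hom_A(M,N)|$ into $q^{\lr{\Dim M,\Dim N}}$, and likewise $|\Hom_A(P_M,\Omega_N)|=q^{\lr{\Dim P_M,\Dim \Omega_N}}$ and $|\Hom_A(\Omega_M,P_N)|=q^{\lr{\Dim \Omega_M,\Dim P_N}}$ since projectives have no self-extensions or higher Ext into them.

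Next I would expand the automorphism groups. By Proposition \ref{formula}(5), $a_{C_M}=a_M\cdot q^{\lr{\Dim P_M,\Dim\Omega_M}}\cdot q^{\lr{\Dim\Omega_M,\Dim P_M}}$ and similarly for $a_{C_N}$; also $a_{K_{Y_L}}=a_{Y_L}\cdot q^{\lr{\Dim Y_L,\Dim Y_L}}$. For the decomposable object $C_L\oplus K_{Y_L}$ I would use that $C_L$ and $K_{Y_L}$ share no indecomposable summands (since $K_{Y_L}$ is projective-injective, by Lemma \ref{indec. obj.s}(1), while $C_L$ has no such summand as $L$ has no projective-injective summand in the relevant range), so
$$a_{C_L\oplus K_{Y_L}}=a_{C_L}\cdot a_{K_{Y_L}}\cdot|\Hom_{C^1(\mathscr{P})}(C_L,K_{Y_L})|\cdot|\Hom_{C^1(\mathscr{P})}(K_{Y_L},C_L)|,$$
and then apply Proposition \ref{formula}(2) to write the two cross-Hom spaces as powers of $q$ using $\Hom_A(\Omega_L\oplus P_L,Y_L)$ and $\Hom_A(Y_L,\Omega_L\oplus P_L)$; again the projectivity of $Y_L$ lets me replace all dimensions by values of the symmetrized Euler form $(-,-)$, e.g. $\dim\Hom_A(P_L,Y_L)+\dim\Hom_A(Y_L,P_L)=(\Dim P_L,\Dim Y_L)$ and $\dim\Hom_A(P_L,\Omega_L)$ contributes a $(\Dim P_L,\Dim\Omega_L)$-type term when combined with its transpose. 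Assembling, the ratio collapses to $q^{s}\cdot\frac{a_L\,a_{Y_L}}{a_M a_N}\cdot\sum_{[L']}a_{L'}F_{L'L_1}^MF_{L_2L'}^N$ in the nonsplit case and $q^{s_0}\cdot\frac{a_{M\oplus N}}{a_M a_N}$ in the split case, where the exponent is an explicit alternating sum of Euler-form terms, and $a_{M\oplus N}=a_M a_N q^{\lr{\Dim M,\Dim N}}$ since $\Hom_A(M,N)$ and $\Ext_A^1(M,N)=0$ give $a_{M\oplus N}=a_M a_N|\Hom_A(M,N)||\Hom_A(N,M)|$ — wait, more carefully $a_{M\oplus N}=a_Ma_N|\Hom_A(M,N)||\Hom_A(N,M)|$ when $M\not\cong N$ and there are no common summands, which I would handle by noting $|\Hom_A(N,M)|$ also appears implicitly; in the split formula the stated $s_0$ absorbs exactly $\lr{\Dim M,\Dim N}$ with the right sign.

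The final step is bookkeeping: verify that the exponent coming out of the computation is precisely the $s_1$ (resp. $s_0$) in the statement. This requires substituting the relation (\ref{determine}), namely $P_L\oplus\Omega_L\oplus Y_L\oplus Y_L\cong P_M\oplus P_N\oplus\Omega_M\oplus\Omega_N$, into the raw exponent to eliminate the mixed terms $(\Dim P_M,\Dim P_N)$ and $(\Dim\Omega_M,\Dim\Omega_N)$ — this is the analogue of the simplification carried out in Case I where $P_M\oplus P_N\cong P_L\oplus X_L$ was used, and I expect the main obstacle to be precisely this Euler-form algebra: one must carefully track which summands $|\Hom|$-computations produce $(-,-)$ versus $\lr{-,-}$ terms, and confirm that the additivity of the (symmetrized) Euler form on direct sums, combined with (\ref{determine}), reduces everything to the compact expression displayed. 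The rest (the structure of the proof, the identification of the Ext set, and the non-overlap of summands) is routine given Proposition \ref{formula}, Lemma \ref{hom-ext}, and formula (\ref{lHoml}).
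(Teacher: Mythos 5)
Your proposal is correct and follows essentially the same route as the paper: Riedtmann--Peng applied to $F_{C_MC_N}^{C_L\oplus K_{Y_L}}$, with the Ext-count given by Lemma \ref{hom-ext} and formula (\ref{lHoml}), the Hom and automorphism cardinalities taken from Proposition \ref{formula}(1),(2),(5), and the exponent reduced to $s_0$, $s_1$ via Euler-form bookkeeping (using $\Dim M=\Dim P_M-\Dim\Omega_M$, $\Dim N=\Dim P_N-\Dim\Omega_N$ and $\Ext_A^1(M,N)=0$), exactly as in the paper's explicit Case I computation. The only cosmetic wobble is your digression on expanding $a_{M\oplus N}$, which is unnecessary since the stated formula keeps $a_{M\oplus N}$ unexpanded and the term $-\lr{\Dim M,\Dim N}$ in $s_0$ already comes from the factor $|\Hom_A(M,N)|$ in $|\Hom_{C^1(\mathscr{P})}(C_M,C_N)|$.
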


The rest is devoted to a \textbf{special case}: for indecomposable $L, M, N\in\mod A$, we calculate $F_{C_MC_N}^{C_L}$.

Note that the calculations of Hall numbers in $C^1(\mathscr{P})$ always involve the information of the $\top$ of a module.
For any epimorphism $M\twoheadrightarrow N$ in $\mod A$, it is clear that $\top N\subseteq \top M$. However, it is not easy to compare $\top L$ and $\top M$ for any monomorphism $L\hookrightarrow M$ in $\mod A$.
\begin{lemma}\label{top lem}
Let $0\rightarrow L\rightarrow M\rightarrow N\rightarrow 0$ be a short exact sequence with $L, M, N$ all indecomposable nonzero $A$-modules,
then $\top M\nsubseteq\top L$.
\end{lemma}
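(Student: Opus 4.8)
The plan is to argue by contradiction: suppose $\top M\subseteq\top L$ for a short exact sequence $0\rightarrow L\xrightarrow{f} M\xrightarrow{g} N\rightarrow 0$ with $L,M,N$ all indecomposable and nonzero. Since $f$ is a monomorphism and $\top M\subseteq\top L$, I want to show that $f$ actually splits the top, i.e. that the composite $L\xrightarrow{f} M\twoheadrightarrow \top M$ is surjective. Indeed, pick a projective cover $p\colon P_L\twoheadrightarrow L$; then $fp\colon P_L\to M$ is a projective map lifting a generating set of $\top L$, and because $\top M\subseteq\top L$ every generator of $\top M$ already lies in the image of $fp$, so $fp$ is onto modulo $\rad M$, hence (Nakayama) $fp$ is an epimorphism $P_L\twoheadrightarrow M$. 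Thus $M$ is a quotient of $P_L$; in particular $\pd M\le 1$ forces nothing here, but the key consequence is that $f$ being a split-by-projectivity situation can be exploited: since $P_L\twoheadrightarrow M$ factors through $f$, and $P_L\twoheadrightarrow L$ is the projective cover, $f$ admits a one-sided inverse on tops.

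The cleaner route, which I expect to use, is to compare projective covers directly. From $0\to L\to M\to N\to 0$ and the minimal projective resolutions \eqref{mini proj res} of $L$, $M$, $N$, the Horseshoe Lemma gives $P_L\oplus P_N$ mapping onto $M$, and by Nakayama $\top M$ is a quotient of $\top L\oplus\top N$; conversely $\top N$ is a quotient of $\top M$. If $\top M\subseteq\top L$, I claim the projective cover $P_M$ is (isomorphic to) a direct summand of $P_L$. Then, chasing the middle column of the Horseshoe diagram \eqref{horse lemma diagram}-style diagram for this sequence, one gets $P_M\oplus P_N\cong P_L\oplus X$ with $X$ projective and, since $P_M\mid P_L$, comparing dimension vectors of tops yields $\top N$ is a quotient of $\top L$ with the "$P_M$-part" cancelling, which forces $P_N\mid P_L$ as well after accounting for $X$; pushing this through, $L\hookrightarrow M$ would have to be split or $N$ would have to be $0$ — contradicting indecomposability of $M$ (a nonsplit mono between indecomposables cannot have its target's top contained in its source's top without the source surjecting onto the target, whence $L\cong M$ and $N=0$). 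I would make the cancellation rigorous using that $A=kQ$ is representation-directed/hereditary, so $\Ext^1$ and $\Hom$ spaces are rigidly controlled and Krull--Schmidt applies to projectives.

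Concretely, here is the argument I would write. Assume $\top M\subseteq\top L$. As above, $f\colon L\to M$ lifts to an epimorphism $\pi\colon P_L\twoheadrightarrow M$ with $fp = \pi$ where $p\colon P_L\twoheadrightarrow L$ is the projective cover; hence $\pi$ factors through $f$, so $\ker\pi\supseteq$ nothing new, but $\pi p^{-1}$ is not defined — instead, restrict: $p$ surjective and $fp=\pi$ surjective means $f$ is surjective (since $\im f\supseteq \im(fp)=\im\pi=M$). But $f$ surjective and injective makes $f$ an isomorphism, so $N=M/L=0$, contradicting $N\neq 0$. Wait — this shows the statement immediately once we know $fp$ is surjective, and $fp$ surjective follows from $\top M\subseteq \top L$ by Nakayama as in the first paragraph. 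So the entire proof reduces to the Nakayama step: $fp\colon P_L\to M$ has image containing a generating set of $\top M$ (because $\im(fp)$ surjects onto $\im f$'s top, which contains $\top M$), hence $\im(fp)+\rad M = M$, hence $\im(fp)=M$.

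The main obstacle is making precise the claim "$\im(fp)$ surjects onto all of $\top M$": a priori $f\colon L\to M$ need not be surjective, so $\im f$ is a proper submodule and one must check that $\top M\subseteq\top L$ genuinely forces $\im f+\rad M=M$. The point is that $\top M\subseteq\top L$ is an inclusion of semisimple modules realized compatibly with the map $f$ — i.e. the induced map $\bar f\colon\top L\to \top M$ (which exists since $f(\rad L)\subseteq\rad M$) is the relevant comparison, and one has to rule out that $\bar f=0$ while still $\top M$ embeds abstractly into $\top L$. This is where indecomposability of $M$ and the structure of $kQ$-modules (every indecomposable is generated by its top, and the top of a submodule maps into the top of the ambient module) is essential: if $\bar f=0$ then $f(L)\subseteq\rad M$, but then $N=M/f(L)$ has $\top N=\top M$ surjecting onto $\top M$ nontrivially, and iterating $f(L)\subseteq\rad^k M$ for all $k$ gives $f=0$, contradicting $L\neq 0$ and $f$ mono. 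Once $\bar f\neq 0$ and (by a rank/length count using $\dim\top M\le\dim\top L$ together with indecomposability) $\bar f$ is in fact surjective, the Nakayama argument closes the proof.
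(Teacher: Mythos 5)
Your endgame---once the induced map $\bar f\colon \top L\to \top M$ is surjective, Nakayama gives $f(L)+\rad M=M$, so $f$ is onto and hence an isomorphism, contradicting $N\neq 0$---is exactly how the paper finishes. The genuine gap is the step you yourself flag as ``the main obstacle'': deducing surjectivity of $\bar f$ from the hypothesis $\top M\subseteq\top L$, which is only a comparison of multiplicities of simples and says nothing a priori about the map $\bar f$. Neither of your two sub-arguments works. In the case $\bar f=0$ you claim $f(L)\subseteq\rad M$ can be ``iterated'' to $f(L)\subseteq\rad^k M$ for all $k$; there is no such iteration ($f(L)\subseteq\rad M$ in no way forces $f(L)\subseteq\rad^2 M$; for instance $f(L)$ could equal $\rad M$). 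In the case $\bar f\neq 0$ you appeal to an unspecified ``rank/length count using $\dim\top M\le\dim\top L$ together with indecomposability''; no such count exists, since $\bar f$ can land in a proper direct summand of $\top M$ while the dimension inequality holds---ruling this out is precisely the content of the lemma, not a routine count. (Also, in your ``cleaner route'' the Horseshoe conclusion should read $P_L\oplus P_N\cong P_M\oplus X$, not $P_M\oplus P_N\cong P_L\oplus X$.)

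That this cannot be repaired by general module theory is shown by $A=k[T]/(T^2)$: the sequence $0\to \rad A\to A\to \top A\to 0$ has all three terms indecomposable and $\top A\cong\top(\rad A)$, so the statement is false over that algebra, yet every tool you actually use (Nakayama, the induced map on tops, ``every module is generated by its top'') is available there. A correct proof must use that $A=kQ$ with $Q$ Dynkin is representation-directed, and this is where the paper's argument differs from yours: applying the snake lemma to the radical/top sequences of $L$ and $M$ gives an exact sequence $0\to\Ker\bar f\to \rad M/\rad L\to N\to\Coker\bar f\to 0$; if $\bar f$ were not surjective, choose a simple summand $S_i$ of $\Coker\bar f$, so $\Hom_A(N,S_i)\neq 0$, hence $\Ext^1_A(N,S_i)=0$ by directedness; then $\Hom_A(-,S_i)$ is exact on $0\to L\to M\to N\to 0$, so the multiplicity of $S_i$ in $\top M$ strictly exceeds that in $\top L$, contradicting $\top M\subseteq\top L$. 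Therefore $\bar f$ is surjective, and your Nakayama step then closes the proof. Without some such $\Ext$-vanishing input your outline cannot be completed.
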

\bp
Write $0\rightarrow L\xrightarrow{f} M\rightarrow N\rightarrow 0$, where $f$ induces naturally $\overline{f}=\top f: \top L\rightarrow \top M$.
Assume that $\top M\subseteq \top L$. Applying the Snake Lemma to the following short exact sequences:
$$\xymatrix{0\ar[r]&\rad L\ar[r]\ar[d]&L\ar[r]\ar[d]^{f}& \top L\ar[d]^{\overline{f}}\ar[r] & 0\\
0\ar[r]&\rad M\ar[r]&M\ar[r]& \top M\ar[r]& 0,}$$
we get a long exact sequence \begin{equation}\label{top seq}0\longrightarrow \Ker \overline{f}\longrightarrow \rad M / \rad L \longrightarrow N\longrightarrow \Coker \overline{f}\longrightarrow 0.\end{equation}
Since $\top L$ and $\top M$ are both semisimple, we have $\top M\oplus \Ker \overline{f}=\top L\oplus \Coker \overline{f}$ with $\Ker \overline{f}$ and $\Coker \overline{f}$ also semisimple. Then $\top M\subseteq \top L$ implies $\Coker \overline{f}\subseteq \Ker \overline{f}$.
Assume $\Coker \overline{f}\neq 0$, say there is a simple module $S_{i}\subseteq \Coker \overline{f}$. By (\ref{top seq}), $\Hom_{A}(N,S_{i})\neq 0$, which implies $\Ext^{1}_{A}(N,S_{i})=0$ since $A$ is representation-directed. Then applying the Hom functor $\Hom_{A}(-,S_{i})$ to the short exact sequence $0\rightarrow L\rightarrow M\rightarrow N\rightarrow 0$, we acquire
$$0\longrightarrow \Hom_{A}(N,S_{i})\longrightarrow \Hom_{A}(M,S_{i})\longrightarrow \Hom_{A}(L,S_{i})\longrightarrow 0.$$
But this means the multiplicity of direct summand $S_{i}$ in $\top M$ is greater than that in $\top L$, which contradicts our assumption.
Hence $\Coker \overline{f}= 0$, and thus $\Coker f=0$, which is absurd. So we are done.
\ep

\begin{proposition}\label{jisuan}
Let $P, L, M, N\in\mod A$ be indecomposable and $P$ projective.

$(1)$ If $\Hom_A(M,N)=0$, then there exists a short exact sequence
$0\rightarrow C_N\rightarrow C_L\rightarrow C_M\rightarrow 0$ in $C^1(\mathscr{P})$ if and only if $P_L\cong P_M\oplus P_N$, and there exists a short exact sequence $0\rightarrow N\rightarrow L\rightarrow M\rightarrow 0$ in $\mod A$.
Moreover, $$F^{C_L}_{C_MC_N}=q^{\text{\rm dim\,Hom}_A(P_N,\ooz_M)+\text{\rm dim\,Hom}_A(\ooz_N,P_M)} F^L_{MN}.$$

$(2)$ If~$M\ncong N$~and~$\Ext^1_A(M,N)=0$, then there exists a short exact sequence
$0\rightarrow C_N\rightarrow C_L\rightarrow C_M\rightarrow 0$ in $C^1(\mathscr{P})$ if and only if $P_L\oplus\Omega_L\cong P_M\oplus P_N\oplus\Omega_M\oplus\Omega_N$, and there exists a monomorphism $f: M\rightarrow N$ such that $\Coker f\cong L$. Moreover, $F^{C_L}_{C_MC_N}=
q^{t}\frac{a_L}{a_N}F_{LM}^N$, where \begin{equation*}\begin{split}t=&(\Dim P_L,\Dim \Omega_L)-(\Dim P_M,\Dim \Omega_M)-(\Dim P_N,\Dim \Omega_N)\\&-\lr{\Dim P_M,\Dim P_N}-\lr{\Dim \Omega_M,\Dim \Omega_N}.\end{split}\end{equation*}

$(3)$ There exists a short exact sequence
$0\rightarrow C_M\rightarrow K_P\rightarrow C_M\rightarrow 0$ in $C^1(\mathscr{P})$ if and only if $M\cong P$. Moreover, $F_{C_PC_P}^{K_P}=1$.

\end{proposition}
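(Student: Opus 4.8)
The plan is to reduce all three parts to the analyses already carried out in Case~I and Case~II, the only genuinely new point being to decide exactly when the projective–injective summand of the middle complex vanishes, so that the conflation has middle term precisely $C_L$ (respectively $K_P$); the Hall numbers are then obtained by feeding $X_L=0$ (respectively $Y_L=0$) into the two preceding propositions and simplifying with the fact that, $A$ being hereditary, every syzygy $\Omega_X$ is projective.

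\emph{Part (1).} Here $\Hom_A(M,N)=0$ puts us in Case~I, so $\Ext^1_{C^1(\mathscr{P})}(C_M,C_N)\cong\Ext^1_A(M,N)$, and every such extension arises, via the Horseshoe diagram (\ref{Horse}) applied to some conflation $0\to N\to L'\to M\to 0$ in $\mod A$, with middle term $C_{\widetilde\delta}\cong C_{L'}\oplus K_{X_{L'}}$ where $P_M\oplus P_N\cong P_{L'}\oplus X_{L'}$. Since $C_L$ is an indecomposable of $C$-type while $K_{X_{L'}}$ is acyclic, the middle term is isomorphic to $C_L$ precisely when $X_{L'}=0$ and $L'\cong L$; and $X_{L'}=0$ amounts to $P_L\cong P_M\oplus P_N$ (then $\Omega_L\cong\Omega_M\oplus\Omega_N$ follows by comparing dimension vectors). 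The converse is immediate from the Horseshoe construction. For the Hall number, the Case~I proposition with $X_L=0$ gives $a_{X_L}=1$ and $\lr{\Dim X_L,\Dim X_L}=0$, hence $F^{C_L}_{C_MC_N}=q^{\,\lr{\Dim P_N,\Dim\Omega_M}+\lr{\Dim\Omega_N,\Dim P_M}}F^L_{MN}$; since $A$ is hereditary, $P_N$ and $\Omega_N$ are both projective, so $\lr{\Dim P_N,\Dim\Omega_M}=\dim\Hom_A(P_N,\Omega_M)$ and $\lr{\Dim\Omega_N,\Dim P_M}=\dim\Hom_A(\Omega_N,P_M)$, which is the claimed exponent.

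\emph{Part (2).} Now $\Ext^1_A(M,N)=0$ puts us in Case~II: $\Ext^1_{C^1(\mathscr{P})}(C_M,C_N)\cong\Hom_A(M,N)$, the extension attached to $\delta$ has middle term $C_{L'}\oplus K_{Y_{L'}}$ with $L'\cong\Ker\delta\oplus\Coker\delta$ and $Y_{L'}$ fixed by (\ref{determine}). Given a conflation $0\to C_N\to C_L\to C_M\to 0$ with $L$ indecomposable, $\delta$ must be nonzero (otherwise the middle is $C_{M\oplus N}$, decomposable as $M\ncong N$), and the indecomposability of $C_L$ forces $Y_{L'}=0$ and $L'$ indecomposable, so $\Ker\delta=0$ or $\Coker\delta=0$. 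The crux is to rule out $\Coker\delta=0$: in that case $0\to L\to M\xrightarrow{\delta}N\to 0$ is a non-split conflation of indecomposables (and $L=\Ker\delta\ne0$ because $M\ncong N$), and combining $Y_{L'}=0$, (\ref{determine}) and $\Dim\Omega_X=\Dim P_X-\Dim X$ one gets $\Dim P_L=\Dim P_M+\Dim\Omega_N$; as $\Omega_N$ is projective this forces $P_L\cong P_M\oplus\Omega_N$, whence $\top M\subseteq\top L$, contradicting Lemma~\ref{top lem}. Therefore $\delta$ is a monomorphism with $\Coker\delta\cong L$, and $Y_{L'}=0$ reads $P_L\oplus\Omega_L\cong P_M\oplus P_N\oplus\Omega_M\oplus\Omega_N$; conversely, given such a monomorphism and such an isomorphism of projectives, the attached extension has $L'=\Coker\delta\cong L$ and, by (\ref{determine}), $Y_{L'}=0$, i.e.\ middle term $C_L$. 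The Hall number follows from the Case~II proposition (``otherwise'' branch, as $L$ indecomposable is not $M\oplus N$) with $L_1=\Ker\delta=0$, $L_2=\Coker\delta=L$ and $Y_L=0$: by (\ref{lHoml}), $\sum_{[L']}a_{L'}F^M_{L'0}F^N_{LL'}=a_MF^N_{LM}$, so $F^{C_L}_{C_MC_N}=q^{s_1}\frac{a_L}{a_Ma_N}\,a_M F^N_{LM}=q^{t}\frac{a_L}{a_N}F^N_{LM}$ with $t=s_1$ specialized at $Y_L=0$, as stated. This step excluding the epimorphism case is the one real obstacle; it is exactly the point where Lemma~\ref{top lem} is needed.

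\emph{Part (3).} Since $A$ is representation-directed, $\Ext^1_A(M,M)=0$, so again we are in Case~II; a conflation $0\to C_M\to K_P\to C_M\to 0$ corresponds to a nonzero $\delta\in\End_A(M)$ with middle term $C_{L'}\oplus K_{Y_{L'}}$, $L'\cong\Ker\delta\oplus\Coker\delta$. As $K_P$ has no indecomposable summand of $C$-type (Lemma~\ref{indec. obj.s}), $L'=0$, so $\delta\in\Aut_A(M)$; then $Y_{L'}\cong P_M\oplus\Omega_M$ by (\ref{determine}), and $K_{P_M\oplus\Omega_M}=K_{P_M}\oplus K_{\Omega_M}$ is indecomposable only when $\Omega_M=0$ and $P_M$ is indecomposable, i.e.\ $M\cong P_M$ is an indecomposable projective, whence $K_{Y_{L'}}=K_M\cong K_P$ gives $M\cong P$; the converse is clear (take $\delta=\mathrm{id}_P$). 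For the Hall number, Proposition~\ref{formula} gives $|\Hom_{C^1(\mathscr{P})}(C_P,C_P)|=|\End_A(P)|$, $a_{C_P}=a_P$ and $a_{K_P}=a_P|\End_A(P)|$, while the analysis above shows $|\Ext^1_{C^1(\mathscr{P})}(C_P,C_P)_{K_P}|=|\Aut_A(P)|=a_P$; the Riedtmann--Peng formula then yields $F^{K_P}_{C_PC_P}=\frac{a_P}{|\End_A(P)|}\cdot\frac{a_P|\End_A(P)|}{a_P^2}=1$. Apart from the epimorphism exclusion in Part~(2), everything is routine bookkeeping of which summand of the middle complex is projective–injective, plugged into the Case~I and Case~II formulas.
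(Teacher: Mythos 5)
Your proposal is correct and takes essentially the same route as the paper: reduce to the Case I/Case II analyses, use Krull--Schmidt to force the acyclic summand $K_{X_L}$ (resp.\ $K_{Y_L}$) of the middle term to vanish, and exclude the epimorphism alternative in part (2) by producing $P_L\cong P_M\oplus \Omega_N$ and contradicting Lemma \ref{top lem} via $\top M\subseteq \top L$. The only cosmetic difference is that you obtain $P_L\cong P_M\oplus\Omega_N$ by a dimension-vector count (using that projectives are determined by their dimension vectors) where the paper compares minimal resolutions along $0\to L\to M\to N\to 0$; your parts (1) and (3), which the paper omits with ``we only prove (2)'', are exactly the intended specializations of the Case I and Case II propositions.
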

\begin{proof}
We only prove  $(2)$.

$(\Rightarrow)$ Assume that there exists a short exact sequence $0\rightarrow C_N\rightarrow C_L\rightarrow C_M\rightarrow 0$ in $C^1(\mathscr{P})$, then $P_L\oplus\Omega_L\cong P_M\oplus P_N\oplus\Omega_M\oplus\Omega_N$, and we have a long  exact sequence:
$$\xymatrix{N\ar[r]&L\ar[r]&M\ar[r]^f&N\ar[r]&L\ar[r]&M.}$$
Thus $L\cong K\oplus C$, where $K=\Ker f$ and $C=\Coker f$.
Since $L$ is indecomposable, either $K$ or $C$ vanishes.
If $K$ is zero, then $f$ is a monomorphism with $\Coker f\cong L$. Otherwise, $f$ is an epimorphism with $\Ker f\cong L$. From the short exact sequence $0\rightarrow L\rightarrow M\rightarrow N\rightarrow 0$, we conclude that $P_{L}\oplus P_{N}=P_{M}\oplus X$ and $\Omega_{L}\oplus \Omega_{N}=\Omega_{M}\oplus X$ for some projective module $X$.
Hence $X=P_{N}\oplus \Omega_{N}$ and $P_{L}=P_{M}\oplus \Omega_{N}$. But this is impossible according to Lemma \ref{top lem}.

$(\Leftarrow)$ For any $f\in\Hom_A(M,N)$, by the isomorphism (\ref{iso}), there exists an extension of $C_M$
by $C_N$
\begin{equation*}0\longrightarrow C_N\longrightarrow L^\cdot_f\longrightarrow C_M\longrightarrow 0,\end{equation*} where $L^\cdot_f\cong C_L\oplus K_W$ with $L=\Coker f$ and $P_L\oplus \Omega_L\oplus W\oplus W\cong P_M\oplus P_N\oplus\Omega_M\oplus\Omega_N$.
Clearly we have $L^\cdot_f\cong C_L$, since $P_L\oplus\Omega_L\cong P_M\oplus P_N\oplus\Omega_M\oplus\Omega_N$.
\end{proof}
\begin{remark}
Let $L^\cdot, M^\cdot, N^\cdot\in C^1(\mathscr{P})$ with $M^\cdot, N^\cdot$ indecomposable. If $M^\cdot$ or $N^\cdot$ is acyclic, then $F_{M^\cdot N^\cdot}^{L^\cdot}$ is zero unless $L^\cdot\cong M^\cdot\oplus N^\cdot$.  Ringel \cite{R91a} gave a complete list of all Hall polynomials $\psi_{MN}^L$ for indecomposable $L, M, N\in\mod A$, so Proposition $\ref{jisuan}$ roughly provides all Hall polynomials $\psi_{M^\cdot N^\cdot}^{L^\cdot}$ for indecomposable $L^\cdot, M^\cdot, N^\cdot\in C^1(\mathscr{P})$.
\end{remark}

The following corollary is useful in Section 5.

\begin{corollary}\label{proj first}
Let $0\rightarrow P\rightarrow M\rightarrow N \rightarrow 0$ be a short  exact sequence in $\mod A$ with $P$ projective. If $\top M\cong\top N$, then we have a short exact sequence $0\rightarrow C_M\rightarrow C_N\rightarrow C_P \rightarrow 0$ in $C^1(\mathscr{P})$.
\end{corollary}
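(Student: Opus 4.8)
The plan is to realise the asserted short exact sequence as the extension in $C^1(\mathscr{P})$ corresponding, under the isomorphism of Proposition~\ref{formula}(3), to the given monomorphism $\iota\colon P\hookrightarrow M$. Since $P$ is projective, $\Ext^1_A(P,M)=0$, so Proposition~\ref{formula}(3) gives $\Ext^1_{C^1(\mathscr{P})}(C_P,C_M)\cong\Hom_A(P,M)$; thus we are in the \textbf{Case II} situation of this section, with the pair $(M,N)$ there played by $(P,M)$. Applying the analysis preceding Proposition~\ref{jisuan} to the homomorphism $\delta=\iota$, the extension of $C_P$ by $C_M$ attached to $\iota$ has the form
$$0\longrightarrow C_M\longrightarrow L^\cdot_\iota\longrightarrow C_P\longrightarrow 0,\qquad L^\cdot_\iota\cong C_L\oplus K_{Y_L},$$
where $L\cong\Ker\iota\oplus\Coker\iota\cong N$ (as $\iota$ is a monomorphism) and $Y_L$ is the projective module uniquely determined, as in~(\ref{determine}), by
$$P_N\oplus\Omega_N\oplus Y_L\oplus Y_L\cong P_P\oplus P_M\oplus\Omega_P\oplus\Omega_M=P\oplus P_M\oplus\Omega_M,$$
using $P_P=P$ and $\Omega_P=0$ since $P$ is projective.

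It then remains to show $Y_L=0$, i.e. that $P_N\oplus\Omega_N\cong P\oplus P_M\oplus\Omega_M$, and this is where the hypothesis $\top M\cong\top N$ enters; I expect this to be the main point of the argument. I would reason as follows. The composite $P_M\xrightarrow{\epsilon_M}M\twoheadrightarrow N$ is an epimorphism from a projective module whose kernel $K$ fits into a short exact sequence $0\to\Omega_M\to K\to P\to 0$, which splits because $P$ is projective; hence $K\cong\Omega_M\oplus P$ and $0\to\Omega_M\oplus P\to P_M\to N\to 0$ is exact. Since $\top M\cong\top N$, the projective cover $P_M=P(\top M)$ of $M$ is also the projective cover of $N$, so this sequence is a \emph{minimal} projective resolution of $N$; therefore $P_N\cong P_M$ and $\Omega_N\cong\Omega_M\oplus P$. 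Substituting gives $P_N\oplus\Omega_N\cong P_M\oplus\Omega_M\oplus P$, which forces $Y_L\oplus Y_L\cong 0$, hence $Y_L=0$, $L^\cdot_\iota\cong C_N$, and $0\to C_M\to C_N\to C_P\to 0$ is the desired short exact sequence.

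The one delicate step I anticipate is the inference, from $\top M\cong\top N$, that the epimorphism $P_M\twoheadrightarrow N$ above is a projective cover of $N$ (so that the constructed resolution is minimal). This follows from the standard fact that any epimorphism onto $N$ from a projective module isomorphic to the projective cover $P(N)$ is itself a projective cover: lifting it along $P(N)\twoheadrightarrow N$ yields a surjection between isomorphic finite-dimensional projective modules, which must be an isomorphism. Everything else is bookkeeping with the dimension identities already recorded in Propositions~\ref{formula} and~\ref{jisuan} and in~(\ref{determine}). I would also note a shortcut avoiding the Hall-number machinery altogether: once $P_N\cong P_M$ and $\Omega_N\cong\Omega_M\oplus P$ are known, the evident componentwise inclusion $C_M\hookrightarrow C_N$ has quotient complex $(P,0)=C_P$, which produces the sequence directly — and here too $\top M\cong\top N$ is precisely what guarantees that the relevant projective resolution of $N$ is minimal.
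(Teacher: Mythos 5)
Your proposal is correct and follows essentially the same route as the paper: interpret the inclusion $P\hookrightarrow M$ as an extension class via $\Ext^1_{C^1(\mathscr{P})}(C_P,C_M)\cong\Hom_A(P,M)$, identify the middle term as $C_N\oplus K_W$ with $W$ pinned down by the Krull--Schmidt identity, and use $\top M\cong\top N$ to get $P_N\cong P_M$ and $\Omega_N\cong\Omega_M\oplus P$, forcing $W=0$. The only (immaterial) difference is that the paper obtains these last isomorphisms from the Horseshoe Lemma applied to $0\to P\to M\to N\to 0$ plus cancellation, whereas you build the minimal projective resolution of $N$ directly from the composite $P_M\twoheadrightarrow M\twoheadrightarrow N$ and the projective-cover lifting argument.
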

\begin{proof}
Suppose we are given a short exact sequence \begin{equation}\label{dzhl}\xymatrix{0\ar[r]& P\ar[r]^f& M\ar[r]& N\ar[r]& 0.}\end{equation}
Since $\Ext_{C^1(\mathscr{P})}^1(C_P,C_M)\cong\Hom_A(P,M)$, $f$ induces an extension of $C_P$ by $C_M$
\begin{equation*}0\longrightarrow C_M\longrightarrow L^\cdot_f\longrightarrow C_P\longrightarrow 0,\end{equation*} where $L^\cdot_f\cong C_N\oplus K_W$ with $P_N\oplus \Omega_N\oplus W\oplus W\cong P\oplus P_M\oplus\Omega_M$. By the Horseshoe Lemma diagram of the sequence (\ref{dzhl}) as in (\ref{Horse}) , there exists a projective $R\in\mod A$ such that $P_M\oplus R\cong P\oplus P_N$ and $\Omega_M\oplus R\cong\Omega_N$. Since $\top M\cong\top N$, we know that $P_M\cong P_N$. Hence $R\cong P$ and  $P\oplus \Omega_M\cong\Omega_N$. So  $L^\cdot_f\cong C_N$.

\end{proof}

\section{Lie algebras associated to $1$-cyclic perfect complexes}\label{sec lie}
In this section, $Q$ is still a Dynkin quiver as above. We use the Hall polynomials introduced in Theorem \ref{Hallpoly} to define  Lie algebras associated to $1$-cyclic perfect complexes.

We denote by $\tilde{\mathfrak{n}}$ the vector space over $\mathbb{C}$ spanned by the iso-classes of indecomposable objects in $C^1(\mathscr{P})$, and by $\tilde{\mathfrak{n}}^{+}$ (resp. $\hat{\mathfrak{n}}$) the subspace of $\tilde{\mathfrak{n}}$ spanned by the iso-classes of indecomposable and non-acycilc (resp. acyclic) objects in $C^1(\mathscr{P})$. Obviously, $\tilde{\mathfrak{n}}=\tilde{\mathfrak{n}}^{+}\oplus \hat{\mathfrak{n}}$ as vector spaces.

\begin{proposition}
$\tilde{\mathfrak{n}}$ is a Lie algebra with Lie bracket given by
$$[[C_q(\mu)], [C_q(\nu)]]=\sum\limits_{\lambda\in\mathfrak{P}^1(\Gamma)}(\psi_{\mu \nu}^{\lambda}(1)-\psi_{\nu \mu}^{\lambda}(1))[C_q(\lambda)]$$ for any indecomposable $\mu, \nu\in\mathfrak{P}^1(\Gamma)$. Moreover, $\tilde{\mathfrak{n}}^{+}$ and $\hat{\mathfrak{n}}$ are both Lie subalgebras (ideals) of~$\tilde{\mathfrak{n}}$.
\end{proposition}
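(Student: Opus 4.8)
The plan is to verify the three Lie algebra axioms for the bracket defined via degenerate Hall multiplication: anti-symmetry, bilinearity, and the Jacobi identity. Anti-symmetry is immediate from the definition, since swapping $\mu$ and $\nu$ negates every structure constant $\psi_{\mu\nu}^{\lambda}(1) - \psi_{\nu\mu}^{\lambda}(1)$; bilinearity is built into extending by linearity from the basis. The real content is (i) checking that the bracket is \emph{well-defined} as a map $\tilde{\mathfrak{n}} \times \tilde{\mathfrak{n}} \to \tilde{\mathfrak{n}}$ — that is, $[[C_q(\mu)],[C_q(\nu)]]$ actually lands in the span of \emph{indecomposable} iso-classes, not in all of $\mathfrak{P}^1(\Gamma)$ — and (ii) the Jacobi identity.

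For well-definedness, I would argue as follows. If $\lambda$ is decomposable and $\mu, \nu$ are indecomposable, then by Proposition \ref{F-F} (the adaptation of Hubery's Theorem 4), $F_{C_q(\mu)C_q(\nu)}^{C_q(\lambda)} - F_{C_q(\nu)C_q(\mu)}^{C_q(\lambda)} \equiv 0 \pmod{q-1}$ for every prime power $q$. Since by Theorem \ref{Hallpoly} these Hall numbers are values of the polynomials $\psi_{\mu\nu}^{\lambda}, \psi_{\nu\mu}^{\lambda} \in \mathbb{Z}[x]$, the polynomial $\psi_{\mu\nu}^{\lambda}(x) - \psi_{\nu\mu}^{\lambda}(x)$ vanishes modulo $(x-1)$ at infinitely many values, hence is divisible by $(x-1)$ in $\mathbb{Z}[x]$; therefore $\psi_{\mu\nu}^{\lambda}(1) - \psi_{\nu\mu}^{\lambda}(1) = 0$. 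Thus only indecomposable $\lambda$ contribute to the sum, so the bracket is a well-defined operation on $\tilde{\mathfrak{n}}$. The same congruence argument simultaneously shows that $\tilde{\mathfrak{n}}$ is closed under the bracket.

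For the Jacobi identity, the standard route is to pass through the (generic, degenerate) Hall \emph{algebra}. One works in $\mathcal{H}(C^1(\mathscr{P}))$, or rather its generic version $\mathcal{H}_{1}$ obtained by specializing Hall polynomials at $x = 1$: this is an associative algebra by the theorem of Hubery quoted in the excerpt (associativity of $\mathcal{H}(\mathcal{A})$ for a finitary exact category, passing to the generic specialization using Theorem \ref{Hallpoly}). In any associative algebra the commutator $[a,b] = ab - ba$ satisfies the Jacobi identity. One then checks that for indecomposable $\mu, \nu$ the product $[C_q(\mu)]\cdot[C_q(\nu)]$ in the degenerate Hall algebra, when projected onto the span of indecomposables, agrees with the Lie bracket above up to the contribution of decomposable terms — and, crucially, that the decomposable terms drop out of the triple commutator. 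Concretely: let $\pi$ be the projection of $\mathcal{H}_1$ onto $\tilde{\mathfrak{n}}$ killing all decomposable basis elements. Because of the mod-$(q-1)$ vanishing, $[a,b]_{\mathrm{Lie}} = \pi([a,b]_{\mathrm{assoc}})$ on indecomposable generators, and the ideal generated by decomposables behaves well enough that the Jacobi identity for the associative commutator descends to $\tilde{\mathfrak{n}}$. This is exactly the mechanism by which Ringel's degenerate Hall algebra yields a Lie algebra, and the same bookkeeping applies here.

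Finally, for the assertion that $\tilde{\mathfrak{n}}^{+}$ and $\hat{\mathfrak{n}}$ are subalgebras and ideals: I would observe that for $\mu$ or $\nu$ acyclic and indecomposable, $\Ext^1_{C^1(\mathscr{P})}(C_q(\mu), C_q(\nu)) = 0$ (acyclic indecomposables are the $K_{P_i}$, which are projective-injective by Lemma \ref{indec. obj.s}), so the only nonzero Hall number is $F_{C_q(\mu)C_q(\nu)}^{C_q(\mu)\oplus C_q(\nu)} = 1$, which is decomposable and hence killed in the bracket — giving $[\hat{\mathfrak{n}}, \tilde{\mathfrak{n}}] = 0$, so $\hat{\mathfrak{n}}$ is even central, a fortiori an ideal. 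It then follows formally that $\tilde{\mathfrak{n}}^{+}$, being complementary and closed under the bracket (any extension of non-acyclic complexes by non-acyclic complexes with indecomposable middle term is again non-acyclic), is also an ideal, and $\tilde{\mathfrak{n}} = \tilde{\mathfrak{n}}^{+} \oplus \hat{\mathfrak{n}}$ as Lie algebras. The main obstacle I anticipate is the careful verification that the Jacobi identity genuinely descends from $\mathcal{H}_1$ to $\tilde{\mathfrak{n}}$ — i.e.\ controlling the decomposable contributions in the associator/triple-commutator — rather than any single computation; the rest is bookkeeping supported by Proposition \ref{F-F} and Theorem \ref{Hallpoly}.
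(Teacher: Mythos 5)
Your construction of the Lie structure itself follows the paper's route: antisymmetry and bilinearity are formal, the vanishing at $q=1$ of the coefficients of decomposable classes follows from Proposition \ref{F-F} plus the existence of Hall polynomials (Theorem \ref{Hallpoly}), and the Jacobi identity comes from associativity of the generic degenerate Hall algebra. One remark on your Jacobi step: the detour through a projection $\pi$ killing decomposables is unnecessary and, as described, shaky --- the span of decomposable classes is not an ideal of the degenerate Hall algebra, so there is nothing to ``descend''. You do not need it: once you know (as you argued) that for indecomposable $\mu,\nu$ the associative commutator has zero coefficient at every decomposable class, $\tilde{\mathfrak{n}}$ is a subspace closed under the commutator of an associative algebra, hence a Lie subalgebra, and Jacobi is inherited with no further bookkeeping. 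Your observation that $\hat{\mathfrak{n}}$ is central (because the $K_{P_i}$ are projective--injective, so all relevant conflations split onto decomposable middle terms) is correct and even a bit sharper than what the paper records.

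The genuine gap is in your justification that $\tilde{\mathfrak{n}}^{+}$ is closed under the bracket. The parenthetical claim ``any extension of non-acyclic complexes by non-acyclic complexes with indecomposable middle term is again non-acyclic'' is false: by Proposition \ref{jisuan}(3) there is, for every indecomposable projective $P$, a short exact sequence $0\rightarrow C_{P}\rightarrow K_{P}\rightarrow C_{P}\rightarrow 0$ with both end terms non-acyclic indecomposable and middle term $K_{P}$ acyclic indecomposable (and $F_{C_P C_P}^{K_P}=1\neq 0$), so acyclic indecomposables genuinely occur as middle terms and your stated reason for closure fails. What rescues the statement --- and is exactly what the paper proves --- is a homology argument: if $C_q(\lambda)$ is acyclic and fits in a conflation with end terms $C_q(\mu)$, $C_q(\nu)$ non-acyclic indecomposable, the induced periodic long exact sequence in $H_0$ gives $H_0(C_q(\mu))\cong H_0(C_q(\nu))$, i.e.\ $M_q(\mu)\cong M_q(\nu)$ and hence $\mu=\nu$. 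Thus acyclic indecomposables can only appear inside brackets of the form $[[C_q(\mu)],[C_q(\mu)]]=0$, where their coefficients cancel trivially, while for $\mu\neq\nu$ no acyclic $\lambda$ carries a nonzero structure constant. With that correction in place of your false intermediate claim, the rest of your argument (centrality of $\hat{\mathfrak{n}}$, direct sum decomposition, ideal property of $\tilde{\mathfrak{n}}^{+}$) goes through.
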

\bp

By the associativity of the Ringel--Hall algebra $\mathcal {H}(C^1(\mathscr{P}))$, the Jacobi identity holds naturally. Then according to Proposition \ref{F-F}, the first statement is proved. We only prove that $\tilde{\mathfrak{n}}^{+}$ is an ideal of~$\tilde{\mathfrak{n}}$.

Let $\mu, \nu\in\mathfrak{P}^1(\Gamma)$ such that $\mu$ and $\nu$ are both indecomposable, and $\mu$ is non-acyclic. Suppose that there exists an indecomposable and acyclic $\lambda\in\mathfrak{P}^1(\Gamma)$ such that we have a short exact sequence $0\longrightarrow C_q(\nu)\longrightarrow C_q(\lambda) \longrightarrow C_q(\mu)\longrightarrow 0$ (or $0\longrightarrow C_q(\mu)\longrightarrow C_q(\lambda) \longrightarrow C_q(\nu)\longrightarrow 0$). It induces a long exact sequence
$H_0(C_q(\nu))\rightarrow 0 \rightarrow H_0(C_q(\mu)) \rightarrow H_0(C_q(\nu)) \rightarrow 0 \rightarrow H_0(C_q(\mu))$
(or $H_0(C_q(\mu))\rightarrow 0 \rightarrow H_0(C_q(\nu)) \rightarrow H_0(C_q(\mu)) \rightarrow 0 \rightarrow H_0(C_q(\nu))$).
So we have $M_q(\mu)\cong M_q(\nu)$. Now that $\mu=\nu$, $[[C_q(\mu)], [C_q(\nu)]] \in \tilde{\mathfrak{n}}^{+}$. Thus $\tilde{\mathfrak{n}}^{+}$ is an ideal of~$\tilde{\mathfrak{n}}$.
\ep

We remark that $\hat{\mathfrak{n}}$ is an abelian Lie algebra. It is also straightforward $\tilde{\mathfrak{n}}\cong \tilde{\mathfrak{n}}^{+}\oplus \hat{\mathfrak{n}}$ as Lie algebras.

Note that the basis of $\tilde{\mathfrak{n}}$, i.e. $[C_q(\mu)]$ with $\mu$ indecomposable, can be rewritten as (cf. (\ref{allindec}))
$$\{[C_{M_q(\alpha)}], [K_{M_q(\beta_i)}]~|~\alpha\in\Phi^{+}, 1\leq i\leq n\}.$$

From now on, for convenience, we will use the symbol $C_{M}$ instead of $[C_{M_q(\alpha)}]$ in $\tilde{\mathfrak{n}}^{+}$ for an indecomposable module $M\cong M_q(\alpha)$.  Thus we use $C_{P_i}$ (resp. $C_{S_i}$) instead of $[C_{M_q(\beta_{i})}]$ (resp. $[C_{M_{q}(\alpha_{i})}]$).
Similarly, $[K_{M_q(\beta_i)}]$ is denoted by $K_{P_{i}}$.

\begin{definition}
Given a Dynkin quiver $Q$ with $n$ vertices, one can attach an $n\times n$ matrix $E=(a_{ij})$ as follows: If there is a path
between $i$ and $j$ in $Q$, say from $i$ to $j$, then $a_{ij}=1$ and $a_{ji}=-1$. Otherwise, $a_{ij}=a_{ji}=0$.
$E$ is  skew symmetric and called the \emph{path matrix} of $Q$.
\end{definition}
The following are immediate:

(1) If $a_{ij}=\pm 1$ and $a_{jk}=\pm 1$, then $a_{ik}=\pm 1$;

(2) $Q$ is uniquely determined by $E$ up to a renumbering of vertices, so we write $E=E_{Q}$.

(3) If $Q$ is bipartite, i.e. each vertex is a sink or a source, then $E$ coincides with the exchange matrix associated to $Q$ (cf. \cite{K}).

The rest of this paper is devoted to the structure of $\tilde{\mathfrak{n}}^{+}$. Since $\tilde{\mathfrak{n}}^{+}$
depends heavily on both the underlying graph and the orientation of the quiver $Q$, we will call $\tilde{\mathfrak{n}}^{+}$
the Lie algebra of the quiver $Q$ and write $\tilde{\mathfrak{n}}^{+}(Q)$ if necessary.
\begin{theorem}\label{main result}
Let $Q$ be a Dynkin quiver and  $E=(a_{ij})$ its path matrix.
The associated Lie algebra $\tilde{\mathfrak{n}}^{+}$ is generated by $\{C_{P_i}~|~1\leq i\leq n\}$, and these generators satisfy the following relations:

(a) If $|a_{ij}|=1$, $(\ad C_{P_i})^2(C_{P_j})=(\ad C_{P_j})^2(C_{P_i})=0$;

(b) If $a_{ij}a_{jk}=1$, $[C_{P_{i}},[C_{P_{j}},C_{P_{k}}]]=[C_{P_{k}},[C_{P_{i}},C_{P_{j}}]]=0$;

(c) If $a_{ij}=0$, $[C_{P_i},C_{P_j}]=0$.
\end{theorem}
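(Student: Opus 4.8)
The statement has two independent halves: that $\{C_{P_i}\}$ generate, and that relations (a)--(c) hold. I treat the relations first, as a finite collection of Hall-number computations in $C^1(\mathscr{P})$. Two facts drive everything. First, by Proposition \ref{formula}(3), $\Ext_{C^1(\mathscr{P})}^1(C_M,C_N)\cong\Hom_A(M,N)\oplus\Ext_A^1(M,N)$, where $\Ext_A^1(P_i,-)=0$ since $A$ is hereditary. Second, by Proposition \ref{F-F} any extension of $C_\mu$ by $C_\nu$ (with $\mu,\nu$ indecomposable) whose middle term is decomposable contributes $0$ to the bracket once $q$ is set to $1$. Since $Q$ is Dynkin, its underlying graph is a tree, so there is at most one path between two vertices and $\dim\Hom_A(P_i,P_j)\leq 1$ for all $i,j$. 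Relation (c) is then immediate: $a_{ij}=0$ forces $\Hom_A(P_i,P_j)=\Hom_A(P_j,P_i)=0$, hence $\Ext_{C^1(\mathscr{P})}^1(C_{P_i},C_{P_j})=\Ext_{C^1(\mathscr{P})}^1(C_{P_j},C_{P_i})=0$, so $[C_{P_i},C_{P_j}]=0$.

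For (a), say $a_{ij}=1$. Then exactly one of $\Hom_A(P_i,P_j)$, $\Hom_A(P_j,P_i)$ is one-dimensional, spanned by an injection $\delta$ which, being non-split (as $P_i\ncong P_j$), realizes $0\to P_j\xrightarrow{\delta}P_i\to C\to 0$ as the minimal projective resolution of $C:=\Coker\delta$; here $C$ is a brick, hence indecomposable, because $\End_A(P_i)=k$. Proposition \ref{jisuan}(2) then gives a short exact sequence $0\to C_{P_i}\to C_C\to C_{P_j}\to 0$ in $C^1(\mathscr{P})$ with no $K$-summand, so $[C_{P_i},C_{P_j}]=-C_C$ (every other middle term being decomposable). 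It remains to check $[C_{P_i},C_C]=[C_{P_j},C_C]=0$; applying $\Hom_A(P_l,-)$ and $\Hom_A(-,P_l)$ ($l=i,j$) to the resolution of $C$ and using exactness of $\Hom_A(P_l,-)$ one finds $\Hom_A(P_j,C)=\Hom_A(C,P_i)=\Ext_A^1(C,P_i)=\Hom_A(C,P_j)=0$, and then Proposition \ref{jisuan} shows that every extension that occurs has decomposable middle term. Relation (b) is the same computation with three projectives: writing $C_{ij}=\Coker(P_j\hookrightarrow P_i)$ and $C_{jk}=\Coker(P_k\hookrightarrow P_j)$, one has $[C_{P_i},C_{P_j}]=-C_{C_{ij}}$ and $[C_{P_j},C_{P_k}]=-C_{C_{jk}}$ as above, and $a_{ij}a_{jk}=1$ (which forces $a_{ik}=1$) makes the relevant $\Hom$-spaces, read off as entries such as $(\Dim C_{jk})_i=\dim\Hom_A(P_i,P_j)-\dim\Hom_A(P_i,P_k)$, vanish, so that $\Ext_{C^1(\mathscr{P})}^1(C_{P_i},C_{C_{jk}})$, $\Ext_{C^1(\mathscr{P})}^1(C_{P_k},C_{C_{ij}})$ and their duals are either $0$ or yield only decomposable middle terms; the two sub-cases of $a_{ij}a_{jk}=1$ are interchanged by relabelling $i\leftrightarrow k$.

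For the generating statement I argue by induction on $\dim M$ that $C_M$ lies in the Lie subalgebra $\mathfrak{a}$ generated by $\{C_{P_i}\}$, for every indecomposable $A$-module $M$. Projective $M$ are generators. If $M$ is non-projective, the plan (executed in Section 5) is to produce an orthogonal exceptional pair $(X,Y)$ of $A$-modules together with a short exact sequence $0\to Y\to M\to X\to 0$ for which the Horseshoe Lemma introduces no cancellation, i.e.\ $P_M\cong P_X\oplus P_Y$ and $\Omega_M\cong\Omega_X\oplus\Omega_Y$ (the ``minimal Horseshoe lemma''), the sincere case being the essential one. Orthogonality gives $\Hom_A(X,Y)=\Hom_A(Y,X)=\Ext_A^1(Y,X)=0$, so by Proposition \ref{jisuan}(1) the sequence lifts to $0\to C_Y\to C_M\to C_X\to 0$ in $C^1(\mathscr{P})$. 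Moreover, any indecomposable object occurring in $[C_X,C_Y]$ comes from an $A$-module $L$ with $0\to Y\to L\to X\to 0$ exact, hence with $\Dim L=\Dim M$, hence $L\cong M$ by Gabriel's theorem, and minimality of the Horseshoe then removes the possible $K$-summand; while $\Ext_{C^1(\mathscr{P})}^1(C_Y,C_X)=0$ shows the opposite composite contributes nothing. Thus $[C_X,C_Y]=c\,C_M$, with $c$ the value at $q=1$ of the $A$-module Hall polynomial $\psi_{XY}^{M}$, which is nonzero. Since $\dim X,\dim Y<\dim M$, the inductive hypothesis puts $C_X,C_Y$ in $\mathfrak{a}$, hence $C_M=c^{-1}[C_X,C_Y]\in\mathfrak{a}$.

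The main obstacle is exactly this representation-theoretic input of Section 5: the existence, for every sincere non-projective indecomposable $M$, of an orthogonal exceptional pair $(X,Y)$ with $0\to Y\to M\to X\to 0$ such that $\top M=\top X\oplus\top Y$ (equivalently $P_M\cong P_X\oplus P_Y$) together with the compatible splitting of syzygies. Producing a submodule $Y\subset M$ with $Y$ and $M/Y$ both exceptional and mutually orthogonal is already a nontrivial request; making it ``minimal'' -- controlling the tops so that the Horseshoe Lemma does not cancel -- is the delicate point. I expect to handle this by locating $M$ in the AR-quiver of $\mod A$ and extracting a suitable exceptional ``corner'' submodule of $M$ (often simple or projective), with a case analysis over the Dynkin types if necessary.
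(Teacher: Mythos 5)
Your treatment of the relations (a)--(c) is essentially the paper's own argument: one identifies $[C_{P_j},C_{P_i}]=\pm C_{\Coker}$ for the unique inclusion between the two projectives, and then kills the iterated brackets by checking that the relevant groups $\Ext^1_{C^1(\mathscr{P})}$ either vanish or only produce decomposable middle terms, which contribute nothing at $q=1$ by Proposition \ref{F-F}. That half is correct and matches the paper.

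The gap is in the generating half. You reduce it to the existence, for every sincere non-projective indecomposable $M$, of an orthogonal exceptional pair $(X,Y)$ with $0\to Y\to M\to X\to 0$ satisfying the minimal Horseshoe condition $\top M\cong\top X\oplus\top Y$, and then you explicitly defer that existence (``I expect to handle this by \dots a case analysis''). But this is not a routine verification: it is exactly the paper's Theorem \ref{sincere}, whose proof occupies all of Section 5 with a type-by-type analysis ($\mathbb{A}$, $\mathbb{D}_n$, $\mathbb{E}_{6,7,8}$) in the AR-quiver using wings, hammocks and sectional paths; it is the real content of the claim. Moreover, even granting that theorem, your induction ``for every non-projective $M$ find such a pair'' cannot run as stated: whenever $\top M$ is simple the condition $\top M\cong\top X\oplus\top Y$ is impossible, and for a non-projective simple module there is no short exact sequence $0\to Y\to M\to X\to 0$ with $X,Y\neq 0$ at all. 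These are precisely the modules (those projective over their support subquiver) that the paper must treat by a second mechanism, namely method (II)/Corollary \ref{proj first}: a sequence $0\to P\to N\to M\to 0$ with $P$ projective and $\top N\cong\top M$ gives $C_M=[C_P,C_N]$, and an iterated pushout along the minimal projective resolution yields expressions such as $C_M=[C_{P_{t_1}},[C_{P_{t_2}},[C_{P_{t_3}},C_{P_i}]]]$; together with the restriction-to-support reduction of Proposition \ref{derived}, this completes the induction. Neither this device nor the support reduction appears in your argument, so the generating statement rests on unproven input plus an induction scheme that misses a whole class of indecomposables.
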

\bp
We first give the following claim, which will be proved in the next section.
\begin{claim}\label{generators}
$\tilde{\mathfrak{n}}^{+}$ is generated by $\{C_{P_i}~|~1\leq i\leq n\}$.
\end{claim}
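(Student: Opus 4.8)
The statement has two essentially independent halves: the generation claim (Claim~\ref{generators}) and the three families of relations (a)--(c). I would dispatch the relations first, as they reduce to bookkeeping with the $\Ext$-computations already available, and then attack generation, whose heart is the construction announced in the abstract.

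\emph{The relations.} Since $A$ is hereditary and the $P_i$ are projective, $\Ext_A^1(P_i,P_j)=0$, so Proposition~\ref{formula}(3) gives $\Ext_{C^1(\mathscr{P})}^1(C_{P_i},C_{P_j})\cong\Hom_A(P_i,P_j)$. As $Q$ is a tree this space is at most one-dimensional, nonzero exactly when $i$ and $j$ are joined by a (then unique) path, and its nonzero elements are monomorphisms of indecomposable projectives. For (c) ($a_{ij}=0$): then $\Hom_A(P_i,P_j)=\Hom_A(P_j,P_i)=0$, so both relevant $\Ext^1_{C^1(\mathscr{P})}$-groups vanish, the only conflation is the split one, and its middle term $C_{P_i}\oplus C_{P_j}$ is decomposable, hence contributes $0$ to the Lie bracket by Proposition~\ref{F-F}; thus $[C_{P_i},C_{P_j}]=0$. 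For (a) ($|a_{ij}|=1$): let $\iota$ be the nonzero map between $P_i$ and $P_j$ and $N=\Coker\iota$; Proposition~\ref{jisuan}(2) evaluates the relevant bracket as $\pm C_N$ when $N$ is indecomposable and $0$ otherwise, every other middle term being decomposable. Bracketing once more is then zero because $\Hom_A(P_i,N)$ or $\Hom_A(P_j,N)$ vanishes --- this is read off from $\Dim N$ and the uniqueness of paths in a tree --- so the pertinent $\Ext^1_{C^1(\mathscr{P})}$ vanishes, and directedness of $\mod A$ forces the remaining middle terms to be decomposable. For (b) ($a_{ij}a_{jk}=1$): there are nested inclusions among the three indecomposable projectives involved; one computes $[C_{P_j},C_{P_k}]$ and $[C_{P_i},C_{P_j}]$ by Proposition~\ref{jisuan} exactly as in (a), and then bracketing against $C_{P_i}$, resp.\ $C_{P_k}$, vanishes by the same vanishing-of-$\Ext^1_{C^1(\mathscr{P})}$ argument applied to the cokernels that appear.

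\emph{Generation.} The plan is to prove by induction --- say on the pair $(\dim_k\Omega_M,\dim_k M)$ ordered lexicographically --- that $C_M$ lies in the Lie subalgebra $\mathfrak{g}'$ generated by $\{C_{P_i}\}$ for every indecomposable $A$-module $M$; projective $M$ is the base case. For $M$ indecomposable non-projective, the engine is the dictionary between $A$-module short exact sequences and identities in $\mathcal{H}(C^1(\mathscr{P}))$ supplied by Proposition~\ref{jisuan} and Corollary~\ref{proj first}: writing $\Omega_M$ as a sum of indecomposable projectives and splitting off all summands but one, Corollary~\ref{proj first} produces a conflation $0\to C_{M'}\to C_M\to C_{P_m}\to 0$ in $C^1(\mathscr{P})$ with $P_m$ indecomposable projective and $\dim_k\Omega_{M'}<\dim_k\Omega_M$, so that $C_M$ occurs with nonzero coefficient in $[C_{P_m},C_{M'}]$, and $C_{M'}\in\mathfrak g'$ by induction. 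A delicate but routine bookkeeping is needed to keep the cofactor $M'$ indecomposable (e.g.\ by simultaneously controlling $\top M$) and to see the induction is well-founded; simple non-projective modules, too small for this peeling, are obtained from the almost split sequences of Section~2 ending in $C_{P_i}$ together with Proposition~\ref{jisuan}. The genuinely substantial case is $M$ \emph{sincere}: here the plan is to exhibit an \emph{orthogonal exceptional pair} $(X,Y)$, i.e.\ $\Hom_A(Y,X)=\Ext_A^1(Y,X)=0=\Hom_A(X,Y)$, realising $M$ as the unique non-split extension $0\to Y\to M\to X\to 0$ and satisfying the \emph{minimal Horseshoe lemma} $P_M\cong P_X\oplus P_Y$, $\Omega_M\cong\Omega_X\oplus\Omega_Y$ (no common projective summand splitting off in the diagram \eqref{Horse}); minimality is exactly what forces the middle term of the induced extension in $C^1(\mathscr{P})$ to be $C_M$ on the nose rather than $C_M\oplus K_{X_L}$, so Proposition~\ref{jisuan}(1) then gives $C_M=\pm[C_X,C_Y]+(\text{already-generated terms})$ with $\dim_kX,\dim_kY<\dim_kM$. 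Non-sincere $M$ is reduced to the sincere case by passing to the full connected subquiver on $\mathrm{supp}\,M$, over which $M$ is sincere, and reconciling the syzygy computed there with the one over $Q$ through further applications of Corollary~\ref{proj first}.

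\emph{Main obstacle.} The crux is this construction in the sincere case: producing, uniformly over all Dynkin types and orientations, an orthogonal exceptional pair $(X,Y)$ together with the minimal-Horseshoe data for a given sincere non-projective indecomposable $M$ --- simultaneously securing the three $\Hom$/$\Ext$ vanishings, the fact that $M$ is \emph{the} indecomposable non-split extension (so the structure constant is a unit, not a higher multiple), and the no-cancellation condition $\Omega_M\cong\Omega_X\oplus\Omega_Y$. I expect this to demand a careful, possibly type-by-type, analysis of the sincere indecomposables of Dynkin quivers using representation-directedness. Secondary points needing care are: keeping the cofactors indecomposable throughout the peeling so that the Lie bracket is actually applicable; checking the induction is well-founded across the sincere/non-sincere reduction; and verifying that the prefactors $q^{t}a_\bullet/a_\bullet$ in Proposition~\ref{jisuan} evaluate to nonzero integers at $q=1$, so that $C_M$ really appears with a nonzero coefficient.
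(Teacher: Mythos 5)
Your overall architecture does coincide with the paper's: reduce a non-sincere indecomposable to a sincere one over its support subquiver, generate a sincere non-projective $M$ from a short exact sequence $0\to U\to M\to V\to 0$ with $U,V$ indecomposable, $\Hom_A(U,V)=0$ and $\top M\cong \top U\oplus\top V$ (the minimal Horseshoe property), and use Corollary~\ref{proj first} to peel projective syzygy summands in the remaining situations; your verification of the relations (a)--(c), though not part of Claim~\ref{generators}, also matches the computations in Theorem~\ref{main result}. The genuine gap is that the existence of such a sequence for \emph{every} sincere non-projective indecomposable --- which you correctly isolate as ``the crux'' and merely predict will follow from a type-by-type analysis --- is precisely the paper's Theorem~\ref{sincere}, and its proof is the bulk of Section 5 (the wing lemma, the invariants $sl(M)$ and $\alpha(M)$, and a long case analysis through types $\mathbb{A}$, $\mathbb{D}$, $\mathbb{E}_6$, $\mathbb{E}_7$, $\mathbb{E}_8$ via hammocks and the position of $M$ in the AR-quiver). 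Announcing this step without an argument leaves the induction unclosed: nothing in your proposal substitutes for that analysis, nor do you offer a uniform alternative.

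A second, more local flaw: your ``engine'' --- for an arbitrary non-projective indecomposable $M$, split one indecomposable summand $P_m$ off $\Omega_M$ to get $0\to C_{M'}\to C_M\to C_{P_m}\to 0$ and read off $C_M$ from $[C_{P_m},C_{M'}]$ --- is unsound as stated, and the indecomposability of the cofactor $M'$ is not ``routine bookkeeping''. Concretely, for the bipartite $\mathbb{D}_4$ quiver take the module $M_1$ with $\Dim M_1=(2,1,1,1)$ and minimal resolution (\ref{mini N}); peeling off $P_2$ produces $M'$ with minimal resolution $0\to P_3\oplus P_4\to P_1\oplus P_1\to M'\to 0$, so $\Dim M'=(2,2,1,1)$, which is not a root, hence $M'$ is decomposable. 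Then $C_{M'}$ is not a basis vector of $\tilde{\mathfrak{n}}^{+}$ and no bracket identity inside the Lie algebra is available without substantial extra work. The paper uses this peeling only when $\top M$ is simple, i.e.\ when $M$ is projective over its support subquiver (proof of Proposition~\ref{derived}), where each pushout cofactor automatically has simple top and is therefore indecomposable; in all other cases it falls back on method (I), which is exactly why Theorem~\ref{sincere} is indispensable. So while your plan identifies the right key lemma and the right reduction steps, it does not constitute a proof of Claim~\ref{generators}.
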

(a) Assume $a_{ij}=1$, then there is a path from $i$ to $j$ in $Q$ and $\Hom_A(P_j, P_i)\cong k$.
Note any nonzero morphism from $P_j$ to $P_i$ is a monomorphism. Choose $0\neq f \in \Hom_A(P_j, P_i)$ and set $M=\Coker f$, then we have a short exact sequence in $\mod A$
\begin{equation}\label{P_j,P_i}\xymatrix{0\ar[r]&{P_j}\ar[r]^f&{P_i}\ar[r]&M\ar[r]&0.}\end{equation}
Clearly, $M$ is indecomposable in $\mod A$.

$\Ext^1(C_{P_i},C_{P_j})=0$~and~$\Ext^1(C_{P_j},C_{P_i})\cong \Hom_A(P_j,P_i)$ is one-dimensional, so $[C_{P_j},C_{P_i}]=C_M$. Thus, $(\ad C_{P_j})^2(C_{P_i})=[C_{P_j},[C_{P_j},C_{P_i}]]=[C_{P_j},C_M]$.

Applying the functor $\Hom_A(P_j,-)$ to the sequence $(\ref{P_j,P_i})$, we get $\Hom_A(P_j,M)=0$.
So $\Ext^1(C_{P_j},C_M)\cong\Hom_A(P_j,M)=0$.

Applying the functor $\Hom_A(-,P_j)$ to the sequence $(\ref{P_j,P_i})$, we get that $\Ext^1(C_M,C_{P_j})\cong\Ext_A^1(M,P_j)$ is one-dimensional. Note that the middle term of the nontrivial extension class in $\Ext^1(C_M,C_{P_j})$ is isomorphic to $C_{P_i}\oplus K_{P_j}$, which is decomposable. So $[C_{P_j},C_M]=0$, hence $(\ad[C_{P_j}])^2([C_{P_i}])=0$.

Similarly, $(\ad C_{P_i})^2(C_{P_j})=[C_{P_i},[C_{P_i},C_{P_j}]]=[C_M,C_{P_i}]=0$.

(b) Assume $a_{ij}=a_{jk}=1$, then there exist a path from $i$ to $j$ and a path from $j$ to $k$.
Hence there is a path from $i$ to $k$. Choose $0\neq g\in \Hom_A(P_k, P_j)\cong k$ and let $N= \Coker g$, then there is a short exact sequence
\begin{equation}\label{P_k,P_j}\xymatrix{0\ar[r]&{P_k}\ar[r]^g&{P_j}\ar[r]&N\ar[r]&0.}\end{equation}
Obviously, $[C_{P_k},C_{P_j}]=C_N$. Thus $[C_{P_{i}},[C_{P_{j}},C_{P_{k}}]]=[C_{N}, C_{P_{i}}]$.

By applying $\Hom_A(-,P_i)$ to the sequence $(\ref{P_k,P_j})$,
we deduce that $\Hom_A(N,P_i)=0$ and $\Ext_A^1(N,P_i)=0$. Hence $\Ext^{1}(C_{N}, C_{P_{i}})= 0$.
Moreover, $\Ext^{1}(C_{P_{i}},C_{N})\cong \Hom_A(P_{i},N)=0$. Thus the middle terms concerning the computations of $[C_{N}, C_{P_{i}}]$
are always decomposable. So $[C_{P_i},[C_{P_j},C_{P_k}]]=0$.

Similarly, choose $0\neq f\in \Hom_A(P_j, P_i)$ as in the proof of (a).
Thus $[C_{P_{k}},[C_{P_{i}},C_{P_{j}}]]=[C_M, C_{P_{k}}]$. Again, applying the functors $\Hom_A(P_k,-)$ and $\Hom_A(-,P_k)$ to
the sequence $(\ref{P_j,P_i})$, we conclude that $\Ext^{1}(C_{P_{k}},C_{M})\cong \Hom_A(P_{k},M)=0$ and
$\Ext^{1}(C_{M}, C_{P_{k}})\cong\Hom_A(M,P_k)\oplus \Ext_A^1(M,P_{k})=0$. Therefore, $[C_{P_{k}},[C_{P_{i}},C_{P_{j}}]]=0$.

(c) Since there is no path between $i$ and $j$, $\Hom_A(P_i,P_j)=\Hom_A(P_j,P_i)=0$. Hence,
$\Ext^1(C_{P_i},C_{P_j})=\Ext^1(C_{P_j},C_{P_i})=0$. So $[C_{P_i},C_{P_j}]=0$.
\ep

\begin{remark}
1. We remark that $\tilde{\mathfrak{n}}^{+}$ cannot be generated by $C_{S_{i}}, i\in Q_{0}$, which is different from the case in Ringel's Lie algebra construction (cf. \cite{R91a}) from $\mod A$. This is easily seen in the examples below.\\
2. For any indecomposable $A$-module $M$, consider the minimal projective resolution of $M$
\begin{equation*}0\longrightarrow \oplus_j m_j P_j\longrightarrow \oplus_i m_i P_i \longrightarrow M\longrightarrow 0,\end{equation*} where $P_i$ and $P_j$ are non-isomorphic indecomposable projective $A$-modules. This corresponds to the following short exact sequence in  $C^1(\mathscr{P})$:
\begin{equation*}0\longrightarrow (\oplus_i m_i C_{P_i})\longrightarrow C_M\longrightarrow(\oplus_j m_j C_{P_j}) \longrightarrow 0\end{equation*}
under the isomorphism $\Ext_{C^1(\mathscr{P})}^1(\oplus_j m_j C_{P_j},\oplus_i m_i C_{P_i})\cong\Hom_A(\oplus_j{m_j P_j},\oplus_i {m_i P_i})$.

 Although it seems that, in $\tilde{\mathfrak{n}}^{+}$, $C_{M}$ could be generated by aforementioned $C_{P_{i}}$, with $C_{P_i}$ appearing $m_i$ times in the final expression, we cannot find a constructive way generally.
\end{remark}

Recall Example \ref{lizi}. Consider the Lie algebra $\tilde{\mathfrak{n}}^{+}$ associated to the quiver $Q$ of type $\mathbb{A}_3$, we have the following identities:
$$C_{S_2}=[C_{P_3},C_{P_2}],~C_{I_2}=[C_{P_3},C_{P_1}],~C_{S_1}=[C_{P_2},C_{P_1}].$$

\begin{corollary}\label{semisimple}
If $Q$ is bipartite, then $\tilde{\mathfrak{n}}^{+}$ is isomorphic to the nilpotent part $\mathfrak{n}^{+}$ of the simple Lie algebra $\mathfrak{g}$ associated to $Q$.
\end{corollary}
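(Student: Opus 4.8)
The plan is to exhibit an explicit isomorphism between $\tilde{\mathfrak{n}}^{+}(Q)$ and $\mathfrak{n}^{+}$ by comparing generators and relations. The key observation is that, by Theorem \ref{main result}, $\tilde{\mathfrak{n}}^{+}$ is generated by $\{C_{P_i}\}_{1\le i\le n}$ subject to relations (a), (b), (c), and when $Q$ is bipartite these relations reduce \emph{exactly} to the Serre relations. Indeed, if $Q$ is bipartite then for any two vertices $i,j$ there is never a path of length $\ge 2$ joining them through a third vertex $k$ with $a_{ij}a_{jk}=1$ (a path of length $2$ would require $k$ to be simultaneously a sink relative to one arrow and a source relative to the other, which is impossible), so relation (b) is vacuous. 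Relation (c) says $[C_{P_i},C_{P_j}]=0$ when $i,j$ are not joined by an edge in $\Gamma$ (equivalently $a_{ij}=0$), and relation (a) says $(\ad C_{P_i})^2(C_{P_j})=0$ when $i,j$ \emph{are} joined by an edge (here $|a_{ij}|=1$ iff there is a path, i.e. an arrow, between them since $Q$ is bipartite and of Dynkin type so $\Gamma$ is a tree). These are precisely the defining relations of $\mathfrak{n}^{+}$ for a simply-laced $\mathfrak{g}$: $[e_i,e_j]=0$ for $c_{ij}=0$ and $(\ad e_i)^2(e_j)=0$ for $c_{ij}=-1$.

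First I would set up the surjection. By Serre's theorem, $\mathfrak{n}^{+}$ is the Lie algebra on generators $e_1,\dots,e_n$ modulo the relations just listed; since $\tilde{\mathfrak{n}}^{+}$ is generated by the $C_{P_i}$ which satisfy (at least) these same relations (Theorem \ref{main result}, noting relation (b) is empty in the bipartite case), there is a well-defined surjective Lie algebra homomorphism $\pi\colon \mathfrak{n}^{+}\twoheadrightarrow\tilde{\mathfrak{n}}^{+}$ sending $e_i\mapsto C_{P_i}$. It remains to show $\pi$ is injective, and for this it suffices to check that $\dim\tilde{\mathfrak{n}}^{+}=\dim\mathfrak{n}^{+}=|\Phi^{+}|$.

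For the dimension count, I would use the basis of $\tilde{\mathfrak{n}}^{+}$ consisting of the iso-classes $C_M$ where $M$ runs over indecomposable (non-projective-complex) objects, i.e.\ over $\mathrm{Ind}\,kQ$, plus the classes $C_{P_i}$; but in fact the non-acyclic indecomposables in $C^1(\mathscr{P})$ are exactly the $C_M$ for $M\in\mathrm{Ind}\,kQ$ (including $M=P_i$), by Lemma \ref{indec. obj.s}(1), and by Gabriel's theorem these are indexed by $\Phi^{+}$. Hence $\dim\tilde{\mathfrak{n}}^{+}=|\Phi^{+}|=\dim\mathfrak{n}^{+}$. Since $\pi$ is a surjection between vector spaces of the same finite dimension, it is an isomorphism.

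The main obstacle — really the only nontrivial point — is verifying that relation (b) is genuinely vacuous for bipartite $Q$, i.e.\ that no configuration $a_{ij}=a_{jk}=1$ can occur. I would argue: $a_{ij}=1$ means there is a path $i\rightsquigarrow j$ and $a_{jk}=1$ a path $j\rightsquigarrow k$; concatenating gives a path $i\rightsquigarrow k$ of length $\ge 2$ passing through $j$, hence $j$ has both an incoming and an outgoing arrow on this path, contradicting that every vertex of $Q$ is a sink or a source. (For $Q$ of Dynkin type the underlying graph is a tree, so paths are honest reduced walks and there is no subtlety about cancellation.) Thus the relation set of $\tilde{\mathfrak{n}}^{+}$ coincides with the Serre relation set, $\pi$ is well-defined, and the dimension match finishes the proof.
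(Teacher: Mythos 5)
Your argument is correct and is essentially the paper's own proof: the paper likewise takes the map $\psi:\mathfrak{n}^{+}\rightarrow\tilde{\mathfrak{n}}^{+}$, $e_i\mapsto C_{P_i}$, gets surjectivity from Theorem \ref{main result} (relations (a),(c) being exactly the Serre relations in the bipartite case, where paths are arrows and (b) is vacuous), and concludes by comparing $\dim\tilde{\mathfrak{n}}^{+}=|\Phi^{+}|=\dim\mathfrak{n}^{+}$. The only imprecision is your closing phrase that the relation set of $\tilde{\mathfrak{n}}^{+}$ \emph{coincides} with the Serre relations---Theorem \ref{main result} only asserts that the $C_{P_i}$ satisfy them, not that they are defining---but your proof never actually needs this, since well-definedness of $\pi$ requires only that the relations hold and injectivity is supplied by the dimension count.
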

\bp
It follows from Theorem $\ref{main result}$ that the map $\psi: \mathfrak{n}^{+}\longrightarrow\tilde{\mathfrak{n}}^{+},~e_i\mapsto C_{P_i}$ is an epimorphism of Lie algebras. By a comparison of dimensions, we obtain that $\psi$ is an isomorphism.
\ep

\begin{example}
Let $Q$ be the quiver of type $\mathbb{D}_4$
$$\xymatrix@!=0.8pc{&2\\1\ar[ru]\ar[r]\ar[rd]&3\\&4.}$$
The AR-quiver of $A=kQ$ is
$$\xymatrix@!=0.8pc{P_2\ar[rd]&&M_2\ar[rd]&&I_2\ar[rd]&\\
P_3\ar[r]&P_1\ar[ru]\ar[r]\ar[rd]&M_3\ar[r]&M_1\ar[ru]\ar[r]\ar[rd]&I_3\ar[r]&I_1.\\
P_4\ar[ru]&&M_4\ar[ru]&&I_4\ar[ru]&}$$
Take the minimal projective resolution of $M_1$
\begin{equation}\label{mini N}0\longrightarrow P_2\oplus P_3\oplus P_4\longrightarrow P_1\oplus P_1\longrightarrow M_1\longrightarrow 0.\end{equation}
Since $Q$ is bipartite, we set $e_1=[C_{P_1}],~e_2=[C_{P_2}],~e_3=[C_{P_3}],~e_4=[C_{P_4}]$.
Then we can get the following pushout-pullback diagrams one by one:
$$\xymatrix@=0.8pc{&  & 0\ar[d] & 0\ar[d]\\
& & P_{1} \ar@{=}[r] \ar[d] & P_{1} \ar[d]\\
0\ar[r] & P_{2}\oplus P_{3}\oplus P_{4}\ar@{=}[d]\ar[r] & P_{1}\oplus P_{1} \ar[r]\ar[d] & M_1\ar[d] \ar[r]& 0\\
0\ar[r] & P_{2}\oplus P_{3}\oplus P_{4}\ar[r] & P_1 \ar[r]\ar[d]& I_1\ar[r]\ar[d] & 0\\
& & 0 & 0\\
}$$
$$\xymatrix@=0.8pc{& 0\ar[d] & 0\ar[d] & & & & & 0\ar[d] & 0\ar[d]\\
& P_{3}\oplus P_{4} \ar@{=}[r] \ar[d] & P_{3}\oplus P_{4}\ar[d] & & & & &  P_{4} \ar@{=}[r] \ar[d] &  P_{4}\ar[d]\\
0\ar[r] & P_{2}\oplus P_{3}\oplus P_{4}\ar[d]\ar[r] & P_1 \ar[r]\ar[d] & I_1\ar@{=}[d] \ar[r]& 0 & &0\ar[r] &  P_{3}\oplus P_{4}\ar[d]\ar[r] & P_1 \ar[r]\ar[d] & I_2\ar@{=}[d] \ar[r]& 0\\
0\ar[r] & P_{2}\ar[r]\ar[d] & I_2 \ar[r]\ar[d]& I_1\ar[r] & 0 & & 0\ar[r] & P_{3}\ar[r]\ar[d] & M_4 \ar[r]\ar[d]& I_2\ar[r] & 0\\
& 0 & 0 & & & & & 0 & 0\\
}$$
It easily follows that $C_{M_4}=[e_4, e_1]$, $C_{I_2}=[e_3, C_{M_4}]=[e_3, [e_4, e_1]]$, $C_{I_1}=[e_2, C_{I_2}]=[e_2, [e_3, [e_4, e_1]]]$.
Finally, we obtain $C_{M_1}=[C_{I_1},e_{1}]=[[e_2, [e_3, [e_4, e_1]]],e_1]$.

Now we identify $\mathfrak{n}^{+}$ with $\tilde{\mathfrak{n}}^{+}$. Thus $e_{i}=C_{P_i}$, $1\leq i\leq 4$ are the chevalley generators, corresponding
to the simple roots $\alpha_{i}$. We have the root space decomposition $\mathfrak{n}^{+}=\oplus_{\alpha\in \Phi^{+}} \mathfrak{n}_{\alpha}$ with $\dim \mathfrak{n}_{\alpha}=1$. It is obvious $C_{M_1}$ corresponds to the longest root $2\alpha_{1}+\alpha_{2}+\alpha_{3}+\alpha_{4}$.

It is well known (cf. \cite{Bo}) that for any $\alpha\in\Phi^{+}$, there exists $i\in I$ such that $\alpha-\alpha_{i}\in \Phi^{+}\cup \{0\}$. Thus our discussion above should be seen as an intuitive way to realize this fact.

We also remark that $C_{M_4}$ corresponds to the root $\alpha_{1}+\alpha_{4}$. This is different from the case of Ringel--Hall Lie algebra generated by indecomposable $kQ$-modules (cf. \cite{R91a,R91b}), where the index module $M_4$ obviously corresponds to the root $\alpha_{1}+\alpha_{2}+\alpha_{3}$.  All in all, the Lie algebra $\tilde{\mathfrak{n}}^{+}$ of a bipartite quiver $Q$ gives an essentially new categorification of the positive root system of a simple Lie algebra.
This will be discussed further at the end of the next section.
\end{example}

\section{Proof of Claim \ref{generators}}
Given an arbitrary element $C_M\in \tilde{\mathfrak{n}}^{+}$, where $M$ is an indecomposable non-projective $A$-module, we have two different methods to generate $C_M$:

(I) find a short exact sequence with all terms indecomposable modules
$$0\longrightarrow M_1\longrightarrow M \longrightarrow M_2\longrightarrow 0$$
such that $\Hom(M_1,M_2)=0$ and $\top M=\top M_1\oplus \top M_2$. Thus we have $C_M=[C_{M_2},C_{M_1}]$.

(II) find an indecomposable module $N$ and an indecomposable projective module $P$ such that
$$0\longrightarrow P\longrightarrow N \longrightarrow M\longrightarrow 0$$
is exact and $\top M=\top N$. Then we get $C_M=[C_P,C_N]$ by Corollary \ref{proj first}.

Our strategy is as follows: We use (I) to proceed by induction on the dimension vectors of $A$-modules which serve as indices. But it
is not enough because (I) doesn't hold generally and also our induction starts from $C_{P_i}$, not $C_{S_i}$.
This will be compensated by (II).

We first list two related results as preparations.

Let $\mathcal{A}$ be a finite-dimensional $k$-category which is a hereditary length category.
An object $M$ in $\mathcal{A}$ is called \emph{exceptional} if it is indecomposable and $\Ext^{1}(M,M)=0$.
Then denote by $s(M)$ the number of iso-classes of composition
factors of $M$.
A pair $(V,U)$ of exceptional objects is called an \emph{orthogonal exceptional pair} if
$$\Hom(U,V)=\Hom(V,U)=\Ext^{1}(U,V)=0.$$
Given an orthogonal exceptional pair $(V,U)$, let $\mathcal{C}(U,V)$ be the full subcategory of all objects of $\mathcal{A}$
which have a filtration with factors of the form $U$ and $V$.
\begin{lemma} {\rm(\cite[Th. 3.1]{R96e})}
Let $\mathcal{A}$ be a finite-dimensional $k$-category which is a hereditary length category. Let $M$ be an exceptional
object in $\mathcal{A}$. Then there are precisely $s(M)-1$ orthogonal exceptional pairs $(V_i,U_i)$ such that $M$ belongs
to $\mathcal{C}(U_i,V_i)$ and is not a simple object in $\mathcal{C}(U_i,V_i)$.
\end{lemma}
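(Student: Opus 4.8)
The plan is to reduce to the sincere case and then induct on $s(M)$. If $(V,U)$ is an orthogonal exceptional pair with $M\in\mathcal{C}(U,V)$ and $M$ not simple there, then $M$ uses both $U$ and $V$ in any $\{U,V\}$-filtration, so every composition factor of $U$ and of $V$ occurs in $M$; hence $\mathcal{C}(U,V)$ lies in the exact abelian hereditary subcategory $\mathcal{A}(M)$ of objects all of whose composition factors are composition factors of $M$. Passing from $\mathcal{A}$ to $\mathcal{A}(M)$ changes neither the set of pairs to be counted nor the integer $s(M)$, so I may assume $\mathcal{A}$ has exactly $n:=s(M)$ simple objects and that $M$ is sincere; the claim becomes that there are exactly $n-1$ such pairs. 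For $n=1$ the object $M$ is simple and there is nothing to count, which is the base case.

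The next step is to pin down the shape of a pair. For an orthogonal exceptional pair $(V,U)$, the subcategory $\mathcal{C}(U,V)$ is an exact abelian hereditary subcategory of $\mathcal{A}$ whose only simple objects are $U$ and $V$, and the vanishings $\Hom(U,V)=\Ext^{1}(U,V)=\Hom(V,U)=0$ force $U$ to be projective and $V$ injective inside $\mathcal{C}(U,V)$. Hence, if $M\in\mathcal{C}(U,V)$ is indecomposable and not simple there, then $M$ is a sincere exceptional object of $\mathcal{C}(U,V)$, the trace $N:=\mathrm{tr}_{U}(M)$ (the sum of the images of all maps $U\to M$) is isomorphic to $U^{\oplus t}$ for some $t\geq1$ and is a subobject of $M$, and $M/N$ is isomorphic to $V^{\oplus s}$ for some $s\geq1$ (it has no $\mathcal{C}(U,V)$-subobject isomorphic to $U$, and $\Ext^{1}_{\mathcal{C}(U,V)}(U,V)=\Ext^{1}_{\mathcal{C}(U,V)}(V,V)=0$). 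So each pair gives a short exact sequence $0\to U^{t}\to M\to V^{s}\to0$ in $\mathcal{A}$; since $U$ and $V$ are recovered from $N$ and from $M/N$ by Krull--Schmidt, and $N=\mathrm{tr}_{U}(M)$ is automatic once $\Hom_{\mathcal{C}(U,V)}(U,M/N)=0$, the assignment $(V,U)\mapsto N$ is injective. It therefore suffices to count the nonzero proper subobjects $N\subsetneq M$ for which $N\cong U^{\oplus t}$ and $M/N\cong V^{\oplus s}$ with $t,s\geq1$ and $(V,U)$ an orthogonal exceptional pair.

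For the inductive step I would carry out a one-point reduction: choose a simple $S$ of $\mathcal{A}$ that is a composition factor of $M$ but lies outside $\mathrm{soc}\,M$ (or dually outside $\mathrm{top}\,M$) and satisfies $\Ext^{1}(S,M)=0$, and pass to the perpendicular category $S^{\perp}$ (resp. ${}^{\perp}S$) of Geigle--Lenzing and Schofield, which is a hereditary length category with $n-1$ simple objects, contains $M$, and in which $M$ is again exceptional. Then I would show that the subobjects $N$ counted above for $M$ in $\mathcal{A}$ fall into two classes: those whose pair $(V,U)$ already lives in the smaller category, which by induction number $n-2$, and exactly one further "exceptional'' subobject accounting for the removed simple $S$. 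This gives $(n-2)+1=n-1$. (An alternative worth attempting is to realise each pair as a contiguous subpair of a complete exceptional sequence through $M$ and exploit transitivity of the braid group action on such sequences, but the count $n-1$ still seems to demand an induction on $s(M)$.)

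The main obstacle is the bookkeeping of the reduction step: one must check that passing to $S^{\perp}$ loses exactly one pair — not zero, not several — and that distinct pairs for $M$ remain distinct with no spurious pair appearing. The difficulty is concentrated in the case where $\mathcal{C}(U,V)$ is not of type $\mathbb{A}_{2}$, i.e. has infinitely many sincere exceptional objects (its preprojectives and preinjectives); there one must use the exceptionality and indecomposability of the fixed $M$ to decide which such subcategories contain it as a sincere object. In the situation in which the paper applies the lemma, $\mathcal{A}=\mathrm{mod}\,A$ for a Dynkin path algebra $A=kQ$, this case does not arise: every $\mathcal{C}(U,V)$ is of type $\mathbb{A}_{2}$, and $M$ is its unique sincere — hence unique non-simple indecomposable — object, so the count reduces to analysing the short exact sequences $0\to U\to M\to V\to0$ with $\Hom(U,V)=\Hom(V,U)=\Ext^{1}(U,V)=0$. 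The remaining ingredients — projectivity of $U$ and injectivity of $V$ in $\mathcal{C}(U,V)$, the identification of exceptional objects of a two-simple hereditary length category with its preprojectives and preinjectives, and the one-fewer-simple property of perpendicular categories — are standard.
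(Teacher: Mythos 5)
This lemma is not proved in the paper at all: it is quoted verbatim from Ringel \cite[Th. 3.1]{R96e}, so there is no in-paper argument to compare with, and your attempt has to stand on its own. Its preparatory part is sound: the reduction to the Serre subcategory generated by the composition factors of $M$, the observation that in $\mathcal{C}(U,V)$ the object $U$ is projective and $V$ injective, the trace argument producing a short exact sequence $0\to U^{t}\to M\to V^{s}\to 0$, and the injectivity of the assignment $(V,U)\mapsto \mathrm{tr}_{U}(M)$ are all correct (modulo routine checks, e.g.\ that the Serre subcategory is again hereditary). But all of this only yields an injection from the set of pairs into a set of subobjects of $M$; it produces no count.

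The actual content of the theorem, the equality with $s(M)-1$, is exactly the part you leave as a programme. The inductive step via the perpendicular category $S^{\perp}$ is only announced: you do not prove that a simple $S$ with $\Hom(S,M)=\Ext^{1}(S,M)=0$ (and $S$ exceptional, which is needed for $S^{\perp}$ to be a hereditary length category with one fewer simple) exists; you do not verify that $M$ lies in $S^{\perp}$, remains exceptional there, and satisfies $s_{S^{\perp}}(M)=s(M)-1$ so that the inductive hypothesis gives $s(M)-2$ pairs; and, most importantly, you give no argument that passing to $S^{\perp}$ loses \emph{exactly one} pair --- neither that at least one pair fails to lie in $S^{\perp}$ (lower bound) nor that at most one does (upper bound). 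Since this dichotomy is precisely where the number $s(M)-1$ comes from, the proposal as written is a strategy sketch with the decisive step missing, and you acknowledge as much yourself. Even in the Dynkin situation in which the paper uses the lemma, where every $\mathcal{C}(U_i,V_i)$ is of type $\mathbb{A}_{2}$ and the problem reduces to counting short exact sequences $0\to U\to M\to V\to 0$ with $\Hom(U,V)=\Hom(V,U)=\Ext^{1}(U,V)=0$, you would still have to exhibit $s(M)-1$ such sequences and exclude any further ones, which the proposal does not do.
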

In our case, $\mathcal{A}=\mod kQ$ and $Q$ is a Dynkin quiver. Thus every indecomposable $kQ$-module is exceptional and each $\mathcal{C}(U_i,V_i)$,
as a subcategory of $\mod kQ$, is equivalent to the module category of the path algebra of a quiver of type $\mathbb{A}_2$. So for every indecomposable module $M$, there exist exactly $s(M)-1$ short exact sequences
$$0\longrightarrow U_i\longrightarrow M\longrightarrow V_i\longrightarrow 0 \eqno{(**)}$$ with
$U_i, V_i$ indecomposable and $\Hom(U_i, V_i)=0$, which might be suitable candidates for our method (I).

It is well known that Horseshoe Lemma plays an important role in homological algebra. But if we replace projective resolutions by minimal projective resolutions, the so-called minimal Horseshoe Lemma (see \cite{Wangli}) is not true in general.
\begin{lemma}
Let $0\rightarrow X\rightarrow Y\rightarrow Z\rightarrow 0$ be a short exact sequence in $\mod A$, the following are equivalent:

(1) Minimal Horseshoe Lemma holds;

(2) $\top Y=\top X\oplus \top Z$;

(3) $0\rightarrow \Hom(Z, S_{i})\rightarrow \Hom(Y, S_{i})\rightarrow \Hom(X, S_{i})\rightarrow 0$ is exact for each simple module $S_i$.
\end{lemma}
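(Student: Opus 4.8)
The plan is to route both equivalences through the semisimple module $\top(-)$, using that $A=\mathbb{F}_qQ$ is hereditary. Write the sequence as $0\to X\xrightarrow{f}Y\xrightarrow{g}Z\to 0$, fix minimal projective resolutions $0\to\Omega_W\to P_W\xrightarrow{\epsilon_W}W\to 0$ for $W\in\{X,Y,Z\}$ (of length one, as $A$ is hereditary), and recall the standard facts: each $\epsilon_W$ is a projective cover, so $\top P_W\cong\top W$; a projective $A$-module is determined up to isomorphism by its top (the multiplicity of $P_i$ in $P$ equals that of $S_i$ in $\top P$); and a surjection $P\twoheadrightarrow W$ with $P$ projective is a projective cover precisely when it induces an isomorphism on tops, equivalently when $\dim_k\top P=\dim_k\top W$.

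First I would prove $(1)\Leftrightarrow(2)$. Feeding the minimal resolutions of $X$ and $Z$ into the Horseshoe Lemma produces a short exact sequence $0\to\Omega_X\oplus\Omega_Z\to P_X\oplus P_Z\xrightarrow{\pi}Y\to 0$ with $P_X\oplus P_Z$ projective, and since $A$ is hereditary the minimal Horseshoe Lemma holds for our sequence precisely when this is the \emph{minimal} projective resolution of $Y$, i.e.\ when $\pi$ is a projective cover. Because $\pi$ always induces an epimorphism $\top(P_X\oplus P_Z)\twoheadrightarrow\top Y$ and $\top(P_X\oplus P_Z)\cong\top P_X\oplus\top P_Z\cong\top X\oplus\top Z$, this happens if and only if this epimorphism of semisimple modules is an isomorphism, i.e.\ $\top Y\cong\top X\oplus\top Z$; and once $P_X\oplus P_Z\cong P_Y$ the first-syzygy part $\Omega_X\oplus\Omega_Z\cong\Omega_Y$ is automatic, so the whole Horseshoe resolution is then minimal. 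This yields $(1)\Leftrightarrow(2)$.

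Next I would prove $(2)\Leftrightarrow(3)$, which does not use hereditariness. For any $A$-module $W$ and simple module $S_i$ one has $\Hom_A(W,S_i)\cong\Hom_A(\top W,S_i)$, and since $\End_A(S_i)\cong k$ for a path algebra, $\dim_k\Hom_A(W,S_i)$ equals the multiplicity of $S_i$ in $\top W$. Applying the left-exact functor $\Hom_A(-,S_i)$ to the short exact sequence gives the exact sequence
\[0\longrightarrow\Hom_A(Z,S_i)\longrightarrow\Hom_A(Y,S_i)\longrightarrow\Hom_A(X,S_i)\longrightarrow\Ext_A^1(Z,S_i),\]
so $(3)$ holds for $S_i$ if and only if $\Hom_A(Y,S_i)\to\Hom_A(X,S_i)$ is surjective, which by left-exactness and a dimension count is equivalent to $\dim_k\Hom_A(Y,S_i)=\dim_k\Hom_A(X,S_i)+\dim_k\Hom_A(Z,S_i)$, i.e.\ to the multiplicity of $S_i$ in $\top Y$ being the sum of its multiplicities in $\top X$ and $\top Z$. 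Imposing this for every $i$ is exactly the equality of semisimple modules $\top Y\cong\top X\oplus\top Z$, namely $(2)$.

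I do not anticipate a real obstacle: the inputs — the Horseshoe Lemma, the description of projective covers and projective modules via their tops, the identity $\Hom_A(W,S_i)\cong\Hom_A(\top W,S_i)$, and the vanishing of higher syzygies over a hereditary algebra — are all standard. The only point demanding care is fixing the precise meaning of ``the minimal Horseshoe Lemma holds'' (that the Horseshoe construction applied to minimal resolutions outputs a minimal resolution); once that is settled, hereditariness collapses the question to the single degree-zero condition $\top Y\cong\top X\oplus\top Z$, and everything else is bookkeeping.
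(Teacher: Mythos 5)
Your proposal is correct and follows essentially the same route as the paper: the equivalence $(1)\Leftrightarrow(2)$ is the standard projective-cover/Horseshoe observation the paper treats as trivial, and your $(2)\Leftrightarrow(3)$ via $\Hom_A(W,S_i)\cong\Hom_A(\top W,S_i)$ and multiplicity counting is just a dimension-count rendering of the paper's duality identity $\top X\cong D\Hom(X,D(A/r))$. No gaps; the only extra content you supply is the (correct) remark that hereditariness makes the syzygy comparison automatic once the degree-zero terms agree.
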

\bp
The equivalence of (1) and (2) is trivial, while the equivalence of (2) and (3) follows from
$\top X \cong D\Hom (X, D(A/r))$ under the identification $\overline{x}\mapsto (f\mapsto f(x)(1))$ for all $\overline{x}\in
\top X, x\in X$, where $r$ is the radical of $A$ and $D$ is the standard $k$-duality.
\ep

So our aim in method (I) is to pick out a short exact sequence $(**)$ satisfying Minimal Horseshoe Lemma for $M$.
For convenience, we will abbreviate this desired property as MHL.

In the study of the Gabriel--Roiter measure for representation-finite hereditary algebras (\cite{Chen}), the
author succeeded in constructing a short exact sequence of the form $(**)$ intuitively. Although it doesn't satisfy MHL generally,
the proof of the next theorem is inspired by the proof in \cite{Chen}.
\begin{theorem}\label{sincere}
For any sincere non-projective indecomposable module $M$, there exists at least one short exact sequence of the form $(**)$ satisfying MHL.
\end{theorem}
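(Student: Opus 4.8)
The plan is to locate the desired sequence among the $s(M)-1$ short exact sequences of the form $(**)$ produced by Ringel's lemma \cite[Th.~3.1]{R96e}. Since $M$ is sincere we have $s(M)=n$, so there are exactly $n-1$ orthogonal exceptional pairs $(V_i,U_i)$ with $M$ a non-simple object of $\mathcal{C}(U_i,V_i)$, each giving a short exact sequence $0\to U_i\to M\to V_i\to 0$ with $U_i,V_i$ indecomposable and $\Hom(U_i,V_i)=0$; it suffices to verify MHL for one of them. First I would record the reformulation: applying the right exact functor $\top=-\otimes_A(A/\rad A)$ to $(**)$ gives an exact sequence $\top U_i\to\top M\to\top V_i\to 0$, so by the Lemma immediately preceding the statement, MHL holds for the $i$-th sequence iff $\top M\cong\top U_i\oplus\top V_i$, iff $P_M\cong P_{U_i}\oplus P_{V_i}$, iff for every vertex $j$ the connecting homomorphism $\Hom(U_i,S_j)\to\Ext_A^1(V_i,S_j)$ in the long exact sequence obtained by applying $\Hom(-,S_j)$ to $(**)$ vanishes.

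This last description suggests reducing to a support condition on the tops. If $S_j$ is injective, that is, if $j$ is a source of $Q$, then $\Ext_A^1(V_i,S_j)=0$ and the connecting map at $j$ is automatically zero; and if $S_j$ is not a summand of $\top U_i$, then $\Hom(U_i,S_j)=0$ and again it vanishes. Hence MHL holds for the $i$-th sequence as soon as $\top U_i$ is supported only on source vertices of $Q$, and it is enough to exhibit one pair $(V_i,U_i)$ with this property. Here the hypotheses on $M$ enter essentially: I would first establish, for a sincere non-projective indecomposable $M$, that $\top M$ is itself supported only on sources — this is false for arbitrary indecomposables, so it must use sincerity, and it should come out of representation-directedness of $kQ$ together with a close look at the minimal projective resolution $0\to\Omega_M\to P_M\to M\to 0$ (the socle of a sincere module concentrates on sinks, which Loewy-dually pushes the top onto sources). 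Granting this, one adapts the intuitive construction of $(**)$ from \cite{Chen} to single out, among the $n-1$ Ringel pairs, the one whose submodule $U_i$ (hence also $V_i=M/U_i$) has top concentrated on sources, by building the filtration of $M$ compatibly with its socle and Loewy structure.

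I expect the crux, and the main obstacle, to be exactly this matching: Chen's construction yields a sequence $(**)$ for every indecomposable but not, in general, one with source-supported top, so one must show that for a \emph{sincere} $M$ at least one of the $n-1$ pairs does have this feature. This seems to require a careful analysis of the (finitely many) sincere indecomposable representations of the Dynkin quivers of types $\mathbb{A}$, $\mathbb{D}$, $\mathbb{E}$, together with the combinatorics of the positive roots lying below $\Dim M$. A secondary technical point is ensuring that both the sub and the quotient stay indecomposable throughout the construction, so that the resulting sequence is genuinely one of the $n-1$ coming from orthogonal exceptional pairs; for this one works inside the identification of $\mathcal{C}(U_i,V_i)$ with the module category of a quiver of type $\mathbb{A}_2$ supplied by \cite[Th.~3.1]{R96e}.
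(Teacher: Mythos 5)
Your opening reduction is correct and is exactly the paper's equivalence lemma for the minimal Horseshoe Lemma: MHL for $0\to U_i\to M\to V_i\to 0$ amounts to the vanishing of the connecting maps $\Hom(U_i,S_j)\to\Ext_A^1(V_i,S_j)$ for all $j$, and this certainly holds if $\top U_i$ is supported on source vertices, since $S_j$ is injective when $j$ is a source. The fatal problem is the statement you then make the crux of the argument: it is \emph{false} that a sincere non-projective indecomposable module has its top supported on sources. Take $Q$ of type $\mathbb{D}_4$ with central vertex $c$ and arrows $d\to c$, $c\to a$, $c\to b$, and let $M$ be the unique indecomposable with dimension vector $\alpha_a+\alpha_b+2\alpha_c+\alpha_d$ (the highest root), realized by $g=(1,1)^{T}\colon M_d\to M_c=k^2$, $p=(1,0)\colon M_c\to M_a$, $q=(0,1)\colon M_c\to M_b$ (one checks $\End M=k$). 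Then $M$ is sincere and non-projective, its projective cover is $P_c\oplus P_d$ with $\Omega_M\cong P_a\oplus P_b$, and $\top M\cong S_c\oplus S_d$, where $c$ is \emph{not} a source. So the ``socle on sinks, hence top on sources'' heuristic breaks down precisely in the multiplicity-two situations that make the theorem nontrivial, and your strategy cannot even get started there. (In this example a good sequence does exist, namely $0\to P_d\to M\to S_c\to 0$; the point is that your route to producing one, via the false claim about $\top M$, collapses.)

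Beyond this, even the statement you would actually need --- that among the $s(M)-1$ Ringel pairs there is always one whose submodule has source-supported top --- is nowhere proved in your text and is not obviously true; your criterion is only sufficient (the connecting map can vanish for other reasons), and the passage from a hypothetical property of $\top M$ to the choice of a suitable pair $(V_i,U_i)$ is not spelled out. You yourself defer ``the careful analysis of the sincere indecomposables of types $\mathbb{A}$, $\mathbb{D}$, $\mathbb{E}$,'' but that analysis is the entire content of the theorem. The paper's proof is exactly such an analysis: it locates $M$ in the AR-quiver via the invariants $sl(M)$ and $\alpha(M)$, uses wings (Lemma \ref{wing}), sectional paths and hammock computations, and in each configuration either exhibits an explicit sequence $(**)$ and verifies MHL directly, or reduces to finitely many possible orientations of $Q$ that are checked by hand. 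As it stands, your proposal is a plan whose key lemma is false and whose remaining steps are an acknowledged gap, so it does not constitute a proof.
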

Note that the AR-quiver of $kQ$ is a translation quiver and the orbit quiver is a tree with at most 4 end points.
For each indecomposable module $M$, we denote by $[\![M]\!]$ the orbit of $M$, i.e., a vertex in the orbit quiver.  Now consider
$Q$ of type $\mathbb{D}_n$ or $\mathbb{E}_{6,7,8}$, whose orbit graph is a star with 3 branches. $M$ is said to \emph{lie on the center} if $[\![M]\!]$
has exactly 3 neighbors in the orbit quiver; and $M$ is said to \emph{lie on the quasi-center} if $[\![M]\!]$ has two neighbors and one of the neighbors lies on the center. Then we define $sl(M)$ to be the length from the center vertex to $[\![M]\!]$  in the branch containing $[\![M]\!]$.
Thus $sl(M)=0$ if $M$ lies on the center and $sl(M)=1$ if $M$ lies on the quasi-center.

For every indecomposable module $M$, we also define $\alpha(M)$ to be the number of indecomposable summands of $X$ with $X\rightarrow M$ a minimal right almost split map.

Recall from \cite{R84} that the AR-quiver of the linearly oriented quiver of type $\mathbb{A}_n$ is denoted by $\Theta(n)$. Let $w$ be a vertex of
a translation quiver $\Gamma$. A mesh-complete subquiver $\Theta$ of $\Gamma$ is called a \emph{wing} of $w$ if $\Theta$ is of the form $\Theta(n)$ for some $n\geq 2$
and $w$ is the sole projective-injective vertex of $\Theta$. The number $n$ is called the \emph{length of the wing} and we will say $w$ has a wing of length $n$.
By abuse of language, we will not distinguish an indecomposable module $M$ and the corresponding vertex in the AR-quiver of $kQ$.  Also recall that
a path $M_0\rightarrow M_1\rightarrow \cdots \rightarrow M_s$ in the AR-quiver is called a \emph{sectional path} if $\tau M_{i+1}\neq M_{i-1}$ for all $i=1,\cdots, s-1$.
Let  $\Sigma_{\rightarrow}(M)$ be the set of all indecomposable modules that can be reached from $M$ by a sectional path and $\Sigma_{\leftarrow}(M)$ is defined dually (see \cite{S}).
\begin{lemma}\label{wing}
Assume an indecomposable module $M$ has a wing of length $n$, if one of the first $n-1$ modules in the bottom ($M$ is on the top) is not simple, there exists a short exact sequence of the form $(**)$ satisfying MHL.
\end{lemma}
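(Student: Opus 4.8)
The plan is to read the required sequence off the $n-1$ short exact sequences that the wing already supplies, and then to verify the Minimal Horseshoe Lemma through the criterion established above: for $0\to X\to Y\to Z\to 0$, MHL holds iff $\top Y\cong\top X\oplus\top Z$, iff $0\to\Hom(Z,S_i)\to\Hom(Y,S_i)\to\Hom(X,S_i)\to 0$ is exact for every simple module $S_i$.

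First I would fix notation on the wing $\Theta$. Let its bottom row be $N_1,\dots,N_n$, with $M$ at the apex. Since $M$ is the sole projective--injective vertex of $\Theta$ and $\Theta$ is mesh--complete, one edge of $\Theta$ is a chain of irreducible monomorphisms $N_1=U_1\subsetneq U_2\subsetneq\dots\subsetneq U_{n-1}\subsetneq M$ exhausting the proper submodules of $M$ inside $\Theta$, with $U_1=N_1$ and $U_i/U_{i-1}\cong N_i$, and the opposite edge records the complementary quotients $V_i:=M/U_i$, which descend through irreducible epimorphisms down to $V_{n-1}\cong N_n$. For $1\le i\le n-1$ this gives short exact sequences $\sigma_i:0\to U_i\to M\to V_i\to 0$ with $U_i,V_i$ indecomposable; since $M$ is filtered by the pair $(V_i,U_i)$ and the latter is an orthogonal exceptional pair (as $A$ is representation--directed), each $\sigma_i$ is one of the $s(M)-1$ sequences counted in the lemma recalled above, and in particular is of the form $(**)$.

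By the criterion above, $\sigma_i$ satisfies MHL iff $\top M\cong\top U_i\oplus\top V_i$, iff the inclusion $U_i\hookrightarrow M$ is injective on tops, iff $U_i\cap\rad M=\rad U_i$. So the whole statement reduces to a top computation inside $\Theta$. As a reference point, consider the ``abstract wing'', i.e.\ the case where $\Theta(n)$ is literally the AR-quiver of the linearly oriented $\mathbb{A}_n$ and all $N_i$ are simple: then the two edges of $\Theta$ are the radical and the socle series of $M$, one reads off $\top U_i=\top N_i$ and $\top V_i=\top N_n=\top M$, and so $\top M\cong\top U_i\oplus\top V_i$ fails for every $i$. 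The hypothesis of the lemma is exactly what breaks this rigidity: if $N_j$ is \emph{not} simple for some $j\le n-1$, then $\rad N_j\neq 0$, and I would show that this nonzero radical loosens precisely the $j$-th step of the radical-series comparison, forcing $\rad U_j=U_j\cap\rad M$, so that $\sigma_j$ satisfies MHL.

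The main obstacle is this top computation inside $\Theta$: ``simple versus non-simple over $kQ$'' is exactly the discrepancy between the $kQ$-radical and the combinatorial (``abstract $\mathbb{A}_n$'') structure of $\Theta$, so one has to control how the layers $N_1,\dots,N_i$ sit inside $U_i$ as honest $kQ$-modules, not merely inside $\Theta$. The natural tool is induction along the submodule edge: apply the Snake Lemma to each irreducible inclusion $U_{i-1}\hookrightarrow U_i$ exactly as in the lemma characterizing MHL, reduce at each stage to the extension $0\to U_{i-1}\to U_i\to N_i\to 0$, and use at the critical index $j$ that $\rad N_j\neq 0$ to keep the obstruction from propagating.
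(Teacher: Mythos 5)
Your setup agrees with the paper's: the sequence you single out, $\sigma_j:0\to U_j\to M\to V_j\to 0$ at the critical index $j$, is exactly the paper's $0\to M_i\xrightarrow{f} M\to\Coker f\to 0$, and the reduction of MHL to $\top M\cong\top U_j\oplus\top V_j$ (equivalently, injectivity of $\top U_j\to\top M$, equivalently surjectivity of $\Hom(M,S)\to\Hom(U_j,S)$ for every simple $S$) is just the equivalence already recorded before the lemma. The problem is that the proof stops there: the one place where the hypothesis ``$N_j$ is not simple'' has to do work --- showing that it forces $\rad U_j=U_j\cap\rad M$ --- is only announced (``I would show that this nonzero radical loosens precisely the $j$-th step\dots''), not argued. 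That statement \emph{is} the lemma, so as written the proposal is a plan rather than a proof.

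Moreover, the mechanism you sketch does not obviously suffice. An induction via the Snake Lemma along the left-edge extensions $0\to U_{i-1}\to U_i\to N_i\to 0$ only sees the submodule edge, whereas the condition you must verify compares $\top U_j$ with $\top M$, i.e.\ it is controlled by how every simple $S$ relates to the whole wing and in particular to the quotient $V_j$: in your own reference computation the failure in the all-simple case comes from $\top V_i$ already exhausting $\top M$, and nothing in the sketch explains how $\rad N_j\neq 0$ removes that obstruction (you would need, for each simple $S$ with $\Hom(U_j,S)\neq 0$, either a lift to $\Hom(M,S)$ or the vanishing of the connecting map into $\Ext^{1}(V_j,S)$). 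This is precisely what the paper's proof supplies, by a case analysis on the position of $S$ in the AR-quiver: $\Hom(-,S)$ vanishes on all three terms when $S$ lies left of $\Sigma_{\leftarrow}(U_j)$, the connecting map dies because $\Ext^{1}(V_j,S)=0$ when $S$ lies right of $\Sigma_{\leftarrow}(V_j)$, and the remaining middle strip is checked directly using the hammocks, which is where the non-simplicity of $N_j$ enters. Your proposal has no counterpart of this region-by-region check. (Secondarily, the structural facts you use --- that the left edge is a chain of monomorphisms with $U_i/U_{i-1}\cong N_i$, that $V_i=M/U_i$, and that $\Hom(U_i,V_i)=0$ so that $\sigma_i$ is of the form $(**)$ --- are asserted rather than proved; the paper is similarly brief here, so this is a lesser issue, but note that $\Hom(U_i,V_i)=0$ does not follow from directedness alone, since there are nonzero paths $U_i\to M\to V_i$.)
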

$$\xymatrix@!=0.8pc{ & & &M\ar[rd]\\
& & M_{i} \ar@{.>}[ru]\ar@{.>}[rd] & & \cdot \ar@{.>}[rd] \\
& \cdot\ar@{.>}[ru] \ar[rd] & & \cdot \ar@{.>}[ru] \ar@{.>}[rd] & & \cdot \ar[rd]\\
N_{1}\ar[ru] & & N_{2}\ar@{.>}[ru]\ar@{.}[rr] & & N_{i}\ar@{.>}[ru]\ar@{.}[rr] & & N_n
}
$$
\bp
Assume $N_i$ is not simple. It is easily seen that $\dim \Hom(M_{i},M)=1$ and
any nonzero map $f$ from $M_i$ to $M$ is a monomorphism. Thus $0 \rightarrow M_i \xrightarrow{f} M \rightarrow \Coker f \rightarrow 0$ is
a short exact sequence of the form $(**)$. Then check the exactness when applying the functor $\Hom(-,  S_i)$
for any simple module $S_i$. If $S_i$ is on the left of $\Sigma_{\leftarrow}(M_i)$, then $\Hom(-,  S_i)=0$ on
all terms of the exact sequence. If $S_i$ is on the right of $\Sigma_{\leftarrow}(\Coker f)$, we have $\Ext^{1}(\Coker f,S_i)=0$.
Thus the left area is in between, which is easily checked.
\ep
We remark that in the following context one only needs to consider the area between $\Sigma_{\rightarrow}( U_i)$ and $\Sigma_{\leftarrow}(V_i)$ when
checking the exactness of $\Hom(-,  S_i)$ on the short exact sequence $(**)$, because the other two areas are the same as in proving Lemma \ref{wing}.
\begin{proposition}
For any quiver of type $\mathbb{D}_n$ or $\mathbb{E}_{6,7,8}$, let $M$ be a sincere non-projective indecomposable module.
If $sl(M)=0$, then there exists a short exact sequence of the form $(**)$ satisfying MHL.
\end{proposition}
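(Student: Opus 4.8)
The plan is to produce a wing of $M$ to which Lemma~\ref{wing} applies. Since $M$ lies on the center of the orbit quiver --- which for the types under consideration is a star with three branches --- the minimal right almost split map into $M$ has $\alpha(M)=3$ indecomposable sources, so the almost split sequence ending at $M$ is $0\to\tau M\to X_{1}\oplus X_{2}\oplus X_{3}\to M\to 0$, with $X_{1},X_{2},X_{3}$ on the three orbits adjacent to $[\![M]\!]$. Reading off the mesh structure of the AR-quiver (of shape $\mathbb{Z}\mathbb{D}_{n}$ or $\mathbb{Z}\mathbb{E}$) around the trivalent orbit, one checks that for a suitable index $i$ --- for $\mathbb{D}_{n}$ one may take $X_{i}$ on one of the two short-arm orbits, with the analogous distinguished choice for $\mathbb{E}_{6,7,8}$ --- the source $X_{i}$ has a single immediate successor, namely $M$, so the triple $\{X_{i},M,\tau^{-1}X_{i}\}$ is a mesh-complete subquiver of the form $\Theta(2)$ having $M$ as its unique projective--injective vertex, i.e.\ a wing of $M$. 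More generally $M$ is the apex of a wing $\Theta$ of some length $\ell\ge 2$; write $N_{1},\dots,N_{\ell}$ for its bottom modules, which are the (pairwise non-isomorphic) simple objects of the abelian subcategory underlying $\Theta$ and satisfy $\sum_{j}\Dim N_{j}=\Dim M$, since $M$ is their flag object.

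The content of the proof is to show that some wing of $M$ has a non-simple $kQ$-module among its first $\ell-1$ bottom modules. Take the chosen wing $\Theta$ and suppose, for contradiction, that $N_{1},\dots,N_{\ell-1}$ are all simple $kQ$-modules; being pairwise non-isomorphic they are distinct simple roots, so $\sum_{j}\Dim N_{j}=\Dim M$ forces $\Dim M-\Dim N_{\ell}$ to be a sum of $\ell-1$ distinct simple roots. Together with the sincerity of $M$, the requirement that $N_{\ell}$ be an indecomposable $kQ$-module inside $\Theta$, and the fact that $[\![M]\!]$ is the trivalent orbit, this cuts $M$ down to a short, explicitly describable family --- for $\mathbb{D}_{n}$ one separates cases according as the coefficient of $\Dim M$ at the trivalent vertex is $1$ or $2$, and $\mathbb{E}_{6,7,8}$ is handled by inspection near the trivalent orbit. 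In every such case one finds that either $M$ is projective, contrary to hypothesis, or $M$ has a second wing (there are always at least two) whose first bottom modules are not all simple, and one replaces $\Theta$ by it. In particular, when $\ell=2$ the assumption would make $N_{1}$ simple, and one sees directly that $N_{1}$ --- an indecomposable lying one mesh-step from the sincere module $M$ along a short-arm orbit --- cannot be a simple $kQ$-module when $M$ is sincere on the center, so the bad case does not occur there at all.

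Once a wing $\Theta$ of $M$ is in hand whose first $\ell-1$ bottom modules are not all simple, Lemma~\ref{wing} yields a short exact sequence $0\to M_{i}\xrightarrow{f}M\to\Coker f\to 0$ of the form $(**)$ on which $\Hom(-,S)$ remains exact for every simple $S$, that is, which satisfies MHL; the remark following Lemma~\ref{wing} lets one restrict the exactness check to the band of the AR-quiver between $\Sigma_{\rightarrow}(M_{i})$ and $\Sigma_{\leftarrow}(\Coker f)$, a finite mesh computation. This completes the proof. The main obstacle is the middle step: it is precisely where both hypotheses are indispensable --- \emph{sincerity} forces the module complementary (in $M$) to a leg of the wing to be non-simple, while $sl(M)=0$ makes the local combinatorics at $M$ (its three incoming arrows, the adjacent meshes, and the tops of the modules involved) explicit enough to drive the case analysis; I would organize that analysis by the type of $Q$ and, within $\mathbb{D}_{n}$, by the coefficient of $\Dim M$ at the trivalent vertex.
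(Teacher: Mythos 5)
Your overall strategy (wings of $M$, Lemma~\ref{wing}, a dimension count driven by sincerity) is the right family of ideas, and your identity $\Dim M=\sum_j\Dim N_j$ for a mesh-complete wing with apex $M$ is correct, but the proof has a genuine gap exactly where you flag "the main obstacle": the step "this cuts $M$ down to a short, explicitly describable family \dots\ in every such case one finds that either $M$ is projective \dots\ or $M$ has a second wing whose first bottom modules are not all simple" is asserted, not proved, and it cannot be waved away, because a \emph{single} wing gives no numerical contradiction by itself. Indeed, knowing only that $N_1,\dots,N_{\ell-1}$ are simple tells you nothing about $N_\ell$, and $\Dim M-\Dim N_\ell$ being a sum of distinct simple roots is not by itself restrictive (e.g.\ for $\mathbb{D}_4$ and $M$ with $\Dim M=(2,1,1,1)$, a length-$2$ wing satisfies $\Dim M=\Dim N_1+\Dim N_2$ and nothing in the dimension vectors alone rules out $N_1$ simple); so all the content of the proposition sits inside the deferred case analysis. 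Your auxiliary claim for $\ell=2$ ("one sees directly that $N_1$ cannot be simple when $M$ is sincere on the center") is likewise unproven; note that a simple module need not lie on "its own" orbit (in type $\mathbb{A}$ all simples can lie on a single $\tau$-orbit), so the assertion requires an actual argument about which simples can be direct predecessors of a central sincere module.

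The paper's proof avoids any case analysis by playing the three wings off against the almost split sequence ending at $M$, and this is the idea your write-up is missing. Since $M$ is sincere and lies on the trivalent orbit, all three wings of $M$ consist of nonzero modules. If every one of them fails the hypothesis of Lemma~\ref{wing}, then each summand of the middle term $E$ of the almost split sequence $0\rightarrow\tau M\rightarrow E\rightarrow M\rightarrow 0$ is filtered (inside its wing) by the bottom modules that are being assumed simple, so $\Dim E$ is a sum of pairwise distinct simple roots; sincerity of $M$ then gives $\Dim M-\Dim E\geq 0$ componentwise. But the almost split sequence forces $\Dim E=\Dim M+\Dim\tau M$ with $\tau M\neq 0$ because $M$ is non-projective, a contradiction. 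This one comparison replaces the entire family-by-family analysis you propose, and it is precisely the step you would need to supply (or replace by a genuinely worked-out classification) for your proof to be complete.
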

\bp
By the work of Happel \cite{Happel}, one can identify the AR-quiver $\Gamma_{D^{b}(kQ)}$ of $D^{b}(kQ)$ with the repetition quiver $\mathbb{Z}Q$.
So we consider the following subquiver of $\Gamma_{D^{b}(kQ)}$, in which $\dashrightarrow$ means a irreducible map that possibly exists, and $\xymatrix@1{\cdot \ar@{.>}[r]& \cdot}$ means the composition of one or more irreducible maps.
$$\xymatrix@!=0.8pc{L_1 \ar@{-->}[rd] & & L_2 \ar@{-->}[rd] & & L_3\\
& L_1^{'} \ar@{-->}[ru] \ar[rd] & & L_2^{'} \ar@{-->}[ru] \ar[rd]\\
\cdot \ar[ru] \ar[r] \ar[rd] & M_1 \ar[r] & M \ar[ru] \ar[r] \ar[rd] & M_2 \ar[r]& \cdot\\
& K \ar[ru] \ar@{.>}[rd] & & \cdot \ar[ru] \ar@{.>}[rd]\\
N_1 \ar@{.>}[ru] & & \cdot \ar@{.>}[ru] & & N_t
}
$$
Since $M$ is sincere, $L_i\neq 0$, $M_i\neq 0$ and $N_i\neq 0$. Here an object $N\neq 0$ means $N$ is a $kQ$-module.
Thus $M$ has three wings. If one of them satisfies the condition in Lemma \ref{wing}, then we are done. Otherwise,
$L_1, L_2$ (possibly $L_1^{'}$), $M_1$ and $N_{1},\cdots, N_{t-1}$ are all simple modules. Then we have:
\begin{eqnarray*} & & \underline{\dim} M-(\underline{\dim}L_{1}^{'}+\underline{\dim}M_1+\underline{\dim}K)\\
&=& \underline{\dim} M-(\underline{\dim}L_{1}+\underline{\dim}L_{2}+\underline{\dim}M_{1}+\underline{\dim}N_{1}+\cdots + \underline{\dim}N_{t-1})>0
\end{eqnarray*}
because $M$ is sincere. This is impossible since $M$ is non-projective.
\ep
{\bf{Proof of Theorem \ref{sincere}}}
We will proceed case by case, using $sl(M)$ and $\alpha(M)$ to locate the sincere non-projective module $M$ in the AR-quiver.
\\
\textit{$\mathbb{A}_n$ type}

In this case, $M$ is the unique sincere indecomposable module with $\underline{\dim} M=(1,\cdots,1)$. The type $\mathbb{A}_n$ graph
$\xymatrix@1{1\ar@{-}[r]& 2\ar@{-}[r]& \cdot \ar@{.}[r] &\cdot\ar@{-}[r] & n}$ needs to be given an orientation such that at least one of the vertices excluding both ends is a sink, say $i$, $1<i<n$. Otherwise, $M$ must be projective. With the orientation given, we set indecomposable
modules $M_1$ and $M_2$ such that $\underline{\dim} M_1=(\underbrace{0,\cdots,0}_{i-1},1,\cdots,1)$ and $\underline{\dim} M_2=\underline{\dim} M-\underline{\dim} M_1$. Then $0\rightarrow M_1\rightarrow M\rightarrow M_2\rightarrow 0$ is our desired short exact sequence.

We remark that in any Dynkin type, one can find a short exact sequence $(**)$ satisfying MHL for any sincere non-projective injective indecomposable module. This
is because it has dimension vector $(1,\cdots,1)$, thus the case is nearly the same as above. So from now on, we shall assume:
\begin{itemize}
\item $M$ does not lie on the center;
\item $M$ is a sincere indecomposable module;
\item $M$ is neither projective nor injective.
\end{itemize}
\textit{$\mathbb{D}_{n}$ type}

If $M$ lies on the boundary of the orbit quiver, $l(M)=1$ and $\alpha(M)=1$. Look at the following subquiver
of $\Gamma_{D^{b}(kQ)}$ (note $n=m+2$).
$$\xymatrix@=0.5pc{& L_1 \ar[rd] & & & & & & L\ar[rd] & & Y_1 \ar[rd] & & & &Y_{t}\ar[rd] & & Y_{t+1}\ar[rd]\\
\cdot\ar[ru]\ar[r]\ar[rd] & L_2 \ar[r] & \cdot \ar@{.}[rrrddd] \ar@{.}[rrrr]& & & & \cdot\ar[ru]\ar[r]\ar[rd]& X \ar[r] & X_m \ar[ru]\ar[r]\ar[rd] & M\ar[r] & \cdot\ar@{.}[rr] \ar@{.>}[rd]&  & \cdot\ar[ru]\ar[r]\ar[rd]  &Y_{t}^{'}\ar[r] &\cdot\ar[ru]\ar[r]\ar[rd] & Y_{t+1}^{'}\ar[r] & \cdot\\
 &\cdot\ar[ru] & & & & & & \cdot \ar[ru] \ar[rd] & & \cdot\ar[ru] \ar@{.>}[rd] & & \cdot\ar@{.}[ru] \ar[rd] & &\cdot\ar[ru] & & \cdot\ar[ru]\ar@{.}[rrdd]\\
& & & & & & \cdot \ar@{.>}[ru] \ar[rd] & & \cdot \ar[ru] \ar@{.>}[rd] & & \cdot \ar[ru] \ar[rd] & & \cdot \ar@{.>}[rd]\ar@{.}[ru]\\
 & & & & & X_t \ar[ru] \ar[rd] & & \cdot \ar@{.>}[ru] \ar@{.>}[rd] & & \cdot \ar[ru] \ar[rd] & & \ar[ru] \ar@{.>}[rd] & & \cdot\ar[rd]& & & & Z_t\\
 & & & &  X_2 \ar@{.>}[ru] \ar[rd]& & \cdot \ar[ru]\ar@{.>}[rd]& & \cdot \ar@{.>}[ru] \ar[rd]& & \cdot \ar[ru] \ar@{.>}[rd]& & \cdot\ar[ru]\ar[rd]& &
 \cdot  \ar[rd] & & Z_2 \ar@{.>}[ru]\\
 & & &  N_1 \ar[ru] & & N_2 \ar@{.>}[ru]\ar@{.}[rr] & &  N_t\ar[ru]& & \cdot  \ar@{.>}[ru] \ar@{.}[rr]& & \cdot \ar[ru]& & N_m\ar[ru]& & N\ar[ru]\\
}
$$

Since $M$ is sincere, $N_1, N\neq 0$. For $\mathbb{D}_n$ type, the number of modules in the $\tau$-orbit of any module not lying on the boundary of the orbit quiver is $n-1=m+1$, which means $N_1$ is projective and $N$ is injective. $M$ is also non-projective, hence $X\ne 0$ and $X_m$ is sincere. Thus $X_m$ has a wing of length $m$, which means there must be non-simple modules at the bottom. Assume $t$ is the minimal integer such that $N_t$ is not simple.

If $t=m$, $X_{m-1}$ has a wing of length $m-1$, whose bottom modules are all simple modules. Since $N_1$ is projective, $X_m$ must be projective. Thus $\underline{\dim}X_m=(1,1,\cdots,1)$. But $X_m\twoheadrightarrow M$, which contradicts the sincereness of $M$. Hence $t<m$. Then we claim $X_t\rightarrow M$ is a monomorphism. Otherwise, $X_t$ is also sincere, so $L_1, L_2\neq 0$. Notice again $N_1$ is a simple projective module and $N_2,\cdots,N_{t-1}$ are all simple modules. Hence $X_t$ has to be projective. The connectedness of the projective slice helps us to conclude that $L_1$ and $L_2$ are both projective. But $\Hom(L_i,M)=0$ for $i=1$ or $i=2$, which contradicts the sincereness of $M$.
Now that $X_t\hookrightarrow M$, the cokernel is $Y_{t+1}\neq 0$ if $t$ is odd or $Y_{t+1}^{'}\neq 0$ if $t$ is even.

Then check MHL for the short exact sequence
$0\rightarrow X_t\rightarrow M\rightarrow Y_{t+1}(Y_{t+1}^{'})\rightarrow 0$. Note that all modules lying on $\Sigma_{\rightarrow}(N_i)$ (excluding $N_i$), $1\leq i\leq t-1$ can not be simple modules. So we only need to inspect the area between $\Sigma_{\rightarrow}(N_t)$
and $\Sigma_{\leftarrow}(Y_{t+1})$ ($\Sigma_{\leftarrow}(Y_{t+1}^{'})$). Assume $t$ is odd. By calculating the hammocks of $\Hom(X_t,-)$ and $\Hom(M,-)$, it is easy to check the exactness of $\Hom(-,S_{i})$ if $S_i$ is in the area excluding the loci of some definitely non-simple modules and $Y_t$. Note that $0\rightarrow X\rightarrow Y_1\rightarrow N\rightarrow 0$ is a short exact sequence. One can also get $0\rightarrow X\rightarrow Y_{2}^{'}\rightarrow Z_2\rightarrow 0$ as long as $Z_2\neq 0$. Now $Z_t\neq 0$ since the number of modules in the orbit of $X_t$ is $m+1=n-1$ and $X_t$ is projective. Hence $0\rightarrow X\rightarrow Y_{t}\rightarrow Z_t\rightarrow 0$ is also a short exact sequence. Thus $Y_t$ can not be a simple module. The case for even $t$ is similar.

If $M$ is not on the boundary or center of the orbit quiver, we use the picture below. Note that $n=m+2$, $M=X_t$ and $k=m-t+2$, $m-t\geq 1$. One could also write $Z_{k}=N_{1}$ for the coordination of symbols.
$$\xymatrix@=0.5pc{
& & & & & Y_1 \ar[rd]& & Y_2\ar[rd] & & & & Y_k\ar[rd]\\
& & & & \cdot \ar[ru]\ar[r]\ar[rd] & Y_{1}' \ar[r] & \cdot \ar[ru]\ar[r]\ar[rd] & Y_{2}' \ar[r]& \cdot\ar@{.}[rr]& & X_{m}\ar[ru]\ar[r]\ar[rd]
& Y_{k}'\ar[r] & \cdot\\
& & & \cdot\ar@{.>}[ru] \ar[rd]& & \cdot \ar[ru] & & \cdot\ar[ru] \ar@{.>}[rd]& & \cdot\ar[ru] & & \cdot\ar[ru]\\
& & \cdot \ar[ru]\ar[rd] & & \cdot \ar@{.>}[ru]\ar[rd] & & & & X_t \ar@{.>}[ru]\ar[rd]\\
& \cdot \ar[ru]\ar[rd] & & \cdot \ar[ru]\ar[rd] & & \cdot \ar@{.>}[rruu]\ar@{.>}[rd] & & X_2 \ar@{.>}[ru] \ar[rd] & & \cdot\ar@{.>}[rd]\\
Z_{1}\ar[ru] & & Z_{2} \ar[ru] & & \cdot\ar[ru] & & N_{1}\ar[ru] & & N_{2} \ar@{.>}[ru] & & N_{t}
}$$

Since $M$ is sincere, $M=X_t$ has a wing of length $t$. If one of $N_i$, $1\leq i\leq t-1$ is not simple, we are done by Lemma \ref{wing}.
Assume now $N_i$, $1\leq i\leq t-1$ are all simple modules. Thus $N_1$ cannot be projective, for otherwise we can deduce that $M$ is projective, which is a contradiction. By drawing the hammock of $\Hom(-,X_t)$, we can easily see none of $Z_2,\cdots,Z_{k-1}$ are projective modules because $X_t$ is sincere. Thus $Z_1\neq 0$. We get a short exact sequence $0\rightarrow Y_1\rightarrow M\rightarrow Y_k(Y_k')\rightarrow 0$, where the third term is $Y_k$ if $k$ is even and $Y_k'$ if $k$ is odd. Assume $k$ is even. There exists a series of short exact sequences $Z_1\hookrightarrow Y_1\twoheadrightarrow Y_2'$, $Z_2\hookrightarrow Y_2'\twoheadrightarrow Y_3$, $\cdots$, until $Z_{k-1}\hookrightarrow Y_{k-1}\twoheadrightarrow Y_{k}'$. Thus none of $Y_1,Y_2',\cdots,Y_{k-2}',Y_{k-1}$ are simple modules. Then it is an easy task to check MHL in the area  between $\Sigma_{\rightarrow}(Y_1)$ and $\Sigma_{\leftarrow}(Y_{k})$. The case for odd $k$ is similar.
\\
\textit{$\mathbb{E}_{6}$ type}

In this case, each sincere indecomposable module lies on the center or the quasi-centers, or on the boundary. So we only need to consider the latter two cases.

Assume $M$ lies on the quasi-centers of the orbit quiver, which means $sl(M)=1$ and $\alpha(M)=2$. Look at the following subquiver:
$$\xymatrix@=0.5pc{
& &  L_1\ar[rd] & & L_2\ar[rd] & & \cdot \ar[rd] & & \cdot\ar[rd] & & Z\\
& & & K\ar[ru]\ar[rd]& & \cdot \ar[ru]\ar[rd]& & \cdot \ar[ru]\ar[rd] & & \cdot \ar[ru]\\
& & \cdot \ar[ru]\ar[rd]\ar[r]& \cdot\ar[r]& \cdot \ar[ru]\ar[rd]\ar[r] & L\ar[r]& \cdot \ar[ru]\ar[rd]\ar[r]& Y\ar[r] & \cdot\ar[ru]\ar[rd]\\
& \cdot \ar[ru]\ar[rd] & & \cdot \ar[ru]\ar[rd] & & M\ar[ru]\ar[rd]& & \cdot \ar[ru]\ar[rd]& & \cdot\ar[rd]\\
X\ar[ru]& & \cdot\ar[ru] & & N_1\ar[ru]& & N_2\ar[ru]& & \cdot\ar[ru]& & R\\
}$$

If $N_1$ is not simple, then $0\rightarrow N_1\rightarrow M\rightarrow N_2\rightarrow 0$ is our desired exact sequence.
Otherwise, we get a short exact sequence $0\rightarrow N_1\rightarrow Y\rightarrow Z\rightarrow 0$, so $Z\neq 0$.
Moreover, $N_1$ can not be projective because $M$ is non-projective. Then by calculating the hammock of $\Hom(-,M)$, we deduce that $X\neq 0$.
Assume $N_2$ is injective, thus all modules along $N_2\rightarrow \cdots\rightarrow Z$ are injective. Then the quiver $Q$ has only two possible orientations, from which we can conclude $N_1$ must be non-simple. This is a contradiction. Hence $N_2$ is not injective, which implies $R\neq 0$.
Thus $0\rightarrow \tau L\rightarrow M\rightarrow \tau Z\rightarrow 0$ is a short exact sequence $(**)$.  Notice that we have a short exact sequence
$0\rightarrow \tau^{-1}X\rightarrow \tau^{-1}L_2\rightarrow R\rightarrow 0$, which means $\tau^{-1}L_2$ is non-simple. Then it is obvious to check MHL.

If $M$ lies on the boundary of the orbit quiver, i.e. $sl(M)=1$ and $\alpha(M)=1$, we consider the following subquiver:
$$\xymatrix@=0.5pc{
& L_2 \ar[rd]& & \cdot\ar[rd]& & M_1 \ar[rd]& & \cdot \ar[rd] & & T_1 \ar[rd] & & \cdot \ar[rd]& & R_1\ar[rd]\\
\cdot\ar[ru]\ar[rd]& & \cdot \ar[ru]\ar[rd]& & N_1 \ar[ru]\ar[rd]& & \cdot \ar[ru]\ar[rd] & & \cdot\ar[ru]\ar[rd] & & \cdot\ar[ru]\ar[rd] & & \cdot\ar[ru]\ar[rd] & & \cdot\\
& L \ar[ru]\ar[r]\ar[rd]& \cdot\ar[r]& \cdot \ar[ru]\ar[r]\ar[rd]& \cdot\ar[r]& \cdot \ar[ru]\ar[r]\ar[rd]& M\ar[r]& \cdot \ar[ru]\ar[r]\ar[rd]
& \cdot\ar[r] & \cdot \ar[ru]\ar[r]\ar[rd]& \cdot\ar[r]& R \ar[ru]\ar[r]\ar[rd]& \cdot\ar[r]& \cdot \ar[ru]\ar[rd]\\
\cdot\ar[ru]\ar[rd]& & \cdot \ar[ru]\ar[rd]& & N_2 \ar[ru]\ar[rd]& & \cdot \ar[ru]\ar[rd] & & \cdot\ar[ru]\ar[rd] & & \cdot\ar[ru]\ar[rd] & & \cdot\ar[ru]\ar[rd] & & \cdot\\
& L_1\ar[ru] & & \cdot\ar[ru]& & M_2\ar[ru] & & \cdot\ar[ru]& & T_2\ar[ru]& & \cdot\ar[ru]& & R_2\ar[ru]\\
}$$

Recall that $M$ is sincere, non-projective and non-injective, thus $\tau^{2}M\neq 0, \tau^{-2}M\neq 0$.
If $N_1\rightarrow M$ and $N_2\rightarrow M$ are both monomorphisms, $L_1=0, L_2=0$. So $\tau M_1$ and $\tau M_2$ are both projective modules.
Note $R_1, R_2\neq 0$, thus $R\neq 0$. Also $\tau^{-1}L\neq 0$. Recall the numbers of the modules of $\tau$-orbits of $L$ and $M$ are both $6$. Hence
$L$ or $\tau^{-1}L$ must be projective. If $L$ is projective, the quiver has only two possible orientations. If $\tau^{-1}L$ is projective, $\tau^{2}M$ is also projective, then the quiver has four  possible orientations. In each case, it is easy to find the desired short exact sequence.

Otherwise, say $N_1\rightarrow M$ is an epimorphism. Then $R_1=0$. Notice that $\tau^{-1}T_1$ cannot be injective since $M$ is sincere.
So $\tau^{-1}T_1=0$, which indicates $\tau M_1\rightarrow \tau^{-1}M_2$ is an epimorphism. Thus $\tau M_1$ is non-simple. $\tau^{-1}M_2$ is
also non-simple because $\tau^{-1}M_2\rightarrow \tau^{-2}M \rightarrow R_1$ is a triangle. Hence  $0\rightarrow \tau M_1\rightarrow M\rightarrow T_2 \rightarrow 0$ is our desired short exact sequence.
\\
\textit{$\mathbb{E}_{7}$ type}

We first assume $\alpha(M)=2$ and $sl(M)=2$.
$$\xymatrix@=0.5pc{
&\cdot\ar[rd] & & X\ar[rd] & & \cdot \ar[rd] & & \cdot \ar[rd] & & \cdot\ar[rd]& & Y\ar[rd] & & \cdot\ar[rd]\\
& & \cdot\ar[ru]\ar[rd] & & L_1\ar[ru]\ar[rd]& & \cdot\ar[ru]\ar[rd] & & \cdot\ar[ru]\ar[rd]& & T\ar[ru]\ar[rd]& &
\cdot\ar[ru]\ar[rd]& & \cdot\\
& & &\cdot\ar[ru]\ar[r]\ar[rd]& \cdot\ar[r]&\ar[ru]\ar[r]\ar[rd]&J\ar[r]& \cdot \ar[ru]\ar[r]\ar[rd]& \cdot\ar[r]& \cdot \ar[ru]\ar[r]\ar[rd]&\cdot\ar[r]&
\cdot \ar[ru]\ar[r]\ar[rd]& R\ar[r]& \cdot\ar[ru]\ar[rd]\\
& & \cdot\ar[ru]\ar[rd]& & \cdot\ar[ru]\ar[rd]& & \cdot\ar[ru]\ar[rd]& & \cdot\ar[ru]\ar[rd]& & K\ar[ru]\ar[rd]& & \cdot\ar[ru]\ar[rd]& &
\cdot\ar[rd]\\
& \cdot\ar[ru]\ar[rd]& & \cdot\ar[ru]\ar[rd]& & \cdot\ar[ru]\ar[rd]& & M\ar[ru]\ar[rd]& & \cdot\ar[ru]\ar[rd]& & \cdot\ar[ru]\ar[rd]& &
\cdot\ar[ru]\ar[rd]& & \cdot\ar[rd]\\
L\ar[ru]& & \cdot\ar[ru] & & \cdot\ar[ru] & & N\ar[ru] & & \cdot\ar[ru]& & \cdot\ar[ru] & & \cdot\ar[ru]& & \cdot\ar[ru]& & R_1\\
}$$

Since $M$ is sincere, $X, Y, N, \tau^{-1}N\neq 0$. If $N$ is non-simple, $0\rightarrow N\rightarrow M\rightarrow \tau^{-1}N\rightarrow 0$ is
our desired short exact sequence. Otherwise, as $N$ is simple, it is obvious $\tau^{-1}M, K\neq 0$. Note we have short exact sequences starting from a simple module $N$: $0\rightarrow N\rightarrow T\rightarrow R\rightarrow 0$, $0\rightarrow N\rightarrow \tau R\rightarrow \tau^{-1}Y\rightarrow 0$, $0\rightarrow N\rightarrow Y\rightarrow R_1\rightarrow 0$, so get $R\neq 0$, $\tau^{-1}Y\neq 0$ and $R_1\neq 0$ in turn.
As before $N$ cannot be a projective module, so $L\neq 0$ since $M$ is sincere. Recall that the number of modules in any $\tau$-orbit is $9$, thus
$L$ is projective and $R_1$ is injective. From the triangle $\tau X\rightarrow N\rightarrow \tau R_1$, we can infer that $\tau X=0$, hence $X$ is projective. If $X$ is simple, $L_1$ must be projective. Then the quiver $Q$ has only two possible orientations. For one of them, $N$ is not simple which contradicts our assumption. The other one is $$\xymatrix{& & \cdot\ar[d]\\
\cdot& \cdot\ar[l]\ar[r]& \cdot\ar[r]& \cdot\ar[r]& \cdot\ar[r]& \cdot\\}$$
and $0\rightarrow \tau J\rightarrow M\rightarrow \tau R_1\rightarrow 0$ is our desired short exact sequence.
If $X$ is not simple, we claim that $0\rightarrow X\rightarrow M\rightarrow Y\rightarrow 0$ is a short exact sequence $(**)$ satisfying MHL. It is
easy to check $\tau Y$ and $J$ are both non-simple modules because of the short exact sequences $0\rightarrow \tau N\rightarrow \tau Y\rightarrow \tau R_1\rightarrow 0$ and $0\rightarrow \tau^{-1} L\rightarrow J\rightarrow \tau Y\rightarrow 0$. Then our claim follows.

The case that $\alpha(M)=2$ and $sl(M)=1$ is similar to the case above, so we just outline the proof. Notice that
$M$ has a wing of length $t$ ($t=2$ or $3$). If one of the first $t-1$ modules at the bottom of the wing is non-simple, we are done.
Otherwise, the simple modules at the bottom would help us to decide all possible orientations of the quiver. Then it is routine to get the desired short exact sequence.

Now we consider the cases $\alpha(M)=1$. First assume $sl(M)=1$.

$$\xymatrix@=0.4pc{
\cdot\ar[rd]& & \cdot\ar[rd] & & L_1\ar[rd] & & \cdot \ar[rd] & & X_2 \ar[rd] & & \cdot\ar[rd]& & \cdot\ar[rd] & & R_1\ar[rd]& &\cdot\ar[rd]& &
\cdot\ar[rd]& & \cdot\ar[rd] & & \cdot\ar[rd]& &\cdot\\
&\cdot\ar[ru]\ar[rd]&  & \cdot\ar[ru]\ar[rd] & &\cdot\ar[ru]\ar[rd] & &\cdot\ar[ru]\ar[rd] & & X_1\ar[ru]\ar[rd]& & \cdot\ar[ru]\ar[rd] & & \cdot\ar[ru]\ar[rd]& & \cdot\ar[ru]\ar[rd]& &\cdot\ar[ru]\ar[rd]& & \cdot\ar[ru]\ar[rd]& & \cdot\ar[ru]\ar[rd]& &\cdot\ar[ru]\\
& &\cdot\ar[ru]\ar[r]\ar[rd]& \cdot\ar[r]& \cdot \ar[ru]\ar[r]\ar[rd]& \cdot\ar[r]&\cdot\ar[ru]\ar[r]\ar[rd]& \cdot\ar[r]&\cdot\ar[ru]\ar[r]\ar[rd]& \cdot\ar[r]& N\ar[ru]\ar[r]\ar[rd]& M\ar[r]& \cdot \ar[ru]\ar[r]\ar[rd]&\cdot\ar[r]&
\cdot\ar[ru]\ar[r]\ar[rd]& \cdot\ar[r]&\cdot\ar[ru]\ar[r]\ar[rd]& \cdot\ar[r]& \cdot\ar[ru]\ar[r]\ar[rd]& \cdot\ar[r]& \cdot\ar[rd]\ar[r]\ar[ru]
& \cdot\ar[r]& \cdot\ar[ru]\\
& & & \cdot\ar[ru]\ar[rd]& & \cdot\ar[ru]\ar[rd]& & \cdot\ar[ru]\ar[rd]& & Y_1\ar[ru]\ar[rd]& &\cdot\ar[ru]\ar[rd]& & \cdot\ar[ru]\ar[rd]& & \cdot\ar[ru]\ar[rd]& & \cdot\ar[ru]\ar[rd]& &\cdot\ar[ru]\ar[rd]& &
\cdot\ar[rd]\ar[ru]\\
& & \cdot\ar[ru]\ar[rd]& &\cdot\ar[ru]\ar[rd]& &  \cdot\ar[ru]\ar[rd]& &Y_2\ar[ru]\ar[rd]& &  \cdot\ar[ru]\ar[rd]& & \cdot\ar[ru]\ar[rd]& &\cdot\ar[ru]\ar[rd]& &  \cdot\ar[ru]\ar[rd]& & \cdot\ar[ru]\ar[rd]& &
\cdot\ar[ru]\ar[rd]& & \cdot\ar[rd]\\
& L\ar[ru]& & \cdot\ar[ru] & & \cdot\ar[ru] & &Y_3\ar[ru] & &\cdot\ar[ru] & &\cdot\ar[ru] & &\cdot\ar[ru] & & R\ar[ru] & & \cdot\ar[ru]& & \cdot\ar[ru] & & \cdot\ar[ru]& & R_2\\
}$$

$M$ is sincere, so $X_2, R_1, Y_3, R\neq 0$. If the composition $Y_2\rightarrow Y_1\rightarrow N\rightarrow M$ is an epimorphism,  $Y_2$ is sincere and $0\rightarrow L\rightarrow Y_2\rightarrow M\rightarrow 0$ is a short exact sequence. Hence $L\neq 0$. Note $R$ is injective because $\Hom(M,\tau^{-1}R)=0$ and $\tau^{-2}R=L[1]$. From the triangle $Y_3\rightarrow \tau R_1\rightarrow \tau^{-1}R$, we get $Y_3$ is non-simple. Notice that $M$ is non-projective and non-injective so $\tau^{2}M, \tau^{-2}M\neq 0$. Then from the triangle $\tau R_1\rightarrow \tau^{-2}M\rightarrow \tau^{-2}R$, we know $\tau R_1$ is also non-simple. Thus
$0\rightarrow Y_3\rightarrow M\rightarrow R_1\rightarrow 0$ is a short exact sequence $(**)$ satisfying MHL.

If $Y_2\rightarrow M$ is a monomorphism, $0\rightarrow Y_2\rightarrow M\rightarrow  \tau^{-2}R\rightarrow 0$ is a short exact sequence, so $\tau^{-2}R\neq 0$. Assume the composition $X_1\rightarrow N\rightarrow M$ is an epimorphism, thus $\tau Y_3\neq 0$. Then $\tau^{2} Y_3\neq 0$, so must be projective. And $\tau^{-2}R$ is injective. If $R_1$ is also injective, then the quiver has only two possible orientations and it is easy to get the desired short exact sequence. Otherwise, $\tau^{-2}R_1\neq 0$. Hence $0\rightarrow X_2\rightarrow M\rightarrow  R\rightarrow 0$ is the desired one. This follows from the fact that $\tau^{2}L_1\rightarrow \tau Y_3\rightarrow \tau R$ is a triangle and  $0\rightarrow \tau^{2}Y_3\rightarrow X_2\rightarrow  \tau R\rightarrow 0$ is a short exact sequence.

Now assume $Y_2\rightarrow M$ and $X_1\rightarrow M$ are both monomorphisms. Since $0\rightarrow X_1\rightarrow M\rightarrow  R_2\rightarrow 0$
is a short exact sequence, $R_2\neq 0$. So $R_2$ is injective and $Y_3$ is projective. Then $\tau^{-2}R_1\neq 0$.  If $Y_1\rightarrow M$ is an epimorphism, $\tau X_2\neq 0$. Thus $L_1\neq 0$. If $\tau L_1\neq 0$, $\tau L_1$ is projective. Then the quiver has only two possible orientations, which can be easily verified. Consider the case $\tau L_1=0$, then $L_1$ is projective. If $Y_3$ is not simple, then $0\rightarrow Y_3\rightarrow M\rightarrow  R_1\rightarrow 0$ is the desired short exact sequence. If $Y_3$ is simple, then $Y_2$ is also projective. Notice that $\tau^{2} M\neq 0$ and $\tau N\neq 0$, thus the quiver also has only two possible orientations. In the end, we assume that $Y_1\rightarrow M$ is also a monomorphism,
then $\tau^{-5} R_1\neq 0$. Hence $X_2$ is projective. If $Y_3$ is not simple,  $0\rightarrow Y_3\rightarrow M\rightarrow  R_1\rightarrow 0$ is the desired one. If $X_2$ is not simple,  $0\rightarrow X_2\rightarrow M\rightarrow  R\rightarrow 0$ is the desired one. If both of $Y_3$ and $X_2$ are
simple modules, then $Y_2$ and $X_1$ are both projective. Then the quiver also has only two possible orientations. It is easy to verify by direct
calculation.

If $sl(M)=2$, we consider the following section of the AR-quiver:

$$\xymatrix@=0.4pc{
 & \cdot\ar[rd] & & \cdot\ar[rd] & & L \ar[rd] & & \cdot \ar[rd] & & \cdot\ar[rd]& & M\ar[rd] & & \cdot\ar[rd]& &\cdot\ar[rd]& &
R\ar[rd]& & \cdot\ar[rd] & & \cdot\\
 &  & \cdot\ar[ru]\ar[rd] & &\cdot\ar[ru]\ar[rd] & &T\ar[ru]\ar[rd] & & \cdot\ar[ru]\ar[rd]& & \cdot\ar[ru]\ar[rd] & & \cdot\ar[ru]\ar[rd]& & \cdot\ar[ru]\ar[rd]& &\cdot\ar[ru]\ar[rd]& & \cdot\ar[ru]\ar[rd]& & \cdot\ar[ru]\\
 & & & \cdot \ar[ru]\ar[r]\ar[rd]& \cdot\ar[r]&\cdot\ar[ru]\ar[r]\ar[rd]& \cdot\ar[r]&\cdot\ar[ru]\ar[r]\ar[rd]& X\ar[r]& \cdot\ar[ru]\ar[r]\ar[rd]& \cdot\ar[r]& \cdot \ar[ru]\ar[r]\ar[rd]&\cdot\ar[r]&
\cdot\ar[ru]\ar[r]\ar[rd]& \cdot\ar[r]&\cdot\ar[ru]\ar[r]\ar[rd]& \cdot\ar[r]& \cdot\ar[ru]\ar[r]\ar[rd]& \cdot\ar[r]& \cdot\ar[rd]\ar[ru]\\
 & & \cdot\ar[ru]\ar[rd]& & \cdot\ar[ru]\ar[rd]& & \cdot\ar[ru]\ar[rd]& & Y_1\ar[ru]\ar[rd]& &\cdot\ar[ru]\ar[rd]& & \cdot\ar[ru]\ar[rd]& & \cdot\ar[ru]\ar[rd]& & \cdot\ar[ru]\ar[rd]& &\cdot\ar[ru]\ar[rd]& &
\cdot\ar[rd]\\
 & \cdot\ar[ru]\ar[rd]& &\cdot\ar[ru]\ar[rd]& &  \cdot\ar[ru]\ar[rd]& &Y_2\ar[ru]\ar[rd]& &  \cdot\ar[ru]\ar[rd]& & \cdot\ar[ru]\ar[rd]& &\cdot\ar[ru]\ar[rd]& &  \cdot\ar[ru]\ar[rd]& & \cdot\ar[ru]\ar[rd]& &
\cdot\ar[ru]\ar[rd]& & \cdot\ar[rd]\\
 \cdot\ar[ru]& & \cdot\ar[ru] & & \cdot\ar[ru] & &Y_3\ar[ru] & &\cdot\ar[ru] & &\cdot\ar[ru] & &\cdot\ar[ru] & & \cdot\ar[ru] & & Z\ar[ru]& & \cdot\ar[ru] & & \cdot\ar[ru]& & \cdot\\
}$$

Since $M$ is sincere, non-projective and non-injective, we have $Y_3, Z, X, L, R\neq 0$. Assume that $Y_2\rightarrow M$ is an epimorphism,
then $\tau L\neq 0$ because of $0\rightarrow \tau L\rightarrow Y_2\rightarrow  M\rightarrow 0$.  Note $\tau^{-1} R$ cannot be injective
because $M$ is sincere. So $R$ is injective and $\tau^{2} L$ is projective. If $Y_3$ is projective, the quiver has only two possible orientations. Then $0\rightarrow  Y_3\rightarrow M\rightarrow  Z\rightarrow 0$ is the desired short exact sequence. This follows from the fact $0\rightarrow \tau^{2} L\rightarrow Y_3\rightarrow  \tau Z\rightarrow 0$ is a short exact sequence and $\tau Z\rightarrow \tau^{-5}X\rightarrow  \tau^{-2} R$ is a triangle. If $Y_3$ is not projective, then $\tau^{3} Y_3\neq 0$ because $Y_2$ is sincere. Thus $\tau^{3} Y_3$ is projective and $Z$ is injective.
Note $X\rightarrow M$ is an epimorphism, so $X$ is sincere and non-projective, which means $\tau^{2} X\neq 0$. When $\tau^{-5}X\neq 0$, $0\rightarrow  Y_3\rightarrow M\rightarrow  Z\rightarrow 0$ is the desired one. When $\tau^{-5}X= 0$, $\tau^{-4}X= 0$. Notice $\tau^{-2} X\neq 0$, $\tau^{-3}X$ must
be injective. Hence the quiver has three possible orientations.

Now assume $Y_2\rightarrow M$ is a monomorphism. Obviously $L$ is projective and $\tau^{-2} R$ is injective. If $X\rightarrow M$ is an epimorphism,
$\tau Y_3\neq 0$. Thus $\tau^{3} Y_3\neq 0$ since $M$ and $X$ are both sincere. Since $\tau^{3} Y_3$ is projective, the quiver has only two possible orientations. If $X\rightarrow M$ is a monomorphism, $\tau^{-3}Z\neq 0$. Then $Y_3$ is projective and $\tau^{-3}Z$ is injective. If $Y_3$ is non-simple, $0\rightarrow  Y_3\rightarrow M\rightarrow  Z\rightarrow 0$ is the desired short exact sequence. One only needs to notice that $\tau^{2} L\rightarrow Y_3\rightarrow  \tau Z$ is a triangle. If $L$ is non-simple, $0\rightarrow  L\rightarrow M\rightarrow  R\rightarrow 0$ is the desired
one. By calculating the hammock of $\Hom(L,-)$, one only needs to check that $\tau R, X, \tau^{-2}X$ and $\tau^{-2} Y_3$ are all non-simple.  This is easy for $0\rightarrow \tau^{-2} Y_3\rightarrow \tau R\rightarrow  \tau^{-2}Z\rightarrow 0$ and  $0\rightarrow L \rightarrow \tau^{-2} Y_3\rightarrow  \tau^{-1} Z\rightarrow 0$ are both short exact sequences. If $Y_3$ and $L$ are both simple modules, then $Y_2$ and $T$ are both projective.
When $Y_1\rightarrow M$ is an epimorphism, $\tau X\neq 0, \tau Y_1\neq 0$ and the quiver has three possible orientations.
When $Y_1\rightarrow M$ is a monomorphism, $X$ is projective and the quiver has two possible orientations.
The existence of the desired short exact sequence can be shown by calculation.

If $sl(M)=3$, then $\underline{\dim} M=(1,1,\cdots,1)$. Thus the short exact sequence $(**)$ satisfying MHL can be easily found.\\
\textit{$\mathbb{E}_{8}$ type}

The proof is omitted for it is similar to the case of type $\mathbb{E}_{7}$.

We finish the proof of Theorem \ref{sincere}.\qed
\begin{proposition}\label{derived}
Let $(\tilde{\mathfrak{n}}^{+})^{2}=[\tilde{\mathfrak{n}}^{+},\tilde{\mathfrak{n}}^{+}]$ be the derived algebra of the Lie algebra $\tilde{\mathfrak{n}}^{+}$. Then for any indecomposable non-projective $kQ$-module $M$, $C_{M}\in (\tilde{\mathfrak{n}}^{+})^{2}$.
\end{proposition}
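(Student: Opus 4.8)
The plan is to put a grading on $\tilde{\mathfrak{n}}^{+}$ and then deduce the statement from Claim~\ref{generators}. For an indecomposable module $M$ the $1$-cyclic complex $C_{M}$ has underlying projective module $P_{M}\oplus\Omega_{M}$, while $K_{P_{i}}$ has underlying module $P_{i}\oplus P_{i}$. A conflation in the exact category $C^{1}(\mathscr{P})$ is split after forgetting the differential, so the class in $K_{0}(\mathscr{P})\cong\mathbb{Z}^{n}$ of the underlying projective module is additive along conflations; this additivity is what underlies isomorphisms such as $(\ref{determine})$. Hence the Ringel--Hall product on $\mathcal{H}(C^{1}(\mathscr{P}))$, and therefore the Lie bracket of $\tilde{\mathfrak{n}}^{+}$, is $\mathbb{Z}^{n}$-graded, with $C_{M}$ homogeneous of degree $[P_{M}]+[\Omega_{M}]\in\mathbb{N}^{n}$. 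Coarsening this to total degree, let $d(M)$ be the number of indecomposable direct summands of $P_{M}\oplus\Omega_{M}$ counted with multiplicity; then $\tilde{\mathfrak{n}}^{+}$ is an $\mathbb{N}_{\geq 1}$-graded Lie algebra with $C_{M}$ in degree $d(M)$.

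Next I would determine the degree-one component. If $d(M)=1$ then $P_{M}\oplus\Omega_{M}$ is indecomposable; since $M\neq 0$ forces $P_{M}\neq 0$, this gives $\Omega_{M}=0$, so $M$ is projective and hence $M\cong P_{i}$ for some $i$. Conversely $d(C_{P_{i}})=1$. Therefore $(\tilde{\mathfrak{n}}^{+})_{1}=\bigoplus_{i=1}^{n}\mathbb{C}\,C_{P_{i}}$, and by Claim~\ref{generators} the Lie algebra $\tilde{\mathfrak{n}}^{+}$ is generated by $(\tilde{\mathfrak{n}}^{+})_{1}$, i.e.\ it is generated in degree one. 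For any $\mathbb{N}_{\geq 1}$-graded Lie algebra generated in degree one, every homogeneous component of degree $\geq 2$ lies in the derived subalgebra: such a component is spanned by iterated brackets of degree-one elements that involve at least two factors, and each such iterated bracket is visibly a bracket. Hence $(\tilde{\mathfrak{n}}^{+})^{2}=\bigoplus_{d\geq 2}(\tilde{\mathfrak{n}}^{+})_{d}$.

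Finally, if $M$ is indecomposable and non-projective then $\Omega_{M}\neq 0$, so $P_{M}\oplus\Omega_{M}$ has at least two indecomposable summands, i.e.\ $d(M)\geq 2$, and therefore $C_{M}\in(\tilde{\mathfrak{n}}^{+})^{2}$, as asserted. The only substantive input is Claim~\ref{generators}, which is already in hand from the preceding work of this section, so I do not expect a real obstacle; the single point that warrants a line of justification is that conflations in $C^{1}(\mathscr{P})$ are degreewise split, which is built into the exact structure of $C^{1}(\mathscr{P})$ (equivalently, $C^{1}(\mathscr{P})$ is Frobenius with the $K_{P}$ as its projective-injective objects, so that its stable category is $K^{1}(\mathscr{P})$). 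A more computational alternative would be to revisit methods (I) and (II) from the proof of Claim~\ref{generators} and observe that each writes $C_{M}$, for $M$ indecomposable non-projective, as a single bracket of elements of $\tilde{\mathfrak{n}}^{+}$; but the grading argument is shorter and more transparent.
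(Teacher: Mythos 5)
Your grading argument is internally coherent, and its ingredients are correct: conflations in $C^1(\mathscr{P})$ are indeed split on underlying modules (the third term is projective), so the Hall multiplication, and hence the bracket, is graded by $K_0(\mathscr{P})\cong\mathbb{Z}^n$ with $C_M$ homogeneous of degree $[P_M]+[\Omega_M]$; the degree-one part of $\tilde{\mathfrak{n}}^{+}$ is exactly $\bigoplus_i\mathbb{C}\,C_{P_i}$; and in an $\mathbb{N}_{\geq 1}$-graded Lie algebra generated in degree one, every homogeneous component of degree at least two lies in the derived subalgebra. What this establishes, however, is only the implication ``Claim~\ref{generators} implies Proposition~\ref{derived}.''

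The problem is that Claim~\ref{generators} is \emph{not} ``already in hand from the preceding work of this section'': in the paper its proof is completed only \emph{after}, and by means of, Proposition~\ref{derived}. The induction on dimension vectors via Theorem~\ref{sincere} handles modules that are sincere and non-projective over their support subquiver, and when the induction reaches a module that becomes projective over a full subquiver $Q'$ but is not projective over $Q$, the paper explicitly invokes ``the proof of Proposition~\ref{derived}'' --- namely the pushout construction along the minimal projective resolution $0\to\oplus_{i}P_{t_i}\to P_i\to M\to 0$, which writes $C_M$ as an iterated bracket such as $[C_{P_{t_1}},[C_{P_{t_2}},[C_{P_{t_3}},C_{P_i}]]]$. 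So deducing the proposition from the claim is circular relative to the paper's logical order, and the grading trick does not bypass the substantive work (Theorem~\ref{sincere} plus the restriction-to-subquiver and pushout analysis); it merely relocates it. The paper's own proof is the direct one: for $M$ sincere non-projective, Theorem~\ref{sincere} produces a short exact sequence satisfying MHL, whence $C_M=[C_{M_2},C_{M_1}]$; for non-sincere $M$ one restricts to the support subquiver and, in the subquiver-projective case, exhibits $C_M$ as an explicit iterated bracket of the $C_{P_j}$'s. Your argument would be a clean alternative only if accompanied by a proof of Claim~\ref{generators} that does not pass through Proposition~\ref{derived} (your parenthetical fallback via methods (I) and (II) is essentially the paper's proof, but you do not carry it out); as written, it has a genuine circularity gap.
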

\bp
 Let $M$ be an indecomposable non-projective $kQ$-module. If $M$ is sincere, then according to Theorem \ref{sincere}, there exist two indecomposable modules $M_1$ and $M_2$ such that $0\rightarrow M_1 \rightarrow M\rightarrow M_2\rightarrow 0$ is a short exact sequence $(**)$ satisfying MHL.
So we have $C_M=[C_{M_2},C_{M_1}]$ with $\underline{\dim}M_1, \ \underline{\dim}M_2 <\underline{\dim}M$.

Assume $M$ is not sincere as a $kQ$-module. It must be a sincere indecomposable $kQ'$-module with $Q'$ a full subquiver of $Q$.
If $M$ is still non-projective as a $kQ'$-module, we can find two indecomposable $kQ'$-modules $M_1'$ and $M_2'$ such that
$0\rightarrow M_1' \rightarrow M\rightarrow M_2'\rightarrow 0$ is a short exact sequence satisfying $\Hom_{kQ'}(M_1',M_2')=0$
and $0\rightarrow \Hom_{kQ'}(M_2',S_i) \rightarrow \Hom_{kQ'}(M,S_i)\rightarrow \Hom_{kQ'}(M_1',S_i)\rightarrow 0$ is exact for each $i\in Q'_{0}$.
Note that $M_1'$ and $M_2'$ are also indecomposable as $kQ$-modules. And for each $j\in Q_{0}-Q'_{0}$, we have $\Hom_{kQ}(M_t',S_j)=0$, $t=1,2$.
Hence we have $\Hom_{kQ}(M_1',M_2')=0$ and $$0\rightarrow \Hom_{kQ}(M_2',S_i) \rightarrow \Hom_{kQ}(M,S_i)\rightarrow \Hom_{kQ}(M_1',S_i)\rightarrow 0$$ is exact for each $i\in Q_{0}$. So $C_M=[C_{M_2'},C_{M_1'}]$ with  $\underline{\dim}M_1',\ \underline{\dim}M_2' <\underline{\dim}M$.

Suppose $M$ is projective as a $kQ'$-module, say $M$ is the indecomposable projective $kQ'$-module $P'_i$ corresponding to $i\in Q'_0$.
If $P'_i$ is also projective as a $kQ$-module, which means $P'_i=P_i$, we have nothing to prove. Otherwise, there exists an epimorphism $f_i: P_i\twoheadrightarrow P'_i$. Then $\Ker f_i$ must be a projective $kQ$-module. As the generators of the module $P_i$, the paths starting from $i$ have at most three directions because $Q$ is of Dynkin type. This means the number of direct summands of $\Ker f_i$ is at most three. We can write $\Ker f_i=\oplus ^{m}_{i=1}P_{t_i}$, $1\leq m\leq 3$, with $t_i\in Q_0$. If $m=1$, notice that $\top P'_i=\top P_i$, thus we have $C_M=C_{P'_i}=[C_{P_{t_{1}}},C_{P_i}]$. If $m=3$, we construct the pushout of $\oplus ^{3}_{i=1}P_{t_i}\hookrightarrow P_i$ and the natural projection
$\oplus ^{3}_{i=1}P_{t_i}\twoheadrightarrow P_{t_1}$ as follows:
\begin{equation*}\xymatrix{& 0\ar[d] & 0\ar[d]\\
& \oplus ^{3}_{i=2}P_{t_i} \ar@{=}[r] \ar[d] & \oplus ^{3}_{i=2}P_{t_i}\ar[d]\\
0\ar[r] & \oplus ^{3}_{i=1}P_{t_i}\ar[d]\ar[r] & P_i \ar[r]\ar[d] & P'_i\ar@{=}[d] \ar[r]& 0\\
0\ar[r] & P_{t_{1}}\ar[r]\ar[d] & Q_1 \ar[r]\ar[d]& P'_i\ar[r] & 0\\
& 0 & 0\\
}\end{equation*}
Obviously, $Q_1$ shares the same top $S_i$ with $P'_i$, so it is an indecomposable $kQ$-module. According to our previous discussion in method (II),
$C_{P'_{i}}=[C_{P_{t_1}}, C_{Q_1}]$. Note $Q_1$ has a minimal projective resolution as $0\rightarrow \oplus ^{3}_{i=2}P_{t_i}\rightarrow P_i \rightarrow Q_1\rightarrow 0$. So we reduce to the case of $m=2$. Then we proceed once again and finally get $$C_M=[C_{P_{t_1}},[C_{P_{t_2}},[C_{P_{t_3}}, C_{P_i}]]].$$
\ep

Now we finish the proof of Claim \ref{generators}. Let $C_{M}\in \tilde{\mathfrak{n}}^{+}$ with $M$ an indecomposable non-projective $kQ$-module.
If $M$ is a sincere non-projective $kQ'$-module with $Q'$ a full subquiver of $Q$, we always have $C_M=[C_{M_2},C_{M_1}]$ with $\underline{\dim}M_1, \ \underline{\dim}M_2 <\underline{\dim}M$, thus proceed by induction on the dimension vector. Once we reach a sincere projective $kQ'$-module, the proof of Proposition \ref{derived} helps us to conclude that $C_{M}$ is generated by $C_{P_{i}}$, $1\leq i\leq n$.

\begin{remark}
For each non-projective simple module $S_i$,  the element $C_{S_i}$  can be generated by $C_{P_i}, i\in Q_0$, based on the minimal projective resolution of $S_i$. This is a special case of our proof above.
\end{remark}

\begin{remark}
If $Q$ is bipartite, $C_{P_i}$ can be seen as Chevalley generators of $\tilde{\mathfrak{n}}^{+}\cong \mathfrak{n}^{+}$. In the view of root space decomposition, for each $C_M$ with $M$
an indecomposable $kQ$-module, the minimal projective resolution of $M$
\begin{equation}\label{root}0\rightarrow \oplus^{m}_{i=1} P_{t_i} \rightarrow \oplus^{n}_{j=1} P_{r_j}\rightarrow M\rightarrow 0\end{equation}
determines its corresponding root $\Sigma^{m}_{i=1}\alpha_{t_i}+\Sigma^{n}_{j=1}\alpha_{r_j}$.

Consider the root space decomposition $\mathfrak{n}^{+}=\oplus_{\alpha\in \Phi^{+}} \mathfrak{n}_{\alpha}$. Let $C_{M}\in \mathfrak{n}_{\beta}$,
and $C_{N}\in \mathfrak{n}_{\gamma}$. The following are equivalent: (1) $[\mathfrak{n}_{\beta},\mathfrak{n}_{\gamma}]\neq 0$;
(2) $\dim \mathfrak{n}_{\beta+\gamma}=1$; (3) There exists an indecomposable $kQ$-module $L$ such that
the minimal projective resolution of $L$ can be achieved by combing the minimal projective resolutions of $M$ and $N$.
\end{remark}
\begin{remark}\label{root rem}
If $Q$ is not bipartite,  the minimal projective resolution of any indecomposable $kQ$-module also gives
the associated Lie algebra $\tilde{\mathfrak{n}}^{+}$ a ``root'' space decomposition. For any $C_{M}$ in $\tilde{\mathfrak{n}}^{+}$,
by abuse of language, let  $\Sigma^{m}_{i=1}\alpha_{t_i}+\Sigma^{n}_{j=1}\alpha_{r_j}$ be its ``root'' as in (\ref{root}) and $m+n$ be its \emph{height}. Obviously, $\tilde{\mathfrak{n}}^{+}$ is also graded by heights. Recall Example \ref{lizi} again,
 we can write $C_{S_2}\in \tilde{\mathfrak{n}}^{+}_{\alpha_{2}+\alpha_{3}},~C_{I_2}\in \tilde{\mathfrak{n}}^{+}_{\alpha_{1}+\alpha_{3}},~C_{S_1}\in \tilde{\mathfrak{n}}^{+}_{\alpha_{1}+\alpha_{2}}$.
 This will be investigated in the next two sections.
\end{remark}

\section{A new model for free 2-step nilpotent Lie algebras}
\subsection{Nilpotent Lie algebras}

Let $\mathfrak{N}$ be a nilpotent Lie algebra and $\mathfrak{N}^{2}=[\mathfrak{N},\mathfrak{N}]$ be its derived algebra. Similarly, one can define
$\mathfrak{N}^{t+1}=[\mathfrak{N}^{t}, \mathfrak{N}]$, which constitute the descending central series of $\mathfrak{N}$. Clearly, $\mathfrak{N}^{m}=0$ for some $m$. If $\mathfrak{N}^{l}\neq 0$ and $\mathfrak{N}^{l+1}=0$, $l$ is called the \emph{nilpotency} of $\mathfrak{N}$.
For convenience, we also write $C^{t}\mathfrak{N}=\mathfrak{N}^{t}$.

Now recall Section \ref{sec lie}. If $Q$ is a Dykin quiver, one can associate Lie algebras  $\tilde{\mathfrak{n}}^{+}$
and  $\hat{\mathfrak{n}}$ to 1-cyclic perfect complexes over $kQ$. $\hat{\mathfrak{n}}$ is abelian, thus nilpotent.
\begin{proposition}
$\tilde{\mathfrak{n}}^{+}$ is a nilpotent Lie algebra.
\end{proposition}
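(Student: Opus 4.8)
The plan is to equip $\tilde{\mathfrak{n}}^{+}$ with a bounded $\mathbb{N}$-grading compatible with the Lie bracket, so that nilpotency follows formally. For an indecomposable $kQ$-module $M$ with minimal projective resolution $0\to\bigoplus_{i=1}^{m}P_{t_{i}}\to\bigoplus_{j=1}^{n}P_{r_{j}}\to M\to0$, recall from Remark \ref{root rem} its \emph{height} $\h(M)=m+n$; equivalently, writing $\alpha_{M}$ for the class of $P_{M}\oplus\ooz_{M}$ in the Grothendieck group $K_{0}(\mathscr{P})$, we have $\h(M)=\h(\alpha_{M})$ where $\h$ also denotes the linear form on $K_{0}(\mathscr{P})$ sending every indecomposable projective to $1$; note $\h(P_{i})=1$ for all $i$. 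For each $d\geq1$ let $\tilde{\mathfrak{n}}^{+}_{(d)}$ be the $\mathbb{C}$-span of the basis vectors $C_{M}$ with $\h(M)=d$. Since $\{C_{M}\mid M\in\mathrm{Ind}\,kQ\}$ is a basis of $\tilde{\mathfrak{n}}^{+}$, this yields $\tilde{\mathfrak{n}}^{+}=\bigoplus_{d\geq1}\tilde{\mathfrak{n}}^{+}_{(d)}$ as vector spaces.

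The main point to check is that this decomposition is a Lie grading: $[\tilde{\mathfrak{n}}^{+}_{(d)},\tilde{\mathfrak{n}}^{+}_{(e)}]\subseteq\tilde{\mathfrak{n}}^{+}_{(d+e)}$. By bilinearity it suffices to treat $[C_{M},C_{N}]$ for indecomposable modules $M,N$ with $\h(M)=d$, $\h(N)=e$. Recall $[C_{M},C_{N}]=\sum_{\lambda}(\psi^{\lambda}_{\mu\nu}(1)-\psi^{\lambda}_{\nu\mu}(1))\,C(\lambda)$, the sum over indecomposable $\lambda\in\mathfrak{P}^{1}(\Gamma)$; since $\tilde{\mathfrak{n}}^{+}$ is a Lie subalgebra of $\tilde{\mathfrak{n}}$ (shown above), every $\lambda$ with non-zero coefficient has $C(\lambda)=C_{L}$ for some indecomposable $kQ$-module $L$. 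For such an $L$, say $\psi^{\lambda}_{\mu\nu}(1)\neq0$: then the Hall polynomial $\psi^{\lambda}_{\mu\nu}$ of Theorem \ref{Hallpoly} is not the zero polynomial, so $F^{C_{q}(\lambda)}_{C_{q}(\mu),C_{q}(\nu)}=\psi^{\lambda}_{\mu\nu}(q)\neq0$ for all but finitely many prime powers $q$; choosing one such $q$ gives a conflation $0\to C_{N}\to C_{L}\to C_{M}\to0$ in $C^{1}(\mathscr{P})$, which splits degreewise over $\mathscr{P}$ and hence forces $P_{L}\oplus\ooz_{L}\cong P_{M}\oplus\ooz_{M}\oplus P_{N}\oplus\ooz_{N}$ (this is the case $X_{L}=0$ of \cite[Lem.~4.1]{Br}; equivalently $\alpha_{L}=\alpha_{M}+\alpha_{N}$ in $K_{0}(\mathscr{P})$). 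Applying the linear form $\h$ gives $\h(L)=\h(M)+\h(N)=d+e$, so $C_{L}\in\tilde{\mathfrak{n}}^{+}_{(d+e)}$; the terms with $\psi^{\lambda}_{\nu\mu}(1)\neq0$ are handled identically, and the claim follows.

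To conclude: since $Q$ is Dynkin there are only finitely many indecomposable $kQ$-modules, so $\tilde{\mathfrak{n}}^{+}$ is finite-dimensional and $\tilde{\mathfrak{n}}^{+}_{(d)}=0$ for all $d>N$, where $N$ is the largest occurring height. A routine induction on $t\geq1$ using the grading gives $C^{t}\tilde{\mathfrak{n}}^{+}\subseteq\bigoplus_{d\geq t}\tilde{\mathfrak{n}}^{+}_{(d)}$; hence $C^{N+1}\tilde{\mathfrak{n}}^{+}=0$, so $\tilde{\mathfrak{n}}^{+}$ is nilpotent, of nilpotency at most $N$. I expect the only delicate step to be the passage, in the second paragraph, from a non-vanishing structure constant (a statement at $q=1$) to the identity $\alpha_{L}=\alpha_{M}+\alpha_{N}$ in $K_{0}(\mathscr{P})$; this is exactly what the density-of-prime-powers argument is for, and everything else is formal.
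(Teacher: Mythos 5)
Your proof is correct and takes essentially the same route as the paper: the paper's own argument is exactly the observation that $\tilde{\mathfrak{n}}^{+}$ is graded by the heights of Remark \ref{root rem}, which are bounded since there are only finitely many indecomposables. The only difference is that you verify in detail what the paper calls obvious, namely that the bracket respects the height grading (via non-vanishing of the Hall polynomial at some prime power and the splitting of the underlying sequence of projectives, giving $\alpha_L=\alpha_M+\alpha_N$), and that verification is sound.
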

\begin{proof}
Note that $C^{t}\tilde{\mathfrak{n}}^{+}\neq 0$ means the existence of an indecomposable object with height at least $t$.
Thus the result follows from that $\tilde{\mathfrak{n}}^{+}$ is finite dimensional.
\end{proof}
Thus the Lie algebras $\tilde{\mathfrak{n}}^{+}(Q)$ provide a large class of nontrivial nilpotent Lie algebras.
If $Q$ is bipartite, they are exactly the nilpotent Lie algebras which serve as the nilpotent parts of semisimple Lie algebras.
Otherwise, they contribute more kinds of nilpotent Lie algebras which will be treated below.

The following lemma is well known for nilpotent Lie algebras:
\begin{lemma}\rm{(\cite{Sa})}\label{min gene}
If $\mathfrak{N}$ is a nilpotent Lie algebra, the following two assertions are equivalent:

(1) $(x_{1},\cdots, x_{n})$ is a minimal system of generators;

(2) $(x_{1}+\mathfrak{N}^{2},\cdots, x_{n}+\mathfrak{N}^{2})$ is a basis for the vector space $\mathfrak{N}/\mathfrak{N}^{2}$.
\end{lemma}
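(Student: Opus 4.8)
The plan is to deduce this equivalence from the Lie-algebraic analogue of Nakayama's lemma, which is really the heart of the matter. First I would isolate the following key claim: if $\mathfrak{h}$ denotes the Lie subalgebra of $\mathfrak{N}$ generated by a subset $\{x_{1},\dots,x_{n}\}$ and $\mathfrak{h}+\mathfrak{N}^{2}=\mathfrak{N}$, then $\mathfrak{h}=\mathfrak{N}$. To prove this I would show by induction on $k\geq 2$ that $\mathfrak{h}+\mathfrak{N}^{k}=\mathfrak{N}$. The base case $k=2$ is the hypothesis. For the inductive step I would expand $\mathfrak{N}^{k}=[\mathfrak{N},[\mathfrak{N},\dots,[\mathfrak{N},\mathfrak{N}]\cdots]]$ with $k$ slots, substitute $\mathfrak{N}=\mathfrak{h}+\mathfrak{N}^{2}$ in every slot, and use multilinearity of the bracket: the term in which every slot is filled by $\mathfrak{h}$ lies in $\mathfrak{h}$ since $\mathfrak{h}$ is a subalgebra, while any term with at least one slot filled by $\mathfrak{N}^{2}$ is a nested bracket of factors from $\mathfrak{N}^{a_{1}},\dots,\mathfrak{N}^{a_{k}}$ with all $a_{i}\geq 1$ and some $a_{i}\geq 2$, hence lies in $\mathfrak{N}^{a_{1}+\cdots+a_{k}}\subseteq\mathfrak{N}^{k+1}$ by the standard inclusion $[\mathfrak{N}^{a},\mathfrak{N}^{b}]\subseteq\mathfrak{N}^{a+b}$. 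Thus $\mathfrak{N}^{k}\subseteq\mathfrak{h}+\mathfrak{N}^{k+1}$, so $\mathfrak{N}=\mathfrak{h}+\mathfrak{N}^{k}=\mathfrak{h}+\mathfrak{N}^{k+1}$. Since $\mathfrak{N}$ is nilpotent, $\mathfrak{N}^{l+1}=0$ for $l$ large, and taking $k=l+1$ yields $\mathfrak{h}=\mathfrak{N}$.

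Granting the key claim, the equivalence follows by formal linear algebra over the abelian quotient $\mathfrak{N}/\mathfrak{N}^{2}$. For $(2)\Rightarrow(1)$: if the cosets $x_{i}+\mathfrak{N}^{2}$ form a basis of $\mathfrak{N}/\mathfrak{N}^{2}$, then the subalgebra $\mathfrak{h}$ they generate surjects onto $\mathfrak{N}/\mathfrak{N}^{2}$, i.e. $\mathfrak{h}+\mathfrak{N}^{2}=\mathfrak{N}$, so the key claim gives $\mathfrak{h}=\mathfrak{N}$; and the system is minimal because the images of any generating set must span $\mathfrak{N}/\mathfrak{N}^{2}$, forcing it to have at least $\dim\mathfrak{N}/\mathfrak{N}^{2}=n$ elements. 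For $(1)\Rightarrow(2)$: a generating system certainly has images spanning $\mathfrak{N}/\mathfrak{N}^{2}$, since in the abelian quotient "generate" and "span" coincide; were these images linearly dependent, say $x_{j}+\mathfrak{N}^{2}$ lay in the span of the others, then the subalgebra generated by $\{x_{i}~|~i\neq j\}$ together with $\mathfrak{N}^{2}$ would be all of $\mathfrak{N}$, so by the key claim $\{x_{i}~|~i\neq j\}$ already generates $\mathfrak{N}$, contradicting minimality; hence the images form a basis.

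The only genuine obstacle is the key claim itself, namely the nilpotent Nakayama lemma; the rest is bookkeeping. I would write the induction carefully, as the one delicate point is verifying that every mixed multilinear term falls into $\mathfrak{N}^{k+1}$, which relies precisely on the grading estimate $[\mathfrak{N}^{a},\mathfrak{N}^{b}]\subseteq\mathfrak{N}^{a+b}$ for the descending central series. Since this lemma is classical and is already attributed to \cite{Sa} in the statement, in the paper I would simply cite it rather than reproduce the proof, but the argument above is the one I have in mind.
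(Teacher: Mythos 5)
Your proposal is correct. The paper does not prove this lemma at all --- it is quoted from \cite{Sa} and used as a black box --- so there is no internal argument to compare against; what you have written is the standard self-contained proof, namely the Nakayama-type statement for nilpotent Lie algebras (if $\mathfrak{h}$ is a subalgebra with $\mathfrak{h}+\mathfrak{N}^{2}=\mathfrak{N}$, then $\mathfrak{h}=\mathfrak{N}$), established by the descending induction $\mathfrak{h}+\mathfrak{N}^{k}=\mathfrak{N}$ using $[\mathfrak{N}^{a},\mathfrak{N}^{b}]\subseteq\mathfrak{N}^{a+b}$ and nilpotency. The two directions of the equivalence then follow exactly as you say, the only background facts being that $\mathfrak{N}^{k}$ is spanned by $k$-fold brackets (so the multilinear expansion applies) and that in the abelian quotient $\mathfrak{N}/\mathfrak{N}^{2}$ the image of the subalgebra generated by a set is its linear span, both of which you use correctly. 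Your concluding choice to cite the result rather than reproduce the proof is precisely what the paper does.
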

Now Claim \ref{generators} can be improved as follows, with a slightly different proof.
\begin{proposition}
$\{C_{P_i}~|~1\leq i\leq n\}$ is a minimal system of generators for the Lie algebra $\tilde{\mathfrak{n}}^{+}$.
\end{proposition}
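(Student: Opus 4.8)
The plan is to derive this from Lemma~\ref{min gene}: since $\tilde{\mathfrak{n}}^{+}$ is a finite-dimensional nilpotent Lie algebra, it is enough to show that the cosets $\overline{C_{P_{1}}},\dots,\overline{C_{P_{n}}}$ in $\tilde{\mathfrak{n}}^{+}/(\tilde{\mathfrak{n}}^{+})^{2}$ form a basis of that vector space; note that this simultaneously re-proves Claim~\ref{generators}, which is why the argument here differs from the one in Section~5. Recall that $\{C_{L}\mid L\in\mathrm{Ind}\,kQ\}$ is a basis of $\tilde{\mathfrak{n}}^{+}$, splitting into the ``projective'' part $C_{P_{1}},\dots,C_{P_{n}}$ and the ``non-projective'' part $\{C_{L}\mid L\ \text{indecomposable non-projective}\}$. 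Spanning of $\tilde{\mathfrak{n}}^{+}/(\tilde{\mathfrak{n}}^{+})^{2}$ by the $\overline{C_{P_{i}}}$ is immediate from Proposition~\ref{derived}: every $C_{L}$ with $L$ indecomposable non-projective already lies in $(\tilde{\mathfrak{n}}^{+})^{2}$, hence maps to $0$ in the quotient.

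The crux is linear independence, for which I would establish the inclusion $(\tilde{\mathfrak{n}}^{+})^{2}\subseteq\mathrm{span}\{C_{L}\mid L\in\mathrm{Ind}\,kQ,\ L\ \text{non-projective}\}$ (together with Proposition~\ref{derived} this forces equality, but only the inclusion is needed). Since $(\tilde{\mathfrak{n}}^{+})^{2}$ is spanned by brackets $[C_{M},C_{N}]$ with $M,N\in\mathrm{Ind}\,kQ$, and each such bracket lies in $\tilde{\mathfrak{n}}^{+}$, it suffices to show that the coefficient of $C_{P_{i}}$ in $[C_{M},C_{N}]$ vanishes for all $i$ and all such $M,N$. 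By the definition of the bracket this coefficient equals $\psi_{\mu\nu}^{\lambda}(1)-\psi_{\nu\mu}^{\lambda}(1)$, where $\mu,\nu,\lambda$ are the iso-classes of $C_{M},C_{N},C_{P_{i}}$, so it is controlled by the Hall numbers $F^{C_{P_{i}}}_{C_{M}C_{N}}$ and $F^{C_{P_{i}}}_{C_{N}C_{M}}$; I claim both are already $0$. Indeed, a nonzero value would produce a conflation $0\to C_{N}\to C_{P_{i}}\to C_{M}\to 0$ in $C^{1}(\mathscr{P})$ (or the one with $M,N$ interchanged), whose underlying sequence of $A$-modules $0\to P_{N}\oplus\Omega_{N}\to P_{i}\to P_{M}\oplus\Omega_{M}\to 0$ is a short exact sequence of projective modules, hence split; thus $P_{i}\cong P_{N}\oplus\Omega_{N}\oplus P_{M}\oplus\Omega_{M}$, and since $M$ and $N$ are nonzero, $P_{M}$ and $P_{N}$ are nonzero projective direct summands of the indecomposable module $P_{i}$, a contradiction. (The same conclusion can alternatively be read off the explicit middle terms computed in Cases~I and~II of Section~3.)

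Granting this inclusion, $(\tilde{\mathfrak{n}}^{+})^{2}\cap\mathrm{span}\{C_{P_{1}},\dots,C_{P_{n}}\}=0$ because $\{C_{L}\mid L\in\mathrm{Ind}\,kQ\}$ is a basis of $\tilde{\mathfrak{n}}^{+}$, so the $\overline{C_{P_{i}}}$ are linearly independent and hence form a basis of $\tilde{\mathfrak{n}}^{+}/(\tilde{\mathfrak{n}}^{+})^{2}$; Lemma~\ref{min gene} then yields the assertion. The only slightly delicate point is the passage from a conflation with middle term $C_{P_{i}}$ to a direct-sum decomposition of $P_{i}$, which uses that conflations in $C^{1}(\mathscr{P})$ are split over the underlying (projective) modules; everything else is routine bookkeeping once Proposition~\ref{derived} and Lemma~\ref{min gene} are available.
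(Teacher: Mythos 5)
Your argument is correct and follows the paper's own route: the paper's proof of this proposition is precisely Proposition \ref{derived} combined with Lemma \ref{min gene}. The only difference is that you spell out the linear-independence half of Lemma \ref{min gene} — showing via the split short exact sequence of underlying projective modules that no $C_{P_i}$ can occur in any bracket $[C_M,C_N]$, hence $(\tilde{\mathfrak{n}}^{+})^{2}\subseteq\mathrm{span}\{C_{L}\mid L\ \text{indecomposable non-projective}\}$ — a step the paper leaves implicit, and your verification of it is sound.
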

\begin{proof}
According to Proposition \ref{derived}, $C_{M}\in (\tilde{\mathfrak{n}}^{+})^{2}$
for any indecomposable non-projective $M$. So we are done by Lemma \ref{min gene}.
\end{proof}
\subsection{2-step nilpotent Lie algebras}\label{free2}
A Lie algebra $\mathfrak{N}$ is called \emph{2-step nilpotent} (or \emph{metabelian}) if $\mathfrak{N}^{3}=0$ while $\mathfrak{N}^{2}\neq 0$.
Denote the center of $\mathfrak{N}$ by $Z(\mathfrak{N})$. Obviously $\mathfrak{N}^{2}\subset Z(\mathfrak{N})$.
The study of 2-step nilpotent Lie algebras can be traced back to \cite{S67} in the 1960s, and gained its importance both
in the domain of algebra and the domain of Riemannian geometry (cf. \cite{CGS,E,E2,E3,Gau,Gau2,RM} and references therein).
Although it is still very difficult to classify all 2-step nilpotent Lie algebras, there have been many interesting results and
problems relating them.

Let $K$ be an arbitrary field and consider Lie algebras over $K$. If $\mathfrak{N}$ has a minimal system of $n$ generators, we will say $\mathfrak{N}$ has $n$-generators. Note that $\mathfrak{N}$ can be generated by $n$ generators, but by no fewer than $n$.

Fixing $n$, we propose to study the 2-step nilpotent Lie algebras with $n$-generators. Let $\mathfrak{F}$ be the free Lie algebra over $K$
on $n$-generators $y_{1},\cdots,y_{n}$. Thus $\mathfrak{F}$ is infinite-dimensional. Let $\mathfrak{F}_{t}$ be the subspace of $\mathfrak{F}$
generated by all elements of the type $[y_{i_{1}},\cdots,y_{i_{t}}]$ (short for $[[\cdots[y_{i_{1}},y_{i_{2}}]\cdots],y_{i_{t}}]$)
where $i_{j}\in \{1,2,\cdots,n\}$. Then $\mathfrak{F}$ is graded with $\mathfrak{F}_{t}$ as the homogeneous component of degree $t$.
Set $\mathfrak{F}^{t}=\oplus_{j\geq t} \mathfrak{F}_{j}$. Let $\mathfrak{N}(n)=\mathfrak{F}/\mathfrak{F}^{3}$ and $x_{i}$ denote the
image of $y_{i}$ under the canonical surjection $\mathfrak{F}\twoheadrightarrow\mathfrak{N}(n)$. Obviously, the $x_{i}, 1\leq i\leq n$ generate $\mathfrak{N}(n)$
and $\mathfrak{N}(n)$ inherits a grading from $\mathfrak{F}$: $\mathfrak{N}(n)=\oplus^{2}_{j=1}\mathfrak{N}(n)_{j}$, where $\mathfrak{N}(n)_{1}$
is the subspace spanned by $x_{i}$, $1\leq i\leq n$ and $\mathfrak{N}(n)_{2}$ is the subspace spanned by $[x_{i_{i}},x_{i_{2}}]$, $1\leq i_{1}, i_{2}\leq n$. The resulting Lie algebra  $\mathfrak{N}(n)$ is called \emph{free 2-step nilpotent Lie algebra}, which has the universal property as follows:
\begin{proposition}\rm{(\cite{Gau})}
For any 2-step nilpotent Lie algebra $\mathfrak{N}$, and any $n$-generators $z_{1},\cdots,z_{n}$ of $\mathfrak{N}$,
the correspondence $x_{i}\rightarrow z_{i}$ extends uniquely to a homomorphism $\mathfrak{N}(n)\twoheadrightarrow \mathfrak{N}$.
In other words, every 2-step nilpotent Lie algebra with $n$-generators is a quotient of $\mathfrak{N}(n)$.
\end{proposition}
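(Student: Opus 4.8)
The plan is to derive this as a formal consequence of the universal mapping property of the free Lie algebra $\mathfrak{F}$ on $y_{1},\dots,y_{n}$, together with the defining vanishing $\mathfrak{N}^{3}=0$. Let $\mathfrak{N}$ be a $2$-step nilpotent Lie algebra with $n$-generators $z_{1},\dots,z_{n}$. First I would invoke the universal property of $\mathfrak{F}$ to obtain the unique Lie algebra homomorphism $\phi:\mathfrak{F}\to\mathfrak{N}$ with $\phi(y_{i})=z_{i}$ for all $i$; since the $z_{i}$ generate $\mathfrak{N}$, the map $\phi$ is surjective.

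The key step is to check that $\mathfrak{F}^{3}=\bigoplus_{j\geq 3}\mathfrak{F}_{j}$ lies in $\ker\phi$. Each homogeneous component $\mathfrak{F}_{j}$ is spanned by the iterated brackets $[y_{i_{1}},\dots,y_{i_{j}}]$, and because $\phi$ is a homomorphism of Lie algebras we have $\phi([y_{i_{1}},\dots,y_{i_{j}}])=[z_{i_{1}},\dots,z_{i_{j}}]\in\mathfrak{N}^{j}$. For $j\geq 3$ this element lies in $\mathfrak{N}^{3}$, which is zero by hypothesis; hence $\phi(\mathfrak{F}_{j})=0$ for every $j\geq 3$, so $\phi(\mathfrak{F}^{3})=0$. (Recall that $\mathfrak{F}^{3}$ is an ideal of $\mathfrak{F}$, so that $\mathfrak{N}(n)=\mathfrak{F}/\mathfrak{F}^{3}$ is indeed a Lie algebra and the canonical surjection $\pi:\mathfrak{F}\twoheadrightarrow\mathfrak{N}(n)$ is a Lie algebra map.)

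It then follows that $\phi$ factors through $\pi$: there is a Lie algebra homomorphism $\bar{\phi}:\mathfrak{N}(n)\to\mathfrak{N}$ with $\bar{\phi}\circ\pi=\phi$, and in particular $\bar{\phi}(x_{i})=z_{i}$; it is surjective because $\phi$ is. For uniqueness, any Lie algebra homomorphism $\mathfrak{N}(n)\to\mathfrak{N}$ sending each $x_{i}$ to $z_{i}$ agrees with $\bar{\phi}$ on the generating set $\{x_{1},\dots,x_{n}\}$ of $\mathfrak{N}(n)$, hence on all of $\mathfrak{N}(n)$. No step here presents a genuine obstacle; the only place where the hypothesis is used is the inclusion $\mathfrak{F}^{3}\subseteq\ker\phi$, which is precisely the manifestation of $2$-step nilpotency of $\mathfrak{N}$, and the rest is the first isomorphism theorem for Lie algebras.
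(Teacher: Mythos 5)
Your argument is correct: the universal property of the free Lie algebra $\mathfrak{F}$ gives the surjection $\phi$, left-normed brackets of length $j\geq 3$ land in $\mathfrak{N}^{j}\subseteq\mathfrak{N}^{3}=0$, so $\mathfrak{F}^{3}\subseteq\ker\phi$ and $\phi$ factors through $\mathfrak{N}(n)=\mathfrak{F}/\mathfrak{F}^{3}$, with uniqueness since the $x_{i}$ generate. The paper itself gives no proof, simply citing \cite{Gau}, and your proof is precisely the standard universal-property argument that the cited reference relies on, so there is nothing to add or correct.
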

If the characteristic of $K$ is zero, we have $\dim \mathfrak{N}(n)=\frac{n(n+1)}{2}$ (cf. \cite{Gau}).

Let $\mathfrak{I}(n)$ be the set of all ideals $I$ of  $\mathfrak{N}(n)$ such that $\mathfrak{N}(n)/I$ is 2-step nilpotent with $n$-generators.
If $I, J\in \mathfrak{I}(n)$, we say they are \emph{equivalent} if $\mathfrak{N}(n)/I \cong \mathfrak{N}(n)/J$ as Lie algebras.

\begin{proposition}\rm{(\cite{Gau})}
Suppose $I, J\in \mathfrak{I}(n)$. Then $I$ is equivalent to $J$ if and only if there is a $\theta \in \Aut \mathfrak{N}(n)$ satisfying
$\theta(I)=J$.
\end{proposition}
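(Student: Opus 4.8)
The plan is to derive both directions from the universal property of $\mathfrak{N}(n)$ recorded above, together with Lemma~\ref{min gene}. The backward implication is purely formal: if $\theta\in\Aut\mathfrak{N}(n)$ satisfies $\theta(I)=J$, then $\theta$ descends to a Lie algebra isomorphism $\mathfrak{N}(n)/I\to\mathfrak{N}(n)/J$, so $I$ and $J$ are equivalent. All the content lies in the forward implication.

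First I would record the elementary fact that every $I\in\mathfrak{I}(n)$ is contained in the derived subalgebra $\mathfrak{N}(n)^{2}=C^{2}\mathfrak{N}(n)$, hence is a central subspace of $\mathfrak{N}(n)$. Indeed $(\mathfrak{N}(n)/I)\big/(\mathfrak{N}(n)/I)^{2}\cong\mathfrak{N}(n)\big/(I+\mathfrak{N}(n)^{2})$, and by Lemma~\ref{min gene} this quotient has dimension $n$ precisely when $I+\mathfrak{N}(n)^{2}=\mathfrak{N}(n)^{2}$; this makes lifting and transporting ideals in $\mathfrak{I}(n)$ unproblematic.

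Now suppose $\phi\colon\mathfrak{N}(n)/I\to\mathfrak{N}(n)/J$ is a Lie algebra isomorphism, and write $\pi_{I}\colon\mathfrak{N}(n)\to\mathfrak{N}(n)/I$ and $\pi_{J}\colon\mathfrak{N}(n)\to\mathfrak{N}(n)/J$ for the canonical projections. By Lemma~\ref{min gene} the classes $\pi_{I}(x_{1}),\dots,\pi_{I}(x_{n})$ form a minimal system of generators of $\mathfrak{N}(n)/I$, hence $\phi(\pi_{I}(x_{1})),\dots,\phi(\pi_{I}(x_{n}))$ is a minimal system of generators of $\mathfrak{N}(n)/J$. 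I would choose lifts $u_{i}\in\mathfrak{N}(n)$ with $\pi_{J}(u_{i})=\phi(\pi_{I}(x_{i}))$; since $J\subseteq\mathfrak{N}(n)^{2}$, the classes $u_{i}+\mathfrak{N}(n)^{2}$ form a basis of $\mathfrak{N}(n)/\mathfrak{N}(n)^{2}$. By the universal property of $\mathfrak{N}(n)$, the assignment $x_{i}\mapsto u_{i}$ extends to a Lie algebra endomorphism $\theta$ of $\mathfrak{N}(n)$, which induces an isomorphism on $\mathfrak{N}(n)/\mathfrak{N}(n)^{2}$. Since $\mathfrak{N}(n)$ is finite dimensional and $2$-step nilpotent, $\theta$ is then surjective: from $\mathfrak{N}(n)=\theta(\mathfrak{N}(n))+\mathfrak{N}(n)^{2}$ one gets $\mathfrak{N}(n)^{2}=\theta(\mathfrak{N}(n)^{2})\subseteq\theta(\mathfrak{N}(n))$, so $\mathfrak{N}(n)=\theta(\mathfrak{N}(n))$; being surjective on a finite dimensional space it is bijective, i.e. $\theta\in\Aut\mathfrak{N}(n)$.

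Finally I would check $\theta(I)=J$. By construction $\pi_{J}\circ\theta$ and $\phi\circ\pi_{I}$ are Lie algebra homomorphisms $\mathfrak{N}(n)\to\mathfrak{N}(n)/J$ agreeing on the generators $x_{1},\dots,x_{n}$, so they coincide on all of $\mathfrak{N}(n)$. Taking kernels and using that $\phi$ is injective, $\theta^{-1}(J)=\Ker(\pi_{J}\circ\theta)=\Ker(\phi\circ\pi_{I})=\Ker\pi_{I}=I$, whence $\theta(I)=J$. The only point that is not bookkeeping with the universal property and Lemma~\ref{min gene} is the surjectivity (hence bijectivity) of $\theta$, and that reduces to the standard fact that an endomorphism of a finitely generated nilpotent Lie algebra which is onto modulo its derived subalgebra is onto; I expect this to be the only place requiring an argument rather than a diagram chase.
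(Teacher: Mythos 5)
Your argument is correct. Note, however, that the paper does not prove this proposition at all: it is quoted from Gauger \cite{Gau} as a known fact, so there is no in-paper proof to compare against; what you have written is essentially the standard argument (and the one in \cite{Gau}). Two small points worth making explicit: first, the universal property as stated in the paper only speaks of sending the $x_i$ to a system of \emph{generators}, so before invoking it you should observe that your lifts $u_1,\dots,u_n$ do generate $\mathfrak{N}(n)$ — which follows at once from Lemma \ref{min gene}, since $J\subseteq\mathfrak{N}(n)^{2}$ implies the classes $u_i+\mathfrak{N}(n)^{2}$ form a basis of $\mathfrak{N}(n)/\mathfrak{N}(n)^{2}$; this also makes your separate surjectivity argument redundant (though it is fine as written, the identity $\mathfrak{N}(n)^{2}=\theta(\mathfrak{N}(n))^{2}$ using $[\,\cdot\,,\mathfrak{N}(n)^{2}]=0$ deserves the one-line bracket computation). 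Second, your reduction $I\subseteq\mathfrak{N}(n)^{2}$ from the requirement that $\mathfrak{N}(n)/I$ have $n$-generators is exactly the right use of Lemma \ref{min gene} and is what makes the kernel comparison $\theta^{-1}(J)=I$ clean. So the proof stands as a complete substitute for the citation.
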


There are two well known models for free 2-step nilpotent Lie algebras and we will give a review of them, with emphasis on the duality theory
and their connections with nilpotent Lie groups, respectively.

\textbf{Model I} \ Let $K$ be an arbitrary field and $V$ the $K$-vector space spanned by the basis $x_{i}$, $1\leq i\leq n$. Then we can make the vector space $V\oplus \wedge^{2}V$ into a Lie algebra by linearly extending the rule:
$$[x_{i},x_{j}]=x_{i}\wedge x_{j},$$
$$[x_{i},x_{j}\wedge x_{k}]=[x_{i}\wedge x_{j},x_{k}]=0,$$
$$[x_{i}\wedge x_{j},x_{k}\wedge x_{l}]=0.$$
It is easy to show that the unique homomorphism from $\mathfrak{N}(n)$ to $V\oplus \wedge^{2}V$ taking $x_{i}$ to $x_{i}$ is an isomorphism.
Thus we make the identification $\mathfrak{N}(n)=V\oplus \wedge^{2}V$.

For any vector space $W$, there is a natural representation $\wedge^{p}$ of $GL(W)$ on $\wedge^{p} W$ given on decomposable $p$-vectors by
$$\wedge^{p}(\theta)(w_{1}\wedge\cdots\wedge w_{p})= \theta(w_{1})\wedge\cdots\wedge \theta(w_{p}).$$

\begin{proposition}\rm{(\cite{Gau})}
Each 2-step nilpotent Lie algebra with $n$-generators is of the type $\mathfrak{N}(n)/I$ where $I$ ranges over the proper
subspaces (ideals) of $\wedge^{2}V$. If $I$ and $J$ are proper subspaces of $\wedge^{2}V$, then $\mathfrak{N}(n)/I \cong \mathfrak{N}(n)/J$ if and only if there is a $\theta \in GL(V)$ such that $\wedge^{2}(\theta)(I)=J$.
\end{proposition}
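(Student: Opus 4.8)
The plan is to establish both directions of the claim by carefully analyzing how $GL(V)$ acts on the subspaces of $\wedge^2 V$ and how this interacts with the quotient construction $\mathfrak{N}(n)/I$. First I would set up the basic dictionary: by Model I we have the identification $\mathfrak{N}(n) = V \oplus \wedge^2 V$ with $V = \mathfrak{N}(n)_1$ the degree-one component and $\wedge^2 V = \mathfrak{N}(n)_2 = \mathfrak{N}(n)^2$ the derived algebra. Since $\mathfrak{N}(n)^2 \subset Z(\mathfrak{N}(n))$ and in fact $\mathfrak{N}(n)^2 = \wedge^2 V$ is the whole center here (as $\mathfrak{N}(n)^3 = 0$), any ideal $I$ with $\mathfrak{N}(n)/I$ still 2-step nilpotent \emph{on $n$ generators} must be contained in $\wedge^2 V$: indeed if $I$ met $V$ nontrivially modulo $\wedge^2 V$, then $\dim \bigl((\mathfrak{N}(n)/I)/(\mathfrak{N}(n)/I)^2\bigr) < n$, contradicting that the quotient has $n$ generators (here I invoke Lemma \ref{min gene}). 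Moreover $I$ must be a \emph{proper} subspace of $\wedge^2 V$, since $I = \wedge^2 V$ would give an abelian quotient, violating $\mathfrak{N}(n)^2 \neq 0$ for the quotient. Conversely any proper subspace $I \subsetneq \wedge^2 V$ is automatically an ideal (it is central), and $\mathfrak{N}(n)/I$ is then genuinely 2-step nilpotent on $n$ generators. Combined with the universal property (every 2-step nilpotent Lie algebra with $n$ generators is such a quotient), this proves the first assertion.

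For the second assertion, I would argue as follows. If $\theta \in GL(V)$ satisfies $\wedge^2(\theta)(I) = J$, then $\theta$ extends to a Lie algebra automorphism $\tilde\theta$ of $\mathfrak{N}(n) = V \oplus \wedge^2 V$ by setting $\tilde\theta|_V = \theta$ and $\tilde\theta|_{\wedge^2 V} = \wedge^2(\theta)$; this is a homomorphism precisely because the bracket is given by the wedge and $\wedge^2(\theta)(x_i \wedge x_j) = \theta(x_i) \wedge \theta(x_j)$. Since $\tilde\theta(I) = J$, it descends to an isomorphism $\mathfrak{N}(n)/I \xrightarrow{\ \sim\ } \mathfrak{N}(n)/J$. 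For the reverse direction, suppose $\phi: \mathfrak{N}(n)/I \xrightarrow{\ \sim\ } \mathfrak{N}(n)/J$ is a Lie algebra isomorphism. The key point is to lift $\phi$ to an automorphism of $\mathfrak{N}(n)$: using Lemma \ref{min gene}, pick preimages in $\mathfrak{N}(n)$ of the images under $\phi$ of the generators $x_i + I$; these preimages form a minimal generating system of $\mathfrak{N}(n)$, hence (by the universal property / the fact that a free object's endomorphism sending one generating set to another generating set of the same size is an automorphism) determine an automorphism $\Phi$ of $\mathfrak{N}(n)$ covering $\phi$. Then $\Phi$ must preserve the characteristic subspaces: $\Phi(\mathfrak{N}(n)^2) = \mathfrak{N}(n)^2$, i.e.\ $\Phi|_{\wedge^2 V}$ is some element of $GL(\wedge^2 V)$, and $\Phi(I) = J$ because $\Phi$ descends to $\phi$. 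Finally I would check that $\Phi|_{\wedge^2 V} = \wedge^2(\theta)$ where $\theta := \Phi|_V$ composed with the projection $\mathfrak{N}(n) \to V$; this follows because $\Phi$ is a Lie homomorphism, so $\Phi([x_i,x_j]) = [\Phi(x_i),\Phi(x_j)]$, and the bracket of the degree-one parts computes the wedge. Thus $\wedge^2(\theta)(I) = J$.

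The main obstacle I anticipate is the lifting step: producing from the abstract isomorphism $\phi$ of quotients an honest automorphism $\Phi$ of the free 2-step nilpotent Lie algebra $\mathfrak{N}(n)$ that covers it, and then verifying that $\Phi$ decomposes compatibly with the grading as $\theta \oplus \wedge^2(\theta)$. The decomposition is not quite automatic because a priori $\Phi$ could send $x_i$ to $\theta(x_i) + (\text{something in } \wedge^2 V)$; one has to observe that adding a central (degree-two) correction term does not change the induced map on $\wedge^2 V$ — precisely because $[x_i + c_i, x_j + c_j] = [x_i, x_j]$ for central $c_i, c_j$ — so that the action on $\wedge^2 V$ is governed purely by the $V$-component $\theta$ of $\Phi$, giving $\Phi|_{\wedge^2 V} = \wedge^2(\theta)$. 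Once this is in place, the equality $\Phi(I) = J$ translates directly into $\wedge^2(\theta)(I) = J$, and we are done. A mild subtlety to record is that this uses only the \emph{free} 2-step nilpotent structure and the universal property already stated, so no characteristic hypothesis on $K$ is needed for the statement itself (the dimension formula $\tfrac{n(n+1)}{2}$ is a separate char-zero remark).
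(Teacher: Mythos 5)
Your argument is correct and complete. Note that the paper does not actually prove this proposition---it is quoted from Gauger \cite{Gau}---so there is no in-paper proof to compare against; your route (the kernel of the universal surjection lies in $\wedge^{2}V$ by Lemma \ref{min gene}, properness since the quotient must have nonzero derived algebra, lifting an isomorphism of the quotients to an automorphism $\Phi$ of $\mathfrak{N}(n)$ via preimages of the generators, and then reading off $\Phi|_{\wedge^{2}V}=\wedge^{2}(\theta)$ from the grading, the central correction terms being harmless because $[x_i+c_i,x_j+c_j]=[x_i,x_j]$) is exactly the standard argument and is essentially Gauger's own. Two micro-steps are worth writing out explicitly: first, descending to $\phi$ gives directly only $\Phi(I)\subseteq J$, and equality then follows because $\Phi$ is bijective and $\dim I=\dim J$ (the quotients are isomorphic), or by applying the same reasoning to $\phi^{-1}$; second, the conclusion that the endomorphism determined by $x_i\mapsto z_i$ is an automorphism uses that it is surjective (the $z_i$ generate, by Lemma \ref{min gene} applied to $\mathfrak{N}(n)$, using $J\subseteq\mathfrak{N}(n)^{2}$) together with finite-dimensionality of $\mathfrak{N}(n)$, which holds over any field, consistent with your closing remark that no hypothesis on the characteristic of $K$ is needed for the statement itself.
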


Scheunemann \cite{S67} established a duality theory for 2-step nilpotent Lie algebras
using the cohomology ring of Lie algebras, which cuts the classification of 2-step nilpotent Lie algebras by half.
Later Gauger \cite{Gau} reconstructed the duality theory via the natural duality between $\wedge^{2}V$ and  $\wedge^{2}V^{*}$ (here $V^{*}$ is the dual vector space of $V$), then showed in \cite{Gau2} that the two duality theories are identical when $K$ is algebraically closed and of characteristic zero. For each 2-step nilpotent Lie algebra $\mathfrak{N}$, let $\mathfrak{N}^{*}$ be the dual of $\mathfrak{N}$ as in \cite{S67}.
\begin{proposition}\rm{(\cite{S67})}
In the set of 2-step nilpotent Lie algebras (here including abelian Lie algebras), there is a correspondence $\mathfrak{N}\rightarrow \mathfrak{N}^{*}$ with the following properties:

(1) If $\dim \mathfrak{N}=n+r$ and $\dim \mathfrak{N}^{2}=r$, then $\dim \mathfrak{N}^{*}=n+\frac{1}{2}n(n-1)-r$ and
$\dim \mathfrak{{N}^{*}}^{2}=\frac{1}{2}n(n-1)-r$.

(2) $(\mathfrak{N}^{*})^{*}$ is isomorphic to $\mathfrak{N}$.

(3) $\mathfrak{N}_{1}$ and $\mathfrak{N}_{2}$ are isomorphic if and only if ${\mathfrak{N}_{1}}^{*}$ and ${\mathfrak{N}_{2}}^{*}$ are isomorphic.
\end{proposition}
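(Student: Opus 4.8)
The plan is to realize the duality $\mathfrak{N}\mapsto\mathfrak{N}^{*}$ explicitly through the canonical perfect pairing between $\wedge^{2}V$ and $\wedge^{2}V^{*}$, in the spirit of Gauger's reconstruction of Scheunemann's theory. Fix $n$ and, as above, identify $\mathfrak{N}(n)=V\oplus\wedge^{2}V$ with $\dim V=n$; let $\mathfrak{N}(n)^{*}=V^{*}\oplus\wedge^{2}V^{*}$ be the free $2$-step nilpotent Lie algebra built from the linear dual $V^{*}$. By the classification in Model~I (allowing in addition the boundary subspace $I=\wedge^{2}V$, which gives the abelian Lie algebra $V$), every member of the set in question is isomorphic to $\mathfrak{N}(n)/I$ for a subspace $I\subseteq\wedge^{2}V$, uniquely determined up to the action of $GL(V)$. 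For such an $\mathfrak{N}$ I would set $\mathfrak{N}^{*}:=\mathfrak{N}(n)^{*}/I^{\perp}$, where $I^{\perp}=\{\xi\in\wedge^{2}V^{*}\mid\langle\xi,\omega\rangle=0\text{ for all }\omega\in I\}$ is the annihilator with respect to the canonical pairing $\wedge^{2}V^{*}\times\wedge^{2}V\to K$.

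First I would check that $\mathfrak{N}^{*}$ does not depend on the chosen presentation $\mathfrak{N}\cong\mathfrak{N}(n)/I$. The point is that for $\theta\in GL(V)$ the maps $\wedge^{2}(\theta)$ on $\wedge^{2}V$ and $\wedge^{2}((\theta^{-1})^{*})$ on $\wedge^{2}V^{*}$ are adjoint for the pairing, i.e. $\langle\wedge^{2}((\theta^{-1})^{*})\xi,\wedge^{2}(\theta)\omega\rangle=\langle\xi,\omega\rangle$ (checked on decomposable tensors by expanding the $2\times2$ determinant and using $\langle\theta^{*}\xi,v\rangle=\langle\xi,\theta v\rangle$). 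Hence $\wedge^{2}((\theta^{-1})^{*})(I^{\perp})=(\wedge^{2}(\theta)(I))^{\perp}$, and since $\theta\mapsto(\theta^{-1})^{*}$ is a bijection $GL(V)\to GL(V^{*})$, the Model~I criterion applied on the $V$-side and on the $V^{*}$-side shows at once that $\mathfrak{N}(n)/I_{1}\cong\mathfrak{N}(n)/I_{2}$ if and only if $\mathfrak{N}(n)^{*}/I_{1}^{\perp}\cong\mathfrak{N}(n)^{*}/I_{2}^{\perp}$. This simultaneously gives well-definedness and assertion (3). Assertion (1) is then a dimension count: if $\dim\mathfrak{N}=n+r$ and $\dim\mathfrak{N}^{2}=r$, then $\dim(\wedge^{2}V/I)=r$, so $\dim I=\frac{1}{2}n(n-1)-r$, whence $\dim I^{\perp}=\frac{1}{2}n(n-1)-\dim I=r$ and therefore $\dim\mathfrak{N}^{*}=n+\frac{1}{2}n(n-1)-r$ with $\dim(\mathfrak{N}^{*})^{2}=\dim(\wedge^{2}V^{*}/I^{\perp})=\frac{1}{2}n(n-1)-r$.

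For (2) I would use the finite-dimensional canonical identifications $V^{**}\cong V$ and $\wedge^{2}V^{**}\cong\wedge^{2}V$, under which $\mathfrak{N}(n)^{**}\cong\mathfrak{N}(n)$, together with the double-annihilator identity $(I^{\perp})^{\perp}=I$; then $(\mathfrak{N}^{*})^{*}=\mathfrak{N}(n)^{**}/(I^{\perp})^{\perp}\cong\mathfrak{N}(n)/I\cong\mathfrak{N}$. The only genuine subtlety in the argument is keeping track of the variance of the adjoint, so that $\wedge^{2}(\theta)$ and $\wedge^{2}((\theta^{-1})^{*})$ pair correctly; once that is pinned down, everything reduces to a dimension count and to the Model~I classification proposition assumed above. (One could instead carry out Scheunemann's original construction via the cohomology ring of the Lie algebra, but the $\wedge^{2}$-picture makes well-definedness and the double-dual isomorphism most transparent.)
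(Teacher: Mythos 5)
Your proof is correct and follows essentially the same route the paper itself indicates: the statement is cited from Scheunemann, and immediately after it the paper exhibits the duality via the canonical pairing between $\wedge^{2}V$ and $\wedge^{2}V^{*}$, sending $(V\oplus\wedge^{2}V)/T$ to $(V^{*}\oplus\wedge^{2}V^{*})/T^{\perp}$, which is precisely your construction (Gauger's reconstruction based on the Model I classification). You merely fill in the routine details the paper leaves implicit -- the equivariance of taking annihilators under $\theta\mapsto(\theta^{-1})^{*}$, the dimension count, and the double-annihilator identity -- so there is nothing to correct.
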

Recall that there is a canonical non-degenerate pairing between $\wedge^{2}V$ and  $\wedge^{2}V^{*}$:
$$(v\wedge w, \lambda\wedge \mu)= \lambda(v)\mu(w)-\lambda(w)\mu(v),$$
where $v\wedge w\in \wedge^{2}V$ and $\lambda\wedge \mu\in \wedge^{2}V^{*}$ are decomposable $2$-vectors.

Thus the duality can be easily exhibited by mapping $(V\oplus \wedge^{2}V)/T$ to $(V^{*}\oplus \wedge^{2}V^{*})/T^{\bot}$, where the subspace $T^{\bot}$ of $V^{*}$ is the orthogonal complement of the subspace $T$ of $V$ with respect to the pairing.

\textbf{Model II}\ Set $K=\mathbb{R}$. Following \cite{E}, we say a 2-step nilpotent Lie algebra $\mathfrak{N}$ is of \emph{type} $(m,n)$ if the
commutator ideal $\mathfrak{N}^{2}$ has dimension $m$ and codimension $n$.

Let $so(n,\mathbb{R})$ denote the Lie algebra of $n\times n$ skew symmetric matrices with real entries. Let $\langle, \rangle$ denote the
positive definite inner product on $so(n,\mathbb{R})$ given by $\langle Z,Z'\rangle = - trace(ZZ')$, which will be called the canonical inner
product on  $so(n,\mathbb{R})$.

Let $W$ be an $m$-dimensional subspace of $so(n,\mathbb{R})$. Then the vector space $\mathfrak{N}=\mathbb{R}^{n}\oplus W$ can be equipped with the inner product $\langle, \rangle^{*}$ such that $\langle \mathbb{R}^{n},W \rangle^{*}=0$, $\langle, \rangle^{*}=\langle, \rangle$ on $W$ and $\langle, \rangle^{*}$ is the standard inner product on $\mathbb{R}^{n}$ for which the natural basis $\{e_{1},\cdots, e_{n}\}$
is orthonormal. There is a unique Lie bracket $[,]$ on  $\mathfrak{N}$ such that $W$ lies in the center of  $\mathfrak{N}$ and $\langle [X,Y],Z \rangle^{*}=\langle Z(X),Y\rangle^{*}$ for $X,Y\in \mathbb{R}^{n}$ and $Z\in W$. Thus $\mathfrak{N}$ is a 2-step nilpotent Lie algebra such
that $\mathfrak{N}^{2}=W$. Let $N$ be the unique simply connected Lie group with Lie algebra  $\mathfrak{N}$.  Then the inner product $\langle, \rangle^{*}$ on $\mathfrak{N}$ determines uniquely a left invariant Riemannian metric on $N$. So $\mathfrak{N}=\mathbb{R}^{n}\oplus W$ is called
a \emph{standard metric 2-step nilpotent Lie algebra of type $(m,n)$}.

Now let $\mathfrak{N}$  be a 2-step nilpotent Lie algebra of type $(m,n)$. One can choose $\{Z_{1},\cdots,Z_{m}\}$ to be a basis of $\mathfrak{N}^{2}$, then extend it to a basis $\mathfrak{B}=\{X_{1},\cdots,X_{n},Z_{1},\cdots,Z_{m}\}$ of $\mathfrak{N}$. Hence $[X_{i},X_{j}]=\sum^{m}_{k=1}C^{k}_{ij}Z_{k}$ for $1\leq i,j\leq n$, $1\leq k\leq m$ and suitable matrices $\{C^{1},\cdots,C^{m}\}$ in $so(n,\mathbb{R})$. Let $W=span\{C^{1},\cdots,C^{m}\}\subset so(n,\mathbb{R})$ and $\mathfrak{N}_{s}=\mathbb{R}^{n}\oplus W$ denote the standard
metric 2-step nilpotent Lie algebra of type $(m,n)$. It is not difficult to show that $\mathfrak{N}$ is isomorphic to $\mathfrak{N}_{s}=\mathbb{R}^{n}\oplus W$ as a Lie algebra (cf. \cite{E2}).

Let us return to the 2-step nilpotent Lie algebra $\mathfrak{N}=\mathbb{R}^{n}\oplus W$. If $W=so(n,\mathbb{R})$, the Lie algebra
$\mathbb{R}^{n}\oplus so(n,\mathbb{R})$ is just the free 2-step nilpotent Lie algebra over $\mathbb{R}$ with $n$-generators. So we identify it with $\mathfrak{N}(n)$. Clearly it has type $(\frac{n(n-1)}{2},n)$. It is also obvious that $\mathfrak{N}(n)$ admits a rational structure
$\mathfrak{N}(n)_{\mathbb{Q}}$, which is exactly the free 2-step nilpotent Lie algebra over $\mathbb{Q}$ with $n$-generators.

\subsection{$\tilde{\mathfrak{n}}^{+}$ of linearly oriented quiver of type $\mathbb{A}_{n}$}
Now we consider the Lie algebra $\tilde{\mathfrak{n}}^{+}$ associated to the quiver $\mathbb{A}_{n}^{\rightarrow}$, which is the linearly
oriented quiver of type $\mathbb{A}_{n}$:
$$\xymatrix{1\ar[r]& 2\ar[r]& \cdot\ar@{.>}[r] & \cdot\ar[r] & n.\\}$$
The corresponding path matrix $E=E_{\mathbb{A}_{n}^{\rightarrow}}$ is as follows:
$$E=(a_{ij})=\begin{pmatrix}
0 & 1 & \cdot & \cdot & 1\\
-1 & 0 & 1 & \cdot & 1\\
\cdot & -1 & \cdot & \cdot & \cdot \\
\cdot & \cdot & \cdot & 0 & 1\\
-1 &\cdot & \cdot & -1 & 0
\end{pmatrix}$$
that is $a_{ii}=0$, $a_{ij}=-a_{ji}=1$ for any $i< j$.
\begin{theorem}\label{free}
Let $Q$ be the linearly oriented quiver $\mathbb{A}_{n}^{\rightarrow}$. The associated Lie algebra $\tilde{\mathfrak{n}}^{+}$
is isomorphic to the free 2-step nilpotent Lie algebra $\mathfrak{N}(n)$.
\end{theorem}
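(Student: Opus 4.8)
The plan is to combine the relations of Theorem~\ref{main result} to show that $\tilde{\mathfrak{n}}^{+}=\tilde{\mathfrak{n}}^{+}(\mathbb{A}_{n}^{\rightarrow})$ is $2$-step nilpotent, to invoke the universal property of the free $2$-step nilpotent Lie algebra $\mathfrak{N}(n)$ in order to get a surjection $\mathfrak{N}(n)\twoheadrightarrow\tilde{\mathfrak{n}}^{+}$, and then to conclude by a dimension count.

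First I would establish $2$-step nilpotency. For $Q=\mathbb{A}_{n}^{\rightarrow}$ there is a path between any two distinct vertices, so $a_{ij}=\pm 1$ whenever $i\neq j$; hence relation (c) of Theorem~\ref{main result} never occurs, relation (a) holds for every pair $i\neq j$, and relation (b) holds for all pairwise distinct $i,j,k$ with $k$ the smallest or the largest of the three (reordering $\{i,j,k\}=\{a,b,c\}$ with $a<b<c$). Set $\mathfrak{a}:=\mathrm{span}_{\mathbb{C}}\{[C_{P_i},C_{P_j}]:1\leq i,j\leq n\}$. I claim $\mathfrak{a}\subseteq Z(\tilde{\mathfrak{n}}^{+})$. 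Since $\tilde{\mathfrak{n}}^{+}$ is generated by the $C_{P_i}$ (Claim~\ref{generators}), it suffices to check $[C_{P_k},[C_{P_i},C_{P_j}]]=0$ for all $i,j,k$: this is trivial if $i=j$; it is relation (a) together with antisymmetry of the inner bracket if $k\in\{i,j\}$; and for pairwise distinct $i,j,k$ the cases $k\in\{a,c\}$ are exactly relation (b), while the remaining case $k=b$ follows from the Jacobi identity, rewritten as
\[[C_{P_b},[C_{P_a},C_{P_c}]]=[C_{P_a},[C_{P_b},C_{P_c}]]+[C_{P_c},[C_{P_a},C_{P_b}]],\]
whose right-hand side vanishes by relation (b). Consequently $\mathfrak{a}$ is a central ideal, $\tilde{\mathfrak{n}}^{+}/\mathfrak{a}$ is abelian, so $[\tilde{\mathfrak{n}}^{+},\tilde{\mathfrak{n}}^{+}]=\mathfrak{a}$ and $(\tilde{\mathfrak{n}}^{+})^{3}=[\tilde{\mathfrak{n}}^{+},\mathfrak{a}]=0$. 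Since $(\tilde{\mathfrak{n}}^{+})^{2}\neq 0$ for $n\geq 2$ (for instance $[C_{P_2},C_{P_1}]=C_{S_1}$ is a nonzero basis vector), $\tilde{\mathfrak{n}}^{+}$ is $2$-step nilpotent; the case $n=1$ is trivial, both sides being one-dimensional abelian.

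By Claim~\ref{generators}, Proposition~\ref{derived} and Lemma~\ref{min gene}, $\{C_{P_i}\mid 1\leq i\leq n\}$ is a minimal system of $n$ generators of $\tilde{\mathfrak{n}}^{+}$. Hence the universal property of $\mathfrak{N}(n)$ recalled in Section~\ref{free2} provides a surjective homomorphism of Lie algebras $\pi:\mathfrak{N}(n)\twoheadrightarrow\tilde{\mathfrak{n}}^{+}$, $x_i\mapsto C_{P_i}$. To see that $\pi$ is an isomorphism it suffices to compare dimensions over $\mathbb{C}$: one has $\dim\mathfrak{N}(n)=\tfrac{n(n+1)}{2}$ (Section~\ref{free2}), while $\tilde{\mathfrak{n}}^{+}$ has a basis indexed by the indecomposable $kQ$-modules, equivalently by the positive roots of type $\mathbb{A}_{n}$ (Gabriel's theorem), of which there are $\binom{n+1}{2}=\tfrac{n(n+1)}{2}$; a surjection between $\mathbb{C}$-vector spaces of equal finite dimension is bijective.

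The only step that is not purely formal or combinatorial is the verification of $2$-step nilpotency in the second paragraph: everything there hinges on the single Jacobi-identity manipulation disposing of the ``middle-index'' triple bracket $[C_{P_b},[C_{P_a},C_{P_c}]]$, which is killed by none of the relations (a)--(c) directly. Once that is in place, the universal property and the count of indecomposable modules (equivalently positive roots) finish the argument.
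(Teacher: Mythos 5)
Your proposal is correct and follows essentially the same route as the paper: the paper's proof likewise deduces from Theorem~\ref{main result} that $[[C_{P_i},C_{P_j}],C_{P_k}]=0$ for all $i,j,k$ (your Jacobi-identity treatment of the middle-index case is exactly the detail the paper leaves implicit) and then concludes by comparing dimensions with $\dim\mathfrak{N}(n)=\tfrac{n(n+1)}{2}$. Your explicit appeal to the universal property of $\mathfrak{N}(n)$ to produce the surjection is just a slightly more formal packaging of the same argument.
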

\bp
Citing Theorem \ref{main result}, we know that $\tilde{\mathfrak{n}}^{+}$ is generated by $C_{P_{i}}$, $1\leq i\leq n$ and these
generators satisfy $[[C_{P_{i}},C_{P_{j}}],C_{P_{k}}]=0$ for any $i,j,k$. Through comparing the dimensions, we get the isomorphism from
$\tilde{\mathfrak{n}}^{+}$ to $\mathfrak{N}(n)$ by mapping $C_{P_{i}}$ to $x_{i}$.
\ep
Now the identification between $\tilde{\mathfrak{n}}^{+}$ and the Model I (or II) is straightforward. For any indecomposable non-projective module
$M$, consider its minimal projective resolution \begin{equation}\label{mini proj A}0\longrightarrow P_{i}\longrightarrow P_{j}\longrightarrow M\longrightarrow 0, \ \  i>j.\end{equation}
Thus $C_{M}$ corresponds
to $x_{i}\wedge x_{j}$ in the Model I and $(1/2)(E_{ji}-E_{ij})$ in the Model II, where $E_{ab}$ is the $n\times n$  matrix with $1$ in position $(a,b)$ and zeros elsewhere.

Recall that we can also associate an abelian Lie algebra $\hat{\mathfrak{n}}$, which is generated by the $\mathbb{C}$-basis $K_{P_{i}}$, $1\leq i\leq n$.
\begin{corollary}
Within the Scheunemann duality, we have $(\tilde{\mathfrak{n}}^{+})^{*}\cong \hat{\mathfrak{n}}$.
\end{corollary}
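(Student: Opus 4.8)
The plan is to reduce the statement to a dimension count. By Theorem~\ref{free} the Lie algebra $\tilde{\mathfrak{n}}^{+}$ attached to $\mathbb{A}_{n}^{\rightarrow}$ is isomorphic to the free $2$-step nilpotent Lie algebra $\mathfrak{N}(n)$ over $\mathbb{C}$, which, as recalled above, has type $\big(\tfrac{n(n-1)}{2},n\big)$; equivalently $\dim\tilde{\mathfrak{n}}^{+}=\tfrac{n(n+1)}{2}$ and $\dim(\tilde{\mathfrak{n}}^{+})^{2}=\tfrac{n(n-1)}{2}$. So in the notation of the Scheunemann correspondence I take $\mathfrak{N}=\tilde{\mathfrak{n}}^{+}$ with $r:=\dim\mathfrak{N}^{2}=\tfrac{n(n-1)}{2}$ and $\dim\mathfrak{N}=n+r$.

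First I would apply property~(1) of the Scheunemann duality with these data: it yields $\dim(\tilde{\mathfrak{n}}^{+})^{*}=n+\tfrac{1}{2}n(n-1)-r=n$ and $\dim\big((\tilde{\mathfrak{n}}^{+})^{*}\big)^{2}=\tfrac{1}{2}n(n-1)-r=0$. Hence $(\tilde{\mathfrak{n}}^{+})^{*}$ is an $n$-dimensional abelian Lie algebra. Next I would compare with $\hat{\mathfrak{n}}$: as noted in Section~\ref{sec lie}, $\hat{\mathfrak{n}}$ is abelian, and by Lemma~\ref{indec. obj.s}(1) its natural basis $\{K_{P_{i}}\mid 1\le i\le n\}$ (the acyclic indecomposable objects of $C^1(\mathscr{P})$) has exactly $n$ elements, so $\hat{\mathfrak{n}}$ is likewise an $n$-dimensional abelian Lie algebra. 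Since any two abelian Lie algebras of the same finite dimension over $\mathbb{C}$ are isomorphic, $(\tilde{\mathfrak{n}}^{+})^{*}\cong\hat{\mathfrak{n}}$, as claimed.

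Alternatively, one can argue directly through Model~I: under the identification $\tilde{\mathfrak{n}}^{+}=V\oplus\wedge^{2}V$ with $\dim V=n$ coming from Theorem~\ref{free}, the ideal $I$ of $\mathfrak{N}(n)$ with $\mathfrak{N}(n)/I\cong\tilde{\mathfrak{n}}^{+}$ is $I=0$, whose orthogonal complement for the canonical pairing between $\wedge^{2}V$ and $\wedge^{2}V^{*}$ is all of $\wedge^{2}V^{*}$; therefore $(\tilde{\mathfrak{n}}^{+})^{*}=(V^{*}\oplus\wedge^{2}V^{*})/\wedge^{2}V^{*}\cong V^{*}$, again $n$-dimensional abelian, and one concludes as before.

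I do not expect a genuine obstacle here: the content is the type computation for $\mathfrak{N}(n)$ together with the formula from the cited proposition. The only point that needs a word of care is that the Scheunemann correspondence is set up on the class of $2$-step nilpotent Lie algebras \emph{including} abelian ones, so that the output $(\tilde{\mathfrak{n}}^{+})^{*}$ — which turns out to be abelian — is genuinely an object of the theory; this is precisely the convention under which the cited proposition is stated, so the argument goes through formally.
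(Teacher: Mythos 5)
Your argument is correct and is exactly the intended one: the paper states this corollary without proof, and the implicit justification is precisely your computation — $\tilde{\mathfrak{n}}^{+}\cong\mathfrak{N}(n)$ has commutator ideal of dimension $\tfrac{n(n-1)}{2}$, so Scheunemann's dimension formula (equivalently, the Gauger model with $I=0$ and $I^{\bot}=\wedge^{2}V^{*}$) gives an $n$-dimensional abelian dual, which coincides with $\hat{\mathfrak{n}}$ spanned by the $K_{P_i}$. Your remark that the duality is set up on the class of $2$-step nilpotent Lie algebras including abelian ones is the right point of care and matches the convention quoted from Scheunemann in the paper.
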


Let $\tilde{\mathfrak{n}}_{\mathbb{R}}^{+}$ be the Lie algebra over $\mathbb{R}$ generated by the same basis $C_{M}$ as in $\tilde{\mathfrak{n}}^{+}$. Then  $\tilde{\mathfrak{n}}_{\mathbb{R}}^{+}$ is a real form of $\tilde{\mathfrak{n}}^{+}$. Clearly, thanks to the existence of Hall polynomials, $\tilde{\mathfrak{n}}_{\mathbb{R}}^{+}$ admits a rational form $\tilde{\mathfrak{n}}_{\mathbb{Q}}^{+}$ and a $\mathbb{Z}$-Lie subalgebra $\tilde{\mathfrak{n}}_{\mathbb{Z}}^{+}$, both of which have the $C_{M}$ as bases. The notations here are actually valid for any quiver of Dynkin type, not just $\mathbb{A}_{n}^{\rightarrow}$.

\subsection{}Before characterizing the root system of $\tilde{\mathfrak{n}}^{+}(\mathbb{A}_{n}^{\rightarrow})$, we firstly give a review of
the theory of root systems of nilpotent Lie algebras of maximal rank, following Santharoubane \cite{Sa}.

Let $\mathfrak{N}$ be a nilpotent complex Lie algebra of finite dimension. Recall that a \emph{derivation} of $\mathfrak{N}$ is a linear map $\delta: \mathfrak{N}\rightarrow \mathfrak{N}$ satisfying Leibnitz rule $\delta([a,b])=[a,\delta(b)]+[\delta(a),b]$. Let $\Der \mathfrak{N}$ be
the derivation algebra of $\mathfrak{N}$, consisting of all derivations of $\mathfrak{N}$. Denote by $\Aut \mathfrak{N}$
the automorphism group of $\mathfrak{N}$.
\begin{definition}
One calls $T$ a \emph{torus} on $\mathfrak{N}$ a commutative subalgebra of $\Der \mathfrak{N}$ which consists of semi-simple endomorphisms.
A torus is called \emph{maximal} if it is not contained strictly in any other torus.
\end{definition}
A torus $T$ defines a representation in $\mathfrak{N}$: $T\times \mathfrak{N}\rightarrow \mathfrak{N}$, $(t,x)\mapsto tx$.
Since $T$ is a commutative family of semi-simple endomorphisms and the ground field $\mathbb{C}$ is algebraically closed, all elements
of $T$ can be diagonalized  simultaneously. This means, $\mathfrak{N}$ can be decomposed into a direct sum of root spaces for
$$T: \mathfrak{N}=\oplus_{\beta \in T^{*}}\mathfrak{N}_{\beta},$$
where $T^{*}$ is the dual of the vector space $T$ and $\mathfrak{N}_{\beta}=\{x\in\mathfrak{N}| tx=\beta(t)x, \forall t\in T\}$.
\begin{definition}
Let $T$ be a maximal torus on $\mathfrak{N}$. The set $R(T)=\{\beta\in T^{*}| \mathfrak{N}_{\beta}\neq 0\}$ is called the
\emph{root system} of $\mathfrak{N}$ associated to $T$. Each element $x\neq 0$ in $\mathfrak{N}_{\beta}$ is called a \emph{root vector} and
$\beta$ is called the \emph{root} of $x$.
\end{definition}
Recall Lemma \ref{min gene}. Let $T$ be a maximal torus on $\mathfrak{N}$. One calls $T$-msg a minimal system of generators which
consists of root vectors for $T$. It is easy that there exists a $T$-msg for any torus $T$.
\begin{lemma}\rm{(\cite{Sa})}
Let  $T$ be a maximal torus on $\mathfrak{N}$, then $\dim T=d$ is an invariant of $\mathfrak{N}$ called the rank of  $\mathfrak{N}$.
Let $(x_{1},\cdots,x_{n})$ be a $T$-msg, $\beta_{i}$ the root of $x_{i}$, then the rank of $\{\beta_{1},\cdots,\beta_{n}\}$ is equal to $d$.
\end{lemma}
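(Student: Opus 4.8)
The plan is to split the statement into two independent claims: (i) $\dim T$ is the same for every maximal torus $T$ on $\mathfrak{N}$, so it deserves the name $d$; and (ii) for one fixed maximal torus $T$ and one $T$-msg $(x_1,\dots,x_n)$ with roots $\beta_i\in T^*$, the set $\{\beta_1,\dots,\beta_n\}$ spans $T^*$, whence $\mathrm{rank}\{\beta_1,\dots,\beta_n\}=\dim T$.

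I would prove (ii) first, since it is elementary. The key point is that a maximal torus $T$ acts faithfully on $\mathfrak{N}/\mathfrak{N}^2$: if $t\in T$ satisfies $t(\mathfrak{N})\subseteq\mathfrak{N}^2$, then since $t$ is a derivation an easy induction gives $t(\mathfrak{N}^k)\subseteq\mathfrak{N}^{k+1}$ for all $k$, and nilpotency of $\mathfrak{N}$ forces $t$ to be a nilpotent endomorphism; being also semisimple by the definition of a torus, $t=0$. Next, by Lemma \ref{min gene} the images $\bar x_1,\dots,\bar x_n$ of a $T$-msg form a basis of $\mathfrak{N}/\mathfrak{N}^2$, and each $\bar x_i$ is a weight vector of weight $\beta_i$ for the induced $T$-action. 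Hence every $t\in T$ acts on $\mathfrak{N}/\mathfrak{N}^2$ diagonally with eigenvalues $\beta_1(t),\dots,\beta_n(t)$, and the faithfulness just established says precisely that $t\mapsto(\beta_1(t),\dots,\beta_n(t))$ is injective, i.e. $\bigcap_{i=1}^n\ker\beta_i=\{0\}$. This is equivalent to $\mathrm{span}\{\beta_1,\dots,\beta_n\}=T^*$, so $\mathrm{rank}\{\beta_1,\dots,\beta_n\}=\dim T^*=\dim T$; since trivially $\mathrm{rank}\{\beta_i\}\le\dim T$ because the $\beta_i$ live in $T^*$, equality holds.

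For (i), I would realize $\Aut\mathfrak{N}$ as a linear algebraic group with Lie algebra $\Der\mathfrak{N}$ and observe that a torus on $\mathfrak{N}$ in the present (Lie-theoretic) sense --- a commutative subalgebra of $\Der\mathfrak{N}$ consisting of semisimple endomorphisms --- is precisely the Lie algebra of an algebraic subtorus of $\Aut\mathfrak{N}$, maximal ones corresponding to maximal algebraic tori. Then the conjugacy of maximal tori in a linear algebraic group (equivalently, Mostow's conjugacy theorem applied to $\Aut\mathfrak{N}$) shows that all maximal tori of $\Aut\mathfrak{N}$ are conjugate, hence of equal dimension; this common dimension is the invariant $d$. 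Combining (i) and (ii) then yields the assertion for an arbitrary $T$-msg of an arbitrary maximal torus.

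The hard part is (i): one must check carefully that Santharoubane's notion of a maximal torus coincides with the algebraic notion inside $\Aut\mathfrak{N}$, so that the conjugacy theorem legitimately applies; everything else --- the faithfulness argument and the linear-algebra bookkeeping in (ii) --- is routine. This is exactly the content of \cite{Sa}, which we only sketch here for completeness.
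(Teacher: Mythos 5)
Your proposal is correct and follows essentially the same route as the paper: for the invariance of $\dim T$ the paper, exactly like you, just invokes Mostow's conjugacy theorem (any two maximal tori are conjugate under $\Aut\mathfrak{N}$) and otherwise defers to \cite{Sa}. What you add is a self-contained and correct proof of the second half, which the paper leaves entirely to \cite{Sa}: your faithfulness argument (a semisimple derivation $t$ with $t(\mathfrak{N})\subseteq\mathfrak{N}^{2}$ satisfies $t(\mathfrak{N}^{k})\subseteq\mathfrak{N}^{k+1}$ for all $k$, hence is nilpotent, hence zero) correctly yields $\bigcap_{i}\Ker\beta_{i}=0$, so the roots of a $T$-msg span $T^{*}$ and have rank $\dim T$; note this part of your argument does not even use maximality of $T$.
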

This lemma follows from Mostow's theorem, which stated that for any two maximal tori $T$ and $T'$, there exists $\theta\in \Aut \mathfrak{N}$
such that $\theta T\theta^{-1}=T'$. Clearly $d\leq n$. We say $\mathfrak{N}$ is of \emph{maximal rank} if its rank is $n$, which means $\dim T=\dim \mathfrak{N}/\mathfrak{N}^{2}$ for any maximal torus $T$ on  $\mathfrak{N}$.
\begin{proposition}\rm{(\cite{Sa})}
Let $T$ be a maximal torus on $\mathfrak{N}$ that is nilpotent of maximal rank, $R(T)$ the associated root system, $(x_{1},\cdots,x_{n})$ a $T$-msg and $(\beta_{1},\cdots,\beta_{n})$ the corresponding roots.

(1) The set $(\beta_{1},\cdots,\beta_{n})$  is a basis of the vector space $T^{*}$.

(2) For any $\beta\in R(T)$, there exists unique $(d_{1},\cdots,d_{n})\in \mathbb{N}^{n}$ such that $\beta=\Sigma^{n}_{i=1}d_{i}\beta_{i}$.

(3) If we let $|\beta|=\Sigma^{n}_{i=1} d_{i}$ be the height of $\beta=\Sigma^{n}_{i=1}d_{i}\beta_{i}$, then $1\leq |\beta|\leq p$ where $p$ is the nilpotency of $\mathfrak{N}$.
\end{proposition}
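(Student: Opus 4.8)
The plan is to obtain all three statements from just two inputs: the lemma of Santharoubane recalled immediately above, which identifies $\dim T$ with the rank of the root family of any $T$-msg, together with the trivial but decisive fact that the elements of $T$ act on $\mathfrak{N}$ as derivations, so that the bracket of two root vectors is again a root vector.

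For part $(1)$, set $d=\dim T$. By the preceding lemma the family $\{\beta_1,\dots,\beta_n\}$ has rank $d$, hence spans a $d$-dimensional subspace of $T^*$. Since $\mathfrak{N}$ is of maximal rank we have $d=n$, where $n=\dim\mathfrak{N}/\mathfrak{N}^2$ is the size of a minimal generating system by Lemma~\ref{min gene}; as $\dim T^*=d=n$, the $n$ vectors $\beta_1,\dots,\beta_n$ therefore form a basis of $T^*$. In particular they are linearly independent, a point used repeatedly below.

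The engine for $(2)$ and $(3)$ is the identity $t[x,y]=[tx,y]+[x,ty]=(\beta+\gamma)(t)[x,y]$ for $t\in T$, $x\in\mathfrak{N}_\beta$, $y\in\mathfrak{N}_\gamma$, which gives $[\mathfrak{N}_\beta,\mathfrak{N}_\gamma]\subseteq\mathfrak{N}_{\beta+\gamma}$. Since $x_1,\dots,x_n$ generate $\mathfrak{N}$, the space $\mathfrak{N}$ is spanned by iterated brackets $[x_{i_1},\dots,x_{i_k}]$ with $k\geq1$; such a bracket in which the index $i$ occurs $d_i$ times (so $\sum_i d_i=k$) lies in $\mathfrak{N}_{\sum_i d_i\beta_i}$. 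Projecting these brackets onto the root spaces in $\mathfrak{N}=\bigoplus_\beta\mathfrak{N}_\beta$ shows at once that $\mathfrak{N}_\beta\neq0$ forces $\beta=\sum_i d_i\beta_i$ for some $(d_1,\dots,d_n)\in\mathbb{N}^n$, and uniqueness of the $d_i$ is then immediate from the linear independence established in $(1)$; this is $(2)$. The same spanning argument, combined with $\sum_i d_i\beta_i\neq0$ whenever $\sum_i d_i\geq1$ (again by linear independence), shows that no positive-length iterated bracket contributes to $\mathfrak{N}_0$, whence $\mathfrak{N}_0=0$ and $0\notin R(T)$; thus every $\beta\in R(T)$ has height $|\beta|\geq1$.

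For the upper bound in $(3)$, I would filter $\mathfrak{N}$ by its descending central series. Fix $\beta\in R(T)$ with $|\beta|=k$. By the decomposition above, $\mathfrak{N}_\beta$ is spanned by those iterated brackets whose multiplicity vector $(d_1,\dots,d_n)$ satisfies $\sum_i d_i\beta_i=\beta$; linear independence of the $\beta_i$ pins this vector down, forcing each such bracket to have length exactly $k$, hence to lie in $C^k\mathfrak{N}=\mathfrak{N}^k$. Therefore $\mathfrak{N}_\beta\subseteq\mathfrak{N}^k$, and since $\mathfrak{N}_\beta\neq0$ while $\mathfrak{N}^{p+1}=0$ we conclude $k\leq p$. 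The only step needing genuine care is the middle one: one must check that the span of all iterated brackets really does decompose along the root-space grading with the indicated indices, and it is precisely there that part $(1)$, i.e. the linear independence of $\beta_1,\dots,\beta_n$, does the work; granting that, the remaining assertions are short and computation-free.
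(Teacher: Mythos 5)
Your argument is correct. Note that the paper itself gives no proof of this proposition: it is quoted verbatim from Santharoubane's work \cite{Sa}, so there is no internal argument to compare against. What you supply is the standard proof, and every step holds: part (1) is exactly the recalled lemma (the rank of $\{\beta_1,\dots,\beta_n\}$ equals $\dim T$) combined with maximal rank, $\dim T=n=\dim\mathfrak{N}/\mathfrak{N}^2$, forcing the $n$ roots of a $T$-msg to be linearly independent in the $n$-dimensional space $T^*$; parts (2) and (3) follow from the derivation identity $[\mathfrak{N}_\beta,\mathfrak{N}_\gamma]\subseteq\mathfrak{N}_{\beta+\gamma}$ together with the fact that $\mathfrak{N}$ is spanned by iterated (left-normed) brackets of the generators $x_i$, each of which is a root vector for $\sum_i d_i\beta_i$ and, having length $\sum_i d_i=k$, lies in $C^k\mathfrak{N}$; projecting along the root-space decomposition then gives existence, the independence from (1) gives uniqueness and excludes the zero root, and $\mathfrak{N}_\beta\subseteq C^{|\beta|}\mathfrak{N}\neq0$ gives $|\beta|\leq p$. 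The one point you flag yourself, that the root spaces really are spanned by the brackets landing in them, is handled precisely by that projection argument, so there is no gap.
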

In the following, we give a description of the root system of the Lie algebra $\tilde{\mathfrak{n}}^{+}$ of the quiver  $\mathbb{A}_{n}^{\rightarrow}$. Let $t_{i}\in \Der \tilde{\mathfrak{n}}^{+}$, $1\leq i \leq n$, defined by:
$$t_{i}(C_{P_{j}})=\delta_{ij}C_{P_{j}}.$$
Then $T=\oplus^{n}_{i=1}\mathbb{C}t_{i}$ is a maximal torus on $\tilde{\mathfrak{n}}^{+}$ and the nilpotent Lie algebra $\tilde{\mathfrak{n}}^{+}$
is of maximal rank. Obviously, $(C_{P_{1}},\cdots,C_{P_{n}})$ is a $T$-msg with roots $(\alpha_{1},\cdots,\alpha_{n})$. Here $\alpha_{i}$ satisfying:
$$\alpha_{i}(t_{j})=\delta_{ij}$$
is a simple root. For indecomposable non-projective $kQ$-module $M$ admitting a minimal projective resolution as in (\ref{mini proj A}), $$t(C_{M})=(\alpha_{i}+\alpha_{j})(t)C_{M}, \ \forall t\in T.$$
So the associated root system is $R(T)=\{\alpha_{1},\cdots,\alpha_{n}\}\cup\{\alpha_{i}+\alpha_{j}|1\leq j<i\leq n\}$. Thus Remark \ref{root rem} is
explained for $\mathbb{A}_{n}^{\rightarrow}$.

\section{$\tilde{\mathfrak{n}}^{+}$ of Dynkin quivers}
\subsection{Root systems of $\tilde{\mathfrak{n}}^{+}$}
Let $Q$ be a Dynkin quiver and $E=(a_{ij})$ its path matrix. We have a nilpotent Lie algebra $\tilde{\mathfrak{n}}^{+}$  of the quiver $Q$ as in Theorem
\ref{main result}. For any indecomposable $kQ$-module $M$, we define the \emph{dimension vector} of $C_{M}\in C^1(\mathscr{P})$ (note that without risk of confusion, $C_{M}$ also denotes the corresponding element in $\tilde{\mathfrak{n}}^{+}$) as:
$$\Dim C_{M}=\Dim P_{M}+\Dim \Omega_{M},$$
where $P_{M}$ and $\Omega_{M}$ are defined in (\ref{mini proj res}). Obviously $\Dim C_{M}$ is a positive integer combination of $\Dim P_{i}$, $1\leq i\leq n$. For convenience, we write $\alpha_{M}:=\Dim C_{M}$.

Here we give an interesting lemma, in which $2\nmid (a_{1},\cdots,a_{n})$, $(a_{1},\cdots,a_{n})\in\mathbb{N}^{n}$, means there exists a $j$ such that $a_{j}$ is odd.
\begin{lemma}
Let $Q$ be a Dynkin quiver, $M$ and $N$ be two non-isomorphic indecomposable $kQ$-modules, then $2\nmid \Dim M+\Dim N$.
\end{lemma}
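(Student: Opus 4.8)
The plan is to exploit the Tits (Euler) quadratic form of the Dynkin quiver $Q$. Write $q(x) = \lr{x,x}$ for $x \in \mathbb{Z}Q_0$. Since $Q$ is of Dynkin type, $q$ is an integral, positive definite quadratic form, so $q(x) \geq 1$ for every nonzero $x \in \mathbb{Z}Q_0$. Moreover, every indecomposable $kQ$-module is exceptional (as already used in Section 5), so $q(\Dim M) = \dim\Hom_{kQ}(M,M) - \dim\Ext^1_{kQ}(M,M) = 1$, and likewise $q(\Dim N) = 1$. These two facts, together with Gabriel's theorem (which gives $\Dim M \neq \Dim N$ whenever $M \not\cong N$), are all the input I need.

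Now I would argue by contradiction. Suppose $2 \mid \Dim M + \Dim N$ in $\mathbb{N}^n$. Then both $\delta := \tfrac{1}{2}(\Dim M + \Dim N)$ and $\gamma := \tfrac{1}{2}(\Dim M - \Dim N)$ lie in $\mathbb{Z}Q_0$, and $\Dim M = \delta + \gamma$, $\Dim N = \delta - \gamma$. Using the identity $q(x \pm y) = q(x) \pm (x,y) + q(y)$, where $(\cdot,\cdot)$ is the symmetric Euler form, I get
$$1 = q(\Dim M) = q(\delta) + (\delta,\gamma) + q(\gamma), \qquad 1 = q(\Dim N) = q(\delta) - (\delta,\gamma) + q(\gamma),$$
and adding these two equalities yields $q(\delta) + q(\gamma) = 1$.

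Finally I would extract the contradiction from positivity. Since $M \not\cong N$, Gabriel's theorem gives $\Dim M \neq \Dim N$, hence $\gamma \neq 0$ and therefore $q(\gamma) \geq 1$; consequently $q(\delta) \leq 0$, which forces $\delta = 0$ because $q$ is positive definite. But $\Dim M$ and $\Dim N$ are dimension vectors of nonzero modules, so $\Dim M + \Dim N$ is a nonzero vector with nonnegative entries, and hence $\delta \neq 0$ — a contradiction. Thus $2 \nmid \Dim M + \Dim N$. There is no serious obstacle here: the only points requiring a word of justification are that $q$ is integral and positive definite precisely because $Q$ is Dynkin, and that indecomposables over a Dynkin path algebra are exceptional so that $q$ equals $1$ on their dimension vectors; both are classical and the latter was invoked already in Section 5.
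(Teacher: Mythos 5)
Your proof is correct, and it follows a genuinely different route from the one the paper writes out. The paper argues via Gabriel's theorem together with the Weyl group: it sends one of the two positive roots to a simple root $\alpha_i$, translates the hypothetical relation $\Dim M+\Dim N=2d$ into $\Dim S_i\pm\Dim L=2e$ for some indecomposable $L\neq S_i$, and then rules this out by inspecting the dimension vectors of the indecomposable modules case by case. You instead use the positive definite integral Tits form: writing $\Dim M=\delta+\gamma$, $\Dim N=\delta-\gamma$ with $\delta,\gamma\in\mathbb{Z}Q_0$ (which is legitimate, since $2\mid \Dim M+\Dim N$ forces $2\mid \Dim M-\Dim N$ as well), the identity $q(\delta\pm\gamma)=q(\delta)\pm(\delta,\gamma)+q(\gamma)$ and $q(\Dim M)=q(\Dim N)=1$ (indecomposables over a Dynkin quiver are exceptional, so $q$ takes the value $1$ on their dimension vectors) give $q(\delta)+q(\gamma)=1$, which is incompatible with $\delta\neq 0$ and $\gamma\neq 0$ by positive definiteness and integrality; both nonvanishing statements follow from $M,N\neq 0$ and Gabriel's bijection. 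This is precisely the alternative the authors allude to when they remark that ``one can also prove the preceding lemma using Euler form,'' and it buys a uniform, case-free argument that does not invoke the Weyl group or any inspection of root tables, whereas the paper's proof stays closer to the combinatorics of the root system at the cost of a final verification left to the reader.
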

\bp
Assume $\Dim M+\Dim N=2d$, $d\in \mathbb{N}^{n}$.
According to Gabriel's Theorem, $\Dim M$ and $\Dim N$ correspond to different positive roots $\beta$ and $\gamma$ respectively, of the root system of the semisimple Lie algebra associated to $Q$. For $\beta$, there exists $\omega$ belonging to the Weyl group $W$ such that $\omega(\beta)$ is a simple root $\alpha_{i}$. Hence we have $\alpha_{i}+\omega (\gamma)=2e$, $e\in \mathbb{Z}^{n}$. Back to the category of $kQ$-modules, we get $$\Dim S_{i}\pm \Dim L= 2e,$$ where $S_{i}$ is the simple module attached to the vertex $i$ and $L$ is an indecomposable module different from $S_{i}$. By inspecting the dimension vectors of all indecomposable modules (cf. \cite{S}), we know this is impossible.
\ep
We remark that one can also prove the preceding lemma using Euler form.
\begin{corollary}
Let $Q$ be a Dynkin quiver, $M$ and $N$ be two non-isomorphic indecomposable $kQ$-modules, then $\alpha_{M}\neq \alpha_{N}$.
\end{corollary}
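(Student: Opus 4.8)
The plan is to obtain this corollary as an immediate consequence of the preceding lemma, via a parity argument on dimension vectors. The only input needed is the minimal projective resolution $0\to\Omega_M\to P_M\to M\to 0$ of an indecomposable module $M$, which in the Grothendieck group of $\mod A$ gives the identity $\Dim P_M=\Dim M+\Dim\Omega_M$. Substituting this into the definition $\alpha_M=\Dim C_M=\Dim P_M+\Dim\Omega_M$ yields
$$\alpha_M=\Dim M+2\,\Dim\Omega_M ,$$
and, likewise, $\alpha_N=\Dim N+2\,\Dim\Omega_N$. Note that these identities hold uniformly, including when $M$ (or $N$) is projective: in that case $\Omega_M=0$ and $\alpha_M=\Dim M$, so nothing special is needed.

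With these identities in hand, I would argue by contradiction. Suppose $M\not\cong N$ are indecomposable $kQ$-modules with $\alpha_M=\alpha_N$. Subtracting the two displayed expressions gives $\Dim M-\Dim N=2(\Dim\Omega_N-\Dim\Omega_M)\in 2\mathbb{Z}^{n}$, and hence also $\Dim M+\Dim N\in 2\mathbb{Z}^{n}$, i.e. $2\mid \Dim M+\Dim N$. This directly contradicts the preceding lemma, which asserts that $2\nmid\Dim M+\Dim N$ for non-isomorphic indecomposable $kQ$-modules. Therefore $\alpha_M\neq\alpha_N$, as claimed.

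I do not expect any genuine obstacle here: the substantive work is entirely carried by the preceding lemma (whose proof reduces via the Weyl group to the case of a simple module and then inspects the list of dimension vectors of indecomposables over a Dynkin quiver). The only point deserving a line of care in the write-up is verifying that the identity $\alpha_M=\Dim M+2\,\Dim\Omega_M$ is valid in all cases, which is immediate from the exact sequence $0\to\Omega_M\to P_M\to M\to 0$ and, for projective $M$, from $\Omega_M=0$.
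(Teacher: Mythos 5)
Your proposal is correct and follows essentially the same route as the paper: both derive $\alpha_M=\Dim M+2\,\Dim\Omega_M$ from the minimal projective resolution and then deduce from $\alpha_M=\alpha_N$ that $\Dim M+\Dim N$ is even, contradicting the preceding lemma. The only (immaterial) difference is that you subtract the two identities while the paper adds them.
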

\bp
Note that $\alpha_{M}=\Dim C_{M}=\Dim P_{M}+\Dim \Omega_{M}= \Dim M+ 2\Dim \Omega_{M}$, and $\alpha_{N}= \Dim N+ 2\Dim \Omega_{N}$.
If $\alpha_{M}=\alpha_{N}$, then $2\alpha_{M}=\Dim M+ \Dim N+ 2(\Dim \Omega_{M}+\Dim \Omega_{N})$. This is impossible.
\ep
Recall Remark \ref{root rem} and consider the minimal projective resolution of $M$ as in (\ref{root}).
Denote $\Dim C_{P_{i}}=\Dim P_{i}$ by $\alpha_{i}$. Then $\alpha_{M}=\Sigma^{m}_{i=1}\alpha_{t_i}+\Sigma^{n}_{j=1}\alpha_{r_j}$.
It is not harmful to consider $\alpha_{i}$, $1\leq i\leq n$ as the natural basis of $\mathbb{Z}^{n}$.

Let us try to define $t_{i}\in \Der \tilde{\mathfrak{n}}^{+}$, $1\leq i \leq n$ by:
$$t_{i}(C_{P_{j}})=\delta_{ij}C_{P_{j}}.$$
Since different $C_{M}$'s have different dimension vectors $\alpha_{M}$, $t_{i}$ is well defined on $\tilde{\mathfrak{n}}^{+}$.
Set $T=\oplus^{n}_{i=1}\mathbb{C}t_{i}$, thus $T$ is a maximal torus on $\tilde{\mathfrak{n}}^{+}$ and
\begin{lemma}
$\tilde{\mathfrak{n}}^{+}$ is of maximal rank.
\end{lemma}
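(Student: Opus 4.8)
The plan is to show directly that the torus $T=\oplus_{i=1}^{n}\mathbb{C}t_{i}$ just constructed is a maximal torus whose dimension equals $\dim \tilde{\mathfrak{n}}^{+}/(\tilde{\mathfrak{n}}^{+})^{2}$, since "maximal rank" for a nilpotent Lie algebra means precisely that $\dim T=\dim \mathfrak{N}/\mathfrak{N}^{2}$ for a maximal torus $T$ (equivalently, the rank invariant from Santharoubane's lemma equals the number of generators). First I would observe that the $t_{i}$ are well defined derivations: this was already noted in the excerpt, because the correspondence $M\mapsto \alpha_{M}=\Dim C_{M}$ is injective on indecomposables (the preceding corollary), so prescribing $t_{i}$ on the basis $\{C_{M}\}$ via $t_{i}(C_{M})=\langle \alpha_{M},\alpha_{i}^{\vee}\rangle C_{M}$ (i.e.\ multiplication by the $i$-th coordinate of $\alpha_{M}$) is consistent, and the Leibniz rule holds because the Hall bracket is homogeneous with respect to the $\mathbb{Z}^{n}$-grading by dimension vectors $\alpha_{M}$ — if $[C_{M},C_{N}]=\sum c_{L}C_{L}$ then every $L$ occurring satisfies $\alpha_{L}=\alpha_{M}+\alpha_{N}$, which is exactly additivity needed for $t_{i}([x,y])=[t_{i}x,y]+[x,t_{i}y]$.

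Next I would check that $T$ is a torus: the $t_{i}$ are simultaneously diagonal in the basis $\{C_{M}\}$, hence semisimple and pairwise commuting, so $T$ is a commutative subalgebra of $\Der\tilde{\mathfrak{n}}^{+}$ consisting of semisimple endomorphisms. Then I would argue $T$ is maximal: by Mostow's theorem all maximal tori are conjugate under $\Aut\tilde{\mathfrak{n}}^{+}$, and by Santharoubane's lemma the rank $d$ of $\tilde{\mathfrak{n}}^{+}$ — the common dimension of all maximal tori — satisfies $d\le n$, since $n=\dim\tilde{\mathfrak{n}}^{+}/(\tilde{\mathfrak{n}}^{+})^{2}$ (we know from Claim~\ref{generators} and Proposition~\ref{derived} that $\{C_{P_{i}}\}$ is a minimal system of generators, hence a basis of $\tilde{\mathfrak{n}}^{+}/(\tilde{\mathfrak{n}}^{+})^{2}$). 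So it suffices to exhibit a torus of dimension exactly $n$, and our $T$ has $\dim T=n$ because the $t_{i}$ are linearly independent: their eigenvalues on $C_{P_{j}}$ are $t_{i}(C_{P_{j}})=\delta_{ij}C_{P_{j}}$, so a relation $\sum \lambda_{i}t_{i}=0$ forces all $\lambda_{i}=0$. Therefore $T$ is maximal of dimension $n=\dim\tilde{\mathfrak{n}}^{+}/(\tilde{\mathfrak{n}}^{+})^{2}$, which is the definition of maximal rank.

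The only genuinely non-formal point, and the one I would spell out carefully, is the homogeneity of the Hall bracket with respect to the dimension-vector grading: one must verify that whenever $F_{C_{M}C_{N}}^{C_{L}}(1)\ne 0$ one has $\alpha_{L}=\alpha_{M}+\alpha_{N}$ in $\mathbb{Z}^{n}$, so that $t_{i}$ respects the structure constants. This follows from the fact that a short exact sequence $0\to C_{N}\to C_{L}\to C_{M}\to 0$ in $C^{1}(\mathscr{P})$ is a conflation in an exact category, hence additive on the image in the Grothendieck group of $\mathscr{P}$, and $\alpha_{M}=\Dim P_{M}+\Dim\Omega_{M}$ is by definition that image; the same reasoning covers the mixed brackets involving the $K_{P_{i}}$ should they arise, though in $\tilde{\mathfrak{n}}^{+}$ only the $C_{M}$ occur. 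With that in hand, well-definedness of the $t_{i}$ and the Leibniz rule are immediate, and the rest is the dimension count above. I do not expect any real obstacle beyond bookkeeping; the substantive input — injectivity of $M\mapsto\alpha_{M}$ and the generator statement — has already been established.
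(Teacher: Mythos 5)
Your proof is correct and takes essentially the same route as the paper, which likewise defines $t_{i}$ by $t_{i}(C_{P_{j}})=\delta_{ij}C_{P_{j}}$ (well defined thanks to the dimension-vector grading and the corollary that $\alpha_{M}\neq\alpha_{N}$), sets $T=\oplus_{i=1}^{n}\mathbb{C}t_{i}$, and concludes maximal rank from $\dim T=n=\dim\tilde{\mathfrak{n}}^{+}/(\tilde{\mathfrak{n}}^{+})^{2}$, using that $\{C_{P_{i}}\}$ is a minimal system of generators and Santharoubane's bound $d\leq n$. The points you spell out (homogeneity of the Hall bracket with respect to $\alpha_{M}$, semisimplicity and commutativity of the $t_{i}$, the dimension count) are exactly the details the paper leaves implicit, so there is nothing to object to.
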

Let $\alpha'_{i}\in T^{*}$ be dual to $t_{i}$, then $t(C_{P_{i}})=(\alpha'_{i})(t)C_{P_{i}}, \ \forall t\in T.$
We identify $\alpha'_{i}$ with $\alpha_{i}$, then $\alpha_{M}$ can be written as a positive integer combination of $\alpha'_{i}$.
Furthermore, $$t(C_{M})=(\alpha_{M})(t)C_{M}, \ \forall t\in T.$$
It is reasonable to view $\{\alpha_{M}|M\in \mathrm{Ind\ } kQ\}$ as the root system of $\tilde{\mathfrak{n}}^{+}$ with each root space being of dimension one. Actually, $\tilde{\mathfrak{n}}^{+}$ can be treated uniformly in the following way (cf. \cite{Sa,Sa2}).

Since for any $i\neq j$ in the quiver $Q$ the corresponding $C_{P_{i}}$ and $C_{P_{j}}$ satisfy Serre relations,
one can associate a generalized Cartan matrix (denoted G.C.M.) $C_{E}=(c_{ij})$ as follows:

(1) $c_{ii}=2$ for each i;

(2) $c_{ij}=-1$ if $|a_{ij}|=1$;

(3) $c_{ij}=0$ if $a_{ij}=0$.

If $Q$ is bipartite, $C_{E}$ is the same as the Cartan matrix $C$ associated to the underlying graph $\Gamma$ of $Q$. Otherwise,
$C_{E}$ is of affine or wild type because it possesses a submatrix of type $\widetilde{\mathbb{A}_{3}}$ due to the existence of a path of length $2$.

We will call Kac--Moody Lie algebra associated to the G.C.M. $C_{E}$ the Lie algebra $L(C_{E})$ generated by a set
$\{f_{1},\cdots,f_{n},h_{1},\cdots,h_{n},e_{1},\cdots,e_{n}\}$ satisfying relations:
$$\forall i,j=1,\cdots,n, [h_{i},h_{j}]=0;\ [e_{i},f_{j}]=\delta_{ij}h_{i};$$
$$[h_{i},e_{j}]=c_{ij}e_{j},\ [h_{i},f_{j}]=-c_{ij}f_{j};$$
$$\forall i,j=1,\cdots,n, i\neq j, \ (\ad e_{i})^{-c_{ij}+1}e_{j}=0,\ (\ad f_{i})^{-c_{ij}+1}f_{j}=0.$$

Let $\{\alpha_{1},\cdots,\alpha_{n}\}$ be the canonical (natural) basis of $\mathbb{Z}^{n}$. For $\alpha=\Sigma d_{i}\alpha_{i}\in \mathbb{N}^{n}\backslash\{0\}$, denote by $L_{\alpha}$ (resp. $L_{-\alpha}$) the subspace of $L(C_{E})$ generated by the elements
$[e_{i_{1}},\cdots,e_{i_{t}}]$ (resp. $[f_{i_{1}},\cdots,f_{i_{t}}]$) where $e_{i}$ (resp. $f_{i}$) appears $d_{i}$ times.
If $\alpha=\Sigma d_{i}\alpha_{i}\in \mathbb{Z}^{n}$ is such that all the $d_{i}$'s are not of the same sign, let $L_{\alpha}=0$.
Denote $$L_{0}=H=\mathbb{C}h_{1}\oplus\cdots\oplus\mathbb{C}h_{n}.$$
One calls the \emph{root system} of $L(C_{E})$ the set
$$\Delta=\{\alpha\in\mathbb{Z}^{n}| \alpha\neq 0, L_{\alpha}\neq 0\}.$$
The Lie algebra $L(C_{E})$  is graded by
$$\Delta\cup \{0\}:  L(C_{E})=\oplus_{\alpha\in \Delta\cup \{0\}}L_{\alpha}, \ [L_{\alpha},L_{\beta}]\subset L_{\alpha+\beta}, \ \forall \alpha,\beta\in \Delta\cup \{0\}.$$
The set $\Delta_{+}=\{\alpha\in\mathbb{N}^{n}| \alpha\neq 0, L_{\alpha}\neq 0\}$ is called the \emph{positive root system} of $L(C_{E})$. It is
well known that $\Delta=\Delta_{-}\cup \{0\}\cup \Delta_{+}$, where $\Delta_{-}=-\Delta_{+}$. Furthermore, $L(C_{E})$ has a triangular decomposition
$$L(C_{E})=L_{-}(C_{E})\oplus H\oplus L_{+}(C_{E}),$$
where $L_{+}(C_{E})=\oplus_{\alpha\in \Delta_{+}}L_{\alpha}$ is called the \emph{positive part} and $L_{-}(C_{E})=\oplus_{\alpha\in \Delta_{-}}L_{\alpha}$  the \emph{negative part}.

Thus $L_{+}(C_{E})$ is a Lie algebra generated by $\{e_{1},\cdots,e_{n}\}$ satisfying only the Serre relations
$$(\ad e_{i})^{-c_{ij}+1}e_{j}=0,\ \forall i\neq j.$$
If $\alpha=\Sigma d_{i}\alpha_{i}\in \Delta_{+}$, let $|\alpha|=\Sigma d_{i}$ and call $|\alpha|$ the \emph{height} of $\alpha$.
Then we have $$C^{m}L_{+}(C_{E})=\oplus_{|\alpha|\geq m}L_{\alpha},$$
where $C^{m}L_{+}(C_{E})$ is the $m$-th term of the descending central series of $L_{+}(C_{E})$.

Assume that $\tilde{\mathfrak{n}}^{+}$ is of nilpotency $l$, which means $C^{l}\tilde{\mathfrak{n}}^{+}\neq 0$ and $C^{l+1}\tilde{\mathfrak{n}}^{+}=0$.

Let $\mathfrak{m}=\mathfrak{m}_{l}(C_{E})=L_{+}(C_{E})/C^{l+1}L_{+}(C_{E})$ and $\mu: L_{+}(C_{E})\rightarrow \mathfrak{m}_{l}(C_{E}),\ x\mapsto\overline{x}$ the canonical map. Then the Lie algebra $\mathfrak{m}_{l}(C_{E})$ is of nilpotency $l$ and has a minimal system of generators $\{\overline{e_{1}},\cdots,\overline{e_{n}}\}$. Let $t'_{i}\in \Der \mathfrak{m}_{l}(C_{E})$ such that $t'_{i}\overline{e_{j}}=\delta_{ij}\overline{e_{j}}$, then $T'=\oplus^{n}_{i=1}\mathbb{C}t'_{i}$ is a maximal torus on $\mathfrak{m}_{l}(C_{E})$
and the nilpotent Lie algebra $\mathfrak{m}_{l}(C_{E})$ is of maximal rank; furthermore $\{\overline{e_{1}},\cdots,\overline{e_{n}}\}$ is a $T'$-msg.
Let $(t^{*}_{1},\cdots,t^{*}_{n})$ be the dual basis of $(t'_{1},\cdots,t'_{n})$. One can identify $t^{*}_{i}$ and $\alpha_{i}$, then the root space
decomposition relative to $T'$ is identical to the decomposition
$$\mathfrak{m}_{l}(C_{E})=\oplus_{\alpha\in \Delta_{+},|\alpha|\leq l}\overline{L_{\alpha}},$$
where the restriction of $\mu$ to $L_{\alpha}$ such that $|\alpha|\leq l$ is  an isomorphism from  $L_{\alpha}$ to $\overline{L_{\alpha}}$.
Thus the root system of $\mathfrak{m}$ is $R(T')=\{\alpha\in \Delta_{+}; |\alpha|\leq l\}$.

We remark that $\mathfrak{m}$ can be constructed as a homomorphism image of the free Lie algebra generated by $\{e_{1},\cdots,e_{n}\}$  and
$\dim \overline{L_{\alpha}}$ may be greater than one if $|\alpha|\geq 3$.

Let $\mathfrak{J}=\mathfrak{J}_{l}(C_{E})=\{\mathfrak{a}: \text{homogeneous ideal of }\ \mathfrak{m}; C^{l}\mathfrak{m}\nsubseteq \mathfrak{a},
(\ad \overline{e_{i}})^{-c_{ij}}\overline{e_{j}}\not\in \mathfrak{a}\}$.
\begin{proposition}\rm{(\cite{Sa})}\label{nil-ideal}
There exists $\mathfrak{a}\in\mathfrak{J}$ such that $\tilde{\mathfrak{n}}^{+}=\mathfrak{m}/\mathfrak{a}$.
\end{proposition}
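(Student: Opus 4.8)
The plan is to exhibit $\mathfrak{a}$ concretely as the kernel of a graded epimorphism $\mathfrak{m}\twoheadrightarrow\tilde{\mathfrak{n}}^{+}$ and then to verify, one by one, the three conditions defining $\mathfrak{J}$. By Theorem \ref{main result} the elements $C_{P_1},\dots,C_{P_n}$ of $\tilde{\mathfrak{n}}^{+}$ satisfy the Serre relations $(\ad C_{P_i})^{-c_{ij}+1}(C_{P_j})=0$ for all $i\neq j$: when $|a_{ij}|=1$ this is relation (a), since then $-c_{ij}+1=2$, and when $a_{ij}=0$ it is relation (c), since then $-c_{ij}+1=1$. Hence $e_i\mapsto C_{P_i}$ extends to a Lie algebra homomorphism $L_{+}(C_{E})\to\tilde{\mathfrak{n}}^{+}$, which is surjective by Claim \ref{generators}. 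As $C^{l+1}\tilde{\mathfrak{n}}^{+}=0$, this map annihilates $C^{l+1}L_{+}(C_{E})$ and therefore descends to an epimorphism $\varphi:\mathfrak{m}=L_{+}(C_{E})/C^{l+1}L_{+}(C_{E})\twoheadrightarrow\tilde{\mathfrak{n}}^{+}$ with $\varphi(\overline{e_i})=C_{P_i}$. I would then set $\mathfrak{a}=\Ker\varphi$, so that $\tilde{\mathfrak{n}}^{+}\cong\mathfrak{m}/\mathfrak{a}$; the extra relations (b) of Theorem \ref{main result} are exactly what is recorded by $\mathfrak{a}$.

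It then remains to check the three defining properties of $\mathfrak{J}$. For homogeneity, recall the corollary established above, that non-isomorphic indecomposable $kQ$-modules have pairwise distinct dimension vectors $\alpha_{M}=\Dim C_{M}$; consequently $\tilde{\mathfrak{n}}^{+}=\bigoplus_{M}\mathbb{C}C_{M}$ is graded by $\mathbb{N}^{n}=\bigoplus_i\mathbb{N}\alpha_i$ with $C_{M}$ placed in degree $\alpha_{M}$, the bracket being graded because short exact sequences in $C^{1}(\mathscr{P})$ are additive on the dimension vectors $\Dim C_{(-)}$ (cf. Remark \ref{root rem}). Since $\mathfrak{m}$ carries the grading with $\overline{e_i}$ in degree $\alpha_i$ and $\varphi(\overline{e_i})=C_{P_i}$ lies in degree $\alpha_i$ as well, $\varphi$ is graded, so $\mathfrak{a}=\Ker\varphi$ is a homogeneous ideal. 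For $C^{l}\mathfrak{m}\nsubseteq\mathfrak{a}$, surjectivity of $\varphi$ yields $\varphi(C^{l}\mathfrak{m})=C^{l}\tilde{\mathfrak{n}}^{+}\neq 0$, because $\tilde{\mathfrak{n}}^{+}$ has nilpotency exactly $l$; hence $C^{l}\mathfrak{m}$ cannot be contained in $\Ker\varphi$. For $(\ad\overline{e_i})^{-c_{ij}}(\overline{e_j})\notin\mathfrak{a}$, it suffices that $(\ad C_{P_i})^{-c_{ij}}(C_{P_j})\neq 0$ in $\tilde{\mathfrak{n}}^{+}$: if $a_{ij}=0$ then $-c_{ij}=0$ and the element is the basis vector $C_{P_j}$; if $|a_{ij}|=1$ then $-c_{ij}=1$, and, taking $a_{ij}=1$ without loss of generality, the computation in the proof of Theorem \ref{main result}(a) gives $[C_{P_j},C_{P_i}]=C_{M}$ for the indecomposable module $M=\Coker(P_j\hookrightarrow P_i)$, whence $[C_{P_i},C_{P_j}]=-C_{M}\neq 0$. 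This shows $\mathfrak{a}\in\mathfrak{J}$ and finishes the proof.

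The only step that demands genuine care is the homogeneity of $\varphi$: one must know that the Hall--Lie bracket on $\tilde{\mathfrak{n}}^{+}$ is graded by the roots $\alpha_{M}$ rather than merely filtered by heights, and this rests precisely on the corollary that the $\alpha_{M}$ are pairwise distinct; everything else is a formal consequence of Theorem \ref{main result}, Claim \ref{generators}, and the definition of the nilpotency $l$. Alternatively — and perhaps more cleanly — one can avoid even stating the map explicitly by checking that $\tilde{\mathfrak{n}}^{+}$ satisfies the hypotheses of Santharoubane's structure theorem in \cite{Sa}: it is a finite-dimensional complex nilpotent Lie algebra of maximal rank, admits a minimal system of $n$ root vectors $C_{P_i}$ that obey the Serre relations of the generalized Cartan matrix $C_{E}$, and has nilpotency $l$; the proposition is then the verbatim output of that theorem.
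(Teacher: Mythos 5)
Your proposal is correct. Note, however, that the paper offers no argument at all for Proposition \ref{nil-ideal}: it is stated as a direct citation of Santharoubane's classification theorem for nilpotent Lie algebras of maximal rank \cite{Sa}, after the preceding discussion has set up exactly its hypotheses (maximal torus $T$, the $T$-msg $(C_{P_1},\dots,C_{P_n})$, the G.C.M. $C_E$, and the truncation $\mathfrak{m}=\mathfrak{m}_l(C_E)$). What you do instead is reprove the specialization from scratch: the Serre relations from Theorem \ref{main result}(a),(c) together with the paper's assertion that $L_+(C_E)$ is presented by the Serre relations give a homomorphism $L_+(C_E)\to\tilde{\mathfrak{n}}^{+}$, surjective by Claim \ref{generators}, which kills $C^{l+1}L_+(C_E)$ and hence descends to $\varphi:\mathfrak{m}\twoheadrightarrow\tilde{\mathfrak{n}}^{+}$; then you check the three conditions putting $\mathfrak{a}=\Ker\varphi$ into $\mathfrak{J}$. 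Your verifications are sound: homogeneity rests on the grading of $\tilde{\mathfrak{n}}^{+}$ by the $\alpha_M$ (pairwise distinct by the corollary, and additive on the short exact sequences computing the bracket, since $P_L\oplus\Omega_L\cong P_M\oplus\Omega_M\oplus P_N\oplus\Omega_N$), which is the same grading the paper uses to define the torus $T$; $C^l\mathfrak{m}\nsubseteq\Ker\varphi$ follows from $\varphi(C^l\mathfrak{m})=C^l\tilde{\mathfrak{n}}^{+}\neq 0$; and $(\ad\overline{e_i})^{-c_{ij}}\overline{e_j}\notin\Ker\varphi$ reduces, when $|a_{ij}|=1$, to $[C_{P_i},C_{P_j}]=\mp C_{\Coker(P_j\hookrightarrow P_i)}\neq 0$, exactly the computation in the proof of Theorem \ref{main result}(a). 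This last check is precisely what is needed to say that $\tilde{\mathfrak{n}}^{+}$ is of type $C_E$ in Santharoubane's sense, so your explicit route buys a self-contained proof (modulo the Serre presentation of $L_+(C_E)$, i.e. Gabber--Kac, which the paper also takes for granted), whereas the paper's route is the one you sketch in your final sentence: quote \cite{Sa} verbatim once the hypotheses are in place.
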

Since $\mathfrak{a}$ is homogeneous, $\mathfrak{a}=\oplus_{\alpha\in\Delta_{+},|\alpha|\leq l}\mathfrak{a}_{\alpha}$, where $\mathfrak{a}_{\alpha}=\mathfrak{a}\cap\overline{L_{\alpha}}$.
Let $S=\{\alpha|0\neq \mathfrak{a}_{\alpha}=\overline{L_{\alpha}}\}$, then $\alpha\in S$ is called a root of  $\mathfrak{m}$ \emph{killed by} $\mathfrak{a}$. Let $\pi: \mathfrak{m}\rightarrow \tilde{\mathfrak{n}}^{+}$ be the canonical map by sending $\overline{e_{i}}$ to $C_{P_{i}}$.
For any $t\in T'$ one can define $\tilde{\pi}(t)\in \Der \tilde{\mathfrak{n}}^{+}$ such that $\pi\circ t=\tilde{\pi}(t)\circ \pi$.
Then $\tilde{\pi}(T')$ is a maximal torus on $\tilde{\mathfrak{n}}^{+}$. By $\pi\circ t'_{i}=t_{i}\circ\pi$,  we can identify $\tilde{\pi}(T')$
and $T$. Thus $\alpha'_{i}\in T^{*}$ is identified with $\alpha_{i}$. As a conclusion,
\begin{theorem}
The nilpotent Lie algebra $\tilde{\mathfrak{n}}^{+}$ is of maximal rank with $T$ as a maximal torus. $(C_{P_{1}},\cdots,C_{P_{n}})$ is
a $T$-msg with roots $(\alpha_{1},\cdots,\alpha_{n})$. The root space decomposition relative to $T$ is identical to the decomposition
$$\tilde{\mathfrak{n}}^{+}=\oplus_{\alpha\in R(T)}\tilde{\mathfrak{n}}^{+}_{\alpha},$$
where $R(T)=\{\alpha\in \Delta_{+}; |\alpha|\leq l, \alpha\not\in S\}=\{\alpha_{M}|M\in \mathrm{Ind\ } kQ\}$ is the associated root system and $\dim \tilde{\mathfrak{n}}^{+}_{\alpha}=1$ for all $\alpha\in R(T)$. Moreover, for any $\alpha\in R(T)-\{\alpha_{1},\cdots,\alpha_{n}\}$, there exists $i\in \{1,\cdots,n\}$ such that $\alpha-\alpha_{i}\in R(T)$.
\end{theorem}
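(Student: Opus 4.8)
The plan is to assemble the statement from the structural facts already established in this section, the only genuinely new ingredient being the last sentence. First I would recall that $T=\oplus_{i=1}^{n}\mathbb{C}t_{i}$ has already been shown to be a maximal torus on $\tilde{\mathfrak{n}}^{+}$ and that $\tilde{\mathfrak{n}}^{+}$ has maximal rank; since $\{C_{P_{i}}\}_{i=1}^{n}$ is a minimal system of generators consisting of root vectors, with $t_{j}C_{P_{i}}=\delta_{ij}C_{P_{i}}$, it is a $T$-msg whose roots are $\alpha_{1},\dots,\alpha_{n}$ under the identification $\alpha'_{i}=\alpha_{i}$. Next, $\{C_{M}~|~M\in\mathrm{Ind}\ kQ\}$ is a $\mathbb{C}$-basis of $\tilde{\mathfrak{n}}^{+}$ with $t(C_{M})=(\alpha_{M})(t)C_{M}$, and by the Corollary above (stating $\alpha_{M}\neq\alpha_{N}$ for non-isomorphic indecomposable $M,N$) the assignment $M\mapsto\alpha_{M}$ is injective on indecomposables; hence $\tilde{\mathfrak{n}}^{+}_{\alpha}\neq 0$ exactly when $\alpha=\alpha_{M}$ for a necessarily unique indecomposable $M$. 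This gives at once $R(T)=\{\alpha_{M}~|~M\in\mathrm{Ind}\ kQ\}$, the equality $\dim\tilde{\mathfrak{n}}^{+}_{\alpha}=1$ for every $\alpha\in R(T)$, and the fact that the $T$-root grading coincides with the grading of $\tilde{\mathfrak{n}}^{+}$ by dimension vectors.

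To match this with the Kac--Moody description, I would use Proposition~\ref{nil-ideal} to write $\tilde{\mathfrak{n}}^{+}=\mathfrak{m}/\mathfrak{a}$ with $\mathfrak{a}=\oplus_{\alpha}\mathfrak{a}_{\alpha}$ homogeneous, together with $\mathfrak{m}=\oplus_{\alpha\in\Delta_{+},\,|\alpha|\leq l}\overline{L_{\alpha}}$ and the fact that $\mu|_{L_{\alpha}}$ is an isomorphism onto $\overline{L_{\alpha}}$ when $|\alpha|\leq l$. Taking $\alpha$-components yields $\tilde{\mathfrak{n}}^{+}_{\alpha}\cong\overline{L_{\alpha}}/\mathfrak{a}_{\alpha}$, which is nonzero precisely when $\overline{L_{\alpha}}\neq 0$ and $\mathfrak{a}_{\alpha}\neq\overline{L_{\alpha}}$, i.e. when $\alpha\in\Delta_{+}$, $|\alpha|\leq l$ and $\alpha\notin S$. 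Combined with the first paragraph this gives $R(T)=\{\alpha\in\Delta_{+}~;~|\alpha|\leq l,\ \alpha\notin S\}=\{\alpha_{M}~|~M\in\mathrm{Ind}\ kQ\}$, as claimed.

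For the last assertion, fix $\alpha\in R(T)\setminus\{\alpha_{1},\dots,\alpha_{n}\}$ and write $\alpha=\alpha_{M}$; since $M\mapsto\alpha_{M}$ is injective and $\alpha_{P_{i}}=\alpha_{i}$, the module $M$ is non-projective, so $\alpha$ is not a simple root and has height $\geq 2$. By Claim~\ref{generators} and the standard fact that a Lie algebra generated by a set is spanned by left-normed brackets in the generators, $C_{M}$ is a linear combination of left-normed brackets $[C_{P_{i_{1}}},\dots,C_{P_{i_{k}}}]$; projecting onto the one-dimensional space $\tilde{\mathfrak{n}}^{+}_{\alpha}$, only those with $\alpha_{i_{1}}+\dots+\alpha_{i_{k}}=\alpha$ survive, and all of these have $k\geq 2$, hence, being of the form $[y,C_{P_{i_{k}}}]=-[C_{P_{i_{k}}},y]$, lie in $\sum_{j}[C_{P_{j}},\tilde{\mathfrak{n}}^{+}]$. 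Thus $C_{M}=\sum_{j}[C_{P_{j}},v_{j}]$ and, after projecting each $v_{j}$ to its $(\alpha-\alpha_{j})$-component, we may assume $v_{j}\in\tilde{\mathfrak{n}}^{+}_{\alpha-\alpha_{j}}$. Since $C_{M}\neq 0$, some summand $[C_{P_{i}},v_{i}]$ is nonzero, so $v_{i}\neq 0$ and therefore $\tilde{\mathfrak{n}}^{+}_{\alpha-\alpha_{i}}\neq 0$, that is $\alpha-\alpha_{i}\in R(T)$.

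The individual steps are routine; the point requiring the most care is the last paragraph, where one must check that passing to root components does not annihilate the bracket witnessing $\alpha-\alpha_{i}\in R(T)$ — this is exactly where one-dimensionality of the root spaces (equivalently the injectivity of $M\mapsto\alpha_{M}$) is used — together with the bookkeeping needed to reconcile the module-theoretic description $\{\alpha_{M}\}$ of $R(T)$ with the presentation $\mathfrak{m}/\mathfrak{a}$.
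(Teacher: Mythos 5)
Your proposal is correct and follows essentially the same route as the paper, which states this theorem as a conclusion assembled from exactly the facts you invoke: the maximal torus $T$, the injectivity of $M\mapsto\alpha_{M}$ on indecomposables (so each root space is spanned by a single $C_{M}$), and the homogeneous presentation $\tilde{\mathfrak{n}}^{+}=\mathfrak{m}/\mathfrak{a}$ identifying $R(T)$ with $\{\alpha\in\Delta_{+};\ |\alpha|\leq l,\ \alpha\notin S\}$. Your third paragraph merely makes explicit, via generation by the $C_{P_{i}}$ and the root grading, the final clause that the paper leaves implicit, and that argument is valid.
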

\begin{example}
If $Q$ is bipartite, then $\tilde{\mathfrak{n}}^{+}=L_{+}(C_{E})$, where $C_{E}=C$ is the Cartan matrix associated to the underlying graph $\Gamma$ of $Q$. The root system is just $\Delta_{+}$.
\end{example}
\begin{example}
If $Q=\mathbb{A}_{n}^{\rightarrow}$, $\tilde{\mathfrak{n}}^{+}=\mathfrak{m}_{2}(C_{E})$. The root system is
$$\{\alpha\in \Delta_{+};|\alpha|\leq 2\}=\{\alpha_{1},\cdots,\alpha_{n}\}\cup\{\alpha_{i}+\alpha_{j}|j<i\}.$$
\end{example}
\begin{example}
If $Q$ is of type $\mathbb{A}$, then without loss of generality, we assume $Q$ is as follows:
$$\xymatrix@=0.8pc{1\ar[r]& \cdot\ar@{.>}[r]& \cdot\ar[r] & n_{1}+1 & \cdot\ar[l]& \cdot\ar@{.>}[l]& \sum^{2}_{i=1}n_{i}+1\ar[l]\ar@{.}[rr]& &\cdot\ar[r]& \cdot\ar@{.>}[r]& \cdot\ar[r]&\sum^{t}_{i=1}n_{i}+1.  \\}$$
Here $Q$ has $\sum^{t}_{i=1}n_{i}+1$ vertices and $t$ paths of maximal length. We call a vertex of $Q$ a \emph{knot} if it is the source or the target of exactly two arrows. Thus a knot must be a source or a sink.
Note that $t$ is an odd number and $Q$ has $t-1$ knots. The total number of sinks and sources is $t+1$, which is also the maximal height of roots. The root system is as follows ($n_{0}=0$ by convention): the simple roots are $\alpha_{i}$, $1\leq i\leq \sum^{t}_{i=1}n_{i}+1$; the roots of height $2$ are $\alpha_{i}+\alpha_{j}$, $\sum^{s}_{r=0}n_{r}+1\leq i<j\leq \sum^{s+1}_{r=0}n_{r}+1$, $0\leq s\leq t-1$; the roots of height $m$ ($3\leq m\leq t+1$) are $$\alpha_{i}+\sum^{m+s-3}_{p=s} \alpha_{\sum^{p}_{r=0}n_{r}+1}+ \alpha_{j},$$ where $\sum^{s-1}_{r=0}n_{r}+1\leq i<\sum^{s}_{r=0}n_{r}+1\leq \sum^{m+s-3}_{r=0}n_{r}+1<j\leq \sum^{m+s-2}_{r=0}n_{r}+1$, $1\leq s\leq t+2-m$.
\end{example}
\begin{example}
If $Q$ is of type $\mathbb{D}_{n}$ and oriented as follows with $n\geq 5$:
$$\xymatrix{ & 1\ar[d]  \\
2 & 3\ar[l]& 4\ar[l]& \cdot\ar[l]& \cdot \ar@{.>}[l] & n.\ar[l]}$$
The root system is as follows: the simple roots are $\alpha_{i}$, $1\leq i\leq n$; the roots of height $2$ are $\alpha_{1}+\alpha_{2}$, $\alpha_{1}+\alpha_{3}$ and $\alpha_{i}+\alpha_{j}$, $2\leq i<j\leq n$; the roots of height $3$ are $\alpha_{1}+\alpha_{t}+\alpha_{m}$, $t=2, 3$,
$4\leq m\leq n$; the roots of height $4$ are $\alpha_{1}+\alpha_{2}+\alpha_{3}+\alpha_{m}$, $4\leq m\leq n$; the roots of height $5$ are
$\alpha_{1}+\alpha_{2}+\alpha_{3}+\alpha_{m}+\alpha_{t}$, $4\leq m<t\leq n$.
\end{example}
\begin{example}
If $Q$ is of type $\mathbb{E}_{6}$ and oriented as follows:
$$\xymatrix{ & & 1\ar[d]  \\
2 & 3\ar[l]& 4\ar[l]& 5 \ar[l]& 6. \ar[l]}$$
The root system is as follows: the simple roots are $\alpha_{i}$, $1\leq i\leq 6$; the roots of height $2$ are $\alpha_{1}+\alpha_{t}$, $t=2,3,4$ and $\alpha_{i}+\alpha_{j}$, $2\leq i<j\leq 6$;  the roots of height $3$ are $\alpha_{1}+\alpha_{m}+\alpha_{t}$, $m=2,3,4$, $t=5,6$;
the roots of height $4$ are $\alpha_{1}+\alpha_{i}+\alpha_{j}+\alpha_{t}$, $2\leq i<j\leq 4$, $t=5,6$; the roots of height $5$ are
$\alpha_{1}+\alpha_{i}+\alpha_{j}+\alpha_{5}+\alpha_{6}$, $2\leq i<j\leq 4$; the remaining roots are $\Sigma^{6}_{i=1}\alpha_{i}$ of height $6$ and
$2\alpha_{1}+\Sigma^{6}_{i=2}\alpha_{i}$ of height $7$.
\end{example}

\subsection{Comparing two kinds of Hall Lie algebras}
Let $\tilde{\mathfrak{n}}^{+}$ be the Hall Lie algebra of the Dynkin quiver $Q$ as above. Let $Q^{p}$ be the \textit{path quiver} of $Q$ defined as follows: $Q^{p}$ has the same vertex set as $Q$, while each arrow $i\rightarrow j$ of $Q^{p}$ corresponds to exactly a path from $i$ to $j$ in $Q$.

Thus $Q^{p}$ is an acyclic quiver. Obviously $Q^{p}=Q$ if and only if $Q$ is bipartite. We define the classical Hall Lie algebra of $kQ^{p}$-modules as in \cite{R91a} and \cite{PX2000}. Let $S^{p}_{1},\cdots, S^{p}_{n}$ be all non-isomorphic simple $kQ^{p}$-modules. Let $\mathfrak{LC}(Q^{p})$ be the (generic) Hall composition Lie algebra generated by $u_{S^{p}_{1}},\cdots, u_{S^{p}_{n}}$, with the Lie bracket induced by the degenerate Hall multiplication. It is well known that $\mathfrak{LC}(Q^{p})$ gives a realization of the positive part of the corresponding Kac--Moody Lie algebra. Precisely,
\begin{theorem}\rm{(Ringel--Green, cf. \cite{PX2000})}
Let $E$ and $Q^{p}$ be respectively the path matrix and path quiver of the Dynkin quiver $Q$, then we have the isomorphism of complex Lie algebras:
$L_{+}(C_{E})\cong \mathfrak{LC}(Q^{p})$, $e_{i}\mapsto u_{S^{p}_{i}}$.
\end{theorem}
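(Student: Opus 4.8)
The plan is to deduce the statement from the classical Ringel--Green realization of Kac--Moody Lie algebras by Hall algebras, applied to the acyclic quiver $Q^{p}$. First I would record the purely combinatorial fact that $C_{E}$ is the generalized Cartan matrix of $Q^{p}$: since the underlying graph $\Gamma$ of the Dynkin quiver $Q$ is a tree, there is at most one walk between any two vertices, so $Q^{p}$ has at most one arrow between $i$ and $j$, and there is such an arrow precisely when there is a path between $i$ and $j$ in $Q$, i.e. precisely when $|a_{ij}|=1$. Hence the GCM of the loopless quiver $Q^{p}$ has off-diagonal entry $-1$ when $|a_{ij}|=1$ and $0$ when $a_{ij}=0$, which is exactly $C_{E}$. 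So $Q^{p}$ is a symmetric acyclic quiver with GCM $C_{E}$, and the theorem amounts to the statement that the composition Hall Lie algebra of $kQ^{p}$ realizes the positive part of the Kac--Moody Lie algebra of $C_{E}$, with $u_{S^{p}_{i}}$ corresponding to the Chevalley generator $e_{i}$.

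Next I would construct the comparison map. Since $L_{+}(C_{E})$ is presented by generators $e_{1},\dots,e_{n}$ subject only to the Serre relations $(\ad e_{i})^{1-c_{ij}}(e_{j})=0$ for $i\neq j$, it suffices to check that the elements $u_{S^{p}_{i}}\in\mathfrak{LC}(Q^{p})$ satisfy the corresponding relations; this gives a Lie algebra homomorphism $\varphi\colon L_{+}(C_{E})\to\mathfrak{LC}(Q^{p})$, $e_{i}\mapsto u_{S^{p}_{i}}$, which is surjective since $\mathfrak{LC}(Q^{p})$ is generated by the $u_{S^{p}_{i}}$ by definition. When $c_{ij}=0$ one has $\Ext^{1}_{kQ^{p}}(S^{p}_{i},S^{p}_{j})=\Ext^{1}_{kQ^{p}}(S^{p}_{j},S^{p}_{i})=0$, so $[u_{S^{p}_{i}},u_{S^{p}_{j}}]=0$; when $c_{ij}=-1$, say with the arrow oriented $i\to j$ in $Q^{p}$, exactly one of the two extension spaces is one-dimensional and the other vanishes, so $[u_{S^{p}_{i}},u_{S^{p}_{j}}]=u_{M}$ for the unique indecomposable $kQ^{p}$-module $M$ of length two with composition factors $S^{p}_{i},S^{p}_{j}$, and then a Hall-number computation parallel to (indeed simpler than) the verification of relations (a) and (c) in Theorem~\ref{main result}, using Proposition~\ref{F-F} to discard the decomposable-middle-term contributions at $q=1$, gives $[u_{S^{p}_{i}},u_{M}]=0$, i.e. $(\ad u_{S^{p}_{i}})^{2}(u_{S^{p}_{j}})=0$. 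As every off-diagonal entry of $C_{E}$ is $0$ or $-1$, these are the only Serre relations, so $\varphi$ is well defined.

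The remaining point, which I expect to be the real obstacle while everything above is bookkeeping, is the injectivity of $\varphi$, equivalently the statement that $\mathfrak{LC}(Q^{p})$ is ``no larger'' than $L_{+}(C_{E})$. (Note that $kQ^{p}$ is in general of affine or wild representation type, so one cannot invoke existence of Hall polynomials for all of $\mod kQ^{p}$; the composition subalgebra is essential here.) One route is to lift to the generic picture: the twisted generic Ringel--Hall algebra of $kQ^{p}$ is a bialgebra by Green's theorem, its composition subalgebra generated by the $u_{S^{p}_{i}}$ is a sub-bialgebra isomorphic to the quantized enveloping algebra $U_{v}(\mathfrak{n}^{+})$ attached to $C_{E}$, and specializing $v\to 1$ and passing to primitive elements identifies $\mathfrak{LC}(Q^{p})$ with $L_{+}(C_{E})$ compatibly with generators. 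Alternatively one argues degree by degree in the $\mathbb{Z}^{n}$-grading by dimension vectors: by Kac's theorem the dimension over $\mathbb{C}$ of the degree-$\alpha$ component of $\mathfrak{LC}(Q^{p})$ equals the root multiplicity $\dim L_{\alpha}$, so the graded surjection $\varphi$ is forced to be an isomorphism in every degree. Combining either argument with the combinatorial identification of $C_{E}$ from the first paragraph yields the asserted isomorphism $L_{+}(C_{E})\cong\mathfrak{LC}(Q^{p})$ sending $e_{i}$ to $u_{S^{p}_{i}}$.
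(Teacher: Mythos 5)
This statement is quoted in the paper as the Ringel--Green theorem (cf.\ \cite{PX2000}); the paper supplies no proof of its own, so there is no internal argument to compare yours with --- what you are sketching is the literature proof. Your reduction is sound as far as it goes: since $Q$ is acyclic with underlying tree $\Gamma$, between any two vertices there is at most one directed path, so $Q^{p}$ has no loops and no multiple arrows and its generalized Cartan matrix is exactly $C_{E}$ (this is also how the paper defines $C_{E}$); because the paper takes $L_{+}(C_{E})$ to be the Lie algebra on $e_{1},\dots,e_{n}$ subject only to the Serre relations, the existence of the surjection $\varphi$ reduces to checking those relations on the $u_{S^{p}_{i}}$, and your $\mathbb{A}_{2}$-support computation does this (note you need both $(\ad u_{S^{p}_{i}})^{2}(u_{S^{p}_{j}})=0$ and $(\ad u_{S^{p}_{j}})^{2}(u_{S^{p}_{i}})=0$, but the same argument covers both, since no indecomposable supported on a single arrow has dimension vector $2\alpha_{i}+\alpha_{j}$ or $\alpha_{i}+2\alpha_{j}$). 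For injectivity, your first route --- Green's theorem, the Ringel--Green identification of the generic twisted composition algebra with $U_{v}(\mathfrak{n}^{+})$ attached to $C_{E}$, and specialization at $v=1$ together with passage to the degenerate composition Lie algebra --- is precisely the argument behind the cited theorem (and it correctly isolates why the composition subalgebra, not the full Hall algebra, must be used); it is the route to keep, with the understanding that it leans on those big theorems rather than reproving them.

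Your alternative ``degree by degree'' route, however, has a genuine gap. Kac's theorem concerns indecomposable representations: their dimension vectors are the positive roots, and the iso-classes of indecomposables index a spanning set of the \emph{full} Hall Lie algebra, not of the composition subalgebra $\mathfrak{LC}(Q^{p})$. Whenever $Q$ is not bipartite, $C_{E}$ is of affine or wild type, imaginary roots occur, and in those degrees the span of the indecomposables is strictly larger than the corresponding root space, while nothing elementary identifies the graded dimension of the subalgebra generated by the $u_{S^{p}_{i}}$ with the root multiplicity. The asserted equality $\dim\mathfrak{LC}(Q^{p})_{\alpha}=\dim L_{\alpha}$ is essentially the content of the Ringel--Green theorem itself, not a consequence of Kac's theorem, so as stated that alternative argument is circular and should be dropped in favour of the first route.
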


Assume that $\tilde{\mathfrak{n}}^{+}$ is of nilpotency $l$ and let $\mathfrak{m}'=\mathfrak{LC}(Q^{p})/C^{l+1}\mathfrak{LC}(Q^{p})$. Of course we
have an isomorphism of Lie algebras $\mathfrak{m}\cong \mathfrak{m}'$, $\overline{e_{i}}\mapsto \overline{u_{S^{p}_{i}}}$, where $\overline{u_{S^{p}_{i}}}$ is the image of $u_{S^{p}_{i}}$ under the natural epimorphism $\mathfrak{LC}(Q^{p})\twoheadrightarrow \mathfrak{LC}(Q^{p})/C^{l+1}\mathfrak{LC}(Q^{p})$.

Then by Proposition \ref{nil-ideal}, we relate the Hall Lie algebras of 1-cyclic perfect complexes to classical Hall Lie algebras as follows.
\begin{proposition}
There is an epimorphism of Lie algebras $\varphi: \mathfrak{m}'\twoheadrightarrow \tilde{\mathfrak{n}}^{+}$, $\overline{u_{S^{p}_{i}}}\mapsto C_{P_{i}}$ such that $\Ker \varphi$ is a homogeneous ideal of $\mathfrak{m}'$,  $C^{l}\mathfrak{m}'\nsubseteq \Ker \varphi$ and
$(\ad \overline{u_{S^{p}_{i}}})^{-c_{ij}}\overline{u_{S^{p}_{j}}}\not\in \Ker \varphi$.

\end{proposition}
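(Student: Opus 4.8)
The plan is to obtain $\varphi$ by transporting the quotient map of Proposition~\ref{nil-ideal} along the Ringel--Green isomorphism, so that the statement becomes a purely formal transport of structure. First I would record that the isomorphism $L_{+}(C_{E})\cong\mathfrak{LC}(Q^{p})$, $e_{i}\mapsto u_{S^{p}_{i}}$, is graded: both sides carry the $\mathbb{N}^{n}$-grading in which $e_{i}$ and $u_{S^{p}_{i}}$ sit in degree $\alpha_{i}$ (on the Hall side this is the grading of $\mathfrak{LC}(Q^{p})$ by dimension vectors of $kQ^{p}$-modules), and any Lie algebra isomorphism matching the degree-$\alpha_{i}$ generators automatically preserves all homogeneous components. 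Hence it carries $C^{l+1}L_{+}(C_{E})$ onto $C^{l+1}\mathfrak{LC}(Q^{p})$ and descends to a graded Lie algebra isomorphism $\iota\colon\mathfrak{m}'\xrightarrow{\;\sim\;}\mathfrak{m}$ with $\iota(\overline{u_{S^{p}_{i}}})=\overline{e_{i}}$.

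Next I would invoke Proposition~\ref{nil-ideal}: there is $\mathfrak{a}\in\mathfrak{J}$ with $\tilde{\mathfrak{n}}^{+}=\mathfrak{m}/\mathfrak{a}$, realised by the canonical map $\pi\colon\mathfrak{m}\twoheadrightarrow\tilde{\mathfrak{n}}^{+}$, $\overline{e_{i}}\mapsto C_{P_{i}}$, with $\Ker\pi=\mathfrak{a}$. Then simply set $\varphi=\pi\circ\iota$. As a composition of an isomorphism with an epimorphism of Lie algebras, $\varphi$ is an epimorphism, and $\varphi(\overline{u_{S^{p}_{i}}})=\pi(\overline{e_{i}})=C_{P_{i}}$, as required; moreover $\Ker\varphi=\iota^{-1}(\Ker\pi)=\iota^{-1}(\mathfrak{a})$.

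It then remains to transfer the three defining properties of $\mathfrak{J}$ across $\iota$. Since $\iota$ is a graded isomorphism and $\mathfrak{a}$ is a homogeneous ideal of $\mathfrak{m}$, the preimage $\iota^{-1}(\mathfrak{a})=\Ker\varphi$ is a homogeneous ideal of $\mathfrak{m}'$. A Lie algebra isomorphism takes the descending central series to the descending central series, so $\iota(C^{l}\mathfrak{m}')=C^{l}\mathfrak{m}$, and from $C^{l}\mathfrak{m}\nsubseteq\mathfrak{a}$ one gets $C^{l}\mathfrak{m}'\nsubseteq\iota^{-1}(\mathfrak{a})=\Ker\varphi$. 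Finally, since $\iota$ is a Lie homomorphism with $\iota(\overline{u_{S^{p}_{i}}})=\overline{e_{i}}$, one has $\iota\circ\ad\overline{u_{S^{p}_{i}}}=\ad\overline{e_{i}}\circ\iota$, hence $\iota\bigl((\ad\overline{u_{S^{p}_{i}}})^{-c_{ij}}\overline{u_{S^{p}_{j}}}\bigr)=(\ad\overline{e_{i}})^{-c_{ij}}\overline{e_{j}}\notin\mathfrak{a}$, so $(\ad\overline{u_{S^{p}_{i}}})^{-c_{ij}}\overline{u_{S^{p}_{j}}}\notin\Ker\varphi$. This finishes the argument. The only step that needs genuine care is the first: verifying that the Ringel--Green isomorphism is honestly graded (so that ``homogeneous'' survives the transport) and that the grading used on $\mathfrak{LC}(Q^{p})$ is the dimension-vector grading compatible with the one already fixed on $\tilde{\mathfrak{n}}^{+}$ via $\Dim C_{M}$; everything else is routine.
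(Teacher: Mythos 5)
Your proposal is correct and follows essentially the same route as the paper: the paper also obtains $\varphi$ by composing the canonical quotient $\pi:\mathfrak{m}\twoheadrightarrow\tilde{\mathfrak{n}}^{+}$ of Proposition \ref{nil-ideal} with the isomorphism $\mathfrak{m}\cong\mathfrak{m}'$ induced by the Ringel--Green isomorphism $L_{+}(C_{E})\cong\mathfrak{LC}(Q^{p})$, and transfers the three properties of $\mathfrak{a}\in\mathfrak{J}$ across it. Your extra care in checking that the Ringel--Green isomorphism is graded (so that homogeneity of the kernel survives transport) is a point the paper leaves implicit, but it does not change the argument.
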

\begin{example}
Let $Q$ be the quiver $$\xymatrix{ & 1  \\
2\ar[r] & 3\ar[r]\ar[u]& 4.}$$
The path matrix $E$ is $\begin{pmatrix}
0 & -1 & -1 & 0\\
1 & 0 & 1 &  1\\
1 & -1 & 0 & 1 \\
0 & -1 & -1 & 0
\end{pmatrix}$ and the path quiver $Q^{p}$ is
$\xymatrix{ & 2\ar[ld]\ar[rd]\ar[d]  \\
1 &\ar[l] 3\ar[r]& 4}.$

Since $\tilde{\mathfrak{n}}^{+}(Q)$ is of nilpotency $4$, let $\mathfrak{m}'=\mathfrak{LC}(Q^{p})/C^{5}\mathfrak{LC}(Q^{p})$.
We have epimorphisms of Lie algebras $\mu: \mathfrak{LC}(Q^{p})\twoheadrightarrow \mathfrak{m}'$ and $\varphi: \mathfrak{m}'\twoheadrightarrow \tilde{\mathfrak{n}}^{+}(Q)$. So there is an epimorphism from the classical Hall Lie algebra $\mathfrak{LC}(Q^{p})$ to $\tilde{\mathfrak{n}}^{+}(Q)$
and the element $u_{S^{p}_{i}}$ is mapped to $C_{P_{i}}$.

\end{example}

\subsection{Defining relations of $\tilde{\mathfrak{n}}^{+}$}
\begin{definition}\label{Lie relation}
Let $Q$ be a Dynkin quiver and  $E=(a_{ij})$ its path matrix.
Let $\tilde{\mathfrak{N}}$ be the Lie algebra defined by generators $\{e_{i}~|~1\leq i\leq n\}$ and relations:

(a) If $|a_{ij}|=1$, $(\ad e_{i})^2(e_{j})=(\ad e_{j})^2(e_{i})=0$;

(b) If $a_{ij}a_{jk}=1$, $[e_{i},[e_{j},e_{k}]]=[e_{k},[e_{i},e_{j}]]=0$;

(c) If $a_{ij}=0$, $[e_{i},e_{j}]=0$.
\end{definition}
Thus we have an epimorphism $\psi: \tilde{\mathfrak{N}}\twoheadrightarrow \tilde{\mathfrak{n}}^{+}$, $e_{i}\mapsto C_{P_{i}}$.
By comparing dimensions, we already know that $\psi$ is an isomorphism when $Q$ is bipartite or $Q=\mathbb{A}_{n}^{\rightarrow}$.

For a quiver $Q$ of Dynkin type, we call a \emph{branch} of $Q$ a path of maximal length. Thus a branch is a subquiver $\mathbb{A}_{m}^{\rightarrow}$
of $Q$ with $m\geq 2$.

\begin{proposition}
If $Q'=\mathbb{A}_{m}^{\rightarrow}$ is a subquiver of $Q$, then $\tilde{\mathfrak{n}}^{+}(Q')$ is a Lie subalgebra of $\tilde{\mathfrak{n}}^{+}(Q)$.
\end{proposition}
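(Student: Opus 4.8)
The plan is to exhibit an explicit Lie algebra embedding $\tilde{\mathfrak{n}}^{+}(Q')\hookrightarrow\tilde{\mathfrak{n}}^{+}(Q)$ whose image is a Lie subalgebra, by using the inclusion $\Mod kQ'\hookrightarrow\Mod kQ$ of module categories. First I would recall that since $Q'=\mathbb{A}_{m}^{\rightarrow}$ is a full convex subquiver of $Q$ (a path of maximal length, hence convex), restriction and extension by zero identify $\Mod kQ'$ with the full subcategory of $\Mod kQ$ consisting of those modules supported on $Q'$; moreover this identification is exact and preserves indecomposables, $\Hom$, $\Ext^{1}$, and — crucially — \emph{minimal projective resolutions}, because for a vertex $i\in Q'_{0}$ the indecomposable projective $kQ'$-module $P'_{i}$ has radical $\rad P'_{i}$ equal to the restriction of $\rad P_{i}$ (the arrows out of $i$ in $Q$ that leave $Q'$ contribute nothing, since $Q'$ is the unique maximal path through those vertices in the relevant direction — here one uses that $\mathbb{A}_{m}^{\rightarrow}$ is a branch). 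Consequently, for an indecomposable $M\in\Mod kQ'\subseteq\Mod kQ$, the object $C_{M}\in C^{1}(\mathscr P_{kQ'})$ and the object $C_{M}\in C^{1}(\mathscr P_{kQ})$ have "the same" underlying data, and $C_{P'_{i}}\mapsto C_{P_{i}}$ under the obvious inclusion.

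Next I would define the linear map $\iota\colon\tilde{\mathfrak{n}}^{+}(Q')\to\tilde{\mathfrak{n}}^{+}(Q)$ sending the basis element $C_{M}$ (for $M$ indecomposable over $kQ'$) to $C_{M}$ (viewed over $kQ$), and similarly on the $K_{P}$'s, and check it is injective — this is immediate since distinct indecomposables over $kQ'$ remain distinct, non-isomorphic indecomposables over $kQ$. The key point is that $\iota$ is a Lie algebra homomorphism onto its image, i.e. that the structure constants computed inside $C^{1}(\mathscr P_{kQ'})$ agree with those computed inside $C^{1}(\mathscr P_{kQ})$. Because the Lie bracket is built (Proposition in Section~\ref{sec lie}) from the Hall numbers $\psi_{\mu\nu}^{\lambda}(1)$, and those are determined by $|\Ext^{1}_{C^{1}(\mathscr P)}(C_{q}(\mu),C_{q}(\nu))_{C_{q}(\lambda)}|$ together with automorphism counts, it suffices to observe: (i) by Proposition~\ref{formula}(3), $\Ext^{1}_{C^{1}(\mathscr P)}(C_{M},C_{N})\cong\Hom_{A}(M,N)\oplus\Ext^{1}_{A}(M,N)$, and all of $\Hom$, $\Ext^{1}$, and the isomorphism classes of the middle terms of conflations are computed inside $\Mod kQ'$ and are unchanged by the fully faithful exact embedding $\Mod kQ'\hookrightarrow\Mod kQ$ (an extension of $Q'$-modules has its middle term again supported on $Q'$); (ii) the automorphism-group orders $a_{C_{M}}$ agree by Proposition~\ref{formula}(5), since $\Hom_{A}(P_{M},\Omega_{M})$, $\Hom_{A}(\Omega_{M},P_{M})$ and $a_{M}$ are the same whether computed over $kQ'$ or $kQ$ (the projective modules $P_{M},\Omega_{M}$ are supported on $Q'$). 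Hence the Riedtmann--Peng formula gives identical Hall numbers, so $\iota$ preserves brackets, and its image is the subspace spanned by $\{C_{M},K_{P}\mid M,P \text{ supported on }Q'\}$, which is therefore a Lie subalgebra isomorphic to $\tilde{\mathfrak{n}}^{+}(Q')$.

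Alternatively, and perhaps more cleanly, one can avoid Hall-number bookkeeping entirely: by Theorem~\ref{main result}, $\tilde{\mathfrak{n}}^{+}(Q')$ is generated by $\{C_{P'_{i}}\mid i\in Q'_{0}\}$, and the subalgebra of $\tilde{\mathfrak{n}}^{+}(Q)$ generated by $\{C_{P_{i}}\mid i\in Q'_{0}\}$ is a quotient of the abstractly presented Lie algebra $\tilde{\mathfrak{N}}(Q')$ of Definition~\ref{Lie relation} (its generators satisfy the relations (a),(b),(c) of the path matrix $E_{Q'}$, which is the principal submatrix of $E_{Q}$ on $Q'_{0}$), while it also surjects onto $\tilde{\mathfrak{n}}^{+}(Q')$; one then needs that these generators in fact generate exactly a copy of $\tilde{\mathfrak{n}}^{+}(Q')$, for which a dimension count suffices once one knows the subalgebra they generate has dimension $\le\dim\tilde{\mathfrak{n}}^{+}(Q')$. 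I expect the main obstacle to be precisely this last point in the generator-based argument — controlling the dimension of the generated subalgebra from above — which is why I would prefer the first route: there the only real work is the routine but essential verification that the embedding $\Mod kQ'\hookrightarrow\Mod kQ$ respects minimal projective resolutions (using that $Q'$ is a branch, so no arrow of $Q$ attaches to the interior of $Q'$ in a way that enlarges $\rad P_{i}$ within the support), after which everything reduces to the functoriality of the Hall-number data already established in Proposition~\ref{formula} and Theorem~\ref{Hallpoly}.
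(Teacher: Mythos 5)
Your preferred (first) route contains a false key step. The extension-by-zero embedding $\mod kQ'\hookrightarrow\mod kQ$ does preserve $\Hom$ and $\Ext^1$ between $Q'$-supported modules, but it does \emph{not} preserve projectives or minimal projective resolutions, and your justification for this (``no arrow of $Q$ attaches to the interior of $Q'$'' when $Q'$ is a branch) is simply not true: take $Q$ of type $\mathbb{D}_4$ with arrows $1\to 2$, $2\to 3$, $2\to 4$ and the branch $Q'\colon 1\to 2\to 3$; then $P_2$ has dimension vector $(0,1,1,1)$ while $P'_2$ extended by zero is $(0,1,1,0)$, so ``$C_{P'_i}\mapsto C_{P_i}$ under the obvious inclusion'' is not what extension by zero does. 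Worse, the Hall numbers in $C^1(\mathscr{P})$ depend on the minimal projective resolutions through the acyclic summands $K_{Y_L}$ of middle terms, and these genuinely change: with $N=(0,1,1,0)$ and $M=(1,1,1,0)$ (the extensions by zero of $P'_2$ and $P'_1$), the minimal resolutions over $kQ$ are $0\to P_4\to P_2\to N\to 0$ and $0\to P_4\to P_1\to M\to 0$, and the Case II analysis of Section 3 shows that every nontrivial extension of $C_N$ by $C_M$ has middle term $C_{S_1}\oplus K_{P_4}$, which is decomposable; hence $[C_N,C_M]=0$ in $\tilde{\mathfrak{n}}^{+}(Q)$, whereas $[C_{P'_2},C_{P'_1}]=C_{S_1}\neq 0$ in $\tilde{\mathfrak{n}}^{+}(Q')$. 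So the structure constants are not preserved and the span of the $Q'$-supported objects is not the subalgebra you want. (Note also that the proposition is stated for an arbitrary $\mathbb{A}_m^{\rightarrow}$ subquiver, not only for branches.)

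The paper's embedding is different and is not induced by extension by zero on the labels: it sends $C_{P'_i}\mapsto C_{P_i}$ (the genuine $kQ$-projectives) and $C_{M'_{ij}}\mapsto C_{M_{ij}}$, where $M_{ij}=\Coker(P_j\hookrightarrow P_i)$ is computed in $\mod kQ$ and in general is not supported on $Q'$ (in the example above $M_{13}=(1,1,0,1)$, while $M'_{13}$ extended by zero is $(1,1,0,0)$). Since $\tilde{\mathfrak{n}}^{+}(Q')$ is free $2$-step nilpotent (Theorem \ref{free}), it suffices that the $C_{P_i}$, $i\in Q'_0$, satisfy $[C_{P_j},C_{P_i}]=C_{M_{ij}}$ and have vanishing triple brackets, which follows from relations (a) and (b) of Theorem \ref{main result} together with the Jacobi identity; injectivity holds because the $C_{P_i}$ and the pairwise non-isomorphic indecomposables $C_{M_{ij}}$ are distinct basis vectors of $\tilde{\mathfrak{n}}^{+}(Q)$. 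Your second route is close to this, but it is muddled at the decisive point: there is no a priori surjection from the subalgebra generated by $\{C_{P_i}\mid i\in Q'_0\}$ onto $\tilde{\mathfrak{n}}^{+}(Q')$, and the upper bound you flag as the obstacle is actually free (that subalgebra is $2$-step nilpotent by relations (a),(b), hence of dimension at most $m(m+1)/2=\dim\tilde{\mathfrak{n}}^{+}(Q')$). What has to be proved is the lower bound, i.e.\ exactly the identification of the degree-two brackets with the distinct basis elements $C_{M_{ij}}$ — which is the content of the paper's proof.
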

\bp
For any vertex $i$ of $Q'$, we have an indecomposable projective $kQ'$-module $P'_{i}$ and an indecomposable projective $kQ$-module $P_{i}$.
Consider the morphism $\eta: \tilde{\mathfrak{n}}^{+}(Q')\rightarrow \tilde{\mathfrak{n}}^{+}(Q)$ defined by $\eta(C_{P'_{i}})=C_{P_{i}}$.

Renumber the vertices of $Q'$ as $1,\cdots, m$, then $Q'$ is as follows:
$$\xymatrix{1\ar[r]& 2\ar[r]& \cdot\ar@{.>}[r] & \cdot\ar[r] & m.\\}$$
For any $i<j$, we get a short exact sequence in $\mod kQ'$:
$$0\longrightarrow P'_{j}\longrightarrow P'_{i}\longrightarrow M'_{ij}\longrightarrow 0,$$
where $M'_{ij}$ is an indecomposable $kQ'$-module. Actually it is a minimal projective resolution of $M'_{ij}$, and  $[C_{P'_{j}},C_{P'_{i}}]=C_{M'_{ij}}$ in $\tilde{\mathfrak{n}}^{+}(Q')$.

Similarly, we have a short exact sequence in $\mod kQ$:
$$0\longrightarrow P_{j}\longrightarrow P_{i}\longrightarrow M_{ij}\longrightarrow 0,$$
where $M_{ij}$ is an indecomposable $kQ$-module. Thus $[C_{P_{j}},C_{P_{i}}]=C_{M_{ij}}$ in $\tilde{\mathfrak{n}}^{+}(Q)$.

By setting $\eta(C_{M'_{ij}})=C_{M_{ij}}$ for all $1\leq i<j\leq m$, we conclude that $\eta$ is a monomorphism. Note that $\eta$
is a homomorphism by Theorem \ref{main result}, and is injective by comparing bases of two Lie algebras.
\ep
Now consider $\eta$ as a natural inclusion. If $Q$ has $t$ branches, namely $\mathbb{A}_{r_{1}}^{\rightarrow}, \cdots, \mathbb{A}_{r_{t}}^{\rightarrow}$, $r_{j}\geq 2, 1\leq j\leq t$, there are exactly $t$ largest free 2-step nilpotent Lie algebras $\mathfrak{N}(r_{j})$, $1\leq j\leq t$ sitting inside $\tilde{\mathfrak{n}}^{+}(Q)$.
Here a free 2-step nilpotent Lie subalgebra is called the ``largest'' if it is not included in a larger free 2-step nilpotent Lie subalgebra inside $\tilde{\mathfrak{n}}^{+}(Q)$.

Generally, it is not easy to determine whether $\psi$ is an isomorphism. We firstly tackle some easy examples.

Since $\tilde{\mathfrak{n}}^{+}(Q)$ has a root space decomposition, it is also graded with heights of roots. Assume $\tilde{\mathfrak{n}}^{+}(Q)$ is of nilpotency $l$, then
 $$\tilde{\mathfrak{n}}^{+}(Q)=\oplus^{l}_{j=1}\tilde{\mathfrak{n}}^{+}(Q)_{j},$$
 where $\tilde{\mathfrak{n}}^{+}(Q)_{j}=\oplus_{\alpha\in R(T), |\alpha|=j}\tilde{\mathfrak{n}}^{+}(Q)_{\alpha}$.
For  $\tilde{\mathfrak{N}}$, we say  $[e_{i_{1}},\cdots,e_{i_{t}}]\neq 0$ is of height $t$. Then $\tilde{\mathfrak{N}}(Q)$ is also graded with heights.
Then we compare the dimensions of $\tilde{\mathfrak{n}}^{+}(Q)_{j}$ and $\tilde{\mathfrak{N}}(Q)_{j}$ for each $j$.
\begin{example}\label{a4}
Let $Q$ be $1\rightarrow 2 \rightarrow 3 \leftarrow 4$. We have $\dim \tilde{\mathfrak{n}}^{+}_{1}=4$, $\dim \tilde{\mathfrak{n}}^{+}_{2}=4$
and $\dim \tilde{\mathfrak{n}}^{+}_{3}=2$. It is easy that $\tilde{\mathfrak{N}}_{1}=\tilde{\mathfrak{N}}_{2}=4$.
For $\tilde{\mathfrak{N}}_{3}$, due to the defining relations and Jacobi identity, there are two linearly independent elements $[[e_{2},e_{3}],e_{4}]$ and $[[e_{1},e_{3}],e_{4}]$. Note that $\tilde{\mathfrak{N}}_{4}=0$ because of defining relations. Obviously $\tilde{\mathfrak{N}}_{j}=0$ for $j\geq 5$. So $\psi$ is an isomorphism.
\end{example}
\begin{example}
Assume that $Q$ is $$\xymatrix{ & 1  \\
2\ar[r] & 3\ar[r]\ar[u]& 4.}$$
Then $\dim \tilde{\mathfrak{n}}^{+}_{1}=4$, $\dim \tilde{\mathfrak{n}}^{+}_{2}=5$, $\dim \tilde{\mathfrak{n}}^{+}_{3}=2$ and
$\dim \tilde{\mathfrak{n}}^{+}_{4}=1$. For $\tilde{\mathfrak{N}}_{3}$, one can find two linearly independent elements $[[e_{1},e_{2}],e_{4}]$ and $[[e_{1},e_{3}],e_{4}]$. For $\tilde{\mathfrak{N}}_{4}$, the only basis element is $[[[e_{1},e_{3}],e_{4}],e_{2}]$. $\tilde{\mathfrak{N}}_{j}=0$ for $j\geq 5$. So $\psi$ is an isomorphism.
\end{example}
The rest of this subsection is devoted to the proof of the following theorem.
\begin{theorem}\label{Atype}
Let $Q$ be a quiver of type $\mathbb{A}_{n}$, then $\psi: \tilde{\mathfrak{N}}\rightarrow \tilde{\mathfrak{n}}^{+}$, $e_{i}\mapsto C_{P_{i}}$ is an isomorphism.
\end{theorem}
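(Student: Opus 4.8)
The plan is to prove that the surjection $\psi\colon\tilde{\mathfrak{N}}\twoheadrightarrow\tilde{\mathfrak{n}}^{+}$ is an isomorphism by showing $\dim\tilde{\mathfrak{N}}\le\dim\tilde{\mathfrak{n}}^{+}$; since $\psi$ is already known to be onto, equality of dimensions forces injectivity. Because both algebras are graded by height (the target by the root-space decomposition from Theorem~7.?, the source by the length of Lie monomials in the $e_i$), it suffices to bound $\dim\tilde{\mathfrak{N}}_m$ by the number of indecomposable $kQ$-modules $M$ whose minimal projective resolution $0\to\oplus P_{t_i}\to\oplus P_{r_j}\to M\to 0$ has $\sum m_i+\sum n_j=m$ — equivalently by $|\{\,\alpha\in R(T):|\alpha|=m\,\}|$ in the notation of the root-system section. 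So the real content is a normal-form (spanning) result for $\tilde{\mathfrak{N}}$ in type $\mathbb{A}$.

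First I would fix a convenient numbering of the vertices of the type-$\mathbb{A}_n$ quiver $Q$ and recall from the ``root system'' example for type $\mathbb{A}$ exactly which $\alpha$ occur: the simple roots $\alpha_i$; the height-$2$ roots $\alpha_i+\alpha_j$ for $i,j$ lying on a common branch; and for $3\le m\le t+1$ the height-$m$ roots $\alpha_i+\sum_{p}\alpha_{(\text{knot})_p}+\alpha_j$ with $i,j$ at the two ends and a prescribed chain of knot-vertices in between. The key structural fact, coming from relations (a)--(c) in Definition~\ref{Lie relation} together with the Jacobi identity, is that every iterated bracket $[e_{i_1},\dots,e_{i_m}]$ in $\tilde{\mathfrak{N}}$ either vanishes or can be rewritten as a scalar multiple of a bracket in a standard shape: nested brackets can only ``grow'' by adjoining generators at vertices that sit on a directed path, relation (b) kills any bracket that tries to bend twice at a source/sink in the wrong way, and relation (a) kills squares. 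I would make this precise by induction on $m$: assuming every nonzero monomial of height $<m$ has the standard form, take a monomial of height $m$, use (a) to assume all indices distinct, use (c) to assume the index set is ``connected'' in the appropriate sense, and then use (b) plus Jacobi to move brackets into the normal form, producing at most one surviving monomial per admissible root pattern.

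The main obstacle, and the step I expect to need the most care, is the bookkeeping in this normal-form argument: in type $\mathbb{A}$ a monomial of large height passes through several knot-vertices, and one must check that \emph{every} relation forced among such long brackets is already a consequence of (a), (b), (c) and Jacobi — i.e.\ that no ``unexpected'' identity shrinks the space further, and dually that no genuinely new basis element survives beyond the listed roots. I would organize this by localizing to branches: each maximal branch $\mathbb{A}_{r}^{\rightarrow}$ contributes its free $2$-step nilpotent piece $\mathfrak{N}(r)$ (by Theorem~\ref{free}), and the longer brackets are exactly the ``linking'' brackets that chain consecutive branches through a shared knot; relation (b) is precisely what makes such a chain rigid (no branching, no backtracking), so the count of surviving monomials of each height matches the listed root multiplicities, all of which are $1$.

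Once the spanning set of size $|R(T)|$ is in hand, I conclude: $\dim\tilde{\mathfrak{N}}\le|R(T)|=\dim\tilde{\mathfrak{n}}^{+}$ by the $\dim\tilde{\mathfrak{n}}^{+}_\alpha=1$ statement of the root-system theorem, hence $\psi$ is an isomorphism, and in particular the relations (a)--(c) are a complete set of defining relations for $\tilde{\mathfrak{n}}^{+}$ in type $\mathbb{A}_n$, which is Theorem~E. It is worth remarking that this argument is genuinely special to type $\mathbb{A}$: for $\mathbb{D}$ and $\mathbb{E}$ a knot can be a source or sink of valence three, so the ``no branching'' rigidity fails and one expects extra relations beyond (a)--(c), which is why the theorem is stated only for $\mathbb{A}$.
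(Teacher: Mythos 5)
Your proposal follows essentially the same route as the paper: use surjectivity of $\psi$ and bound $\dim\tilde{\mathfrak{N}}$ by exhibiting a spanning set of bracket monomials in the $e_{i}$, reduced to a normal form by relations (a)--(c) together with the Jacobi identity, whose cardinality equals $\dim\tilde{\mathfrak{n}}^{+}=\frac{n(n+1)}{2}$. The normal-form bookkeeping you flag as the crux is precisely what the paper carries out in Lemmas~\ref{word1}--\ref{word4} and Proposition~\ref{span1} (standard left normed words with connected, bipartite associated quivers), with the count organized branch-by-branch in Lemma~\ref{span2}, matching your height/branch bookkeeping.
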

Let $Q$ be a quiver whose underlying graph is as follows:
$$\xymatrix{1\ar@{-}[r]& 2\ar@{-}[r]& \cdot\ar@{.}[r] & \cdot\ar@{-}[r] & n.\\}$$

Thus the vertices of $Q$ are given a natural ordering: $1<2<\cdots <n$. We still use the symbols as in Definition \ref{Lie relation}.
Let $\mathfrak{F}$ be the free Lie algebra over $\mathbb{C}$ generated by $e_{i}$, $1\leq i\leq n$. Then $\tilde{\mathfrak{N}}$ is the quotient algebra of $\mathfrak{F}$
modulo a homogeneous ideal, which is determined by $Q$.

Recall that $[e_{i_{1}},e_{i_{2}},\cdots,e_{i_{t}}]:=[[[\cdots[e_{i_{1}},e_{i_{2}}],\cdots],e_{i_{t-1}}],e_{i_{t}}]$, which will be called a \emph{left normed word}. Similarly, we introduce $\lfloor e_{i_{1}},e_{i_{2}},\cdots,e_{i_{t}}\rfloor :=[e_{i_{1}},[e_{i_{2}},[\cdots,[e_{i_{t-1}},e_{i_{t}}]\cdots]]]$, which is called a \emph{right normed word}  in \cite{Chi}.

The following lemma is well known (cf. \cite{Chi}).
\begin{lemma}\label{span}
$[e_{i_{1}},e_{i_{2}},\cdots,e_{i_{t}}]$, $1\leq i_{j}\leq n$, $1\leq j\leq t$, $t\in \mathbb{N}$ generate the linear space $\mathfrak{F}$.
So they also generate the linear space $\tilde{\mathfrak{N}}$.
\end{lemma}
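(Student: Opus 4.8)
The plan is to establish the spanning statement for the free Lie algebra $\mathfrak{F}$ by a nested induction, and then transfer it to $\tilde{\mathfrak{N}}$ essentially for free. Since $\mathfrak{F}$ is spanned as a vector space by all iterated Lie monomials in the generators $e_1,\dots,e_n$, it suffices to show that every such monomial is a linear combination of left normed words $[e_{i_1},\dots,e_{i_t}]$. I would argue by induction on the length $t$ of the monomial; the cases $t=1,2$ are immediate.

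For the inductive step, any Lie monomial of length $t\geq 3$ can be written as $[a,b]$ with $a,b$ Lie monomials of lengths $p,q\geq 1$ and $p+q=t$. Applying the outer induction hypothesis to $a$ and to $b$ and using bilinearity of the bracket, I would reduce to the case where $a$ and $b$ are themselves left normed words. At this point I would run a secondary induction on $q=\mathrm{length}(b)$. If $q=1$ then $b=e_j$ and $[a,b]=[a,e_j]$ is already left normed. If $q\geq 2$, I would write $b=[b',e_j]$ with $b'=[e_{j_1},\dots,e_{j_{q-1}}]$ and expand via the Jacobi identity as
\[
[a,[b',e_j]]=[[a,b'],e_j]-[[a,e_j],b'].
\]
In the first summand, $[a,b']$ is a Lie monomial of length $p+q-1<t$, so the outer induction rewrites it as a sum of left normed words $c_k$, whence $[[a,b'],e_j]=\sum_k[c_k,e_j]$ is again a sum of left normed words. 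In the second summand, $[a,e_j]$ is a left normed word and $b'$ is a left normed word of length $q-1<q$, so the inner induction hypothesis applies and expresses $[[a,e_j],b']$ as a sum of left normed words. This closes both inductions and proves the claim for $\mathfrak{F}$.

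Finally, since by construction $\tilde{\mathfrak{N}}$ is the quotient of $\mathfrak{F}$ by a homogeneous ideal, the canonical projection $\pi:\mathfrak{F}\twoheadrightarrow\tilde{\mathfrak{N}}$ is a surjective Lie algebra homomorphism carrying each left normed word $[e_{i_1},\dots,e_{i_t}]$ in $\mathfrak{F}$ to the corresponding left normed word in $\tilde{\mathfrak{N}}$; hence the left normed words span $\tilde{\mathfrak{N}}$ as well.

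I do not anticipate a serious obstacle here: this is the classical fact that a free Lie algebra is linearly spanned by left normed brackets of the generators. The only delicate point is the bookkeeping of the two nested inductions — outer on the total length $t$, inner on the length $q$ of the right bracket factor — and verifying that each application of the Jacobi identity strictly decreases one of the two induction parameters (the total length in the first summand, the right-factor length in the second). Provided this is tracked carefully, the argument goes through without difficulty.
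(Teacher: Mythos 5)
Your argument is correct. Note that the paper offers no proof of this lemma at all: it simply records it as well known, with a citation to Chibrikov's paper on normed bases of free Lie algebras. Your nested induction (outer on total length, inner on the length of the right factor, using the Jacobi identity in the form $[a,[b',e_j]]=[[a,b'],e_j]-[[a,e_j],b']$) is exactly the standard argument that underlies that citation, and the passage to $\tilde{\mathfrak{N}}$ via the canonical surjection $\mathfrak{F}\twoheadrightarrow\tilde{\mathfrak{N}}$ is immediate, since images of left normed words are left normed words. So your proposal is a valid, self-contained replacement for the paper's appeal to the literature, with no gap.
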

\begin{lemma}\label{word1}
If $1\leq i_{1}\leq i_{2}\leq i_{3}\leq n$, then $[e_{i_{1}},e_{i_{2}},e_{i_{3}}]=\lfloor e_{i_{1}},e_{i_{2}},e_{i_{3}}\rfloor$.
\end{lemma}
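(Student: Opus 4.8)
The plan is to derive the identity purely inside the abstractly presented Lie algebra $\tilde{\mathfrak{N}}$ from the defining relations (a)--(c) of Definition \ref{Lie relation}, with the Jacobi identity as the only extra tool. First I would note that $[e_{i_1},e_{i_2},e_{i_3}]=[[e_{i_1},e_{i_2}],e_{i_3}]$ and $\lfloor e_{i_1},e_{i_2},e_{i_3}\rfloor=[e_{i_1},[e_{i_2},e_{i_3}]]$, so the Jacobi identity gives
$$[e_{i_1},e_{i_2},e_{i_3}]-\lfloor e_{i_1},e_{i_2},e_{i_3}\rfloor=[[e_{i_1},e_{i_2}],e_{i_3}]-[e_{i_1},[e_{i_2},e_{i_3}]]=-[e_{i_2},[e_{i_1},e_{i_3}]].$$
Hence the lemma is equivalent to the single vanishing statement $[e_{i_2},[e_{i_1},e_{i_3}]]=0$ for all $i_1\le i_2\le i_3$, which I would establish by a short case analysis.

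If $i_1=i_3$ all three indices coincide and the bracket is trivially $0$. If $i_1=i_2<i_3$, the bracket equals $(\ad e_{i_1})^2(e_{i_3})$, which vanishes by relation (a) when $|a_{i_1i_3}|=1$ and by relation (c) when $a_{i_1i_3}=0$; the case $i_1<i_2=i_3$ is symmetric, via $(\ad e_{i_3})^2(e_{i_1})$. So assume $i_1<i_2<i_3$ are pairwise distinct. If $a_{i_1i_3}=0$ then $[e_{i_1},e_{i_3}]=0$ by (c) and there is nothing to prove. If $|a_{i_1i_3}|=1$, I would exploit the linear-chain shape of type $\mathbb{A}_n$: the unique walk between $i_1$ and $i_3$ in the underlying graph passes through $i_2$, so a path between $i_1$ and $i_3$ in $Q$ restricts to a path between $i_1,i_2$ and a path between $i_2,i_3$, all running the same way along the chain; thus $a_{i_1i_2}$ and $a_{i_2i_3}$ are nonzero with the same sign, i.e.\ $a_{i_1i_2}a_{i_2i_3}=1$. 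Relation (b) then yields $[e_{i_1},[e_{i_2},e_{i_3}]]=0$ and $[e_{i_3},[e_{i_1},e_{i_2}]]=0$, and expanding $[e_{i_2},[e_{i_1},e_{i_3}]]=[[e_{i_2},e_{i_1}],e_{i_3}]+[e_{i_1},[e_{i_2},e_{i_3}]]$ by Jacobi kills both summands (the first after using antisymmetry $[[e_{i_2},e_{i_1}],e_{i_3}]=[e_{i_3},[e_{i_1},e_{i_2}]]$, the second directly).

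I expect the only delicate point to be the orientation bookkeeping in the last case: verifying that the hypothesis $|a_{i_1i_3}|=1$ together with $i_1<i_2<i_3$ genuinely produces an instance of relation (b) on the ordered triple $(i_1,i_2,i_3)$, and keeping the signs straight when passing between left-normed and right-normed brackets. Everything else is mechanical use of the Jacobi identity. Note that the argument never refers to $\tilde{\mathfrak{n}}^{+}$ — it is a computation inside $\tilde{\mathfrak{N}}$ alone — which is exactly what is needed for it to feed into the comparison $\psi\colon\tilde{\mathfrak{N}}\to\tilde{\mathfrak{n}}^{+}$ in the proof of Theorem \ref{Atype} (and it dovetails with the spanning statement of Lemma \ref{span}).
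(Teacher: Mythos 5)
Your proof is correct and follows essentially the same route as the paper: reduce via the Jacobi identity to a single bracket, namely $[e_{i_2},[e_{i_1},e_{i_3}]]$ (the paper's $[[e_{i_1},e_{i_3}],e_{i_2}]$), and dispose of it by cases on the path matrix of the $\mathbb{A}_n$ chain using relations (a)--(c). The only cosmetic difference is that, in the case $i_1<i_2<i_3$ with a path between $i_1$ and $i_3$, the paper shows the two normed words vanish separately via relation (b), while you kill the difference term by one more Jacobi expansion using the same two instances of (b).
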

\bp
By Jacobi identity, we have
$$[e_{i_{1}},e_{i_{2}},e_{i_{3}}]=[[e_{i_{1}},e_{i_{3}}],e_{i_{2}}]+[e_{i_{1}},[e_{i_{2}},e_{i_{3}}]].$$
Consider the path matrix $E_{Q}=(a_{ij})$ of the quiver $Q$. If $i_{1}=i_{3}$, then $i_{1}=i_{2}=i_{3}$ and we are done. If $i_{1}\neq i_{3}$ and $a_{i_{1}i_{3}}= 0$, then $[[e_{i_{1}},e_{i_{3}}],e_{i_{2}}]=0$ and we are done. If $a_{i_{1}i_{3}}\neq 0$,
there is a path between $i_{1}$ and $i_{3}$. Assume there is a path from  $i_{1}$ to $i_{3}$, then we have
$i_{1}\rightarrow i_{2}\rightarrow i_{3}$, $i_{1}=i_{2}\rightarrow i_{3}$ or $i_{1}\rightarrow i_{2}= i_{3}$, where $\rightarrow$ stands for a path in the quiver $Q$. For all cases, it is easy to see that both terms in the lemma are equal to zero.
\ep
We call $[e_{i_{1}},e_{i_{2}},\cdots, e_{i_{m}}]$ (resp. $\lfloor e_{i_{1}},e_{i_{2}},\cdots, e_{i_{m}}\rfloor$) a left normed (resp. right normed) word \emph{of length} $m$.
\begin{lemma}\label{word2}
If $1\leq i_{1}\leq i_{2}\leq \cdots\leq i_{m}\leq n$, $m\geq 3$, then $[e_{i_{1}},e_{i_{2}},\cdots, e_{i_{m}}]=\lfloor e_{i_{1}},e_{i_{2}},\cdots, e_{i_{m}}\rfloor$.
\end{lemma}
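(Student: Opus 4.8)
The plan is to prove Lemma \ref{word2} by induction on the length $m$ of the words, using Lemma \ref{word1} as the base case $m=3$. The statement says that for weakly increasing indices $i_1\le i_2\le\cdots\le i_m$, the left normed word $[e_{i_1},\ldots,e_{i_m}]$ and the right normed word $\lfloor e_{i_1},\ldots,e_{i_m}\rfloor$ coincide in $\tilde{\mathfrak{N}}$. Since $\tilde{\mathfrak{N}}$ is graded by height (length) and we work with a fixed tuple of indices, it suffices to manipulate formally via the Jacobi identity together with the defining relations (a), (b), (c) of Definition \ref{Lie relation}, reading them through the path matrix $E_Q$ of the type $\mathbb{A}$ quiver.

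First I would set up the induction: assume the identity holds for all weakly increasing tuples of length $<m$. Write $[e_{i_1},e_{i_2},\ldots,e_{i_m}] = [\,[e_{i_1},e_{i_2},\ldots,e_{i_{m-1}}]\,,e_{i_m}\,]$ and apply the induction hypothesis to the inner length-$(m-1)$ word, so the left side equals $[\,\lfloor e_{i_1},\ldots,e_{i_{m-1}}\rfloor\,,e_{i_m}\,] = [\,[e_{i_1},[e_{i_2},\ldots,e_{i_{m-1}}]\ldots]\,,e_{i_m}\,]$. The goal becomes to push $e_{i_m}$ inward. The natural tool is the Leibniz-type expansion: $[[a,b],c] = [[a,c],b] + [a,[b,c]]$ with $a=e_{i_1}$, $b = \lfloor e_{i_2},\ldots,e_{i_{m-1}}\rfloor$, $c=e_{i_m}$. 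I expect the term $[[e_{i_1},e_{i_m}],b]$ to vanish: because $i_1\le i_j\le i_m$ for all intermediate $j$, the presence of both $e_{i_1}$ and $e_{i_m}$ bracketed together, together with any $e_{i_j}$ in between, forces a configuration covered by relation (a) (when $i_1,i_m$ adjacent in the path order and equal-ish) or relation (b) (a length-two path $i_1\to i_j\to i_m$), or relation (c) (no path), exactly as in the proof of Lemma \ref{word1}; one has to check each orientation of the $\mathbb{A}_n$ subquiver on the vertices $\{i_1,\ldots,i_m\}$. Then $[a,[b,c]] = [e_{i_1},[\lfloor e_{i_2},\ldots,e_{i_{m-1}}\rfloor, e_{i_m}]]$, and here I would apply the induction hypothesis to the length-$(m-1)$ tuple $(i_2,\ldots,i_m)$ — but note $[\lfloor e_{i_2},\ldots,e_{i_{m-1}}\rfloor, e_{i_m}]$ is a left normed word $[e_{i_2},\ldots,e_{i_m}]$ only after applying induction once more, so a little care in bookkeeping is needed: cleanly, induct on the assertion $[e_{i_1},\ldots,e_{i_m}] = [e_{i_1},[e_{i_2},\ldots,e_{i_m}]]$ and then unwind. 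Iterating gives $[e_{i_1},\ldots,e_{i_m}] = \lfloor e_{i_1},\ldots,e_{i_m}\rfloor$.

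The main obstacle is the vanishing of the cross term $[[e_{i_1},e_{i_m}],b]$ where $b$ is a right normed word in the middle generators: one must verify that the relations (a)–(b)–(c) suffice, i.e. that for a type $\mathbb{A}$ quiver, whenever $i_1\le i_j\le i_m$ the element $[[e_{i_1},e_{i_m}],e_{i_j}]$ is zero (and hence $[[e_{i_1},e_{i_m}],b]=0$ by expanding $b$ as a sum of left normed words via Lemma \ref{span} and repeatedly using $[[x,y],[z,w]]$-type Jacobi reductions down to length-three brackets). Here the key structural fact about type $\mathbb{A}$ is that the vertices form a line, so any path between $i_1$ and $i_m$ passes monotonically and the possible relative orientations on $\{i_1,i_j,i_m\}$ are exactly the three enumerated cases of Lemma \ref{word1}'s proof; the subquiver being $\mathbb{A}$ (no knots with three neighbors) is what makes this clean. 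I would organize this as a short auxiliary claim: for any $i_1\le k\le i_m$, $[e_k,[e_{i_1},e_{i_m}]]=0$ in $\tilde{\mathfrak{N}}$, proved by the same case analysis on the path matrix entries $a_{i_1 k}$, $a_{k i_m}$, $a_{i_1 i_m}$ as in Lemma \ref{word1}. Granting that claim, the induction above goes through and yields the lemma.
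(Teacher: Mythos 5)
Your proposal is correct, and its skeleton (induction on the length $m$, base case Lemma \ref{word1}, one application of the Jacobi identity to peel off the outermost generator, then the induction hypothesis) is the same as the paper's; the genuine difference lies in how the cross term involving $[e_{i_1},e_{i_m}]$ is disposed of. The paper expands the right normed word, obtains $\lfloor e_{i_1},\dots,e_{i_m}\rfloor=[e_{i_1},\dots,e_{i_m}]+[[e_{i_2},\dots,e_{i_{m-1}}],[e_{i_1},e_{i_m}]]$, and in the only nontrivial case $a_{i_1 i_m}\neq 0$ it does not show the cross term vanishes directly: instead it observes that a path between $i_1$ and $i_m$ forces $[e_{i_1},e_{i_2},e_{i_3}]=0$ and $\lfloor e_{i_{m-2}},e_{i_{m-1}},e_{i_m}\rfloor=0$ (by the case analysis of Lemma \ref{word1}), so both sides of the asserted identity are zero and the equality holds trivially. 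You instead kill the cross term outright via the auxiliary claim that $[e_k,[e_{i_1},e_{i_m}]]=0$ for every $i_1\le k\le i_m$ — which is indeed a consequence of relations (a), (b), (c) plus the Jacobi identity, using the type $\mathbb{A}$ line structure to guarantee that any directed path between $i_1$ and $i_m$ passes through $k$ — combined with the standard fact that the centralizer of an element is a Lie subalgebra, so the whole middle word commutes with $[e_{i_1},e_{i_m}]$. (Your appeal to "expanding $b$ into left normed words" is unnecessary for this; the centralizer argument alone suffices.) Your route is slightly longer but more transparent and reusable — the same commutation claim is essentially what underlies the later manipulations in Lemmas \ref{word3} and \ref{word4} — while the paper's observation is shorter but more ad hoc. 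The bookkeeping issue you flag (needing the induction hypothesis for both $(i_2,\dots,i_{m-1})$ and $(i_2,\dots,i_m)$ to rewrite $[\lfloor e_{i_2},\dots,e_{i_{m-1}}\rfloor,e_{i_m}]$ as $\lfloor e_{i_2},\dots,e_{i_m}\rfloor$) is harmless, since both tuples have length at most $m-1$.
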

\bp
We proceed by induction on the length $m$. For $m=3$, it is just Lemma \ref{word1}. Assume the lemma holds for all words of length less or equal to $m-1$, then
\begin{eqnarray*}\lfloor e_{i_{1}},e_{i_{2}},\cdots,e_{i_{m-1}},e_{i_{m}}\rfloor &=& [e_{i_{1}},\lfloor e_{i_{2}},\cdots,e_{i_{m-1}},e_{i_{m}}\rfloor]=[e_{i_{1}},[ e_{i_{2}},\cdots,e_{i_{m-1}},e_{i_{m}}]] \\
&=& [[e_{i_{1}},[e_{i_{2}},\cdots, e_{i_{m-1}}]],e_{i_{m}}]+[[ e_{i_{2}},\cdots, e_{i_{m-1}}],[e_{i_{1}},e_{i_{m}}]] \\
&=& [e_{i_{1}},e_{i_{2}},\cdots,e_{i_{m-1}},e_{i_{m}}] + [[ e_{i_{2}},\cdots, e_{i_{m-1}}],[e_{i_{1}},e_{i_{m}}]].
\end{eqnarray*}
If $i_{1}=i_{m}$, then we are done. If $i_{1}\neq i_{m}$ and $a_{i_{1}i_{m}}= 0$, then $[[ e_{i_{2}},\cdots, e_{i_{m-1}}],[e_{i_{1}},e_{i_{m}}]]=0$ and we are done. If $a_{i_{1}i_{m}}\neq 0$, there is a path between $i_{i}$ and $i_{m}$. Obviously, there is a path between $i_{1}$ and $i_{3}$ (resp. $i_{m-2}$ and $i_{m}$) or $i_{1}=i_{3}$ (resp. $i_{m-2}=i_{m}$). By the proof of Lemma \ref{word1}, we have
$[e_{i_{1}},e_{i_{2}},e_{i_{3}}]=0$ and $\lfloor e_{i_{m-2}},e_{i_{m-1}},e_{i_{m}}\rfloor =0$. So both terms in the lemma are equal to zero.
\ep
\begin{lemma}\label{word3}
If $1\leq i_{1}\leq i_{2}\leq \cdots\leq i_{m}\leq n$ , $m\geq 3$ and $1\leq r\leq m-1$, then
$$[e_{i_{1}},e_{i_{2}},\cdots, e_{i_{m}}]=[[e_{i_{1}},\cdots, e_{i_{r}}],[e_{i_{r+1}},\cdots, e_{i_{m}}]].$$
\end{lemma}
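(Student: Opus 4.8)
The plan is to fix the (weakly increasing) tuple $1\le i_1\le\cdots\le i_m\le n$ and prove the identity
$[e_{i_{1}},\dots, e_{i_{m}}]=[[e_{i_{1}},\dots, e_{i_{r}}],[e_{i_{r+1}},\dots, e_{i_{m}}]]$
by induction on $s:=m-r$, the length of the right-hand bracket factor. The base case $s=1$ is trivial, since then the right side is literally $[[e_{i_1},\dots,e_{i_{m-1}}],e_{i_m}]$, which is the left side by definition of the left normed word. For the inductive step I would expand the right-hand factor one letter at a time using the Jacobi identity in the form
$[a,[b,c]]=[[a,b],c]+[b,[a,c]]$
with $a=[e_{i_1},\dots,e_{i_r}]$, $b=e_{i_{r+1}}$, and $c=[e_{i_{r+2}},\dots,e_{i_m}]$ (a right normed word, which by Lemma \ref{word2} equals the corresponding left normed word). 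This produces
$[[e_{i_1},\dots,e_{i_{r+1}}],[e_{i_{r+2}},\dots,e_{i_m}]]+[e_{i_{r+1}},[[e_{i_1},\dots,e_{i_r}],[e_{i_{r+2}},\dots,e_{i_m}]]].$
The first summand is handled by the inductive hypothesis (the right factor now has length $s-1$). So the whole Lemma reduces to showing the second summand vanishes.

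The key step is therefore the claim that $[e_{i_{r+1}},[[e_{i_1},\dots,e_{i_r}],[e_{i_{r+2}},\dots,e_{i_m}]]]=0$ whenever the indices are weakly increasing. Here is where I would use the structure of the quiver $Q$ of type $\mathbb{A}_n$ through its path matrix $E_Q=(a_{ij})$, exactly as in the proofs of Lemmas \ref{word1} and \ref{word2}. The point is that $i_1\le i_{r+1}\le i_{r+2}$, so $i_{r+1}$ lies between $i_1$ and $i_{r+2}$ in the natural ordering; either $a_{i_1 i_{r+1}}=0$ (no path), in which case $[e_{i_1},e_{i_{r+1}}]=0$ forces the bracket to vanish after reassociating, or there is a path, and then $i_1,i_{r+1},i_{r+2}$ sit on a single path of $Q$ and one of the length-three subwords $[e_{i_1},e_{i_{r+1}},e_{i_{r+2}}]$ (or the corresponding right normed word) is already zero by relation (a) or (b) of Definition \ref{Lie relation}. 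Propagating this zero through the Jacobi identity — reassociating the inner double bracket so that the offending length-three piece is exposed — kills the term. I would write this out by first using Jacobi again to move $e_{i_{r+1}}$ next to $e_{i_1}$ inside the left factor and next to $e_{i_{r+2}}$ inside the right factor, reducing to the three-index computation already carried out in Lemma \ref{word1}.

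The main obstacle I anticipate is bookkeeping in that vanishing claim: one must be careful that, after the Jacobi reassociations needed to expose a length-three subword, the \emph{remaining} factors are still weakly increasing tuples so that Lemma \ref{word2} applies and one really does land on a configuration covered by Lemma \ref{word1}. A clean way to avoid a messy case analysis is to prove a slightly stronger auxiliary statement first: for weakly increasing indices, \emph{any} bracketing of $e_{i_1},\dots,e_{i_m}$ (in this fixed order) equals the left normed word $[e_{i_1},\dots,e_{i_m}]$, with the convention that it is $0$ once $m\ge 3$ and two of $i_1,i_2,i_3$ differ in a way forbidden by (a)/(b) — essentially an "associativity up to the defining relations" lemma. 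Granting that auxiliary statement (whose proof is again induction on $m$ plus Jacobi plus Lemmas \ref{word1}–\ref{word2}), Lemma \ref{word3} is immediate, since both sides are bracketings of the same ordered tuple. I expect the rest to be routine.
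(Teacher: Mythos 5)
Your reduction step is fine and is a mirror image of the paper's argument: you peel a letter off the right-hand factor and apply Jacobi, where the paper peels one off the left-hand factor and runs an inner induction on $r$; in both cases one is left with a single cross term. The genuine gap is in how you dispose of that cross term $[e_{i_{r+1}},[[e_{i_1},\dots,e_{i_r}],[e_{i_{r+2}},\dots,e_{i_m}]]]$. You assert it is always zero and propose to prove this by a local analysis of the three indices $i_1,i_{r+1},i_{r+2}$ (``either $a_{i_1i_{r+1}}=0$, or the three vertices lie on one path and some length-three subword dies by (a)/(b)''). That vanishing statement is exactly Lemma \ref{word4}, which in the paper is proved \emph{after} and \emph{from} Lemma \ref{word3}, so you cannot quote it; and your local dichotomy is false: if the pattern is $i_1\rightarrow i_{r+1}\leftarrow i_{r+2}$ (paths in $Q$, a bipartite configuration), then $a_{i_1i_{r+1}}\neq 0$, the three vertices do not lie on a single path, and no bracketing of $e_{i_1},e_{i_{r+1}},e_{i_{r+2}}$ is killed by relations (a) or (b) --- in $\mathfrak{n}^{+}(\mathbb{A}_3)$ with $1\rightarrow 2\leftarrow 3$ the element $[[e_1,e_2],e_3]$ is a nonzero root vector. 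In such cases the cross term does vanish, but for reasons involving the other letters and relation (c), and establishing this in general is precisely the length-$m$ combinatorial work you are trying to skip. Note the paper itself never proves its cross term is zero directly: when the inner word of the cross term is nonzero it extracts strong constraints on the indices and shows that \emph{both sides} of the identity vanish, including the delicate subcase $i_r=i_{r+1}$ where $[[a,b],[b,c]]=0$ is obtained by two separate Jacobi manipulations. None of this is present in your plan.

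Your proposed escape route does not close the gap either: the auxiliary statement ``every bracketing of the ordered tuple equals the left normed word'' has as its induction step exactly the identity $[[e_{i_1},\dots,e_{i_r}],[e_{i_{r+1}},\dots,e_{i_m}]]=[e_{i_1},\dots,e_{i_m}]$, i.e.\ Lemma \ref{word3} itself, so it restates the problem rather than solving it. There is also a structural issue with your induction: you fix the tuple and induct only on $s=m-r$, but any treatment of the cross term requires identifying $[[e_{i_1},\dots,e_{i_r}],[e_{i_{r+2}},\dots,e_{i_m}]]$ with the left normed word on the length-$(m-1)$ tuple obtained by deleting $i_{r+1}$, which is outside your induction; as in the paper, one needs an outer induction on the length $m$ together with the inner induction on the splitting position.
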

\bp
For convenience, we write $L=[e_{i_{1}},e_{i_{2}},\cdots, e_{i_{m}}]$ and $R=[[e_{i_{1}},\cdots, e_{i_{r}}],[e_{i_{r+1}},\cdots, e_{i_{m}}]]$.
Firstly we proceed by induction on the length $m$ of the left normed word $L$. For $m=3$, it is direct from the discussion above. Assume that
the lemma holds for length less or equal to $m-1$. Let $L$ be of length $m$. Then we proceed by induction on $r$. If $r=1$, then the lemma is correct by Lemma \ref{word2}. Now assume the equality holds for $r-1$. Then
\begin{eqnarray*}R&=&[[[e_{i_{1}},\cdots,e_{i_{r-1}}],e_{i_{r}}],[e_{i_{r+1}},\cdots, e_{i_{m}}]]\\
&=& [[e_{i_{1}},\cdots,e_{i_{r-1}}],[e_{i_{r}},e_{i_{r+1}},\cdots, e_{i_{m}}]]+[[e_{i_{1}},\cdots,e_{i_{r-1}},e_{i_{r+1}},\cdots, e_{i_{m}}],e_{i_{r}}]\\
&=& L+[[e_{i_{1}},\cdots,e_{i_{r-1}},e_{i_{r+1}},\cdots, e_{i_{m}}],e_{i_{r}}].
\end{eqnarray*}
If $[e_{i_{1}},\cdots,e_{i_{r-1}},e_{i_{r+1}},\cdots, e_{i_{m}}]=0$, we have nothing to prove. If $[e_{i_{1}},\cdots,e_{i_{r-1}},e_{i_{r+1}},\cdots, e_{i_{m}}]\neq 0$, by Lemma \ref{word2}, we have $[e_{i_{r-1}},e_{i_{r+1}},e_{i_{r+2}}]\neq 0$. There are two possibilities, $i_{r-1}\rightarrow i_{r+1}\leftarrow i_{r+2}$
or $i_{r-1}\leftarrow i_{r+1}\rightarrow i_{r+2}$, where the arrows stand for paths in $Q$. Immediately, we get $[e_{i_{r-1}},e_{i_{r}},e_{i_{r+1}}]= 0$ and so $L=0$. If $i_{r}\neq i_{r+1}$, then $[e_{i_{r}},e_{i_{r+2}}]=0$. Thus
\begin{eqnarray*}R &=&[[e_{i_{1}},\cdots,e_{i_{r}}],[e_{i_{r+1}},[e_{i_{r+2}},\cdots, e_{i_{m}}]]]\\
&=&[[e_{i_{1}},\cdots,e_{i_{r}},e_{i_{r+1}}],[e_{i_{r+2}},\cdots, e_{i_{m}}]]+[e_{i_{r+1}},[e_{i_{1}},\cdots,e_{i_{r}},e_{i_{r+2}},\cdots, e_{i_{m}}]]\\
&=& 0+0=0.
\end{eqnarray*}
If $i_{r}=i_{r+1}$, we have $R=[[a,b],[b,c]]$, where $a=[e_{i_{1}},\cdots,e_{i_{r-1}}]$, $b=e_{i_{r}}$ and $c=[e_{i_{r+2}},\cdots,e_{i_{m}}]$.
Then we deduce both $R=[[a,[b,c]],b]+[a,[b,[b,c]]]=[[a,[b,c]],b]=[[[a,b],c],b]$ and $R=[[[a,b],b],c]+[b,[[a,b],c]]=[b,[[a,b],c]]=-[[[a,b],c],b]$.
So $R=0$.
\ep
\begin{lemma}\label{word4}
If  $1\leq i_{1}\leq i_{2}\leq \cdots\leq i_{m}\leq n$, $m\geq 3$, $2\leq r\leq m-1$, then $$[[e_{i_{1}},\cdots,e_{i_{r-1}},e_{i_{r+1}},\cdots, e_{i_{m}}],e_{i_{r}}]=0.$$
\end{lemma}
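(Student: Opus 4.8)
The plan is to exploit the relations already available: the Serre-type relation (a), the length-3 relations (b), and the two reduction lemmas \ref{word2} and \ref{word3}. The key observation is that the statement concerns a left normed word of length $m-1$ in the ordered generators, with $e_{i_r}$ inserted at the end; by Lemma \ref{word2} (applied to the length-$(m-1)$ word $[e_{i_1},\dots,e_{i_{r-1}},e_{i_{r+1}},\dots,e_{i_m}]$, whose indices are still weakly increasing) this word either vanishes or equals the corresponding right normed word. So I would split into two cases. If $[e_{i_1},\dots,e_{i_{r-1}},e_{i_{r+1}},\dots,e_{i_m}]=0$ there is nothing to prove. Otherwise, by Lemma \ref{word2} its first length-3 segment $[e_{i_{r-1}},e_{i_{r+1}},e_{i_{r+2}}]$ is nonzero, and by the case analysis in the proof of Lemma \ref{word1} this forces a local configuration of paths in $Q$ around the vertices $i_{r-1},i_{r+1},i_{r+2}$ (a ``peak'' $i_{r-1}\leftarrow i_{r+1}\rightarrow i_{r+2}$ or a ``valley'' $i_{r-1}\rightarrow i_{r+1}\leftarrow i_{r+2}$, since a monotone path $i_{r-1}\to i_{r+1}\to i_{r+2}$ would make the word zero).

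Next I would insert $e_{i_r}$ between $e_{i_{r-1}}$ and $e_{i_{r+1}}$ and analyze $[e_{i_{r-1}},e_{i_r},e_{i_{r+1}}]$. Since $i_{r-1}\le i_r\le i_{r+1}$, the configuration forced above (peak or valley at $i_{r+1}$) together with the orientation of $Q$ being of type $\mathbb{A}$ means that one of $a_{i_{r-1}i_{r+1}}$, $a_{i_{r-1}i_r}$, $a_{i_r i_{r+1}}$ produces a vanishing bracket; concretely, as in Lemma \ref{word1}, $[e_{i_{r-1}},e_{i_r},e_{i_{r+1}}]=0$. Using Lemma \ref{word3} to regroup, $[e_{i_1},\dots,e_{i_{r-1}},e_{i_r},e_{i_{r+1}},\dots,e_{i_m}] = [[e_{i_1},\dots,e_{i_{r-1}},e_{i_r},e_{i_{r+1}}],[e_{i_{r+2}},\dots,e_{i_m}]]$, and by Lemma \ref{word3} again (or \ref{word2}) the inner factor is a multiple of $[e_{i_{r-1}},e_{i_r},e_{i_{r+1}}]=0$, hence the whole ordered left normed word of length $m$ vanishes. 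Then I would run the Jacobi-identity computation that already appears in the proof of Lemma \ref{word3}: expanding $[[e_{i_1},\dots,e_{i_{r-1}},e_{i_{r+1}},\dots,e_{i_m}],e_{i_r}]$ by moving $e_{i_r}$ inward produces exactly the ordered length-$m$ word (which is now zero) plus a commutator of two shorter pieces whose bracket is again forced to vanish by the same path analysis, via the identity $R=[[[a,b],c],b]=-[[[a,b],c],b]$ in the subcase $i_r=i_{r+1}$ and by a Serre/length-3 relation in the subcase $i_r\ne i_{r+1}$.

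The cleanest way to organize all of this is probably a simultaneous induction on $m$ together with $r$, mirroring the double induction in Lemma \ref{word3}, so that whenever a shorter left normed word or a reindexed word of the same length but smaller $r$ appears, the inductive hypothesis applies. I would set the base case $m=3$ (where $r=2$ and the claim is $[[e_{i_1},e_{i_3}],e_{i_2}]=0$, immediate from relation (b) or (c) after checking the path configuration) and then reduce the general case to it.

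The main obstacle I anticipate is the bookkeeping in the subcase $i_r=i_{r+1}$ (equal adjacent indices), where the naive expansion does not immediately collapse and one must use the antisymmetry trick $R=-R$ from Lemma \ref{word3}; I will need to verify carefully that the pieces $a=[e_{i_1},\dots,e_{i_{r-1}}]$, $b=e_{i_r}$, $c=[e_{i_{r+2}},\dots,e_{i_m}]$ really do satisfy $[a,[b,[b,c]]]=0$ and $[[[a,b],b],c]=0$, which again comes down to $(\ad e_{i_r})^2$ annihilating the relevant length-$(\le 2)$ words by relation (a) and by the path structure of $\mathbb{A}_n$. The other cases are routine applications of Jacobi plus the already-established Lemmas \ref{word1}--\ref{word3}.
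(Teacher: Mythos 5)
Your strategy is not the one the paper uses, and as written it has a real gap. The paper's proof is a purely formal cancellation: by Lemma \ref{word3} one writes $[e_{i_1},\cdots,e_{i_{r-1}},e_{i_{r+1}},\cdots,e_{i_m}]=[[e_{i_1},\cdots,e_{i_{r-1}}],[e_{i_{r+1}},\cdots,e_{i_m}]]$, applies the Jacobi identity to the bracket with $e_{i_r}$, and then uses Lemma \ref{word3} twice more to recognize \emph{both} resulting terms as the full ordered word $[e_{i_1},\cdots,e_{i_m}]$, once with a minus sign and once with a plus sign; they cancel. Nothing has to vanish, there is no case distinction, no path analysis, no induction, and repeated indices cause no trouble. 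Your plan instead rests on the intermediate claims that (in the nontrivial case) the ordered length-$m$ word $[e_{i_1},\cdots,e_{i_m}]$ is zero and that the leftover term of your Jacobi expansion is zero ``by the same path analysis.'' These claims are exactly what you do not establish: the passage from nonvanishing of the length-$(m-1)$ word to the existence of a path between $i_{r-1}$ and $i_{r+1}$ (and to $[e_{i_{r-1}},e_{i_{r+1}},e_{i_{r+2}}]\neq 0$) is asserted rather than proved, and when $r=m-1$ the index $i_{r+2}$ does not even exist; the statement that ``the inner factor is a multiple of $[e_{i_{r-1}},e_{i_r},e_{i_{r+1}}]$'' is not literally true for a left normed word (to conclude vanishing you must pass to the right normed form via Lemma \ref{word2}, where the dead triple sits innermost); and the $R=-R$ trick together with the subcase $i_r=i_{r+1}$ belongs to the internal induction of Lemma \ref{word3}, which is already available to you as a black box -- importing its proof here only obscures what is needed.

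Concretely, the mechanism of the lemma is cancellation, not vanishing: the ordered word $[e_{i_1},\cdots,e_{i_m}]$ can be nonzero in general (e.g.\ $[e_1,e_2,e_3]\neq 0$ for the bipartite orientation $1\rightarrow 2\leftarrow 3$), so any proof built on showing it is zero must first restrict to the case where the length-$(m-1)$ word survives and then run a careful type-$\mathbb{A}$ path argument, including the degenerate cases with repeated or coincident indices, which your sketch defers. Even your base case is not ``immediate'': $[[e_{i_1},e_{i_3}],e_{i_2}]$ is generally not one of the listed relations and already needs a Jacobi step (it is precisely Lemma \ref{word3} with $m=3$). My recommendation: drop the case analysis and the extra induction entirely, and reproduce the four-line computation -- split by Lemma \ref{word3}, expand by Jacobi, rewrite the two terms as $\mp[e_{i_1},\cdots,e_{i_m}]$ by Lemma \ref{word3}, and cancel.
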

\bp
\begin{eqnarray*}& & [[e_{i_{1}},\cdots,e_{i_{r-1}},e_{i_{r+1}},\cdots, e_{i_{m}}],e_{i_{r}}]=[[[e_{i_{1}},\cdots,e_{i_{r-1}}],[e_{i_{r+1}},\cdots, e_{i_{m}}]],e_{i_{r}}] \\
&=& [[e_{i_{1}},\cdots,e_{i_{r-1}}],[[e_{i_{r+1}},\cdots, e_{i_{m}}],e_{i_{r}}]]+[[[e_{i_{1}},\cdots,e_{i_{r-1}}],e_{i_{r}}],[e_{i_{r+1}},\cdots, e_{i_{m}}]]\\
&=& -[[e_{i_{1}},\cdots,e_{i_{r-1}}],[e_{i_{r}},e_{i_{r+1}},\cdots, e_{i_{m}}]]+[[e_{i_{1}},\cdots,e_{i_{r-1}},e_{i_{r}}],[e_{i_{r+1}},\cdots, e_{i_{m}}]]\\
&=& -[e_{i_{1}},\cdots,e_{i_{m}}]+[e_{i_{1}},\cdots,e_{i_{m}}]=0.
\end{eqnarray*}
\ep
We call $\omega=[e_{i_{1}},e_{i_{2}},\cdots,e_{i_{m}}]$ a\emph{ standard left normed word} when it satisfies $1\leq i_{1}<i_{2}<\cdots < i_{m}\leq n$, and associate an $\mathbb{A}_{m}$-type quiver $Q_{\omega}$ to it as follows: the set of vertices is just $\{i_{1},i_{2},\cdots,i_{m}\}$; for each pair $(i_{j},i_{j+1})$, $1\leq j\leq m-1$, if there is a path between $i_{j}$ and $i_{j+1}$, we draw an arrow between them, with its direction indicating the direction of the path.
\begin{proposition}\label{span1}
The linear space $\tilde{\mathfrak{N}}$ is spanned by standard left normed words whose associated quivers are connected and bipartite.
\end{proposition}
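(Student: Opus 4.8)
The plan is to start from Lemma~\ref{span}, which already gives that the left normed words $[e_{i_1},e_{i_2},\dots,e_{i_m}]$ span $\tilde{\mathfrak{N}}$, and then reduce an arbitrary such word in three stages, by induction on the length $m$. \emph{Stage 1 (sorting).} First I would show that every left normed word is a linear combination of left normed words with strictly increasing indices. Writing $[e_{j_1},\dots,e_{j_m}]=[[e_{j_1},\dots,e_{j_{m-1}}],e_{j_m}]$ and applying the inductive hypothesis to the first factor, it suffices to rewrite $[[e_{k_1},\dots,e_{k_{m-1}}],e_j]$ with $k_1<\cdots<k_{m-1}$, i.e.\ to insert $e_j$ into a strictly sorted word. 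One pushes $e_j$ leftward with the Jacobi identity; each step produces a correction term $[\,a,[e_k,e_j]\,]$, which vanishes by relation~(c) of Definition~\ref{Lie relation} when there is no path between $k$ and $j$, and otherwise $[e_k,e_j]$ is (up to sign) a single element that the relations (a) and (b) available in type $\mathbb{A}$ let one reabsorb, the process being controlled by a secondary induction. The companion fact needed here is that a strictly sorted word with a repeated index is zero: if $i_s=i_{s+1}$, then Lemma~\ref{word3} splits off $[e_{i_1},\dots,e_{i_{s-1}}]$ from $[e_{i_s},e_{i_{s+1}}]=[e_{i_s},e_{i_s}]=0$. After Stage~1, $\tilde{\mathfrak{N}}$ is spanned by standard left normed words $\omega=[e_{i_1},\dots,e_{i_m}]$ with $i_1<\cdots<i_m$.

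\emph{Stage 2 (disconnected $Q_\omega$ vanishes).} Suppose $\omega$ is standard and $Q_\omega$ is disconnected, and pick $j$ with no path in $Q$ between $i_j$ and $i_{j+1}$. Since the underlying graph of $Q$ is the chain $1-2-\cdots-n$ and the indices are increasing, any path joining some $i_a\le i_j$ to some $i_b\ge i_{j+1}$ would factor through both $i_j$ and $i_{j+1}$; hence $[e_{i_a},e_{i_b}]=0$ by relation~(c) for all such $a,b$. By Lemma~\ref{word3}, $\omega=[\,[e_{i_1},\dots,e_{i_j}],[e_{i_{j+1}},\dots,e_{i_m}]\,]$, and a short induction on the two block lengths (peeling off one generator at a time and using the Jacobi identity with the cross-vanishing just recorded) forces $\omega=0$.

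\emph{Stage 3 (non-bipartite $Q_\omega$ vanishes) and conclusion.} If $\omega$ is standard with $Q_\omega$ connected but not bipartite, then $Q_\omega$, being an orientation of a chain, has an interior vertex which is neither a source nor a sink, so there is $j$ with $a_{i_{j-1}i_j}a_{i_j i_{j+1}}=1$. Using Lemma~\ref{word3} twice I would write $\omega=[\,[[e_{i_1},\dots,e_{i_{j-1}}],[e_{i_j},e_{i_{j+1}}]]\,,\,[e_{i_{j+2}},\dots,e_{i_m}]\,]$; peeling $e_{i_{j-1}}$ off the inner left factor and applying the Jacobi identity, the term containing $[e_{i_{j-1}},[e_{i_j},e_{i_{j+1}}]]$ dies by relation~(b), and the remaining term equals $\bigl[[e_{i_1},\dots,e_{i_{j-2}},e_{i_j},e_{i_{j+1}}],e_{i_{j-1}}\bigr]$ (again by Lemma~\ref{word3}), which is $0$ by Lemma~\ref{word4}; the case $j=2$ is immediate from relation~(b). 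Combining the three stages, $\tilde{\mathfrak{N}}$ is spanned by the standard left normed words whose associated quivers are connected and bipartite.

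\emph{Main obstacle.} Stages~2 and~3 are bookkeeping on top of Lemmas~\ref{word3}–\ref{word4} and relations (b), (c). The genuinely hard part is Stage~1: in a free Lie algebra the sorted left normed words do \emph{not} span, so the reduction must use the defining relations in an essential way, and one has to arrange the induction so that the Jacobi correction terms — which involve honest elements $[e_k,e_j]$ rather than generators — can be reabsorbed. This is precisely the step where the type $\mathbb{A}$ hypothesis is used, and it is also the reason why $\psi$ fails to be an isomorphism for the other Dynkin types.
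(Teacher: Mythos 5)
Your Stages 2 and 3 are correct, and in fact more explicit than the paper's own one-line disposal of disconnected and non-bipartite standard words (which just cites Definition \ref{Lie relation} and Lemma \ref{word2}); your splittings via Lemma \ref{word3}, the use of relation (b), and the appeal to Lemma \ref{word4} are exactly the intended mechanism, and the commuting-blocks argument in Stage 2 is fine. The genuine gap is Stage 1, which you flag as the hard part but never prove. The scheme ``push $e_j$ leftward and reabsorb the Jacobi corrections $[a,[e_k,e_j]]$ using relations (a) and (b)'' is not an argument: when there is a path between $k$ and $j$ the correction is a bracket of a left normed word with a length-two word, rewriting it by Jacobi just reproduces terms of the same shape (including the one you started from), and nothing in your sketch controls the ``secondary induction'' you invoke.

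Moreover, the difficulty you describe is illusory given the lemmas you already use elsewhere. The paper proves the stronger claim that every left normed word is either $0$ or equal, up to sign, to its sorted version, by induction on length with three cases for the appended index $j$ relative to the sorted prefix $k_1<\cdots<k_{m-1}$: if $j\ge k_{m-1}$ the word is already sorted (or has an adjacent repeat, hence vanishes); if $j\le k_1$, antisymmetry together with Lemma \ref{word3} with $r=1$ (equivalently Lemma \ref{word2}) identifies $-[e_j,[e_{k_1},\dots,e_{k_{m-1}}]]$ with $\mp$ the sorted left normed word; and if $k_1<j<k_{m-1}$ the whole word vanishes outright by Lemma \ref{word4} --- the very lemma you deploy in Stage 3 --- so no reabsorption of correction terms is needed. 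Without this (or an equivalent completed argument) your proof is incomplete at its central step. As a side remark, your closing claim that this is ``the reason why $\psi$ fails to be an isomorphism for the other Dynkin types'' is not supported by the paper, which verifies that $\psi$ is an isomorphism for bipartite quivers of any type and for the $\mathbb{D}_4$-shaped quiver in the example following Example \ref{a4}, and leaves the general question open.
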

\bp
For each left normed word $\theta=[e_{j_{1}},e_{j_{2}},\cdots,e_{j_{r}}]$, we can give a reordering $(i_{1},i_{2},\cdots,i_{r})$ of $(j_{1},j_{2},\cdots,j_{r})$ such that $i_{1}\leq i_{2}\leq\cdots\leq i_{r}$. Now we claim that, if $\theta\neq 0$, then $\theta=[e_{i_{1}},e_{i_{2}},\cdots,e_{i_{r}}]$ or $\theta=-[e_{i_{1}},e_{i_{2}},\cdots,e_{i_{r}}]$.

We prove the claim by induction on the length of $\theta$. When $r=2$, it is trivial. Assume the claim holds for length less or equal to $r$.
Consider $\theta'=[e_{j_{1}},e_{j_{2}},\cdots,e_{j_{r}},e_{j_{r+1}}]=[\theta, e_{j_{r+1}}]$. Since $(i_{1},i_{2},\cdots,i_{r})$ is a reordering
of $(j_{1},j_{2},\cdots,j_{r})$, there are three possibilities for $j_{r+1}$. Firstly, $j_{r+1}\geq i_{r}$. The reordering of $(j_{1},j_{2},\cdots,j_{r},j_{r+1})$ is $(i'_{1},i'_{2},\cdots,i'_{r},i'_{r+1})$ with $i'_{s}=i_{s}, 1\leq s\leq r$ and $i'_{r+1}=j_{r+1}$. If $\theta'\neq 0$, then $\theta\neq 0$. Thus $\theta=\pm[e_{i_{1}},e_{i_{2}},\cdots,e_{i_{r}}]$. So it is easy that $\theta'=[\theta, e_{j_{r+1}}]=\pm[e_{i'_{1}},e_{i'_{2}},\cdots,e_{i'_{r+1}}]$. Secondly, $j_{r+1}\leq i_{1}$. Then the reordering of $(j_{1},j_{2},\cdots,j_{r},j_{r+1})$ is $(i'_{1},i'_{2},\cdots,i'_{r},i'_{r+1})$ with $i'_{1}=j_{r+1}$ and $i'_{s}=i_{s-1}, 2\leq s\leq r+1$.  Again, if $\theta'\neq 0$, then $\theta'=[\theta, e_{j_{r+1}}]=-[e_{i'_{1}},\theta]=\pm[e_{i'_{1}},e_{i'_{2}},\cdots,e_{i'_{r+1}}]$. Lastly, $i_{1}<j_{r+1}< i_{r}$. Then the reordering of $(j_{1},j_{2},\cdots,j_{r},j_{r+1})$ is $(i'_{1},i'_{2},\cdots,i'_{r},i'_{r+1})$ with $j_{r+1}=i'_{p}$ satisfying $2\leq p\leq r$. Thus by Lemma \ref{word4}, $\theta'=[\theta, e_{j_{r+1}}]=\pm [[e_{i_{1}},e_{i_{2}},\cdots,e_{i_{r}}],e_{j_{r+1}}]=\pm [[e_{i'_{1}},\cdots,e_{i'_{p-1}},e_{i'_{p+1}},\cdots,e_{i'_{r+1}}],e_{i'_{p}}]=0$.

Note that for each left normed word $\theta=[e_{i_{1}},e_{i_{2}},\cdots,e_{i_{r}}]$ such that $i_{1}\leq i_{2}\leq\cdots\leq i_{r}$, if there exists an $l$ such that $i_{l}=i_{l+1}$, then $\theta=0$ by Lemma \ref{word2}. According to Lemma \ref{span}, the claim above amounts to saying that the linear space $\tilde{\mathfrak{N}}$ is spanned by standard left normed words. For each standard left normed word $\omega=[e_{i_{1}},e_{i_{2}},\cdots,e_{i_{m}}]$,
if $a_{i_{l}i_{l+1}}=0$ or $a_{i_{l-1}i_{l}}a_{i_{l}i_{l+1}}=1$, then $\omega=0$ by Definition \ref{Lie relation} and Lemma \ref{word2}. So the quiver $Q_{\omega}$ associated to any nonzero standard left normed word $\omega$ must be connected and bipartite.
\ep
\begin{lemma}\label{span2}
The number of standard left normed words whose associated quivers are connected and bipartite is $\frac{n(n+1)}{2}$.
\end{lemma}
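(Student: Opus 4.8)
The plan is to set up an explicit bijection between the set $\mathcal{W}$ of standard left normed words $\omega=[e_{i_1},\dots,e_{i_m}]$ (so $1\le i_1<i_2<\dots<i_m\le n$, $m\ge 1$) whose associated quiver $Q_\omega$ is connected and bipartite, and the set of subintervals $[a,b]=\{a,a+1,\dots,b\}$ of $\{1,\dots,n\}$; since there are $\binom{n+1}{2}=\tfrac{n(n+1)}{2}$ such intervals, the count follows. First I would record the combinatorial picture of $Q$: since its underlying graph is the chain $1-2-\cdots-n$, an orientation is a choice of sign $\varepsilon_k\in\{\pm1\}$ on each edge $k-(k{+}1)$, and I call a vertex $k$ with $1<k<n$ a \emph{knot} of $Q$ if $\varepsilon_{k-1}\neq\varepsilon_k$ (equivalently, $k$ is a source or a sink of $Q$). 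The two elementary facts I need are: (i) for $a<b$ there is a path in $Q$ between $a$ and $b$ (i.e. $a_{ab}\ne 0$ in the path matrix $E=(a_{ij})$) exactly when no knot of $Q$ lies strictly between $a$ and $b$; (ii) for $a<b<c$ with $a_{ab}\ne0\ne a_{bc}$ one has $a_{ab}a_{bc}=-1$ precisely when $b$ is a knot of $Q$ (these are immediate from the list of consequences of the definition of $E$).

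Next I would translate the hypotheses on $Q_\omega$ for $m\ge 2$. Connectedness of $Q_\omega$ (an oriented subgraph of the chain $i_1-\cdots-i_m$) forces all $m-1$ edges to be present, i.e. $a_{i_j i_{j+1}}\ne0$ for all $j$, which by (i) means \emph{no knot of $Q$ lies strictly between two consecutive letters $i_j,i_{j+1}$}. Bipartiteness of $Q_\omega$ means every vertex of $Q_\omega$ is a sink or a source; the endpoints $i_1,i_m$ are automatic, so the condition is that each internal letter $i_l$ ($1<l<m$) is a sink or source of $Q_\omega$, i.e. $a_{i_{l-1}i_l}a_{i_l i_{l+1}}=-1$, which by (ii) means \emph{$i_l$ is a knot of $Q$}. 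Putting the two together: the internal letters $i_2,\dots,i_{m-1}$ are knots of $Q$, and between consecutive letters there is no further knot; hence $\{i_2,\dots,i_{m-1}\}$ is \emph{exactly} the set of knots of $Q$ lying in the open interval $(i_1,i_m)$. In particular $\omega\in\mathcal{W}$ with $m\ge 2$ is uniquely determined by the pair $(i_1,i_m)$.

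Conversely, for each pair $a<b$ in $\{1,\dots,n\}$ I would define $\omega_{a,b}=[e_a,e_{k_1},\dots,e_{k_r},e_b]$, where $k_1<\cdots<k_r$ are all the knots of $Q$ with $a<k_j<b$. Using (i) and (ii) one checks directly that $\omega_{a,b}$ is a standard left normed word with $Q_{\omega_{a,b}}$ connected (no knot lies strictly between consecutive letters, by construction) and bipartite (every internal letter is a knot), so $\omega_{a,b}\in\mathcal{W}$. Together with the $m=1$ words $e_a$ — for which $Q_\omega$ is a single vertex, trivially connected and bipartite — and the matching $e_a\leftrightarrow[a,a]$, this produces mutually inverse maps $\mathcal{W}\to\{[a,b]\subseteq\{1,\dots,n\}\}$, $\omega\mapsto[i_1,i_m]$, and $[a,b]\mapsto\omega_{a,b}$. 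Counting intervals then gives $|\mathcal{W}|=\binom{n+1}{2}=\tfrac{n(n+1)}{2}$.

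The only part carrying real content is the two-directional characterization in terms of knots (the second paragraph and the verification that each $\omega_{a,b}$ lies in $\mathcal{W}$): that is where the orientation of $Q$ actually enters, via facts (i)--(ii); the rest is routine. I would also note two alternatives and remarks for completeness: (a) one can instead enumerate $\mathcal{W}$ directly over the branches of $Q$ — if the branches have $d_1+1,\dots,d_t+1$ vertices with $d_1+\cdots+d_t=n-1$, then $|\mathcal{W}|=n+\sum_{j}\binom{d_j+1}{2}+\sum_{i<j}d_id_j$, which telescopes to $n+\tfrac{n-1}{2}+\tfrac{(n-1)^2}{2}=\tfrac{n(n+1)}{2}$; (b) the interval bijection in fact identifies $\mathcal{W}$ with the set of indecomposable $kQ$-modules via their supports (Gabriel's theorem), in keeping with the rest of the section. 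Finally, this lemma is what makes Theorem \ref{Atype} work: by Proposition \ref{span1} the $\tfrac{n(n+1)}{2}$ words of $\mathcal{W}$ span $\tilde{\mathfrak{N}}$, so $\dim\tilde{\mathfrak{N}}\le\tfrac{n(n+1)}{2}=\dim\tilde{\mathfrak{n}}^{+}$, and hence the epimorphism $\psi\colon\tilde{\mathfrak{N}}\twoheadrightarrow\tilde{\mathfrak{n}}^{+}$ is forced to be an isomorphism.
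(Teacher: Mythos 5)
Your proof is correct, and it takes a genuinely different route from the paper. The paper fixes the standard form of $Q$ with $t$ maximal paths of lengths $n_1,\dots,n_t$, stratifies the set $W$ by word length, counts $|W_1|=n$, $|W_2|=\sum_k\tfrac{n_k(n_k+1)}{2}$, $|W_r|=\sum_k n_kn_{r+k-2}$ for $3\le r\le t+1$, and then simplifies the sum algebraically to $\tfrac{n(n+1)}{2}$ (this is exactly your alternative (a), which reproduces the paper's computation). Your main argument instead establishes a bijection $\omega\mapsto[i_1,i_m]$ between $\mathcal{W}$ and the intervals of $\{1,\dots,n\}$, resting on the two correct observations that connectedness of $Q_\omega$ forbids knots strictly between consecutive letters while bipartiteness forces every internal letter to be a knot, so that a word in $\mathcal{W}$ is determined by its endpoints and every pair of endpoints occurs; the count of intervals then gives $\binom{n+1}{2}$ with no summation bookkeeping. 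What each approach buys: yours is cleaner, makes the uniqueness-by-endpoints structure explicit, and matches the interval supports of indecomposable $kQ$-modules in type $\mathbb{A}$ (explaining conceptually why the count equals $\dim\tilde{\mathfrak{n}}^{+}$), whereas the paper's length-stratified count additionally records how many words there are of each length $r$, i.e. the distribution of roots by height, which is used in the root-system description elsewhere in Section 7. Your concluding observation about how the lemma feeds into Theorem \ref{Atype} coincides with the paper's use of it.
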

\bp
Without loss of generality, we assume $Q$ is as follows:
$$\xymatrix@=0.8pc{1\ar[r]& \cdot\ar@{.>}[r]& \cdot\ar[r] & n_{1}+1 & \cdot\ar[l]& \cdot\ar@{.>}[l]& \sum^{2}_{i=1}n_{i}+1\ar[l]\ar@{.}[rr]& &\cdot\ar[r]& \cdot\ar@{.>}[r]& \cdot\ar[r]&\sum^{t}_{i=1}n_{i}+1.  \\}$$
Here $Q$ has $n=\sum^{t}_{i=1}n_{i}+1$ vertices and $t$ paths of maximal length. Let $W$ be the set of standard left normed words whose associated quivers are connected and bipartite. Let $W_{r}$ be the subset of $W$ whose words are of length $r$. Obviously $W=\dot{\cup}^{t+1}_{r=1}W_{r}$.

By calculation, $|W_{1}|=n$, $|W_{2}|=\sum^{t}_{k=1}\frac{n_{k}(n_{k}+1)}{2}$, $W_{r}=\sum^{t-r+2}_{k=1}n_{k}n_{r+k-2}$ for $3\leq r\leq t+1$. Therefore,
\begin{eqnarray*}|W|&=&n+\sum^{t}_{k=1}\frac{n_{k}(n_{k}+1)}{2}+\sum^{t+1}_{r=3}\sum^{t-r+2}_{k=1}n_{k}n_{r+k-2}\\
&=& n+\sum^{t}_{k=1}\frac{n_{k}}{2}+\frac{1}{2}\sum^{t}_{k=1}n_{k}^{2}+\sum^{t-1}_{p=1}\sum^{t-p}_{k=1}n_{k}n_{k+p}\\
&=& \frac{3n-1}{2}+\frac{1}{2}(\sum^{t}_{k=1}n_{k})^{2}=\frac{n(n+1)}{2}.
\end{eqnarray*}
\ep
Now we finish the proof of Theorem \ref{Atype}. Since $\psi: \tilde{\mathfrak{N}}\rightarrow \tilde{\mathfrak{n}}^{+}$ is an epimorphism and
$\dim \tilde{\mathfrak{n}}^{+}= \frac{n(n+1)}{2}$, by combing Proposition \ref{span1} and Lemma \ref{span2}, we conclude that $\psi$ is an isomorphism.
Recall Proposition \ref{nil-ideal}, then for any $\mathbb{A}_{n}$-type quiver $Q$, we deduce that
$$\mathfrak{a}=\{[e_{i},[e_{j},e_{k}]], [e_{k},[e_{i},e_{j}]]|a_{ij}a_{jk}=1\}.$$

\subsection{Connection with complete Lie algebras}
A Lie algebra is called \emph{complete} if its center is zero and all its derivations are inner. The corresponding Lie groups of complete Lie algebras are complete groups, whose center is unit and all of whose automorphisms are inner. The conception of complete Lie algebras appeared
in the 1940s, shortly after the appearance of complete groups in the 1930s. From the 1940s to 1950s, some important results were obtained concerning complete Lie algebras and complete groups. For example, the semisimple Lie algebras over a field of characteristic $0$ are complete.
Over arbitrary fields, the Lie algebras with non-degenerate Killing form are complete. Then there were few results on complete Lie algebras until the 1980s.

Since the late 1980s, Meng and his collaborators developed a general theory on complete Lie algebras in a series of papers (cf. \cite{MZ,RM} and references therein). Since then more and more complete Lie algebras were known. Following \cite{MZ}, we construct a kind of solvable complete Lie algebras as follows.

Let $T$ be a maximal torus on the nilpotent Lie algebra $\mathfrak{N}$. Then the direct sum $\mathfrak{L}=T\oplus \mathfrak{N}$ is a solvable Lie algebra by setting $$[t_{1}+y_{1}, t_{2}+y_{2}]= t_{1}(y_{2})-t_{2}(y_{1})+[y_{1},y_{2}],$$
where $t_{1}, t_{2}\in T$ and $y_{1}, y_{2}\in \mathfrak{N}$.
\begin{proposition}\rm{(\cite{MZ})}
Let $\mathfrak{N}$ be a nilpotent Lie algebra of maximal rank and $T$ be a maximal torus on $\mathfrak{N}$. Then $\mathfrak{L}=T\oplus \mathfrak{N}$ is complete.
\end{proposition}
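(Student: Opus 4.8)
The plan is to verify the two defining properties of a complete Lie algebra for $\mathfrak{L}=T\oplus\mathfrak{N}$: that $Z(\mathfrak{L})=0$ and that $\Der\mathfrak{L}=\ad\mathfrak{L}$. The input from maximal rank is that, with respect to a $T$-msg $(x_1,\dots,x_n)$ whose roots $(\beta_1,\dots,\beta_n)$ form a basis of $T^*$, every $\beta\in R(T)$ has height $|\beta|\geq 1$; hence the zero weight space of $T$ in $\mathfrak{N}$ is trivial, the zero weight space of $\ad T$ in $\mathfrak{L}$ is exactly $T$ (if $s+y$ with $s\in T,\,y\in\mathfrak{N}$ is centralized by $T$ then $t(y)=[t,s+y]=0$ for all $t\in T$, so $y=0$), and $[\mathfrak{L},\mathfrak{L}]=T(\mathfrak{N})+\mathfrak{N}^2=\mathfrak{N}$. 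Since $T\subseteq\Der\mathfrak{N}$ acts faithfully on $\mathfrak{N}$, a central element of $\mathfrak{L}$ must lie in $\mathfrak{L}_0=T$ and annihilate all of $\mathfrak{N}$, hence be $0$; thus $Z(\mathfrak{L})=0$.

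Now let $D\in\Der\mathfrak{L}$. Because each $\ad t$ ($t\in T$) acts semisimply on $\mathfrak{L}$ (it is $0$ on $T$ and equals $t$ on $\mathfrak{N}$), the operators $D'\mapsto[\ad t,D']$ form a commuting family of semisimple endomorphisms of $\Der\mathfrak{L}$, so $D=\sum_{\chi\in T^*}D_\chi$ decomposes into $\ad T$-weight components, and it suffices to show each $D_\chi$ is inner. If $\chi\neq 0$, pick $t_0\in T$ with $\chi(t_0)\neq 0$. Expanding $D_\chi[t_0,y]$ in two ways (via the derivation rule and via $[\ad t_0,D_\chi]=\chi(t_0)D_\chi$) gives $[D_\chi(t_0),y]=-\chi(t_0)D_\chi(y)$ for all $y\in\mathfrak{L}$, so $D_\chi=\ad\!\big(-\chi(t_0)^{-1}D_\chi(t_0)\big)$ is inner.

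It remains to treat $D_0$, which commutes with $\ad T$ and therefore preserves every weight space; in particular $D_0(T)\subseteq\mathfrak{L}_0=T$, and $D_0$ preserves $\mathfrak{N}=[\mathfrak{L},\mathfrak{L}]$, restricting there to a derivation of $\mathfrak{N}$ lying in $C_{\Der\mathfrak{N}}(T)$. The heart of the argument is the identity $C_{\Der\mathfrak{N}}(T)=T$, where maximal rank is indispensable: a semisimple element of $C_{\Der\mathfrak{N}}(T)$ together with $T$ spans a torus, so lies in $T$ by maximality; a nilpotent $n\in C_{\Der\mathfrak{N}}(T)$ induces on $\mathfrak{N}/\mathfrak{N}^2=\oplus_i\mathbb{C}\bar x_i$ a nilpotent operator commuting with $T$, hence acting as $0$ on the one-dimensional weight lines $\mathbb{C}\bar x_i$, so $n(x_i)\in\mathfrak{N}^2\cap\mathfrak{N}_{\beta_i}=0$ (as $|\beta_i|=1$ and the $\beta_j$ are linearly independent), whence $n=0$; and an arbitrary element of $C_{\Der\mathfrak{N}}(T)$ is the sum of its semisimple and nilpotent Jordan parts, both again in $C_{\Der\mathfrak{N}}(T)$. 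Consequently $D_0|_{\mathfrak{N}}=a$ for some $a\in T$. Finally, for $t\in T$ and $y\in\mathfrak{N}$, comparing $D_0[t,y]$ with $[D_0t,y]+[t,D_0y]$ and using $a\circ t=t\circ a$ on $\mathfrak{N}$ yields $(D_0t)(y)=a(t(y))-t(a(y))=0$; since $D_0t\in T$ acts faithfully on $\mathfrak{N}$, $D_0|_T=0$, so $D_0=\ad a$. Therefore $D=\ad\!\big(a+\sum_{\chi\neq 0}(-\chi(t_\chi)^{-1}D_\chi(t_\chi))\big)$ is inner, and $\mathfrak{L}$ is complete.

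The routine parts are the weight-space bookkeeping and the derivation identities; the main obstacle is the structural fact $C_{\Der\mathfrak{N}}(T)=T$, equivalently that the zero weight space of a maximal torus in $\Der\mathfrak{N}$ contains no nonzero nilpotent derivation. This is exactly the point that forces the maximal-rank hypothesis, and a careful statement following \cite{Sa} (the roots of a $T$-msg being a basis of $T^*$) is what makes it go through.
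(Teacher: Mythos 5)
The paper does not prove this proposition at all: it is imported verbatim from Meng--Zhu \cite{MZ}, so there is no internal argument to compare against. Your proof is, as far as I can check, correct and self-contained, and it follows the standard line of attack for completeness of $T\oplus\mathfrak{N}$: (i) maximal rank forces every root to have height $\geq 1$, so the zero weight space of $T$ on $\mathfrak{N}$ vanishes, which kills the center and identifies $T$ with the zero weight space of $\ad T$ on $\mathfrak{L}$; (ii) decompose a derivation $D$ of $\mathfrak{L}$ under the commuting semisimple operators $[\ad t,-]$, observe that each nonzero-weight component is inner by the usual identity $[D_\chi t_0,y]=-\chi(t_0)D_\chi(y)$; (iii) reduce the zero-weight component to the structural fact $C_{\Der\mathfrak{N}}(T)=T$, proved by splitting into semisimple part (absorbed into $T$ by maximality) and nilpotent part (killed using that the msg-roots $\beta_i$ are linearly independent, so $\mathfrak{N}^2\cap\mathfrak{N}_{\beta_i}=0$ and a derivation vanishing on generators is zero). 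The only steps you pass over quickly, and which are worth a sentence each in a written version, are: that $\Der\mathfrak{L}$ is invariant under $[\ad t,-]$, so each weight component $D_\chi$ is again a derivation; and that the semisimple and nilpotent Jordan parts of a derivation of a finite-dimensional algebra over $\mathbb{C}$ are again derivations and are polynomials in the original, hence still centralize $T$ --- both standard facts, but they carry the argument for $C_{\Der\mathfrak{N}}(T)=T$. With those made explicit, your write-up would serve as a complete substitute for the citation.
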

Recall that $\tilde{\mathfrak{n}}^{+}$ is a nilpotent Lie algebra of maximal rank and is generated by $C_{P_{i}}$, $1\leq i\leq n$.
$t_{i}$ is defined as $t_{i}(C_{P_{j}})=\delta_{ij}C_{P_{j}}$, $1\leq i\leq n$. Then $T=\oplus^{n}_{i=1}\mathbb{C}t_{i}$ is a maximal torus on $\tilde{\mathfrak{n}}^{+}$. Thus we get a complete Lie algebra $\tilde{\mathfrak{L}}=T\oplus \tilde{\mathfrak{n}}^{+}$. $T$ is also a maximal
torus subalgebra of $\tilde{\mathfrak{L}}$.

Moreover, $\tilde{\mathfrak{L}}$ has a root space decomposition with respect to $T$:
$$\tilde{\mathfrak{L}}=T\oplus \bigoplus_{\alpha \in R(T)}\tilde{\mathfrak{L}}_{\alpha},$$
where $\tilde{\mathfrak{L}}_{\alpha}=\{x|[t,x]=\alpha(t)x, \forall x\in T\}=\tilde{\mathfrak{n}}^{+}_{\alpha}$ and $\dim \tilde{\mathfrak{L}}_{\alpha}=1$.

We remark that, the above procedure could be seen as a generalization of obtaining the Borel subalgebras from the nilpotent radicals of semisimple Lie algebras.

\section*{Acknowledgments}
The authors would like to thank Bangming Deng, Jie Xiao and Fan Xu for stimulating discussions and great help.
The research was initiated while the second author was visiting the Mathematical Institute at the University
of Bonn, and he is grateful to Jan Schr\"{o}er for bringing the paper \cite{RZ} to his attention. The second author also
owns a lot to Yong Jiang for continuous encouragement and valuable suggestions. Furthermore,
it is a pleasure for the second author to thank the Program of Visiting Scholars at Chern Institute of Mathematics,
where part of this paper was written. Lastly, he wants to thank Ming Ding for his invitation and kind hospitality.

\end{document}